\documentclass[12pt]{amsart}

\usepackage[left=2cm, right=2cm, top=3cm, bottom=3cm]{geometry}
\usepackage{hyperref}
\usepackage{amsmath, amsthm, amsfonts, amssymb}
\usepackage{mathtools}
\usepackage{mathrsfs}
\usepackage{longtable}
\usepackage{booktabs}
\usepackage{upgreek}

\hypersetup{
	colorlinks=true,
	pdftitle={Correspondence among congruence families for generalized Frobenius partitions via modular permutations},
	pdfauthor={Rong Chen, and Xiao-Jie Zhu},
	pdfsubject={Mathematics},
	pdfkeywords={Partition congruences, Frobenius partitions, Modular functions, Weil representations, Jacobi forms}
}

\numberwithin{equation}{section}

\theoremstyle{plain}
\newtheorem{thm}{Theorem}[section]
\newtheorem*{thm*}{Theorem}
\newtheorem*{thm1.1*}{Theorem \ref{thm:fkbetaSLZ}}
\newtheorem*{thm1.3*}{Theorem \ref{r:thecpsimodfun}}
\newtheorem{coro}[thm]{Corollary}
\newtheorem{conj}[thm]{Conjecture}
\newtheorem{oq}[thm]{Question}
\newtheorem{prop}[thm]{Proposition}

\newtheorem{lemm}[thm]{Lemma}
\newtheorem{lemma}[thm]{Lemma}

\theoremstyle{definition}
\newtheorem{deff}[thm]{Definition}
\newtheorem{examp}[thm]{Example}

\theoremstyle{remark}
\newtheorem{rema}[thm]{Remark}
\newtheorem{algo}[thm]{Algorithm}

\newcommand\twoscript[2]{\substack{{#1} \\ {#2}}}

\newcommand\beq{\begin{equation}}
\newcommand\eeq{\end{equation}}
\newcommand\rmi{\mathrm{i}}
\newcommand\rme{\mathrm{e}}
\newcommand\legendre[2]{\genfrac{(}{)}{}{}{#1}{#2}}
\newcommand\tbtmat[4]{\left(\begin{smallmatrix}{#1} & {#2} \\ {#3} & {#4}\end{smallmatrix}\right)}
\newcommand\tbtMat[4]{\left(\begin{matrix}{#1} & {#2} \\ {#3} & {#4}\end{matrix}\right)}
\newcommand*\abs[1]{\lvert#1\rvert}
\newcommand\etp[1]{\mathfrak{e}\left(#1\right)}

\newcommand\sgn[1]{\mathop{\mathrm{sgn}}\left(#1\right)}

\newcommand\lcm{\mathop{\mathrm{lcm}}}

\newcommand\numZ{\mathbb{Z}}
\newcommand\numQ{\mathbb{Q}}
\newcommand\numR{\mathbb{R}}
\newcommand\numC{\mathbb{C}}

\newcommand\numgeq[2]{\mathbb{#1}_{\geq #2}}

\newcommand\slZ{\mathrm{SL}_2(\mathbb{Z})}

\newcommand\slR{\mathrm{SL}_2(\mathbb{R})}

\newcommand\glpR{\mathrm{GL}_2^{+}(\mathbb{R})}
\newcommand\glptR{\widetilde{\mathrm{GL}_2^{+}(\mathbb{R})}}
\newcommand\sltR{\widetilde{\mathrm{SL}_2(\mathbb{R})}}
\newcommand\sltZ{\widetilde{\mathrm{SL}_2(\mathbb{Z})}}

\newcommand\glnC[1]{\mathrm{GL}_{#1}(\mathbb{C})}

\newcommand\uhp{\mathfrak{H}}

\raggedbottom
\allowdisplaybreaks

\makeatletter
\@namedef{subjclassname@2020}{\textup{2020} Mathematics Subject Classification}
\makeatother

\begin{document}

\title[Congruences for generalized Frobenius partitions]{Correspondence among congruence families for generalized Frobenius partitions via modular permutations}

\author{Rong Chen}
\address{Department of Mathematics, Shanghai Normal University, Shanghai, People's Republic of China}
\email{rchen@shnu.edu.cn}

\author{Xiao-Jie Zhu}
\address{School of Mathematical Sciences,
Key Laboratory of MEA(Ministry of Education) \& Shanghai Key Laboratory of PMMP,
East China Normal University}
\email{zhuxiaojiemath@outlook.com}

\subjclass[2020]{Primary 11P83, Secondary 11F27; 11F33; 11F37; 20C15}

\keywords{Partition congruences, Frobenius partitions, Modular functions, Weil representations, Jacobi forms}

\begin{abstract}
In 2024, Garvan, Sellers and Smoot discovered a remarkable symmetry in the families of congruences for generalized Frobenius partitions $c\psi_{2,0}$ and $c\psi_{2,1}$. They also emphasized that the considerations for the general case of $c\psi_{k,\beta}$ are important for future work. In this paper, for each $k$ we construct a vector-valued modular form for the generating functions of $c\psi_{k,\beta}$, and determine an equivalence relation among all $\beta$. Within each equivalence class, we can identify modular transformations relating the congruences of one $c\psi_{k,\beta}$ to that of another $c\psi_{k,\beta'}$. Furthermore, correspondences between different equivalence classes can also be obtained through linear combinations of modular transformations. As an example, with the aid of these correspondences, we prove a family of congruences of $c\phi_{3}$, the Andrews' $3$-colored Frobenius partition.
\end{abstract}

\thanks{The first author is supported by the National Key R\&D Program of China (Grant No. 2024YFA1014500) and the National Natural Science Foundation of China (Grant No. 12401438). The second author is supported in part by Science and Technology Commission of Shanghai Municipality (No. 22DZ2229014).}

\maketitle

\section{Introduction}

In his 1984 AMS Memoir, Andrews \cite{An84} introduced two families of partition functions. Among them one is $c\phi_k(n)$, the generalized $k$-colored Frobenius partition function. He proved a variety of Ramanujan-type congruences satisfied by these $c\phi_k(n)$ such as
\begin{align}
\label{r:cphi2mod5}
c\phi_2(5n+3)\equiv 0 \pmod{5}.
\end{align}
In 1994, Sellers \cite{Se94} conjectured that \eqref{r:cphi2mod5} has an analog modulo all powers of $5$:
\begin{align}
\label{r:cphi2mod5a}
c\phi_2(5^\alpha n+\lambda_\alpha)\equiv 0 \pmod{5^\alpha},
\end{align}
where $\lambda_\alpha$ satisfies that $12\lambda_\alpha\equiv 1 \pmod{5^\alpha}$. Eighteen years later, a complete proof of this conjecture was found by Paule and Radu \cite{PR12}. Their proof follows the method of Watson \cite{Wa38}, and provides a new perspective on proving Ramanujan-type congruences by using the technique of modular functions.

Recently, Jiang, Rolen and Woodbury \cite{JRW22} considered $(k,a)$-colored Frobenius partition functions $c\psi_{k,a}(n)$, which are natural generalizations of Andrews' $c\phi_k$ since $c\psi_{k,k/2}(n)=c\phi_k(n)$. They proved that for $a \in \frac{k}{2}+\numZ$ the generating function of $c\psi_{k,a}(n)$,
$$C\Psi_{k,a}(q):=\sum_{n=0}^{\infty}c\psi_{k,a}(n)q^n,$$
is the $\zeta^a$ coefficient of
\beq
\label{r:fkdef}
F_k(\tau,z)=\left(\frac{-\vartheta(\tau,z+\frac{1}{2})}{q^{\frac{1}{12}}\eta(\tau)}\right)^k,
\eeq
where $q:=\etp{\tau}:=\rme^{2\uppi\rmi\tau}$, $\zeta:=\etp{z}$, $\tau\in\uhp$ (the upper half plane), $z\in\numC$,
\begin{equation}
\label{eq:eta}
\eta(\tau):=q^{\frac{1}{24}}\prod_{j=1}^\infty (1-q^j),
\end{equation}
and (see Example \ref{examp:classicalTheta})
$$
\vartheta(\tau,z):=\sum_{n\in \frac{1}{2}+\mathbb{Z}}\etp{\frac{1}{2}n^2\tau+n\left(z+\frac{1}{2}\right)}.
$$

Garvan, Sellers and Smoot \cite{GSS24} found that the property of $c\psi_{2,0}$ modulo powers of $5$ is similar to \eqref{r:cphi2mod5a}. They obtained
\begin{align}
\label{r:cpsi2mod5a}
c\psi_{2,\beta}(n)\equiv 0 \pmod{5^\alpha},
\end{align}
where $\beta\in \{0,1\}$ and $3\cdot 2^{\beta+1}n\equiv (-1)^{\beta+1} \pmod{5^\alpha}$. (For $\beta=1$, this is just \eqref{r:cphi2mod5a}.) Furthermore, they realized that the two families of congruences \eqref{r:cpsi2mod5a} bear a strange symmetry between each other via the application of certain Atkin-Lehner involution to their generating functions. This profound observation led them to recognize that one family of congruences may imply other families via certain modular transformations. At the end of their paper, they emphasized five considerations for future work. The first three read as follows:

\begin{enumerate}
\item Is our system of congruences for $c\psi_{2,\beta}(n)$ complete for $\beta = 0, 1$? That is to say, is this pair part of a broader triple or quadruple of families all closely related via their proofs or their behavior through some associated Atkin--Lehner involutions? Are there perhaps other mappings similar to $\sigma$ which would relate these families to others? Indeed, we believe that this specific pair of congruence families is not so closely related to any others in the same way that they are to each other---but we cannot be certain.
\item What are the implications of identifying different congruence families with one another via homomorphisms which fix certain sub-module function spaces?
\item Do similar systems of congruence families exist for more general $c\psi_{k,\beta}(n)$, especially by holding a given $k$ fixed and varying $\beta$?
\end{enumerate}

The main purpose of this paper is to address the problems mentioned above.

\subsection{Vector-valued modular transformation laws of $C\Psi_{k,\beta}$}
In the viewpoint of modular transformations, the functions $c\psi_{2,\beta}(n)$ where $\beta = 0, 1$ are complete since their generating functions constitute a weakly holomorphic vector-valued modular form with respect to $\slZ$. This remains true for general $c\psi_{k,\beta}(n)$. Let $f_{k,\beta}(\tau)=q^{\frac{k}{12}-\frac{\beta^2}{2k}}C\Psi_{k,\beta}(q)$ and define
\begin{equation}
\label{eq:mathfrakBk}
\mathfrak{B}_k=\begin{dcases}
\left\{0,1,2,\dots,\frac{k}{2}\right\} &\text{ if } 2\mid k,\\
\left\{\frac{1}{2},\frac{3}{2},\frac{5}{2}\dots,\frac{k}{2}\right\} &\text{ if } 2\nmid k.\\
\end{dcases}
\end{equation}
Our first result establishes the vector-valued modular transformation laws of $f_{k,\beta}(\tau)$ by giving the explicit coefficients under the actions of $\tbtmat{1}{1}{0}{1}$ and $\tbtmat{0}{-1}{1}{0}$:

\begin{thm}
\label{thm:fkbetaSLZ}
Let $k\in\numgeq{Z}{1}$, $\beta\in\mathfrak{B}_k$. Then we have
\begin{align*}
f_{k,\beta}\vert_{-\frac{1}{2}}\tbtmat{1}{1}{0}{1}&=\etp{\frac{k}{12}-\frac{\beta^2}{2k}}f_{k,\beta},\\
f_{k,\beta}\vert_{-\frac{1}{2}}\tbtmat{0}{-1}{1}{0}&=\sum_{\beta'\in\mathfrak{B}_k}s_{\beta,\beta'}^{(k)}\cdot f_{k,\beta'},
\end{align*}
where
\begin{equation*}
s_{\beta,\beta'}^{(k)}=\mu_{\beta'/k}\cdot\rmi^{2\beta}\etp{\frac{2k+1}{8}}\frac{2}{\sqrt{k}}\cdot\cos\left(\frac{2\uppi\beta'(\beta+\tfrac{k}{2})}{k}\right),
\end{equation*}
$\mu_t=1/2$ if $t=0$ or $1/2$ and $\mu_t=1$ else.
\end{thm}
The notation $f\vert_{-\frac{1}{2}}\tbtmat{a}{b}{c}{d}$ means the weight $-\tfrac{1}{2}$ modular transformation, that is, $f\vert_{-\frac{1}{2}}\tbtmat{a}{b}{c}{d}(\tau):=(c\tau+d)^{1/2}f\left(\frac{a\tau+b}{c\tau+d}\right)$ where $(c\tau+d)^{1/2}$ is the principal branch (the argument belongs to $(-\tfrac{\uppi}{2},\tfrac{\uppi}{2}]$). Since $\slZ$ is generated by $\tbtmat{1}{1}{0}{1}$ and $\tbtmat{0}{-1}{1}{0}$ we can calculate $f_{k,\beta}\vert_{-\frac{1}{2}}\tbtmat{a}{b}{c}{d}$ for any $\tbtmat{a}{b}{c}{d}\in\slZ$ by first expanding $\tbtmat{a}{b}{c}{d}$ as a product or quotient of $\tbtmat{1}{1}{0}{1}$ and $\tbtmat{0}{-1}{1}{0}$ and then applying repeatedly the two formulas in Theorem \ref{thm:fkbetaSLZ}. Therefore, $(f_{k,\beta})_{\beta\in\mathfrak{B}_k}$ is a $\numC^{\lfloor k/2\rfloor+1}$-valued weakly holomorphic modular form with respect to $\slZ$ of weight $-1/2$. For the proof, see Section \ref{r:sec3}.

\begin{examp}
\label{examp:k234}
For $k=2,3,4$, we have
\begin{align*}
\left.\begin{pmatrix}
f_{2,0}\\
f_{2,1}
\end{pmatrix}\right\vert_{-\frac{1}{2}}\tbtMat{1}{1}{0}{1}&=
\begin{pmatrix}
\rme^{\uppi\rmi/3} & 0\\
0 & \rme^{-\uppi\rmi/6}
\end{pmatrix}\cdot\begin{pmatrix}
f_{2,0}\\
f_{2,1}
\end{pmatrix},\\
\left.\begin{pmatrix}
f_{2,0}\\
f_{2,1}
\end{pmatrix}\right\vert_{-\frac{1}{2}}\tbtMat{0}{-1}{1}{0}&=\frac{\rme^{\uppi\rmi/4}}{\sqrt{2}}
\begin{pmatrix}
-1 & 1\\
1 & 1
\end{pmatrix}\cdot\begin{pmatrix}
f_{2,0}\\
f_{2,1}
\end{pmatrix},\\
\left.\begin{pmatrix}
f_{3,1/2}\\
f_{3,3/2}
\end{pmatrix}\right\vert_{-\frac{1}{2}}\tbtMat{1}{1}{0}{1}&=
\begin{pmatrix}
\rme^{5\uppi\rmi/12} & 0\\
0 & \rme^{-\uppi\rmi/4}
\end{pmatrix}\cdot\begin{pmatrix}
f_{3,1/2}\\
f_{3,3/2}
\end{pmatrix},\\
\left.\begin{pmatrix}
f_{3,1/2}\\
f_{3,3/2}
\end{pmatrix}\right\vert_{-\frac{1}{2}}\tbtMat{0}{-1}{1}{0}&=\frac{\rme^{\uppi\rmi/4}}{\sqrt{3}}
\begin{pmatrix}
-1 & 1\\
2 & 1
\end{pmatrix}\cdot\begin{pmatrix}
f_{3,1/2}\\
f_{3,3/2}
\end{pmatrix},\\
\left.\begin{pmatrix}
f_{4,0}\\
f_{4,1}\\
f_{4,2}
\end{pmatrix}\right\vert_{-\frac{1}{2}}\tbtMat{1}{1}{0}{1}&=
\begin{pmatrix}
\rme^{2\uppi\rmi/3} & 0 & 0\\
0 & \rme^{5\uppi\rmi/12} & 0\\
0 & 0 & \rme^{-\uppi\rmi/3}
\end{pmatrix}\cdot\begin{pmatrix}
f_{4,0}\\
f_{4,1}\\
f_{4,2}
\end{pmatrix},\\
\left.\begin{pmatrix}
f_{4,0}\\
f_{4,1}\\
f_{4,2}
\end{pmatrix}\right\vert_{-\frac{1}{2}}\tbtMat{0}{-1}{1}{0}&=\frac{\rme^{\uppi\rmi/4}}{2}
\begin{pmatrix}
1 & -2 & 1\\
-1 & 0 & 1\\
1 & 2 & 1
\end{pmatrix}\cdot\begin{pmatrix}
f_{4,0}\\
f_{4,1}\\
f_{4,2}
\end{pmatrix}.
\end{align*}
\end{examp}
For some considerations on representation theory of the metaplectic group and the modular group, see Section \ref{subsec:representations}.

\subsection{A unified tool to relate congruence families}
The modular transformation $f_{k,\beta}\vert_{-\frac{1}{2}}\gamma$, when $\gamma$ belongs to the subgroup $\Gamma_0(k)$ (see \eqref{eq:Gamma0N}), results in a single term $f_{k,\beta'}$ up to a constant factor, and possibly with a different $\beta'$. This phenomenon is studied in detail in Section \ref{r:sec4} where the main result is Theorem \ref{thm:fkbetaGamma0k}. The following theorem, which is a direct consequence of Theorem \ref{thm:fkbetaGamma0k}, demonstrates how modular transformations map the modular function associated with one generalized Frobenius partition to the function associated with another, and it serves as a unified tool to identify different congruence families with one another:

\begin{thm}
\label{r:thecpsimodfun}
Let $p\geq 5$ be prime and $\beta,\beta'\in \mathfrak{B}_k$ with $(2\beta,k)=(2\beta',k)$. Set
\begin{equation*}
r=\frac{k}{(k,(2\beta)^2(p^2-1)/8)},\quad r_e=\lcm(2,r).
\end{equation*}
Then there exists $\gamma\in \Gamma_0^0(p^2k,r)$ (see \eqref{eq:Gamma00NM}) such that
\begin{equation*}
\frac{f_{k,\beta}(\tau)}{f_{k,\beta}(p^2\tau)}\bigg| \gamma=\frac{f_{k,\beta'}(\tau)}{f_{k,\beta'}(p^2\tau)}.
\end{equation*}
Moreover, if $(r,p)=1$, then we can further require that
\begin{equation*}
U_p'(L\vert \gamma)=U_p'(L)\vert \gamma
\end{equation*}
for all modular functions $L$ with respect to $\Gamma_1^*(2p^2k,r)$ (see \eqref{eq:Gamma1starNM}) where
\begin{equation*}
U_p'(L):=\frac{1}{p}\sum_{x=0}^{p-1}L\left(\frac{\tau+r_ex}{p}\right).
\end{equation*}
\end{thm}
In the above and throughout the paper, $(m,n)$ denotes the greatest common divisor, $\lcm(m,n)$ the least common multiple and $f\vert\tbtmat{a}{b}{c}{d}(\tau):=f\left(\frac{a\tau+b}{c\tau+d}\right)$. (However, the notation $f\vert\gamma$ has a different meaning in Section \ref{subsec:fromf312tof332}.)

For any $k,p,\beta$, and $\beta'$ in Theorem \ref{r:thecpsimodfun} we can calculate $\gamma$ explicitly. For example, for $k=2$, $p=5$, $\beta=1$ and $\beta'=0$ the modular transformation
$$
\gamma=\begin{pmatrix}
27 & 7\\
50 & 13
\end{pmatrix}
$$
maps $c\psi_{2,1}(n)$ to $c\psi_{2,0}(n)$, which has been found in \cite{GSS24} in a different manner.

Theorem \ref{r:thecpsimodfun} also answers that for any fixed $k$, similar congruence families do exist for $c\psi_{k,\beta}(n)$ with $\beta$ varying, at least when one of the congruence families is related to some explicit modular functions. Indeed, the condition $(2\beta,k)=(2\beta',k)$ determines an equivalence relation for a given $k$. Theorem \ref{r:thecpsimodfun} then guarantees that all $c\psi_{k,\beta}(n)$ are related to each other within the same equivalence class. See Theorem \ref{thm:thklkl} for details.

For the proof of Theorem \ref{r:thecpsimodfun} and the construction of $\gamma$, see Section \ref{subsec:proof_of_theorem_thecpsimodfun}.

As $f_{k,k/2}$ is the generating function of Andrews' $k$-colored Frobenius partition, the case $\beta'=k/2$ is important. If in addition $k\equiv2\pmod{4}$ and $\beta=0$, we have a perfect choice of $\gamma$, which is related to certain Atkin-Lehner involution and is a direct generalization of \cite[Eq. (3.37)]{GSS24} where $k=2$ is considered. See Proposition \ref{prop:whenk24gamma}.

We note that there is a precedent in \cite{CCG24} for using vector-valued modular transformations to correspond two families of congruences.

\subsection{A new approach for proving Ramanujan-type congruences}
Even if $c\psi_{k,\beta}(n)$ and $c\psi_{k,\beta'}(n)$ fall into different equivalence classes, they may correspond to each other under some linear combination of modular transformations. This seems to provide a new approach for proving Ramanujan-type congruences. As an example, we consider the case $k=3$.

\begin{thm}
\label{r:themaincong}
For integers $\alpha>0$ and $n\geq 0$, we have
\begin{align}
\label{r:c31cong}
c\psi_{3,1/2}(5^{\alpha}n+\delta_{\alpha})\equiv 0 \pmod{5^{\lfloor\frac{\alpha}{2}\rfloor}},
\end{align}
\begin{align}
\label{r:c33cong}
c\psi_{3,3/2}(5^{\alpha}n+\lambda_{\alpha})\equiv 0 \pmod{5^{\lfloor\frac{\alpha}{2}\rfloor}},
\end{align}
where
$$
\delta_{2\alpha}=\delta_{2\alpha+1}=\frac{5^{2\alpha+1}-5}{24},
$$
\begin{align*}
&\lambda_{2\alpha-1}=\frac{3\cdot 5^{2\alpha-1}+1}{8},\\
&\lambda_{2\alpha}=\frac{7\cdot 5^{2\alpha}+1}{8}.
\end{align*}
\end{thm}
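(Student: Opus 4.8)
The plan is to treat one of the two families directly by the modular-function circle method --- in the localized form developed by Watson, Paule--Radu and Garvan--Sellers--Smoot --- and to obtain the other from it by transporting the congruence between the two equivalence classes $(2\beta,3)=1$ and $(2\beta,3)=3$, using the correspondences of Theorem~\ref{r:thecpsimodfun} together with a \emph{linear combination} of modular transformations built from the metaplectic $S$-matrix of Example~\ref{examp:k234}. We take $p=\ell=5$; since the relevant $r$ equals $1$ in both cases (so $\lcm(2,r)$ is prime to $5$), the operator $U_5'$ of \eqref{eq:Upprime} is literally the usual $U_5$, and the intertwining relation \eqref{eq:thecpsimodfun2} becomes the statement that $U_5$ commutes, on modular functions for $\Gamma_1^*(150,1)$, with the matrices supplied by Theorem~\ref{r:thecpsimodfun}.

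\textbf{The direct family.} Say we handle $\beta=\tfrac12$ first. A short $q$-expansion computation shows that iterating $U_5$ on $f_{3,1/2}(24\tau)$ selects exactly the arithmetic progression $5^{\alpha}n+\delta_{\alpha}$ from the coefficients of $C\Psi_{3,1/2}$, and this is what forces the value $\delta_{2\alpha}=\delta_{2\alpha+1}=(5^{2\alpha+1}-5)/24$. Using Theorems~\ref{thm:fkbetaSLZ} and~\ref{thm:fkbetaGamma0k} (applied to $f_{3,1/2}(\tau)/f_{3,1/2}(25\tau)$, a modular function on $\Gamma_0^0(75,1)$), one then exhibits a fixed eta-quotient $\varepsilon_\alpha$ and a fixed congruence subgroup $\Gamma$ such that
\[
L_\alpha(\tau):=\varepsilon_\alpha(\tau)^{-1}\sum_{n\ge0}c\psi_{3,1/2}\!\left(5^{\alpha}n+\delta_\alpha\right)q^{n}
\]
is a modular function for $\Gamma$, holomorphic on $\uhp$ with poles only at one distinguished cusp, and such that the $L_\alpha$ satisfy a two-step recursion $L_{2\gamma+1}=U_5\!\bigl(m_0\,L_{2\gamma}\bigr)$, $L_{2\gamma+2}=U_5\!\bigl(m_1\,L_{2\gamma+1}\bigr)$ for fixed modular functions $m_0,m_1$ on $\Gamma$. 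The core of the argument is a localization estimate: there are explicitly generated modules $\mathcal M_0,\mathcal M_1$ of $5$-integral modular functions for $\Gamma$ with poles confined to the distinguished cusp, with $m_0\,U_5(\mathcal M_1)\subseteq\mathcal M_0$ and $m_1\,U_5(\mathcal M_0)\subseteq 5\,\mathcal M_1$, and base cases $L_0\in\mathcal M_0$, $L_1\in\mathcal M_1$ read off from finitely many coefficients of $C\Psi_{3,1/2}$. Induction on $\gamma$ gives $L_\alpha\in 5^{\lfloor\alpha/2\rfloor}\mathcal M_{\alpha\bmod2}$, and comparing $q$-expansions yields \eqref{r:c31cong}.

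\textbf{Transfer to $\beta=\tfrac32$, i.e.\ to $c\phi_3$.} Because $(2\cdot\tfrac12,3)=1\ne3=(2\cdot\tfrac32,3)$, the two families lie in distinct --- indeed singleton --- equivalence classes, so Theorem~\ref{r:thecpsimodfun} does not connect them on the nose. It does, however, tell us how $f_{3,1/2}(\tau)/f_{3,1/2}(25\tau)$ transforms, while the $S$-matrix of Example~\ref{examp:k234} mixes $f_{3,1/2}$ with $f_{3,3/2}$; combining $S$ with suitable elements of $\glpQ$ produces matrices $\gamma_1,\dots,\gamma_m$ and constants $c_1,\dots,c_m$ with
\[
\frac{f_{3,3/2}(\tau)}{f_{3,3/2}(25\tau)}=\sum_{i=1}^{m}c_i\,\frac{f_{3,1/2}(\tau)}{f_{3,1/2}(25\tau)}\bigg|\gamma_i ,
\]
the combination chosen so that the right-hand side is a $\numQ$-rational, $5$-integral $q$-series even though the individual $S$-entries carry $\sqrt6$ and $\rmi$; this is the ``linear combination of modular transformations'' heralded in the introduction, and the meaning of the symbol $f\vert\gamma$ in Section~\ref{subsec:fromf312tof332}. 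Applying the same tower of $U_5$'s to both sides and pulling it through each $\gamma_i$ by \eqref{eq:thecpsimodfun2}, the $5$-adic divisibility established for the $\beta=\tfrac12$ side is inherited by the $\beta=\tfrac32$ side. Finally, running through the analogue of the $q$-expansion bookkeeping above --- now with the exponent shift $-\tfrac18$ of $f_{3,3/2}$ in place of $\tfrac5{24}$, and tracking the cusp to which the $\gamma_i$ carry things --- converts the progression $5^\alpha n+\delta_\alpha$ into $5^\alpha n+\lambda_\alpha$ with $\lambda_{2\alpha-1}=(3\cdot5^{2\alpha-1}+1)/8$ and $\lambda_{2\alpha}=(7\cdot5^{2\alpha}+1)/8$, which is \eqref{r:c33cong}.

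\textbf{The main obstacle} will be the sharp localization estimate in the direct part: proving that the $5$ genuinely appears in $m_1\,U_5(\mathcal M_0)\subseteq 5\,\mathcal M_1$ while $m_0\,U_5(\mathcal M_1)$ has no $5$ to give back. This requires choosing the generators of $\mathcal M_0,\mathcal M_1$ and the multipliers $m_0,m_1$ from the cusp and order-of-vanishing data, writing down the matrices of $U_5$ on those generators over $\numZ_{(5)}$-coefficient polynomials in a hauptmodul-like function, and controlling their reductions modulo $5$; here the natural level involves $15$, so the underlying modular curve is no longer rational, which is exactly what obstructs a one-step ($5^\alpha$) estimate and imposes the two-periodic analysis --- a genuinely more delicate computation than in the $k=2$ case. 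The second, lesser obstacle is in the transfer: one must produce the combination $\sum_i c_i(\cdot)\vert\gamma_i$ that is simultaneously (a) an identity of integral $q$-series, (b) compatible with $U_5$ via \eqref{eq:thecpsimodfun2}, and (c) aligned with the correct residues modulo $5^\alpha$; reconciling all three forces careful bookkeeping of the cusps and widths of $\Gamma_0^0(75,1)$ and of the entries of the metaplectic $S$-matrix.
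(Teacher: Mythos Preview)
Your outline for the direct family $\beta=\tfrac12$ is essentially correct and matches the paper's Section~\ref{subsec:sketch_r_c31cong}: the paper uses the eta-quotient \eqref{eq:cpsi31genfun}, works over the genus-$1$ curve $X_0(15)$ with the functions $t,p_0,p_1$ of \eqref{eq:tp0p1}, and establishes a two-periodic $U_5$ recursion (Lemma~\ref{r:lm43}) giving $L_{2\alpha-1}\in 5^{\alpha-1}X^{(1)}$ and $L_{2\alpha}\in 5^{\alpha}X^{(0)}$.

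The transfer step, however, has a genuine gap. Your proposed identity
\[
\frac{f_{3,3/2}(\tau)}{f_{3,3/2}(25\tau)}=\sum_{i=1}^{m}c_i\,\frac{f_{3,1/2}(\tau)}{f_{3,1/2}(25\tau)}\bigg|\gamma_i
\]
cannot hold for any $c_i\in\numC$ and $\gamma_i\in\glpQ$. By \eqref{eq:cpsi31genfun} the function $f_{3,1/2}(\tau)/f_{3,1/2}(25\tau)=\eta_{25}^4\eta_3^3/(\eta_{75}^3\eta_1^4)$ is an eta-quotient, hence holomorphic on $\uhp$; every term on the right is therefore holomorphic on $\uhp$. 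But the left side has poles on $\uhp$, as the paper points out immediately after \eqref{r:cpsi33genfun} --- this is precisely why a direct Paule--Radu attack on \eqref{r:c33cong} is obstructed. So no such identity exists, and the transport of the $5$-divisibility at the level of the ratios cannot work. A second problem is that \eqref{eq:thecpsimodfun2} is proved only for the specific $\gamma\in\Gamma_0^0(p^2k,r)$ produced by Algorithm~\ref{algo:findgaama}, and these exist only when $(2\beta,k)=(2\beta',k)$; you cannot invoke it for matrices manufactured from $S$.

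The paper instead works with $f_{3,1/2}$ itself at weight $-\tfrac12$, not with the ratio. Proposition~\ref{prop:f3betaGamma026} determines $f_{3,1/2}\vert_{-1/2}\gamma$ for $\gamma\in\Gamma_0^0(2,6)$ (a genuine refinement beyond Theorem~\ref{thm:fkbetaGamma0k}, since $3\nmid c$ is allowed), and from it one extracts
\[
M_0=\zeta_{12}^{-1}\,\widetilde{\tbtmat{1}{0}{50}{1}}+\zeta_{3}\,\widetilde{\tbtmat{1}{0}{100}{1}}\in\numC[\sltZ]
\]
with $f_{3,1/2}\vert_{-1/2}M_0=f_{3,3/2}$ (Example~\ref{r:exm01}); the $S$-matrix never enters. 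Commuting $U_5^R$ with $M_0,M_1$ is done not via \eqref{eq:thecpsimodfun2} but via the half-integral, period-$2$ criterion of Proposition~\ref{prop:commute2algHalf}, verified matrix by matrix in Proposition~\ref{r:prom01}. Lemma~\ref{lema:LtoK} then gives $K_\alpha=f_\alpha(t)+\overline{p_i}\,g_\alpha(t)$ with the \emph{same} polynomials $f_\alpha,g_\alpha$ as for $L_\alpha$. The final, nontrivial ingredient is Lemma~\ref{eq:overlinep0p1coeff}: an explicit eta-quotient dissection (Lemmas~\ref{r:lmm0}--\ref{r:lmm1}) showing that the transported $\overline{p_0},\overline{p_1}$ have integral $q$-coefficients. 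Only then does $L_\alpha\in5^{\lfloor\alpha/2\rfloor}X^{(i)}$ transfer to $K_\alpha\in5^{\lfloor\alpha/2\rfloor}\overline{X}^{(i)}$ and yield \eqref{r:c33cong}.
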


One surprising aspect is that proving the congruences \eqref{r:c31cong} is typically straightforward if one follows the method outlined in \cite{PR12}, whereas there is a greater challenge to prove \eqref{r:c33cong} in the traditional way. We find that the generating function of $c\psi_{3,1/2}(n)$ is
\begin{equation}
\label{eq:cpsi31genfun}
C\Psi_{3,1/2}(q)=3q^{-\frac{5}{24}}\frac{\eta(3\tau)^3}{\eta(\tau)^4},
\end{equation}
which shows that $C\Psi_{3,1/2}(q)/q^5 C\Psi_{3,1/2}(q^{25})$ is a holomorphic modular function on $\Gamma_0(75)$. Then one can prove \eqref{r:c31cong} by working with the modular curve $X_0(15)$ which has genus 1 and cusp count 4. However, we find that the generating function of $c\psi_{3,3/2}(n)$ is
\begin{align}
\label{r:cpsi33genfun}
C\Psi_{3,3/2}(q)=q^{\frac{1}{8}}\left(\frac{1}{\eta(3\tau)}+9\frac{\eta(9\tau)^3}{\eta(3\tau)\eta(\tau)^3}\right).
\end{align}
The issue is that $q^3 C\Psi_{3,3/2}(q)/ C\Psi_{3,3/2}(q^{25})$ has some poles on the upper half plane. Another common approach to deal with \eqref{r:cpsi33genfun} is to shift the focus to $q^{-1/8}\eta(75\tau)C\Psi_{3,3/2}(q)$, which is a holomorphic modular function. However, in this viewpoint we need to work with the modular curve $X_0(45)$ which has genus 3 and cusp count 8. To the best of our knowledge, there appears to be no literature addressing specifically the proof of Ramanujan-type congruences by working with such a complicated modular curve.

\subsection{Organization of the paper and notations}
In Section \ref{r:sec2}, we introduce some tools (Weil representations, Jacobi forms and theta decompositions) for proving Theorems \ref{thm:fkbetaSLZ}, \ref{thm:fkbetaGamma0k} and Proposition \ref{prop:f3betaGamma026}. Theorem \ref{thm:fkbetaSLZ} is then proved in Section \ref{r:sec3}. In Section \ref{r:sec4} we show that modular transformations in $\Gamma_0(k)$ map $f_{k,\beta}$ to a single term $f_{k,\beta'}$ and determine the transformation rule explicitly. Some related results are also presented. In Section \ref{r:sec5}, we prove Theorem \ref{r:thecpsimodfun} and explain how this theorem can be applied to extend the result of Garvan, Sellers and Smoot to more general $c\psi_{k,\beta}$ for a fixed $k$ and for $\beta$ belonging to a fixed equivalence class. In Section \ref{r:sec6}, to show that the correspondence between two congruence families may also occur across different equivalence classes, we prove Theorem \ref{r:themaincong} by establishing a correspondence between \eqref{r:c31cong} and \eqref{r:c33cong}. Finally, we discuss some miscellaneous observations and future works in Section \ref{sec:misc}.

Throughout the paper, we will need some subgroups of $\slZ$. Let $N,M$ be nonnegative integers. Set
\begin{align}
\Gamma_0(N)&:=\{\tbtmat{a}{b}{c}{d}\in\slZ\colon c\equiv0\bmod{N}\},\label{eq:Gamma0N}\\
\Gamma_0^0(N,M)&:=\{\tbtmat{a}{b}{c}{d}\in\slZ\colon c\equiv0\bmod{N},\,b\equiv0\bmod{M}\},\label{eq:Gamma00NM}\\
\Gamma_1(N)&:=\{\tbtmat{a}{b}{c}{d}\in\slZ\colon c\equiv0\bmod{N},\,a\equiv d\equiv1\bmod{N}\},\label{eq:Gamma1N}\\
\Gamma_1^*(N,M)&:=\{\tbtmat{a}{b}{c}{d}\in\slZ\colon c\equiv0\bmod{N},\,a\equiv d\equiv1\bmod{N},\,b\equiv0\bmod{M}\},\label{eq:Gamma1starNM}\\
\Gamma_1^*(N)&:=\Gamma_1^*(N,24),\notag\\
\Gamma(N)&:=\{\tbtmat{a}{b}{c}{d}\in\slZ\colon c\equiv b\equiv0\bmod{N},\,a\equiv d\equiv1\bmod{N}\}.\notag
\end{align}

\section{Weil representations and Jacobi forms}
\label{r:sec2}

The content of this section, except some usual definitions, will be used only in the proofs of Theorems \ref{thm:fkbetaSLZ}, \ref{thm:fkbetaGamma0k} and Proposition \ref{prop:f3betaGamma026}.
\subsection{Lattices and Gauss sums}
Let $V$ be a real vector space of dimension $n\in\numgeq{Z}{1}$ equipped with a nondegenerate symmetric bilinear form $B$. We set
\begin{equation}
\label{eq:QfromB}
Q(v):=\frac{1}{2}B(v,v),\quad v\in V
\end{equation}
and call it the associated quadratic form.
A \emph{lattice} in $\underline{V}=(V, B)$ is a free $\numZ$-module of rank $n$ that spans $V$. The \emph{dual} of a lattice $L$, denoted by $L^\sharp$, is the module of vectors $v$ such that $B(w, v)\in\numZ$ for all $w\in L$. It turns out that $L^\sharp$ is still a lattice. By abuse of language, we call the pair $\underline{L}=(L, B)$ a lattice as well. It is called \emph{even integral} if $Q(v)\in\numZ$ for all $v\in L$, which implies that $L\subseteq L^\sharp$ (in this case, $L$ is called \emph{integral}).

The \emph{discriminant module} of an even integral lattice $L$ is the quotient group $L^\sharp/L$ which is a finite abelian group. There is a well-defined ``quadratic form'' $Q(v+L)=Q(v)+\numZ$ on $L^\sharp/L$ that makes it a finite quadratic module (cf. \cite[\S 2]{Str13}).

\begin{examp}
\label{examo:latticeZand2Z}
Let $k$ be a nonzero integer and $\underline{\numR}_k$ be the one-dimensional space $\numR$ equipped with the bilinear form $(x, y)\mapsto kxy$. Then both $\numZ$ and $2\numZ$ are integral lattices in $\underline{\numR}_k$. To indicate the underlying bilinear forms, we write $\underline{\numZ}_k$ and $\underline{2\numZ}_k$ instead. If $k$ is even then $\underline{\numZ}_k$ is even integral; otherwise if $k$ is odd then only $\underline{2\numZ}_k$ is even integral. It is immediate that $\underline{\numZ}_k^\sharp=\frac{1}{k}{\numZ}$ and $\underline{2\numZ}_k^\sharp=\frac{1}{2k}{\numZ}$; therefore, the discriminant modules $\underline{\numZ}_k^\sharp/\underline{\numZ}_k=\frac{1}{k}{\numZ}/\numZ$ and $\underline{2\numZ}_k^\sharp/\underline{2\numZ}_k=\frac{1}{2k}{\numZ}/2\numZ$ are both cyclic groups.
\end{examp}

\begin{deff}
\label{def:LkLksharp}
Let $k$ be a nonzero integer. We define $L_k:=\numZ$ if $k$ is even, and $L_k:=2\numZ$ if $k$ is odd. Moreover, set $L_k^\sharp:=\frac{1}{k}{\numZ}$ if $k$ is even, and $L_k^\sharp:=\frac{1}{2k}{\numZ}$ if $k$ is odd. Finally, set $D_k:=\underline{\numZ}_k^\sharp/\underline{\numZ}_k=\frac{1}{k}{\numZ}/\numZ$ if $k$ is even, and $D_k=\underline{2\numZ}_k^\sharp/\underline{2\numZ}_k=\frac{1}{2k}{\numZ}/2\numZ$ if $k$ is odd.
\end{deff}

The lattice we need to deal with $C\Psi_{k,\beta}$ ($k\in\numgeq{Z}{1}$) is exactly $L_k$.

Next we recall some facts about Gauss sums. The classical Gauss sum is defined by $G(n, m)=\sum_{x=0}^{\abs{m}-1}\etp{\frac{nx^2}{m}}$ for $n\in\numZ$ and $m\in\numZ\setminus\{0\}$. It is known that in the case $m>0$ and $(n, m)=1$ we have
\begin{equation}
\label{eq:claGaussSum}
G(n,m)=\begin{dcases}
\sqrt{m}\legendre{2n}{m}\etp{\frac{1-m}{8}}, &\text{if } 2\nmid m;\\
\sqrt{2m}\legendre{2m}{n}\etp{\frac{n}{8}}, &\text{if } m\equiv0\bmod{4};\\
0, &\text{if }m\equiv2\bmod{4}.
\end{dcases}
\end{equation}
This is a reformulation of classical results due to Gauss (cf. \cite[\S 1.5]{BEW98}). The notation $\legendre{\cdot}{\cdot}$ refers to the Kronecker-Jacobi symbol. For the reader's convenience we present its definition:
\begin{itemize}
\item $\legendre{m}{p}$ is the usual Legendre symbol if $p$ is an odd prime.
\item $\legendre{m}{2}$ equals $0$ if $2 \mid m$, and equals $(-1)^{(m^2-1)/8}$ if $2 \nmid m$.
\item $\legendre{m}{-1}$ equals $1$ if $m \geq 0$, and equals $-1$ otherwise.
\item $\legendre{m}{1}=1$ by convention.
\item $\legendre{m}{n}$ is defined to make it a complete multiplicative function of $n \in \numZ-\{0\}$.
\item $\legendre{m}{0}=0$ if $m \neq \pm1$, and $\legendre{\pm1}{0}=1$.
\end{itemize}
Below we shall use the known facts that the function $n\mapsto\legendre{m}{n}$ is $\abs{m}$-periodic if $m\equiv0,1\bmod{4}$; it is $\abs{4m}$-periodic if $m\equiv2\bmod{4}$ and that the function $m\mapsto\legendre{m}{n}$ is $n$-periodic if $n$ is odd and positive. For the proofs of these facts along with other basic properties, see \cite[Section 2.2.2]{Coh07}.

We introduce two variants. One is
\begin{equation}
\label{eq:gGauss1}
\mathfrak{g}_{k}(b,d;t)=\abs{d}^{-\frac{1}{2}}\sum_{v\in L_k/dL_k}\etp{\frac{bk(t+v)^2}{2d}}
\end{equation}
where $k\in\numZ\setminus\{0\}$, $b\in\numZ$, $d\in\numZ\setminus\{0\}$, $t\in L_k^\sharp$. (For the definitions of $L_k$ and $L_k^\sharp$, see Definition \ref{def:LkLksharp}.) Note that it is well-defined.

\begin{lemm}
\label{lemm:glbdt}
Suppose $a,b,c,d\in\numZ$ with $ad-bc=1$, $d\neq0$, and $bct\in L_k$. Then
\begin{equation*}
\mathfrak{g}_{k}(b,d;t)=\etp{\frac{a^2bdkt^2}{2}}\mathfrak{g}_{k}(b,d;0).
\end{equation*}
\end{lemm}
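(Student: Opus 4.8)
\textbf{Proof plan for Lemma \ref{lemm:glbdt}.} The plan is to reduce everything to a direct manipulation of the defining sum \eqref{eq:gGauss1}. Write $\mathfrak{g}_{\underline{L}}(b,d;t)=\abs{d}^{-n/2}\sum_{v\in L/dL}\etp{bQ(t+v)/d}$. The idea is to absorb the dependence on $t$ by a translation of the summation variable, using the hypothesis $bct\in L$ together with the relation $ad-bc=1$. Concretely, since $ad\equiv1\pmod{bc}$ is not quite what we want, I would instead use $ad-bc=1$ in the form $1-bc\cdot(\text{something})$; the natural substitution is $v\mapsto v-abct+dw$-type shifts, but let me organize it as follows.

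First I would expand $Q(t+v)=Q(v)+B(t,v)+Q(t)$ by \eqref{eq:QfromB} and bilinearity, so that
\[
\mathfrak{g}_{\underline{L}}(b,d;t)=\abs{d}^{-n/2}\etp{\frac{bQ(t)}{d}}\sum_{v\in L/dL}\etp{\frac{b(Q(v)+B(t,v))}{d}}.
\]
Now the task is to show the remaining sum equals $\etp{(a^2bd-b)Q(t)/d}\,\mathfrak{g}_{\underline{L}}(b,d;0)$, i.e. that completing the square inside introduces exactly the factor $\etp{(a^2bd-b)Q(t)/d}$. The key step: set $v=v'-cat$ and check that $v\mapsto v'$ is a well-defined bijection of $L/dL$ — this requires $cat\in L$, which follows from $bct\in L$ only after we also know $act\in L$; so I would first note that $ad-bc=1$ gives $act = a(ad-1)t/b$, which is not obviously integral, so the cleaner route is the substitution $v = v' + bct\cdot(\text{unit})$ — wait, $bct\in L$ is given, so $v\mapsto v'$ with $v=v'+ mbct$ for any integer $m$ is a genuine bijection of $L/dL$. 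Choosing $m$ so that the cross term is killed modulo $d$: after substitution the exponent becomes $\frac{b}{d}\bigl(Q(v')+B(v',(1+mbc)t)+ (\text{pure }t\text{ term})\bigr)$, and I want $1+mbc\equiv 0$ in the appropriate sense; taking $m=-a d$... hmm, but $mbc=-adbc=-ad(ad-1)$, messy. The genuinely clean choice, which I would verify, is $m$ with $mbc\equiv -1\pmod d$: since $ad-bc=1$ forces $(bc,d)=1$, such $m$ exists, and then $1+mbc\equiv0\pmod d$ so $B(v',(1+mbc)t)\in d\numZ$ (as $B(v',t)\in\numZ$ because $t\in L^\sharp$), making that cross term contribute trivially to $\etp{\cdot/d}$. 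The leftover pure-$t$ term is $\frac{b}{d}\bigl(B(mbct,t)+Q(mbct)\cdot(\text{coefficient})\bigr)$; collecting and simplifying using $m\equiv a^2 d$-type congruences (from $mbc\equiv-1$, $ad-bc=1$ one derives $m\equiv a^2 d\pmod{?}$, or more robustly one just computes the total $t$-exponent and matches it against $a^2bdQ(t)$ modulo $\numZ$).

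The main obstacle I anticipate is precisely this bookkeeping of the ``pure-$t$'' exponent: one must show that the total coefficient of $Q(t)$ (equivalently of $B(t,t)$) accumulated from the square-completion equals $a^2bd$ modulo $\frac{1}{2}\numZ$ in a way that makes $\etp{\cdot}$ come out to $\etp{a^2bdQ(t)}$ — and here the evenness of the lattice (so that $Q$ takes values allowing the denominators to clear) and the identity $ad-bc=1$ must be used in tandem. I would handle this by expressing $m=(-1+\ell d)/(bc)$ for a suitable integer $\ell$, substituting, and reducing the resulting rational expression modulo $\numZ$ using $b Q(t)\cdot(\text{denominator})$ divisibility coming from $bct\in L\Rightarrow bQ(ct)\in$ appropriate ideal, together with $a^2 bc\equiv a^2(ad-1)\equiv -a^2\pmod{\text{?}}$ to land on $a^2 bd$. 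A secondary, smaller point to check is that the substitution $v\mapsto v'$ is bijective on the \emph{quotient} $L/dL$ and not merely on $L$ — but this is immediate since translation by the fixed element $mbct\in L$ descends to $L/dL$. Once the exponent identity is pinned down, combining it with the $\etp{bQ(t)/d}$ pulled out at the start yields $\etp{(b+a^2bd-b)Q(t)/d}=\etp{a^2bdQ(t)/d}$... and matching the statement's $\etp{a^2bdQ(t)}$ (no $d$ in the denominator) tells me the correct accounting must in fact produce the exponent with the $d$ cancelled, which is a useful consistency check on the sign and modulus choices above.
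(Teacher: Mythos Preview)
Your approach is viable and would work if the bookkeeping were completed, but you are making life harder than necessary. The paper's proof uses one clean trick that you circle around but never land on: instead of expanding $Q(t+v)$ and then hunting for a substitution $v\mapsto v'+m\,bct$ with $mbc\equiv-1\pmod d$, simply write $t+v=adt+(v-bct)$ using $ad-bc=1$, and substitute $w=v-bct$ (a bijection of $L/dL$ since $bct\in L$). Expanding $Q(adt+w)=Q(adt)+B(adt,w)+Q(w)$, the cross term has $\tfrac{b}{d}B(adt,w)=ab\,B(t,w)\in\numZ$ because $t\in L^\sharp$ and $w\in L$, so it drops out of $\etp{\cdot}$. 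The pure-$t$ term is $\tfrac{b}{d}Q(adt)=\tfrac{b(ad)^2}{d}Q(t)=a^2bd\,Q(t)$, and you are done in one line---no congruence $mbc\equiv-1\pmod d$, no auxiliary integer $\ell$, no modular reduction of $\ell^2$ back to $a^2$.

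For the record, your route does close: with $1+mbc=\ell d$ one gets total $t$-exponent $\tfrac{b}{d}(1+mbc)^2Q(t)=b\ell^2 d\,Q(t)$, and since $(\ell,m)$ and $(a,1)$ both solve $xd-y\,bc=1$ one has $\ell=a+kbc$ for some $k\in\numZ$; then $b d(\ell^2-a^2)Q(t)=akbd\cdot 2bcQ(t)+k^2bd\cdot(bc)^2Q(t)\in\numZ$ because $bct\in L$ forces $2bcQ(t)=B(t,bct)\in\numZ$ and $(bc)^2Q(t)=Q(bct)\in\numZ$. So $\etp{b\ell^2dQ(t)}=\etp{a^2bdQ(t)}$ as required. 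This resolves the ``main obstacle'' you flagged and also explains the disappearing $d$ in the denominator that confused you at the end: in the correct accounting the factor $(1+mbc)^2$ carries a $d^2$, cancelling the $d$ downstairs and leaving one $d$ upstairs.
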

\begin{proof}
Since $ad-bc=1$ we have
\begin{equation}
\label{eq:glbdtProof1}
\mathfrak{g}_{k}(b,d;t)=\abs{d}^{-\frac{1}{2}}\sum_{v\in L_k/dL_k}\etp{\frac{bk(adt+(v-bct))^2}{2d}}.
\end{equation}
Since $bct\in L_k$ the map $L_k/dL_k\rightarrow L_k/dL_k$, $v\mapsto v-bct$ is well-defined and bijective. Applying the change of variables $w=v-bct$ in \eqref{eq:glbdtProof1} and then using the facts $(adt+w)^2=(adt)^2+2adtw+w^2$ and $\frac{bk\cdot 2adtw}{2d}\in\numZ$ we obtain the desired identity.
\end{proof}

\begin{lemm}
\label{lemm:gbdt}
If $k$ is an even positive integer, $\beta\in\numZ$ and $\tbtmat{a}{b}{c}{d}\in\slZ$ with $k\mid bc$, then
\begin{equation*}
\mathfrak{g}_{k}\left(b,d;\frac{\beta}{k}\right)=\legendre{\sgn{d}\cdot kb}{\abs{d}}\etp{\frac{1-\abs{d}}{8}}\etp{\frac{bd(a\beta)^2}{2k}}.
\end{equation*}
Moreover, if $k$ is an odd positive integer, $\beta\in\frac{1}{2}+\numZ$ and $\tbtmat{a}{b}{c}{d}\in\slZ$ with $4k\mid bc$, then the above formula holds as well.
\end{lemm}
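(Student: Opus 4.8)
The plan is to reduce the statement about the lattice Gauss sums $\mathfrak{g}_{\underline{\numZ}_k}(b,d;\beta/k)$ and $\mathfrak{g}_{\underline{2\numZ}_k}(b,d;\beta/k)$ to an evaluation of a classical Gauss sum $G(n,m)$, and then invoke \eqref{eq:claGaussSum}. First I would treat the case $k$ even. The hypothesis $k\mid bc$ together with $t=\beta/k$ gives $bct=\beta\cdot(bc/k)\in\numZ=\underline{\numZ}_k$, so Lemma \ref{lemm:glbdt} applies and yields
\begin{equation*}
\mathfrak{g}_{\underline{\numZ}_k}\!\left(b,d;\tfrac{\beta}{k}\right)=\etp{a^2bd\,Q(\tfrac{\beta}{k})}\,\mathfrak{g}_{\underline{\numZ}_k}(b,d;0)=\etp{\tfrac{a^2bd\beta^2}{2k}}\,\mathfrak{g}_{\underline{\numZ}_k}(b,d;0),
\end{equation*}
using $Q(x)=\frac12 kx^2$ on $\underline{\numR}_k$. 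This already produces the factor $\etp{bd(a\beta)^2/(2k)}$, so it remains to show $\mathfrak{g}_{\underline{\numZ}_k}(b,d;0)=\legendre{\sgn{d}\cdot kb}{\abs{d}}\etp{(1-\abs{d})/8}$. By definition \eqref{eq:gGauss1} with $n=1$ and $Q(v)=\frac12 kv^2$,
\begin{equation*}
\mathfrak{g}_{\underline{\numZ}_k}(b,d;0)=\abs{d}^{-1/2}\sum_{v\in\numZ/d\numZ}\etp{\frac{kbv^2}{2d}}.
\end{equation*}
Since $k$ is even, write $k=2k'$; then the exponent is $k'bv^2/d$, so the sum is $G(k'b,\abs{d})$ up to handling $\sgn d$ (replacing $d$ by $\abs d$ only permutes residues and conjugates nothing because the quadratic form is unchanged). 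Thus $\mathfrak{g}_{\underline{\numZ}_k}(b,d;0)=\abs d^{-1/2}G((k/2)b,\abs d)$.

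The remaining work is to massage $\abs d^{-1/2}G((k/2)b,\abs d)$ into the claimed closed form. Here the key point is that $ad-bc=1$ forces $(b,d)=1$, but $(k/2)b$ need not be coprime to $\abs d$; however $k\mid bc$ and $(d,c)$-considerations, or more directly the relation $ad\equiv1\pmod{c}$ together with $k\mid bc$, let me reduce to the coprime case by a standard splitting of the Gauss sum over the common factor, or — cleaner — I would argue that the stated identity is what one gets from the first formula of \eqref{eq:claGaussSum} after checking that $(2\cdot(k/2)b,\abs d)=(kb,\abs d)$ behaves correctly. Concretely, when $\abs d$ is odd and coprime to $kb$, \eqref{eq:claGaussSum} gives $G((k/2)b,\abs d)=\sqrt{\abs d}\legendre{kb}{\abs d}\etp{(1-\abs d)/8}$, and inserting $\sgn d$ via $\legendre{kb}{\abs d}=\legendre{\sgn d\cdot kb}{\abs d}$ (valid since $\abs d>0$ makes $\legendre{\sgn d}{\abs d}=\legendre{-1}{\abs d}^{(1-\sgn d)/2}$... actually $\legendre{\sgn d\cdot kb}{\abs d}=\legendre{\sgn d}{\abs d}\legendre{kb}{\abs d}$ and $\legendre{\sgn d}{\abs d}=1$ when $d>0$, $=\legendre{-1}{\abs d}$ when $d<0$) requires a small sign check that I expect to be absorbed by the conventions; I would verify this matches the intended normalization. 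The case $\abs d$ even: one must show the left side still equals the claimed product; here $\legendre{\cdot}{\abs d}$ with even $\abs d$ is interpreted via the Kronecker symbol, and the relation $k\mid bc$, $(b,d)=1$ restricts the $2$-adic valuation of $d$ in a way that makes the even-modulus line of \eqref{eq:claGaussSum} (which vanishes when $\abs d\equiv2\bmod 4$) consistent — I would check that $\abs d\equiv2\bmod4$ cannot occur under the hypotheses, or that both sides vanish.

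For the odd-$k$ case with the lattice $\underline{2\numZ}_k$, $t=\beta/k$ with $\beta\in\frac12+\numZ$, the hypothesis is strengthened to $4k\mid bc$; then $bct=\beta bc/k=(2\beta)\cdot(bc/(2k))\in 2\numZ$ since $2k\mid bc$ and $2\beta\in\numZ$, so Lemma \ref{lemm:glbdt} again applies (note $\underline{2\numZ}_k$ is even because $Q(2m)=2km^2\in\numZ$). The scalar becomes $\etp{a^2bd\cdot\frac12 k(\beta/k)^2}=\etp{a^2bd\beta^2/(2k)}$, identical in form. For $\mathfrak{g}_{\underline{2\numZ}_k}(b,d;0)=\abs d^{-1/2}\sum_{v\in 2\numZ/2d\numZ}\etp{kbv^2/(2d)}$, substituting $v=2w$ with $w$ running over $\numZ/d\numZ$ gives $\sum_w\etp{2kbw^2/d}=G(2kb,\abs d)$; applying \eqref{eq:claGaussSum} with the odd modulus $\abs d$ (forced odd here since $4\mid bc$ and $(b,d)=1$... one checks $d$ must be odd) gives $\sqrt{\abs d}\legendre{4kb}{\abs d}\etp{(1-\abs d)/8}=\sqrt{\abs d}\legendre{kb}{\abs d}\etp{(1-\abs d)/8}$, matching the even-$k$ formula verbatim.

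The main obstacle I anticipate is not any single deep step but the bookkeeping around non-coprimality and signs: reconciling $G((k/2)b,\abs d)$ or $G(2kb,\abs d)$ with \eqref{eq:claGaussSum} when $(kb,\abs d)>1$, and correctly inserting $\sgn d$ into the Kronecker symbol. I would handle non-coprimality by the standard multiplicative splitting of $G(n,m)$ along $g=(n,m)$ — writing $G(n,m)=g\cdot G(n/g,m/g)$ when $g^2\mid ... $ (more precisely the reduction $G(n,m)=(n,m)G(n/(n,m),m/(n,m))$ when $(n,m)\mid m$ appropriately) — and checking the hypotheses $k\mid bc$, $ad-bc=1$ pin down $(kb,d)$ tightly enough (essentially $(kb,d)=(k,d)$ with $(b,d)=1$, and $k\mid bc$ with $(c,d)$ bounded) that the extra factors collapse into the Legendre symbol and the exponential as claimed. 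I expect the cleanest route is to first prove the identity assuming $(kb,d)=1$ directly from \eqref{eq:claGaussSum}, then reduce the general case to this by the splitting lemma, verifying at each stage that the $\etp{(1-\abs d)/8}$ factor and the Kronecker symbol transform compatibly.
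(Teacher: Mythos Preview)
Your overall strategy---apply Lemma~\ref{lemm:glbdt} to pull off the scalar $\etp{bd(a\beta)^2/(2k)}$, then evaluate $\mathfrak{g}_{\underline{L}}(b,d;0)$ as a classical Gauss sum via~\eqref{eq:claGaussSum}---is exactly the paper's approach. But your execution has one genuine error and one unnecessary detour.

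\textbf{The sign of $d$.} Your claim that ``replacing $d$ by $\abs d$ only permutes residues and conjugates nothing'' is wrong. The exponent $\etp{nv^2/d}$ with $d<0$ equals $\etp{-nv^2/\abs d}$, so
\[
\mathfrak{g}_{\underline{\numZ}_k}(b,d;0)=\abs d^{-1/2}\sum_{v\bmod \abs d}\etp{\sgn d\cdot (k/2)bv^2/\abs d}=\abs d^{-1/2}G\bigl(\sgn d\cdot (k/2)b,\abs d\bigr),
\]
and likewise $\mathfrak{g}_{\underline{2\numZ}_k}(b,d;0)=\abs d^{-1/2}G(\sgn d\cdot 2kb,\abs d)$. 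This is precisely where the $\sgn d$ in the stated formula comes from: feeding $\sgn d\cdot n$ into the first line of~\eqref{eq:claGaussSum} produces $\legendre{\sgn d\cdot kb}{\abs d}$ directly, with no further sign gymnastics needed. Your odd-$k$ computation ends with $\legendre{kb}{\abs d}$ instead of $\legendre{\sgn d\cdot kb}{\abs d}$, which is off by $\legendre{-1}{\abs d}$ when $d<0$.

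\textbf{Coprimality and parity of $d$.} All of your worries here evaporate under the hypotheses. From $ad-bc=1$ we have $(b,d)=(c,d)=1$. If a prime $p$ divides both $k$ and $d$, then $p\mid bc$ (since $k\mid bc$) forces $p\mid c$, contradicting $(c,d)=1$; hence $(k,d)=1$. The same argument with $p=2$ (using $2\mid k$ when $k$ is even, or $4\mid bc$ when $k$ is odd) shows $d$ is odd in both cases. Thus $(\,(k/2)b,\abs d)=1$ for even $k$ and $(2kb,\abs d)=1$ for odd $k$, and the first line of~\eqref{eq:claGaussSum} applies immediately---no multiplicative splitting of Gauss sums, no even-modulus case, no vanishing check.
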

\begin{proof}
We present the proof only for $k$ being odd since the even case is similar. Setting $t=\beta/k$ in Lemma \ref{lemm:glbdt} we find that, since $4k\mid bc$,
\begin{align}
\mathfrak{g}_{k}\left(b,d;\frac{\beta}{k}\right)&=\etp{\frac{bd(a\beta)^2}{2k}}\cdot\mathfrak{g}_{k}\left(b,d;0\right)\notag\\
&=\etp{\frac{bd(a\beta)^2}{2k}}\cdot\abs{d}^{-\frac{1}{2}} G\left(\sgn{d}\cdot2kb, \abs{d}\right).\label{eq:g2Zk}
\end{align}
Since $4\mid bc$ we have $2\nmid d$, and hence we can insert the first case of \eqref{eq:claGaussSum} into \eqref{eq:g2Zk} from which the desired formula follows.
\end{proof}

The other variant of Gauss sums we need is
\begin{equation}
\label{eq:GDdx}
\mathscr{G}_{k}(d,x)=\frac{1}{\sqrt{\abs{D_k}\cdot\abs{D_k[d]}}}\sum_{y\in D_k}\etp{dky^2/2+kxy},
\end{equation}
where $k\in\numZ\setminus\{0\}$, $d\in\numZ$, $x\in D_k$ (see Definition \ref{def:LkLksharp}), and where $D_k[d]$ is the kernel of the homomorphism $y\mapsto dy$ on $D_k$. See \cite[page 514]{Str13} for Gauss sums associated with arbitrary discriminant modules. Following Str\"omberg, set
\begin{align}
D_k[d]&:=\{y\in D_k\colon dy=0\},\notag\\
D_k[d]^*&:=d\cdot D_k=\{dy\colon y\in D_k\},\notag\\
D_k[d]^\bullet&:=\{y\in D_k\colon dkz^2/2+kyz=0,\,\forall z\in D_k[d]\}.\label{eq:DdBullet}
\end{align}
Scheithauer \cite[Proposition 2.1]{Sch09} shows that $D_k[d]^\bullet=y+D_k[d]^*$ for some $y\in D_k$. (Actually, Scheithauer's lemma is valid for arbitrary discriminant modules besides $D_k$.) As a consequence, $\abs{D_k[d]^\bullet}=\abs{D_k[d]^*}$.
\begin{lemm}[Str\"omberg]
\label{lemm:StrombergscrG}
If $x\not\in D_k[d]^\bullet$, then $\mathscr{G}_{k}(d,x)=0$. Moreover, let $x_0\in D_k[d]^\bullet$. Then for each $x\in D_k[d]^\bullet$ there is a $y\in D_k$ such that $x=x_0+dy$ and for all such $y$ we have
\begin{equation*}
\mathscr{G}_{k}(d,x)=\etp{-dky^2/2-kx_0y}\mathscr{G}_{k}(d,x_0).
\end{equation*}
\end{lemm}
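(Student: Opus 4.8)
The plan is to work directly with the finite exponential sum
\[
S(x):=\sum_{y\in D}\etp{dQ(y)+B(x,y)},
\]
so that $\mathscr{G}_{\underline{D}}(d,x)=S(x)/\sqrt{\abs{D}\cdot\abs{D[d]}}$ with a normalizing factor independent of $x$, and to exploit two translations in this sum: translation by an element of $D[d]$ for the vanishing statement, and translation by a $d$-th preimage of $x-x_0$ for the transformation formula. The only structural input I will use is the polarization identity $Q(u+v)=Q(u)+B(u,v)+Q(v)$ together with biadditivity of $B$, keeping in mind throughout that $Q$ and $B$ take values in $\numQ/\numZ$ and that $dz=0$ in $D$ does \emph{not} imply $dQ(z)=0$.

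For the vanishing assertion I would fix $z\in D[d]$ and, using that $y\mapsto y+z$ permutes $D$, rewrite $S(x)=\sum_{y}\etp{dQ(y+z)+B(x,y+z)}$. Expanding via polarization and using $dB(y,z)=B(y,dz)=0$ gives $S(x)=\etp{dQ(z)+B(x,z)}\,S(x)$ for every $z\in D[d]$. If $x\notin D[d]^\bullet$ then by definition of $D[d]^\bullet$ there is some $z\in D[d]$ with $dQ(z)+B(x,z)\neq0$ in $\numQ/\numZ$, so $\etp{dQ(z)+B(x,z)}\neq1$ and the identity forces $S(x)=0$, i.e.\ $\mathscr{G}_{\underline{D}}(d,x)=0$.

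For the transformation formula, given $x_0,x\in D[d]^\bullet$ I would invoke Scheithauer's description $D[d]^\bullet=x_0+D[d]^*=x_0+dD$ to write $x-x_0=dy$ for some $y\in D$. Translating $w\mapsto w+y$ in $S(x_0)$ and expanding, the cross term contributes $dB(w,y)=B(dy,w)$, so that $B(x_0,w)+dB(w,y)=B(x_0+dy,w)=B(x,w)$; collecting the $w$-independent factors yields $S(x_0)=\etp{dQ(y)+B(x_0,y)}\,S(x)$, which is the claimed identity after dividing by the common normalization. The remaining point — that $\etp{-dQ(y)-B(x_0,y)}$ is independent of the chosen $y$ — follows by the same polarization computation: if $x-x_0=dy=dy'$ then $z:=y-y'\in D[d]$ and $dQ(y)-dQ(y')=dQ(z)$, $B(x_0,y)-B(x_0,y')=B(x_0,z)$, with $dQ(z)+B(x_0,z)=0$ since $x_0\in D[d]^\bullet$.

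There is no serious obstacle here: the whole argument is elementary manipulation of a Gauss sum over a finite abelian group. The one thing to be careful about — which is also the conceptual content of the lemma — is that $d$ annihilates $D[d]$ additively but not as far as $Q$ is concerned, so the correct index set for ``where $\mathscr{G}_{\underline{D}}$ is supported'' is the coset $D[d]^\bullet$ rather than a naive annihilator, and it is precisely Scheithauer's fact that $D[d]^\bullet$ is a single coset of $dD$ that makes the second formula a genuine description of $\mathscr{G}_{\underline{D}}(d,\cdot)$ on all of $D[d]^\bullet$ from one value.
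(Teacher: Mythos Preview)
Your proof is correct. Both arguments rest on translation invariance of the sum, but you apply it more directly: you translate the full sum over $D$ by an element $z\in D[d]$ to get the vanishing, and by a $d$th preimage $y$ of $x-x_0$ to get the transformation law. The paper instead first decomposes $D$ along cosets of $D[d]$ via a set of representatives $\mathscr{R}$, factors the sum, and uses character orthogonality on the inner sum over $D[d]$; this produces an intermediate simplified expression $\mathscr{G}_{\underline{D}}(d,x)=\sqrt{\abs{D[d]}/\abs{D}}\sum_{y_1\in D/D[d]}\etp{dQ(y_1)+B(x,y_1)}$ for $x\in D[d]^\bullet$, and then applies the translation there. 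Your route is slightly cleaner in that it avoids choosing representatives altogether (the paper explicitly notes that one cannot in general write $D=D[d]\oplus D[d]^*$, which is why it works with $\mathscr{R}$ rather than a complement); on the other hand the paper's approach gives that intermediate formula as a byproduct, which can be handy elsewhere. Your check that the factor is independent of the choice of $y$ is also correct: the point $dB(y',z)=B(y',dz)=0$ for $z\in D[d]$ is exactly what makes $dQ(y)-dQ(y')=dQ(z)$ hold.
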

\begin{proof}
The proof of Str\"omberg \cite[Lemma 2.16]{Str13} has a small gap since it is not necessary that $D_k=D_k[d]\oplus D_k[d]^*$, so we provide a proof\footnote{Our proof actually works for arbitrary discriminant modules.} here. Let $\mathscr{R}$ be a complete set of representatives of the quotient group $D_k/D_k[d]$. Applying in \eqref{eq:GDdx} the change of variables corresponding to the bijection $\mathscr{R}\times D_k[d]\rightarrow D_k$, $(y_1,y_2)\mapsto y_1+y_2$ and noting that $d\cdot ky_1y_2=0+L_k$ we obtain
\begin{align}
\mathscr{G}_{k}(d,x)&=\frac{1}{\sqrt{\abs{D_k}\cdot\abs{D_k[d]}}}\sum_{y_1\in \mathscr{R},\,y_2\in D_k[d]}\etp{dk(y_1+y_2)^2/2+kx(y_1+y_2)}\notag\\
&=\frac{1}{\sqrt{\abs{D_k}\cdot\abs{D_k[d]}}}\sum_{y_1\in \mathscr{R}}\etp{dky_1^2/2+kxy_1}\sum_{y_2\in D_k[d]}\etp{dky_2^2/2+kxy_2}.\label{eq:inproofStrombergscrG}
\end{align}
Set $\psi_{d,x}(y_2)=\etp{dky_2^2/2+kxy_2}$, which is a linear character of the finite abelian group $D_k[d]$. It then follows from the orthogonality relations and \eqref{eq:inproofStrombergscrG} that
$$\mathscr{G}_{k}(d,x)=0 \text{ if } x\not\in D_k[d]^\bullet$$
(which is the first desired assertion) and that
\begin{equation}
\label{eq:scrGsimple}
\mathscr{G}_{k}(d,x)=\sqrt{\frac{\abs{D_k[d]}}{\abs{D_k}}}\sum_{y_1\in \mathscr{R}}\etp{dky_1^2/2+kxy_1}
\end{equation}
if $x\in D_k[d]^\bullet$. The above quantity is independent of the choice of the set $\mathscr{R}$ of representatives and hence we can assume $y_1\in D_k/D_k[d]$ in the summation range.

For the second conclusion, let $x_0$ and $x$ be as in the conditions. The existence of $y$ follows from the fact $D_k[d]^\bullet$ is a coset in $D_k/D_k[d]^*$ (\cite[Proposition 2.1]{Sch09}). Finally, for all $y\in D_k$ with $x=x_0+dy$ we have, according to \eqref{eq:scrGsimple},
\begin{align*}
\mathscr{G}_{k}(d,x)&=\sqrt{\frac{\abs{D_k[d]}}{\abs{D_k}}}\sum_{y_1\in D_k/D_k[d]}\etp{dky_1^2/2+k(x_0+dy)y_1)}\\
&=\etp{-dky^2/2-kx_0y}\cdot\sqrt{\frac{\abs{D_k[d]}}{\abs{D_k}}}\sum_{y_1\in D_k/D_k[d]}\etp{dk(y_1+y)^2/2+kx_0(y_1+y)}.
\end{align*}
Applying in the above the change of variables corresponding to the bijection $D_k/D_k[d]\rightarrow D_k/D_k[d]$, $y_1+D_k[d]\mapsto y_1+y+D_k[d]$ and then using \eqref{eq:scrGsimple} with $x$ replaced by $x_0$ we arrive at the final conclusion.
\end{proof}

Str\"omberg indeed gave closed formulas for $\mathscr{G}_{\underline{D}}(d,x)$ where $\underline{D}$ is an arbitrary finite quadratic module; cf. Lemmas 3.6--3.10 and Corollary 3.11 of \cite{Str13}.

\subsection{Weil representations and associated theta series}
\label{subsec:weil_representations_and_associated_theta_series}
For the definition of the Weil representation associated with an arbitrary finite quadratic module, see \cite[\S 5]{Str13}. We only need those associated with $\underline{\numZ}_k$ if $k\in\numZ\setminus\{0\}$ is even and $\underline{2\numZ}_k$ if $k\in\numZ\setminus\{0\}$ is odd. They are even integral lattices as explained in Example \ref{examo:latticeZand2Z}. Let $\sltZ$ be the metaplectic cover of the modular group $\slZ$ (see Appendix \ref{appendix:metaplectic}, in particular Proposition \ref{prop:widetildeG}).
\begin{deff}[Special cases of Weil representations]
\label{def:Weilk1}
If $k\in\numZ\setminus\{0\}$ is even, then we define a representation $\rho_{\underline{\numZ}_k}\colon\sltZ\rightarrow\mathrm{GL}(\numC^{\abs{k}})$ by
\begin{align}
\rho_{\underline{\numZ}_k}\widetilde{\tbtmat{1}{1}{0}{1}}\delta_x &= \etp{kx^2/2}\delta_x, \label{eq:WeilrhoT}\\
\rho_{\underline{\numZ}_k}\widetilde{\tbtmat{0}{-1}{1}{0}}\delta_x &= \sqrt{\frac{-\sgn{k}\rmi}{\abs{D_k}}}\sum_{y \in D_k}\etp{-kxy}\delta_y, \label{eq:WeilrhoS}
\end{align}
where $x\in D_k=\frac{1}{k}\numZ/\numZ$, and where $(\delta_x)_{x\in D_k}$ is the standard basis of $\numC^{\abs{k}}$. On the other hand, if $k\in\numZ\setminus\{0\}$ is odd, then we define a representation $\rho_{\underline{2\numZ}_k}\colon\sltZ\rightarrow\mathrm{GL}(\numC^{\abs{4k}})$ by the same formulas but with $x\in D_k=\frac{1}{2k}\numZ/2\numZ$, and $(\delta_x)_{x\in D_k}$ being the standard basis of $\numC^{\abs{4k}}$.
\end{deff}
The following basic fact makes the above definition reasonable.
\begin{prop}
The assignments \eqref{eq:WeilrhoT} and \eqref{eq:WeilrhoS} extend uniquely to a group homomorphism from $\sltZ$ to $\mathrm{GL}(\numC^{\abs{k}})$ if $k$ even, and to $\mathrm{GL}(\numC^{\abs{4k}})$ if $k$ odd.
\end{prop}
\begin{proof}
We define $t$ ($s$ respectively) by setting $t\delta_x$ ($s\delta_x$ respectively) to be the right-hand side of \eqref{eq:WeilrhoT} (\eqref{eq:WeilrhoS} respectively). They extend to $\numC$-linear isomorphisms. One can check immediately that $s^8=1$, $(st)^3=s^2$, and $s^4t=ts^4$. Note that $\sltZ$ has a presentation with generators $\widetilde{T}=\widetilde{\tbtmat{1}{1}{0}{1}}$, $\widetilde{S}=\widetilde{\tbtmat{0}{-1}{1}{0}}$, and with relations $\widetilde{S}^8=\widetilde{I}$, $(\widetilde{S}\widetilde{T})^3=\widetilde{S}^2$, and $\widetilde{S}^4\widetilde{T}=\widetilde{T}\widetilde{S}^4$ (cf. \cite[Lemma 5.2]{Zhu23_2}). Since the relations of $\widetilde{T}$ and $\widetilde{S}$ are the same as those of $t$ and $s$, $\rho_{\underline{\numZ}_k}\widetilde{T}=t$ and $\rho_{\underline{\numZ}_k}\widetilde{S}=s$ extend uniquely to a homomorphism. The proof for $k$ odd is the same.
\end{proof}

\begin{deff}
\label{def:Weilk2}
Let $\underline{L}_k=\underline{\numZ}_k$ if $k$ is even and $\underline{L}_k=\underline{2\numZ}_k$ if $k$ is odd. For $\gamma\in\sltZ$ and $x,y\in D_k$, we define $\rho_{\underline{L}_k}(\gamma)_{y,x}\in\numC$ to be the $\delta_y$-coefficient of $\rho_{\underline{L}_k}(\gamma)\delta_x$
\end{deff}
Str\"omberg \cite[Remark 6.8]{Str13} gave an explicit formula for $\rho_{\underline{L}}(\gamma)_{y,x}$ (with $\underline{L}$ an arbitrary even integral lattice) which extends Scheithauer's results \cite{Sch09}. Zhu \cite[Theorem 1.4]{Zhu25} provided a more explicit formula.

If we identify the dual space of $\numC^{\abs{k}}$ or $\numC^{\abs{4k}}$ with themselves using the dual basis of $(\delta_x)_{x\in D_k}$, then the \emph{dual representation} of $\rho_{\underline{L}_k}$ is $\rho_{\underline{L}_{-k}}$, namely, 
\begin{equation}
\label{eq:rhoDual}
\rho_{\underline{L}_k}^*(\gamma)_{y,x}:=\overline{\rho_{\underline{L}_k}(\gamma)_{y,x}}=\rho_{\underline{L}_{-k}}(\gamma)_{y,x},\qquad\gamma\in\sltZ,\, y,x\in D_k.
\end{equation}

To each $\underline{L}_k$ with $k>0$, we can associate a family of theta series
\begin{equation}
\label{eq:deffJacobiThetaLatticeIndex}
\vartheta_{\underline{L}_k, t}(\tau,z)=\sum_{v \in t+L_k}\etp{\tau kv^2/2+kvz}=\sum_{w\in L_k}q^{\frac{k(t+w)^2}{2}}\zeta^{k(t+w)},
\end{equation}
where $t\in D_k$, $\tau\in\uhp$, $z\in\numC$. Set $\vartheta_{\underline{L}_k}=\sum_{t \in D_k}\vartheta_{\underline{L}_k, t}\delta_t$ so that it is a vector-valued function from $\uhp\times\numC$ to $\numC^k$ if $k$ is even and to $\numC^{4k}$ if $k$ is odd.

\subsection{Jacobi forms and theta decompositions}
\label{subsec:jacobi_forms_of_lattice_index_and_theta_decompositions}
In this subsection we briefly review the theory of Jacobi forms. The standard reference is the book of Eichler and Zagier \cite{EZ85}. Our definition extends the one in \cite{EZ85} a little in the points that we allow the weight and index to be half-integral, and that the elliptic transformation laws are possibly with respect to $2\numZ$ instead of $\numZ$. An alternative approach, which includes the forms used here as special cases, is to use the theory of Jacobi forms of lattice index (cf. e.g. \cite[\S 2]{Zhu23}).

To deal with half-integral weight Jacobi forms, we need the metaplectic covers of modular groups; see Appendix \ref{appendix:metaplectic}. Let $m\in\frac{1}{2}\numZ$; we also need a variant of the Heisenberg group, denoted by $H(\numR,2m)$, whose elements are $([v,w],\xi)$ with $v,w \in \numR$ and $\xi \in S^1=\{z \in \numC \colon \abs{z}=1\}$. The composition law is given by
\begin{equation*}
([v_1,w_1],\xi_1)\cdot([v_2,w_2],\xi_2)
=([v_1+v_2,w_1+w_2],\xi_1\xi_2 \etp{m(v_1w_2-v_2w_1)}).
\end{equation*}
The semi-direct product $\glptR \ltimes H(\numR,2m)$ is the group of $(\gamma,\varepsilon,[v,w],\xi)$ with $(\gamma,\varepsilon)\in\glptR$ and $([v,w],\xi)\in H(\numR,2m)$. The composition law is thus the following:
\begin{multline*}
(\tbtmat{a_1}{b_1}{c_1}{d_1},\varepsilon_1, [v_1,w_1],\xi_1)\cdot(\tbtmat{a_2}{b_2}{c_2}{d_2},\varepsilon_2, [v_2,w_2],\xi_2)\\
=\left((\tbtmat{a_1}{b_1}{c_1}{d_1},\varepsilon_1)\cdot(\tbtmat{a_2}{b_2}{c_2}{d_2},\varepsilon_2), ([a_2'v_1+c_2'w_1,b_2'v_1+d_2'w_1],\xi_1)\cdot([v_2,w_2],\xi_2)\right),
\end{multline*}
where $\tbtmat{a_2'}{b_2'}{c_2'}{d_2'}=(a_2d_2-b_2c_2)^{-1/2}\tbtmat{a_2}{b_2}{c_2}{d_2}$.
Let $m\in\frac{1}{2}\numZ$, $k\in\frac{1}{2}\numZ$, $n\in\numgeq{Z}{1}$ and let $f$ be a function from $\uhp\times\numC$ to $\numC^n$. Define\footnote{Comparing this and \cite[Theorem 1.4]{EZ85}, one will find a factor $\zeta^m$ appears in Eichler and Zagier's definition, while just $\zeta$ in ours. This difference is caused by the different definitions of Heisenberg groups. Ours is the one valid as well for Jacobi forms of lattice index.}
\begin{multline*}
f\vert_{k,m}(\tbtmat{a}{b}{c}{d},\varepsilon,[v,w],\xi)(\tau,z)=\varepsilon^{-2k}(c'\tau+d')^{-k}\\
\cdot\xi\etp{m\left(-\frac{c}{c\tau+d}(z+\tau v+w)^2+\tau v^2+2vz+vw\right)}\cdot f\left(\frac{a\tau+b}{c\tau+d},\frac{z+\tau v+w}{c^\prime\tau+d^\prime}\right).
\end{multline*}
A direct calculation shows that $f\vert_{k,m}I=f$ and $f\vert_{k,m}(\gamma_1\gamma_2)=(f\vert_{k,m}\gamma_1)\vert_{k,m}\gamma_2$ for $\gamma_1,\gamma_2\in\glptR \ltimes H(\numR,2m)$ and $I$ being the identity element of $\glptR \ltimes H(\numR,2m)$. Set
$$f\vert_{k,m}\tbtmat{a}{b}{c}{d}=f\vert_{k,m}(\tbtmat{a}{b}{c}{d},1,[0,0],1) \text{ and } f\vert_{k,m}([v,w],\xi)=f\vert_{k,m}(\tbtmat{1}{0}{0}{1},1,[v,w],\xi).$$

Let $G$ be a finite index subgroup of $\slZ$ and $L$ be a nontrivial subgroup of $\numZ$. Set $H(L,2m)=\{([v,w],\xi)\colon v,w\in L,\, \xi=\pm1\}$. Then $H(L,2m)$ is a subgroup of $H(\numR,2m)$ and $\widetilde{G}\ltimes H(L,2m)$ is a subgroup of $\glptR \ltimes H(\numR,2m)$.
\begin{deff}[Extension of the definition in page 9 of \cite{EZ85}]
\label{def:JacobiForm}
Let $m\in\frac{1}{2}\numZ$, $k\in\frac{1}{2}\numZ$, $\mathbf{n}\in\numgeq{Z}{1}$, and let $f$ be a holomorphic function from $\uhp\times\numC$ to $\numC^{\mathbf{n}}$ (or any $\numC$-vector space of dimension $\mathbf{n}$). Let $\rho\colon\widetilde{G}\ltimes H(L,2m)\rightarrow\glnC{\mathbf{n}}$ be a group representation with a finite index kernel. We say $f$ is a \emph{weakly holomorphic Jacobi form} on group pair $(G,L)$, of weight $k$, of index $m$ and with multiplier $\rho$ if the following conditions are fulfilled:
\begin{enumerate}
    \item[(a)] $f\vert_{k,m}\gamma=\rho(\gamma)\circ f$ for each $\gamma\in\widetilde{G}\ltimes H(L,2m)$,
    \item[(b)] For each $\gamma\in\widetilde{\slZ}$ there are $n_0\in\numQ$ and $N\in\numgeq{Z}{1}$ such that $f\vert_{k,m}\gamma$ can be expanded as a normally convergent series
    \begin{equation}
    \label{eq:JFFourier}
    f\vert_{k,m}\gamma(\tau,z)= \sum_{n_0\leq n\in\frac{1}{N}\numZ,\, t\in\frac{1}{N}\numZ}c(n,t) q^n\zeta^{2mt}.\qquad (\zeta=\etp{z},\,c(n,t)\in\numC^{\mathbf{n}})
    \end{equation}
\end{enumerate}
Moreover, if $f$ satisfies additionally
\begin{equation}
\label{eq:JFFourier2}
c(n,t)\neq0\implies n\geq mt^2 \text{ in \eqref{eq:JFFourier} for each } \gamma\in\widetilde{\slZ},
\end{equation}
then $f$ is called a \emph{Jacobi form}. The set of all Jacobi forms, which forms a $\numC$-vector space, is denoted by $J_{k,m,L}(G,\rho)$. If $L=\numZ$, then set simply $J_{k,m}(G,\rho):=J_{k,m,\numZ}(G,\rho)$.
\end{deff}

\begin{rema}
(a) The space of all modular forms on group $G$, of weight $k$, and with multiplier $\rho$ is denoted by $M_k(G,\rho)$.

(b) If $\mathbf{n}=1$, we always identify $\glnC{1}$ with $\numC^\times$, the group of nonzero complex numbers, and thus identify $\rho(\gamma)\circ f$ with $\rho(\gamma)\cdot f$, the usual scalar multiplication. Since we have assumed $\ker\rho$ has finite index, $\rho(\gamma)$ must be a root of unity.
\end{rema}

\begin{examp}
\label{examp:thetaL}
Let $k$ be a nonzero integer. The Weil representation $\rho_{\underline{L}_k}$ was introduced in Definitions \ref{def:Weilk1} and \ref{def:Weilk2}. It can be extended to a representation of $\widetilde{\slZ}\ltimes H(L_k,k)$ by setting $\rho_{\underline{L}_k}([v,w],\xi)\delta_x=\xi\delta_x$ for $x\in D_k$. Then if $k>0$ we have $\vartheta_{\underline{L}_k}\in J_{\frac{1}{2},\frac{k}{2},L_k}(\slZ,\rho_{\underline{L}_k})$. (See \eqref{eq:deffJacobiThetaLatticeIndex} and the paragraph following it for the definition of $\vartheta_{\underline{L}_k}$.) A proof can be given by combining \cite[Corollary 3.34]{Boy15}, \cite[Proposition 3.36]{Boy15} and the definition of $\rho_{\underline{L}_k}$. See also \cite[Section 5]{Zhu23}. Below we shall use $\vartheta_{\underline{L}_k}$ as the base of theta decompositions.
\end{examp}
\begin{examp}
\label{examp:classicalTheta}
We need the classical Jacobi theta series
\begin{equation*}
\vartheta(\tau,z):=\rmi\cdot\sum_{r\in\numZ}\legendre{-4}{r}q^{\frac{r^2}{8}}\zeta^{\frac{r}{2}}=\sum_{n\in\frac{1}{2}+\numZ}\rme^{\uppi\rmi n^2\tau+2\uppi\rmi n\left(z+\frac{1}{2}\right)}.\qquad(\zeta=\rme^{2\uppi\rmi z})
\end{equation*}
It is known that $\vartheta\in J_{\frac{1}{2},\frac{1}{2}}(\slZ,\chi_\eta^3\ltimes\chi_H)$ where $\chi_\eta^3\ltimes\chi_H$ is the unitary character of $\widetilde{\slZ}\ltimes H(\numZ,1)$ that sends $\left(\tbtmat{a}{b}{c}{d},\varepsilon\right)$ to $\chi_\eta\left(\tbtmat{a}{b}{c}{d},\varepsilon\right)^3$ where
\begin{equation}
\label{eq:etaChar}
\chi_\eta\left(\tbtmat{a}{b}{c}{d},\varepsilon\right)=\begin{dcases}
\varepsilon\cdot\legendre{d}{\abs{c}}\etp{\frac{1}{24}\left((a+d-3)c-bd(c^2-1)\right)}   & \text{if }2 \nmid c, \\
\varepsilon\cdot\legendre{c}{d}\etp{\frac{1}{24}\left((a-2d)c-bd(c^2-1)+3d-3\right)}   & \text{if }2 \mid c,
\end{dcases}
\end{equation}
and sends $([v,w],\xi)$ to $\chi_H([v,w],\xi)=\xi\cdot(-1)^{vw+v+w}$. Note that $\chi_\eta\colon\sltZ\rightarrow S^1$ is the multiplier system of Dedekind eta function \eqref{eq:eta}. The essential part of the assertion $\vartheta\in J_{\frac{1}{2},\frac{1}{2}}(\slZ,\chi_\eta^3\ltimes\chi_H)$ is the well-known Poisson summation formula for $\vartheta\left(-\frac{1}{\tau},\frac{z}{\tau}\right)$.
\end{examp}
\begin{lemm}
\label{lemm:thetaPowerk}
Let $k$ be a positive integer. Then $\vartheta(\tau,z+\frac{1}{2})^k\in J_{\frac{k}{2},\frac{k}{2}}(\Gamma_0(2),\chi_k)$ where
\begin{align}
\chi_k\colon\widetilde{\Gamma_0(2)}\ltimes H(\numZ,k)&\rightarrow S^1\label{eq:chikFormula}\\
(\tbtmat{a}{b}{c}{d},\varepsilon, [v,w],1) &\mapsto \chi_\eta^{3k}\left(\tbtmat{a}{b}{c}{d},\varepsilon\right)\cdot(-1)^{-kv}\chi_H^k([v,w],1)\notag\\
&\cdot\rmi^{-\frac{kc}{2}}\chi_H^k\left(\left[\tfrac{c}{2},\tfrac{d-1}{2}\right],1\right).\notag
\end{align}
Consequently, $\vartheta(\tau,z+\frac{1}{2})^k\in J_{\frac{k}{2},\frac{k}{2},2\numZ}(\Gamma_0(2),\chi_k)$. (If $2\nmid k$, the lattice $\underline{2\numZ}_{k}$ is even integral but $\underline{\numZ}_{k}$ not.)
\end{lemm}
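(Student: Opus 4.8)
The strategy is to transfer the transformation law of $\vartheta$ from Example \ref{examp:classicalTheta} to $\vartheta(\tau,z+\tfrac12)$ by conjugating the $\sltZ\ltimes H(\underline{\numZ}_1)$-action with the elliptic translation $([0,\tfrac12],1)$, and then to pass to $k$-th powers. Put $g(\tau,z):=\vartheta(\tau,z+\tfrac12)$; by definition of the slash operator, $g=\vartheta\vert_{\frac12,\frac12}([0,\tfrac12],1)$, so $g^k=\vartheta(\tau,z+\tfrac12)^k$ is holomorphic on $\uhp\times\numC$ and, being a product of $k$ copies of $g$, transforms in weight $\tfrac k2$ and index $\tfrac k2$, with its multiplier and Fourier expansions determined by those of $g$. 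Hence it suffices to compute $g\vert_{\frac12,\frac12}$ on $\widetilde{\Gamma_0(2)}$ and on $H(\underline{\numZ}_k)$, which gives condition (a) of Definition \ref{def:JacobiForm} together with the multiplier, and then to check condition (b) and the extra condition \eqref{eq:JFFourier2}.

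For condition (a), fix $\widetilde\gamma=\big(\tbtmat{a}{b}{c}{d},\varepsilon\big)\in\widetilde{\Gamma_0(2)}$. A short computation with the composition law of $\glptR\ltimes H(\numR^1,B)$ gives $([0,\tfrac12],1)\cdot\widetilde\gamma=\widetilde\gamma\cdot([\tfrac c2,\tfrac d2],1)$; and since $c$ is even and $d$ odd, $([\tfrac c2,\tfrac d2],1)=([\tfrac c2,\tfrac{d-1}2],1)\cdot([0,\tfrac12],1)\cdot([0,0],\rmi^{-c/2})$, the last factor being the commutator phase $\exp(-\uppi\rmi c/4)$. Substituting $\vartheta\vert_{\frac12,\frac12}\widetilde\gamma=\chi_\eta^3(\widetilde\gamma)\,\vartheta$ and $\vartheta\vert_{\frac12,\frac12}([v,w],1)=\chi_H([v,w],1)\,\vartheta$ (valid for $[v,w]\in\numZ^2$) yields
\[
g\vert_{\frac12,\frac12}\widetilde\gamma=\chi_\eta^3(\widetilde\gamma)\,\rmi^{-c/2}\,\chi_H([\tfrac c2,\tfrac{d-1}2],1)\,g ;
\]
the analogous manipulation, moving $([0,\tfrac12],1)$ past $([v,w],1)$ with $[v,w]\in\numZ^2$ and collecting the commutator phase $(-1)^v$, gives $g\vert_{\frac12,\frac12}([v,w],1)=(-1)^v\chi_H([v,w],1)\,g$. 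Taking $k$-th powers of these identities and assembling over $\widetilde{\Gamma_0(2)}\ltimes H(\underline{\numZ}_k)$ reproduces exactly the formula \eqref{eq:chikFormula} for $\chi_k$; since $\vert_{\frac k2,\frac k2}$ is a group action and $g^k\not\equiv0$, this $\chi_k$ is then automatically a character of $\widetilde{\Gamma_0(2)}\ltimes H(\underline{\numZ}_k)$, and its kernel has finite index because all its values are roots of unity.

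For condition (b) and \eqref{eq:JFFourier2}, take an arbitrary $\widetilde\gamma\in\sltZ$. The same conjugation gives $g^k\vert_{\frac k2,\frac k2}\widetilde\gamma=\chi_\eta^{3k}(\widetilde\gamma)\big(\vartheta\vert_{\frac12,\frac12}([\tfrac c2,\tfrac d2],1)\big)^k$, and $\vartheta\vert_{\frac12,\frac12}([\tfrac c2,\tfrac d2],1)$ is just $\vartheta$ with its elliptic argument translated by $\tfrac c2\tau+\tfrac d2$ up to a phase, hence still admits a Fourier expansion of the shape \eqref{eq:JFFourier}. A short calculation shows that translating the elliptic argument by a vector $v'$ carries a Fourier coefficient at $(n,t)$ to one at $\big(n+Q(v')+B(t,v'),\,t+v'\big)$, under which $n-Q(t)$ is unchanged; as $\vartheta$ is a genuine Jacobi form (indeed $n-Q(t)\equiv0$ throughout its Fourier support), and both products and elliptic translations preserve \eqref{eq:JFFourier2}, it follows that $g^k\vert_{\frac k2,\frac k2}\widetilde\gamma$ has an expansion \eqref{eq:JFFourier} obeying \eqref{eq:JFFourier2}. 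This yields $\vartheta(\tau,z+\tfrac12)^k\in J_{\frac k2,\frac k2}(\Gamma_0(2),\chi_k)$.

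Finally, $2\numZ\subseteq\numZ$ forces $\widetilde{\Gamma_0(2)}\ltimes H(\underline{2\numZ}_k)\leq\widetilde{\Gamma_0(2)}\ltimes H(\underline{\numZ}_k)$, and $Q$ on $\underline{2\numZ}_k$ is the restriction of $Q$ on $\underline{\numZ}_k$, so the membership just proved restricts to $\vartheta(\tau,z+\tfrac12)^k\in J_{\frac k2,\underline{2\numZ}_k}(\Gamma_0(2),\chi_k)$; and $\underline{2\numZ}_k$ is even because $Q(2m)=2km^2\in\numZ$ for all $m\in\numZ$. I expect the main obstacle to be the bookkeeping in the second paragraph: one must chase the commutator phases $\exp\big(\uppi\rmi(B(v_1,w_2)-B(v_2,w_1))\big)$ and the central $S^1$-components through the metaplectic--Heisenberg composition law precisely enough to recover exactly the powers of $\rmi$ and of $-1$ in \eqref{eq:chikFormula}.
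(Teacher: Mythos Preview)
Your proof is correct and follows essentially the same approach as the paper's: both arguments reduce to conjugating the known transformation law of $\vartheta$ (from Example~\ref{examp:classicalTheta}) by the elliptic translation $([0,\tfrac12],1)$ inside the semi-direct product, tracking the commutator phases. The only organizational difference is that you work with $\vartheta$ at weight $\tfrac12$, index $\tfrac12$ first, handling the metaplectic and Heisenberg factors separately before taking $k$-th powers, whereas the paper works directly with $\vartheta^k$ at weight $\tfrac k2$, index $\tfrac k2$ and treats the full element $(\tbtmat{a}{b}{c}{d},1,[v,w],1)$ at once; the phase bookkeeping in both computations matches.
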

\begin{proof}
Since $\vartheta(\tau,z)^k\vert_{\frac{k}{2},\frac{k}{2}}([0,\frac{1}{2}],1)=\vartheta(\tau,z+\frac{1}{2})^k$ we consider the left-hand side. The condition (b) of Definition \ref{def:JacobiForm} and \eqref{eq:JFFourier2} can be proved directly by definitions and the fact\footnote{Note that $\chi_H^{k}$ is not the usual power; it maps $([v,w],\xi)$ to $\xi\cdot\chi_H([v,w],1)^k$. See \cite[Proposition 2.10]{Zhu23}.} $\vartheta^k\in J_{\frac{k}{2},\frac{k}{2}}(\slZ,\chi_\eta^{3k}\ltimes\chi_H^{k})$. It remains to verify (a) of Definition \ref{def:JacobiForm} for $f=\vartheta(\tau,z)^k\vert_{\frac{k}{2},\frac{k}{2}}([0,\frac{1}{2}],1)$. Let $(\tbtmat{a}{b}{c}{d},1,[v,w],1)\in\widetilde{\Gamma_0(2)}\ltimes H(\numZ,k)$ be arbitrary. We have, by the composition law of the group $\glptR \ltimes H(\numR,k)$,
\begin{multline}
\label{eq:thetaktransform}
\vartheta(\tau,z)^k\vert_{\frac{k}{2},\frac{k}{2}}([0,\tfrac{1}{2}],1)\vert_{\frac{k}{2},\frac{k}{2}}(\tbtmat{a}{b}{c}{d},1,[v,w],1)=\vartheta(\tau,z)^k\vert_{\frac{k}{2},\frac{k}{2}}(\tbtmat{a}{b}{c}{d},1,[v+\tfrac{c}{2},w+\tfrac{d}{2}],\etp{\tfrac{k(cw-dv)}{4}})\\
=\etp{\tfrac{k(cw-dv)}{2}}\etp{-\tfrac{kc}{8}}\vartheta(\tau,z)^k\vert_{\frac{k}{2},\frac{k}{2}}(\tbtmat{a}{b}{c}{d},1,[v,w],1)\vert_{\frac{k}{2},\frac{k}{2}}([\tfrac{c}{2},\tfrac{d-1}{2}],1)\vert_{\frac{k}{2},\frac{k}{2}}([0,\tfrac{1}{2}],1).
\end{multline}
Since $2\mid c$ we have $\etp{\tfrac{k(cw-dv)}{2}}=(-1)^{-kv}$, $\etp{-\tfrac{kc}{8}}=\rmi^{-\frac{kc}{2}}$ and
$$\vartheta(\tau,z)^k\vert_{\frac{k}{2},\frac{k}{2}}(\tbtmat{a}{b}{c}{d},1,[v,w],1)\vert_{\frac{k}{2},\frac{k}{2}}([\tfrac{c}{2},\tfrac{d-1}{2}],1)=\chi_\eta^{3k}\left(\tbtmat{a}{b}{c}{d},1\right)\cdot\chi_H^k([v,w],1)\cdot\chi_H^k\left(\left[\tfrac{c}{2},\tfrac{d-1}{2}\right],1\right)\vartheta(\tau,z)^k.$$
Inserting these into \eqref{eq:thetaktransform} we find that
\begin{equation*}
\vartheta(\tau,z)^k\vert_{\frac{k}{2},\frac{k}{2}}([0,\tfrac{1}{2}],1)\vert_{\frac{k}{2},\frac{k}{2}}(\tbtmat{a}{b}{c}{d},1,[v,w],1)=\chi_k(\tbtmat{a}{b}{c}{d},1,[v,w],1)\cdot\vartheta(\tau,z)^k\vert_{\frac{k}{2},\frac{k}{2}}([0,\tfrac{1}{2}],1)
\end{equation*}
which concludes our proof.
\end{proof}

\begin{rema}
(a) A similar reasoning shows that $q^{\frac{k}{8}}\zeta^{\frac{k}{2}}\vartheta(\tau,z+\frac{\tau}{2})^k\in J_{\frac{k}{2},\frac{k}{2}}(\Gamma_0^0(1,2),\chi'_k)$ where
\begin{equation*}
\Gamma_0^0(1,2)=\{\tbtmat{a}{b}{c}{d}\in\slZ\colon 2\mid b\},
\end{equation*}
and
\begin{align*}
\chi'_k\colon\widetilde{\Gamma_0^0(1,2)}\ltimes H(\numZ,k)&\rightarrow S^1\\
(\tbtmat{a}{b}{c}{d},\varepsilon, [v,w],1) &\mapsto \chi_\eta^{3k}\left(\tbtmat{a}{b}{c}{d},\varepsilon\right)\cdot(-1)^{kw}\chi_H^k([v,w],1)\\
&\cdot\rmi^{\frac{kb}{2}}\chi_H^k\left(\left[\tfrac{a-1}{2},\tfrac{b}{2}\right],1\right).
\end{align*}
(b) The above two Jacobi forms are related to each other by
\begin{equation}
\label{eq:twothetaS}
\vartheta(\tau,z+\tfrac{1}{2})^k\vert_{\frac{k}{2},\frac{k}{2}}\widetilde{\tbtmat{0}{-1}{1}{0}}=\etp{-\tfrac{3k}{8}}q^{\frac{k}{8}}\zeta^{\frac{k}{2}}\vartheta(\tau,z+\tfrac{\tau}{2})^k.
\end{equation}
This can be proved by expanding the definition of the left-hand side and then using the facts $\vartheta^k\vert_{\frac{k}{2},\frac{k}{2}}\tbtmat{0}{-1}{1}{0}(\tau,z+\tfrac{\tau}{2})=\chi_\eta^{3k}\tbtmat{0}{-1}{1}{0}\vartheta^k(\tau,z+\tfrac{\tau}{2})$ and $\chi_\eta\tbtmat{0}{-1}{1}{0}=\etp{-\tfrac{1}{8}}$.
\end{rema}

Now we describe the \emph{theta decompositions}. The reader may find more details and different versions in \cite[Theorem 5.1]{EZ85}, \cite[Theorem 1]{Kri96}, \cite[Proposition 7]{Wil19}, \cite[Theorem 2.5]{Zem21} or \cite[Theorem 5.2, Remark 5.3]{Zhu23}. The following version is a special case discussed in \cite[Remark 5.3]{Zhu23}.
\begin{thm}
\label{thm:thetaDecomp}
Let $0<m\in\frac{1}{2}\numZ$ and let $L_{2m}$ be the subgroup defined in Definition \ref{def:LkLksharp}. Let $k\in\frac{1}{2}\numZ$ and $G$ be a finite index subgroup of $\slZ$. Let $\chi\colon\widetilde{G}\rightarrow S^1$ be a unitary character whose kernel is of finite index. Let $\chi'\colon\widetilde{G}\ltimes H(L_{2m},2m)\rightarrow S^1$ be the character\footnote{As one can check, $\chi'$ is indeed a group character because the lattice $\underline{L}_{2m}$ is even integral.} given by $$\chi'(\gamma,\varepsilon,[v,w],\xi)=\xi\cdot\chi(\gamma,\varepsilon).$$ Then the map
\begin{align*}
J_{k,m,L_{2m}}(G,\chi')&\rightarrow M_{k-\frac{1}{2}}(G,\chi\cdot\rho_{\underline{L}_{2m}}^*\vert_{\widetilde{G}})\\
\sum_{n,t}c(n,t) q^n\zeta^{2mt} &\mapsto\sum_n\left(\sum_{t\in D_{2m}}c(n+mt^2,t)\delta_t\right)q^n
\end{align*}
is a $\numC$-linear isomorphism. (For the definitions of $\rho_{\underline{L}_{2m}}^*$ and of $\delta_t$, see \eqref{eq:rhoDual} and Definition \ref{def:Weilk1}, respectively.) Moreover, the inverse map is given by
\begin{equation*}
\sum_{t\in D_{2m}}h_t(\tau)\delta_t\mapsto\sum_{t\in D_{2m}}h_t(\tau)\vartheta_{\underline{L}_{2m},t}(\tau,z).
\end{equation*}
\end{thm}
\begin{proof}
Setting in \cite[Theorem 5.2 and Remark 5.3]{Zhu23} $\underline{L}=\underline{L}_{2m}$, $\mathcal{W}=\numC$, $\rho=\chi'$, and notice that $\underline{L}_{2m}$ is even integral.
\end{proof}
\begin{examp}
If $k$ is an even positive integer, then $\vartheta(\tau,z+\frac{1}{2})^k\in J_{\frac{k}{2},\frac{k}{2}}(\Gamma_0(2),\chi_k)$ by Lemma \ref{lemm:thetaPowerk}. Since $D_k=\frac{1}{k}\numZ/\numZ$, we have according to Theorem \ref{thm:thetaDecomp} that
\begin{equation}
\label{eq:thetaExpansionkEven}
\vartheta\left(\tau,z+\frac{1}{2}\right)^k=\sum_{t\in\frac{1}{k}\numZ/\numZ}h_t^{(k)}(\tau)\vartheta_{\underline{\numZ}_k,t}(\tau,z)
\end{equation}
where
\begin{gather}
h_t^{(k)}(\tau):=\etp{kt}\sum_{n\in\numQ}\rho_k\left(2n+k\left(t-\frac{1}{2}\right)^2,k\left(t-\frac{1}{2}\right)\right)q^n,\label{eq:htk}\\
\rho_k(m,r):=\#\{(x_1,\dots,x_k)\in\numZ^k\colon\sum x_j^2=m,\, \sum x_j=r\}.\notag
\end{gather}
The $\numC^k$-valued modular form $\sum_{t\in\frac{1}{k}\numZ/\numZ}h_t^{(k)}(\tau)\delta_t$ thus belongs to $M_{\frac{k-1}{2}}(\Gamma_0(2),\chi_k\cdot\rho_{\underline{\numZ}_k}^*)$. We fix a set of representatives $\{\frac{0}{k},\frac{1}{k},\dots,\frac{k-1}{k}\}$ of $\frac{1}{k}\numZ/\numZ$. Then the modular transformation equations can be expressed as
\begin{equation}
\label{eq:hbetakvertgamma}
h_{\beta/k}^{(k)}\vert_{\frac{k-1}{2}}\gamma=\chi_k(\gamma)\cdot\sum_{0\leq\beta'<k}\rho_{\underline{\numZ}_k}^*(\gamma)_{\frac{\beta}{k},\frac{\beta'}{k}}\cdot h_{\beta'/k}^{(k)},\quad\beta\in\{0,1,\dots,k-1\},\,\gamma\in\widetilde{\Gamma_0(2)}.
\end{equation}
On the other hand, if $k$ is odd positive, then $\vartheta(\tau,z+\frac{1}{2})^k\in J_{\frac{k}{2},\frac{k}{2},2\numZ}(\Gamma_0(2),\chi_k)$ where $\underline{2\numZ}_{k}$ is even, and hence (noting now $D_k=\frac{1}{2k}\numZ/2\numZ$)
\begin{equation}
\label{eq:thetaExpansionkOdd}
\vartheta\left(\tau,z+\frac{1}{2}\right)^k=\sum_{t\in\frac{1}{2k}\numZ/2\numZ}h_t^{(k)}(\tau)\vartheta_{\underline{2\numZ}_k,t}(\tau,z).
\end{equation}
(The coefficients $h_t^{(k)}$ are defined as above.) Theorem \ref{thm:thetaDecomp} now shows that the $\numC^{4k}$-valued modular form $\sum_{t\in\frac{1}{2k}\numZ/2\numZ}h_t^{(k)}(\tau)\delta_t\in M_{\frac{k-1}{2}}(\Gamma_0(2),\chi_k\cdot\rho_{\underline{2\numZ}_k}^*)$. We fix a set of representatives $\{\frac{0}{2k},\frac{1}{2k},\dots,\frac{4k-1}{2k}\}$ of $\frac{1}{2k}\numZ/2\numZ$; the matrix form of the above transformation equation is
\begin{equation}
\label{eq:hbetakvertgammaOdd}
h_{\beta/k}^{(k)}\vert_{\frac{k-1}{2}}\gamma=\chi_k(\gamma)\cdot\sum_{0\leq2\beta'<4k}\rho_{\underline{2\numZ}_k}^*(\gamma)_{\frac{\beta}{k},\frac{\beta'}{k}}\cdot h_{\beta'/k}^{(k)},\quad\beta\in\left\{\frac{0}{2},\frac{1}{2},\dots,\frac{4k-1}{2}\right\},\,\gamma\in\widetilde{\Gamma_0(2)}.
\end{equation}
Note that $h_{\beta/k}^{(k)}=0$ if $2\beta$ is even; hence in the above formula we can assume $\beta,\beta'\in\left\{\frac{1}{2},\frac{3}{2},\dots,\frac{4k-1}{2}\right\}$.
\end{examp}
\begin{rema}
Jiang, Rolen and Woodbury \cite[Theorem 2]{JRW22} gave a recursive formula for $h_{\beta/k}^{(k)}$ which was used by them to obtain expressions of $C\Psi_{k,\beta}$ as combinations of Dedekind eta functions and Klein forms, whereas the purpose of our treatment is the precise transformation equations with respect to a modular group as large as possible and with all multipliers explicitly presented.
\end{rema}

We conclude this section by a formula concerning coefficients of Weil representations. It is relevant to \cite[Theorem 5.4]{Bor00}.
\begin{lemm}
\label{lemm:coeffWeil}
Suppose $k\in\numgeq{Z}{1}$. Let $\gamma=\tbtmat{a}{b}{c}{d}\in\slZ$ satisfying $d\neq0$, and $k\mid bc$ if $2\mid k$; $4k\mid bc$ if $2\nmid k$. Let $x,y\in D_{k}$. If $ay-x\not\in D_{k}[c]^\bullet$, then $\rho_{\underline{L}_k}(\widetilde{\gamma})_{y,x}=0$; otherwise we have
\begin{equation}
\label{eq:rhogammayx}
\rho_{\underline{L}_k}(\widetilde{\gamma})_{y,x}=(-\rmi)^{\frac{1-\sgn d}{2}\cdot\legendre{c}{-1}}\cdot\mathfrak{g}_{k}(b,d;y)\cdot\sqrt{\frac{\abs{D_{k}[c]}}{\abs{D_{k}}}}\cdot\mathscr{G}_{k}(-cd,y-dx).
\end{equation}
\end{lemm}

\begin{proof}
Setting $\underline{L}=\underline{L}_k$ in \cite[Proposition 3.17 and Remark 3.18(c)]{Zhu25}.
\end{proof}

\section{Vector-valued modular transformations of $C\Psi_{k,\beta}$}
\label{r:sec3}

This section mainly revolves around Theorem \ref{thm:fkbetaSLZ}, which we first recall. Note that $f_{k,\beta}(\tau):=q^{\frac{k}{12}-\frac{\beta^2}{2k}}C\Psi_{k,\beta}(q)$.
As shown by Jiang, Rolen and Woodbury \cite[Theorem 1]{JRW22}, for any $k\in\numgeq{Z}{1}$ and $\beta\in\frac{k}{2}+\numZ$, $C\Psi_{k,\beta}$ is the $\zeta^\beta$-coefficient of $\left(\frac{-\vartheta(\tau,z+1/2)}{q^{1/12}\eta(\tau)}\right)^k$. By the definition we find that the $\zeta^\beta$-coefficient of $\vartheta(\tau,z+1/2)^k$ is $q^{\frac{\beta^2}{2k}}\cdot h_{\beta/k}^{(k)}(\tau)$; see \eqref{eq:htk}. It follows that
\begin{equation}
\label{eq:fkbeta}
f_{k,\beta}=q^{\frac{k}{12}-\frac{\beta^2}{2k}}\cdot C\Psi_{k,\beta}=\frac{(-1)^kh_{\beta/k}^{(k)}}{\eta^k}.
\end{equation}
No matter whether $k$ is even or odd, we have
\begin{equation}
\label{eq:htsymmetry}
h_t^{(k)}=h_{t+1}^{(k)}=h_{1-t}^{(k)}
\end{equation}
by \eqref{eq:htk}. Thus without loss of generality, we may assume $\beta\in\mathfrak{B}_k$ (see \eqref{eq:mathfrakBk}) when investigating $f_{k,\beta}$. 

\begin{thm1.1*}
Let $k\in\numgeq{Z}{1}$, $\beta\in\mathfrak{B}_k$. Then we have
\begin{align}
f_{k,\beta}\vert_{-\frac{1}{2}}\tbtmat{1}{1}{0}{1}&=\etp{\frac{k}{12}-\frac{\beta^2}{2k}}f_{k,\beta},\label{eq:fkbetaVertT}\\
f_{k,\beta}\vert_{-\frac{1}{2}}\tbtmat{0}{-1}{1}{0}&=\sum_{\beta'\in\mathfrak{B}_k}s_{\beta,\beta'}^{(k)}\cdot f_{k,\beta'},\label{eq:fkbetaVertS}
\end{align}
where
\begin{equation*}
s_{\beta,\beta'}^{(k)}=\mu_{\beta'/k}\cdot\rmi^{2\beta}\etp{\frac{2k+1}{8}}\frac{2}{\sqrt{k}}\cdot\cos\left(\frac{2\uppi\beta'(\beta+\tfrac{k}{2})}{k}\right),
\end{equation*}
$\mu_t=1/2$ if $t=0$ or $1/2$ and $\mu_t=1$ else.
\end{thm1.1*}

\begin{proof}
The $\tbtmat{1}{1}{0}{1}$ transformation equation follows directly from the definition. The proofs of the $\tbtmat{0}{-1}{1}{0}$ transformation equations for $k$ being even or odd are similar, so here we only present the odd case. We apply the operator $\vert_{\frac{k}{2},\frac{k}{2}}\tbtmat{0}{-1}{1}{0}$ on both sides of \eqref{eq:thetaExpansionkOdd} (for even $k$, on \eqref{eq:thetaExpansionkEven}). Taking into account \eqref{eq:twothetaS}, we find that
\begin{equation}
\label{eq:SactonThetaDecomp}
\etp{-\frac{3k}{8}}q^{\frac{k}{8}}\zeta^{\frac{k}{2}}\vartheta\left(\tau,z+\frac{\tau}{2}\right)^k=\sum_{t\in\frac{1}{2k}\numZ/2\numZ}h_t^{(k)}(\tau)\vert_{\frac{k-1}{2}}\tbtmat{0}{-1}{1}{0}\cdot\vartheta_{\underline{2\numZ}_k,t}(\tau,z)\vert_{\frac{1}{2},\frac{k}{2}}\tbtmat{0}{-1}{1}{0}.
\end{equation}
Since $\underline{2\numZ}_k$ is an even lattice, we have $\vartheta_{\underline{2\numZ}_k}\in J_{\frac{1}{2},\frac{k}{2},2\numZ}(\slZ,\rho_{\underline{2\numZ}_k})$ (see Example \ref{examp:thetaL}). Therefore,
\begin{equation}
\label{eq:theta2ZkunderS}
\vartheta_{\underline{2\numZ}_k,t}\vert_{\frac{1}{2},\frac{k}{2}}\tbtmat{0}{-1}{1}{0}=\sqrt{\frac{-\rmi}{4k}}\sum_{y\in\frac{1}{2k}\numZ/2\numZ}\etp{-kty}\vartheta_{\underline{2\numZ}_k,y}
\end{equation}
by \eqref{eq:WeilrhoS}. On the other hand, applying the theta decomposition (Theorem \ref{thm:thetaDecomp}) to $q^{\frac{k}{8}}\zeta^{\frac{k}{2}}\vartheta\left(\tau,z+\frac{\tau}{2}\right)^k$ and then comparing the obtained coefficients to \eqref{eq:thetaExpansionkOdd} with $z$ replaced by $z+\frac{\tau-1}{2}$ we find that
\begin{equation}
\label{eq:thetaSdecomp}
q^{\frac{k}{8}}\zeta^{\frac{k}{2}}\vartheta\left(\tau,z+\frac{\tau}{2}\right)^k=\sum_{y\in\frac{1}{2k}\numZ/2\numZ}\etp{\frac{k}{4}}\etp{-\frac{ky}{2}}h_{y-1/2}^{(k)}(\tau)\vartheta_{\underline{2\numZ}_k,y}(\tau,z).
\end{equation}
(This can also be proved using the fact $\left(\frac{-\vartheta(\tau,z+1/2)}{q^{1/12}\eta(\tau)}\right)^k=\sum_{\beta\in k/2+\numZ}C\Psi_{k,\beta}(q)\zeta^\beta$ and the change of variables $z\mapsto z+\frac{\tau-1}{2}$.) Inserting \eqref{eq:thetaSdecomp} and \eqref{eq:theta2ZkunderS} into the left-hand side and the right-hand side of \eqref{eq:SactonThetaDecomp}, respectively, we obtain
\begin{multline*}
\etp{-\frac{3k}{8}}\sum_{y\in\frac{1}{2k}\numZ/2\numZ}\etp{\frac{k}{4}}\etp{-\frac{ky}{2}}h_{y-1/2}^{(k)}(\tau)\vartheta_{\underline{2\numZ}_k,y}(\tau,z)\\
=\sqrt{\frac{-\rmi}{4k}}\sum_{y\in\frac{1}{2k}\numZ/2\numZ}\sum_{t\in\frac{1}{2k}\numZ/2\numZ}h_t^{(k)}(\tau)\vert_{\frac{k-1}{2}}\tbtmat{0}{-1}{1}{0}\etp{-kty}\vartheta_{\underline{2\numZ}_k,y}(\tau,z).
\end{multline*}
Since $\vartheta_{\underline{2\numZ}_k,y}$, $y\in\frac{1}{2k}\numZ/2\numZ$ are $\numC$-linearly independent, equating the $\vartheta_{\underline{2\numZ}_k,y}$-coefficients of both sides and replacing $y$ with $y+1/2$ we obtain
\begin{equation*}
h_{y}^{(k)}(\tau)=\frac{1}{\sqrt{4k}}\etp{\frac{3k-1}{8}}\etp{\frac{ky}{2}}\sum_{t\in\frac{1}{2k}\numZ/2\numZ}\etp{-kt\left(y+\frac{1}{2}\right)}h_t^{(k)}(\tau)\vert_{\frac{k-1}{2}}\tbtmat{0}{-1}{1}{0}.
\end{equation*}
On both sides of this identity we apply the operator $\vert_{\frac{k-1}{2}}\tbtmat{0}{-1}{1}{0}$ and note that (being careful that we are dealing with the double cover $\widetilde{\slZ}$ especially when $2\mid k$) $(\tbtmat{0}{-1}{1}{0},1)^2=(\tbtmat{-1}{0}{0}{-1},1)$. This gives
\begin{equation*}
h_{y}^{(k)}(\tau)\vert_{\frac{k-1}{2}}\tbtmat{0}{-1}{1}{0}=\frac{1}{\sqrt{4k}}\etp{\frac{k+1}{8}}\etp{\frac{ky}{2}}\sum_{t\in\frac{1}{2k}\numZ/2\numZ}\etp{-kt\left(y+\frac{1}{2}\right)}h_t^{(k)}(\tau).
\end{equation*}
It follows from this and $h_t^{(k)}=h_{t+1}^{(k)}=h_{1-t}^{(k)}$ that
\begin{multline*}
h_{y}^{(k)}(\tau)\vert_{\frac{k-1}{2}}\tbtmat{0}{-1}{1}{0}=\frac{1}{\sqrt{4k}}\etp{\frac{k+1}{8}}\etp{\frac{ky}{2}}\sum_{t\in k^{-1}\mathfrak{B}_k}h_t^{(k)}(\tau)\mu_t\\
\cdot\left(\etp{-kt\left(y+\frac{1}{2}\right)}+\etp{-k(1-t)\left(y+\frac{1}{2}\right)}+\etp{-k(t+1)\left(y+\frac{1}{2}\right)}+\etp{-k(2-t)\left(y+\frac{1}{2}\right)}\right).
\end{multline*}
Setting in the above $y=\beta/k$, $t=\beta'/k$ and then dividing the obtained identity by
\begin{equation*}
\eta^k\vert_{\frac{k}{2}}\tbtmat{0}{-1}{1}{0}=\etp{-\frac{k}{8}}\eta^k
\end{equation*}
we arrive at \eqref{eq:fkbetaVertS} as desired.
\end{proof}

\begin{coro}
\label{coro:kOddvert10k1}
Let $k$ be an odd positive integer and $\beta\in\mathfrak{B}_k$. Then
\begin{equation}
\label{eq:10k1}
f_{k,\beta}\big\vert_{-\frac{1}{2}}\tbtMat{1}{0}{k}{1}=\etp{\frac{k^2}{24}}f_{k,\beta}.
\end{equation}
\end{coro}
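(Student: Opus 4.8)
The matrix $\tbtmat{1}{0}{k}{1}$ lies in $\slZ$, so by Theorem~\ref{thm:fkbetaSLZ} the transformation $f_{k,\beta}\vert_{-\frac12}\tbtmat{1}{0}{k}{1}$ can be computed by decomposing $\tbtmat{1}{0}{k}{1}$ into the generators $T=\tbtmat{1}{1}{0}{1}$ and $S=\tbtmat{0}{-1}{1}{0}$ and applying \eqref{eq:fkbetaVertT}, \eqref{eq:fkbetaVertS} repeatedly. The cleanest route is to use the relation $\tbtmat{1}{0}{k}{1}=S^{-1}T^{k}S^{-1}$ (equivalently $S\tbtmat{1}{0}{k}{1}S=T^{-k}$ up to the center); since $k$ is odd we also have $S^{-1}=S^{3}$ but it is easier to just write $\tbtmat{1}{0}{k}{1}=S T^{-k} S^{-1}$ and track signs carefully, or alternatively use $T^{k}S T^{k}=\tbtmat{k}{k^2-1}{1}{k}\cdot$(something); the conjugation identity $S T^{-k} S^{-1}=\tbtmat{1}{0}{k}{1}$ is the simplest. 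Then $f_{k,\beta}\vert_{-\frac12}\tbtmat{1}{0}{k}{1}=\bigl((f_{k,\beta}\vert_{-\frac12}S)\vert_{-\frac12}T^{-k}\bigr)\vert_{-\frac12}S^{-1}$, and each factor is handled by Theorem~\ref{thm:fkbetaSLZ}.

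Concretely, first apply \eqref{eq:fkbetaVertS} to get $f_{k,\beta}\vert_{-\frac12}S=\sum_{\beta'}s_{\beta,\beta'}^{(k)}f_{k,\beta'}$; then apply $\vert_{-\frac12}T^{-k}$, which by \eqref{eq:fkbetaVertT} multiplies the $\beta'$-component by $\etp{-k\bigl(\frac{k}{12}-\frac{(\beta')^2}{2k}\bigr)}=\etp{-\frac{k^2}{12}+\frac{(\beta')^2}{2}}$; then apply $\vert_{-\frac12}S^{-1}$, i.e.\ expand $S^{-1}$ in the generators (for instance $S^{-1}=S\cdot S^{-2}=S\cdot(-I)$, using that $S^2=-I$ in $\slZ$ acts as a scalar of modulus $1$, or more honestly track the metaplectic sign as in the proof of Theorem~\ref{thm:fkbetaSLZ}). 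The result is a double sum $\sum_{\beta',\beta''}(\text{stuff})\,s_{\beta,\beta'}^{(k)}\,\etp{\tfrac{(\beta')^2}{2}}\,s_{\beta'',\beta'}^{(k)}\,f_{k,\beta''}$ (the $\beta'$-indices match up because $S$ and $S^{-1}$ transpose the roles of the two indices of $s^{(k)}$, up to the unitarity relation \eqref{eq:rhoDual}). One then shows the coefficient of $f_{k,\beta''}$ collapses to $\etp{\frac{k^2}{24}}\delta_{\beta,\beta''}$.

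An alternative, and I think cleaner, approach avoids the double sum entirely: work directly at the level of the theta-decomposition coefficients $h_{\beta/k}^{(k)}$ rather than $f_{k,\beta}$. From \eqref{eq:fkbeta}, $f_{k,\beta}=(-1)^k h_{\beta/k}^{(k)}/\eta^k$, and $\vartheta(\tau,z+\tfrac12)^k$ is a Jacobi form on $\Gamma_0(2)$ by Lemma~\ref{lemm:thetaPowerk} with the explicit multiplier $\chi_k$. Since $\tbtmat{1}{0}{k}{1}\in\Gamma_0(2)$ when $k$ is even but \emph{not} when $k$ is odd, this direct route needs the Weil-representation transformation for $\vartheta_{\underline{2\numZ}_k}$ under $\tbtmat{1}{0}{k}{1}$; here Lemma~\ref{lemm:coeffWeil} applies with $c=k$, $d=1$, since $bc\cdot L^\sharp\subseteq L$ is automatic ($b=0$), giving $\rho_{\underline{2\numZ}_k}(\widetilde{\tbtmat{1}{0}{k}{1}})_{y,x}=\mathfrak{g}_{\underline{2\numZ}_k}(0,1;y)\sqrt{\abs{D[k]}/\abs{D}}\,\mathscr{G}_{D}(-k,y-x)$; but with $d=1$ one has $\mathfrak{g}_{\underline{L}}(0,1;y)=1$ and the Gauss sum $\mathscr{G}$ can be evaluated, and crucially since $b=0$ there is no $\chi_k$-twist issue — indeed $\eta^k\vert_{\frac k2}\tbtmat{1}{0}{k}{1}=\etp{-\frac{k^2}{24}}\eta^k$ by the eta-multiplier formula \eqref{eq:etaChar} (with $c=k$ odd, $a=d=1$, $b=0$, giving exponent $\frac{1}{24}(2k)\cdot c\cdot\frac{k}{\gcd}\dots$, i.e.\ $\chi_\eta^{3k}\tbtmat{1}{0}{k}{1}=\etp{\frac{k}{24}\cdot(2-3)\cdot k\cdot 3}$-type expression which one computes to be $\etp{-\frac{k^2}{24}}$ after simplification). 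The factor $(-1)^k=(-1)$ and the sign from $\rho^*$ must then combine to give exactly $\etp{\frac{k^2}{24}}$.

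\textbf{Main obstacle.} The delicate point in either approach is bookkeeping of roots of unity and metaplectic signs: in the generator approach, keeping $S^{-1}$ distinct from $S$ in $\widetilde{\slZ}$ (they differ by the nontrivial lift of $-I$, which acts nontrivially on half-integral-weight forms) is exactly the subtlety flagged in the proof of Theorem~\ref{thm:fkbetaSLZ}; in the direct approach, the evaluation of $\mathfrak{g}_{\underline{2\numZ}_k}$ and $\mathscr{G}$ via Lemmas~\ref{lemm:gbdt} and \ref{lemm:StrombergscrG} for the specific matrix $\tbtmat{1}{0}{k}{1}$, together with the eta-multiplier evaluation from \eqref{eq:etaChar}. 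I expect the cleanest writeup to multiply out the two $s^{(k)}_{\cdot,\cdot}$ factors and use the trigonometric/Gauss-sum identity $\sum_{\beta'}\mu_{\beta'/k}\cos\bigl(\tfrac{2\pi\beta'(\beta+k/2)}{k}\bigr)\cos\bigl(\tfrac{2\pi\beta'(\beta''+k/2)}{k}\bigr)\etp{\tfrac{(\beta')^2}{2}}$ — which is essentially a quadratic Gauss sum over residues mod $k$ — reducing it to $\delta_{\beta,\beta''}$ times an eighth root of unity, the latter matching $\etp{k^2/24}$ after combining with the $\rmi^{2\beta}\etp{(2k+1)/8}$ prefactors and the $\etp{-k^2/12}$ from the $T^{-k}$ step. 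Verifying that all these roots of unity assemble to precisely $\etp{k^2/24}$ (and nothing more) is the one place where a genuine computation, rather than a structural argument, is unavoidable.
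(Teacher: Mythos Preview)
Your generator decomposition $\tbtmat{1}{0}{k}{1}=(-I)\,S\,T^{-k}\,S$ is exactly what the paper uses, but both of your routes miss the single observation that collapses everything: for $k$ odd, every $\beta'\in\mathfrak{B}_k$ is a half-integer, so $(\beta')^2\equiv\tfrac14\pmod{2}$ and hence $\etp{(\beta')^2/2}=\etp{1/8}$ \emph{independently of $\beta'$}. Thus your $T^{-k}$-step eigenvalue $\etp{-\tfrac{k^2}{12}+\tfrac{(\beta')^2}{2}}$ is a scalar, not a diagonal matrix with genuinely varying entries, and the two $S$-actions cancel without any Gauss sum over $\beta'$. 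The ``trigonometric/Gauss-sum identity'' you flag as the unavoidable computation is in fact trivial.

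The paper packages this even more cleanly by first multiplying through by $\eta^k$: since $k$ is odd, $\eta^k f_{k,\beta}=(-1)^k h_{\beta/k}^{(k)}$ has \emph{integer} weight $(k-1)/2$, so the metaplectic bookkeeping you worry about disappears entirely. Under $T$, the eigenvalue of $\eta^k f_{k,\beta}$ is $\etp{\tfrac{k}{8}-\tfrac{\beta^2}{2k}}$, and raising to the $(-k)$-th power gives $\etp{-\tfrac{k^2}{8}+\tfrac{\beta^2}{2}}=1$ by the same $\bmod\ 8$ observation. So in the matrix form $(\eta^k\mathbf{f}_k)\vert_{(k-1)/2}\tbtmat{1}{0}{k}{1}=M_{-I}M_S M_T^{-k}M_S\cdot\eta^k\mathbf{f}_k$, the middle factor $M_T^{-k}$ is the identity, and $M_{-I}M_S^2$ is the identity because $(-I)S^2=I$ in $\slZ$ and the weight is integral. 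The claimed root of unity $\etp{k^2/24}$ then comes solely from $\eta^{-k}\vert_{-k/2}\tbtmat{1}{0}{k}{1}$ via \eqref{eq:etaChar}. Your proposal is on the right track but over-engineered; the paper's $\eta^k$-trick is what you want.
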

\begin{proof}
Set $\mathbf{f}_k=(f_{k,\beta})_{\beta\in\mathfrak{B}_k}$, viewed as a column vector. Then Theorem \ref{thm:fkbetaSLZ} is equivalent to $(\eta^k\mathbf{f}_k)\vert_{\frac{k-1}{2}}\tbtmat{1}{1}{0}{1}=M_T\cdot\eta^k\mathbf{f}_k$ and $(\eta^k\mathbf{f}_k)\vert_{\frac{k-1}{2}}\tbtmat{0}{-1}{1}{0}=M_S\cdot\eta^k\mathbf{f}_k$ where $M_T$ and $M_S$ are certain square matrices. Moreover, let $M_{-I}$ be the matrix such that $(\eta^k\mathbf{f}_k)\vert_{\frac{k-1}{2}}\tbtmat{-1}{0}{0}{-1}=M_{-I}\cdot\eta^k\mathbf{f}_k$. We have
\begin{equation*}
\tbtMat{1}{0}{k}{1}=\tbtMat{-1}{0}{0}{-1}\tbtMat{0}{-1}{1}{0}\tbtMat{1}{1}{0}{1}^{-k}\tbtMat{0}{-1}{1}{0}.
\end{equation*}
Since the weight $\frac{k-1}{2}$ is an integer, we need not lift the above matrix identity to the double cover $\sltZ$. It follows that
\begin{equation*}
(\eta^k\mathbf{f}_k)\vert_{\frac{k-1}{2}}\tbtmat{1}{0}{k}{1}=M_{-I}M_{S}M_{T}^{-k}M_S\cdot\eta^k\mathbf{f}_k.
\end{equation*}
Since $\eta\vert_{\frac{1}{2}}\tbtmat{1}{1}{0}{1}=\etp{\tfrac{1}{24}}\eta$ and $2\nmid k$ we find that
\begin{align*}
(\eta^k\mathbf{f}_k)\vert_{\frac{k-1}{2}}\tbtmat{1}{-k}{0}{1}&=\left(\eta\vert_{\frac{1}{2}}\tbtmat{1}{-k}{0}{1}\right)^k\cdot \mathbf{f}_k\vert_{-\frac{1}{2}}\tbtmat{1}{-k}{0}{1}\\
&=\etp{-\frac{k^2}{24}}\cdot\mathop{\mathrm{diag}}\left(-k\left(\frac{k}{12}-\frac{\beta^2}{2k}\right)\right)_{\beta\in\mathfrak{B}_k}\cdot\eta^k\mathbf{f}_k\\
&=\eta^k\mathbf{f}_k,
\end{align*}
where in the second equality we have used \eqref{eq:fkbetaVertT} ($\mathop{\mathrm{diag}}$ means diagonal matrix). By this and the definition of $M_T$ we find that $M_T^{-k}$ is the identity matrix, and hence so is $M_{-I}M_{S}M_{T}^{-k}M_S$. In another words, $(\eta^k\mathbf{f}_k)\vert_{\frac{k-1}{2}}\tbtmat{1}{0}{k}{1}=\eta^k\mathbf{f}_k$. This identity, multiplied by $\eta^{-k}\vert_{-\frac{k}{2}}\tbtmat{1}{0}{k}{1}=\etp{\tfrac{k^2}{24}}\eta^{-k}$ (see \eqref{eq:etaChar}), implies the desired identity.
\end{proof}
\begin{rema}
The corresponding result in the case $2\mid k$ is
\begin{equation}
\label{eq:102k1}
f_{k,\beta}\big\vert_{-\frac{1}{2}}\tbtMat{1}{0}{2k}{1}=\etp{\frac{k^2}{12}}f_{k,\beta}.
\end{equation}
The proof is a bit more involved since one has to manipulate elements in the double cover $\sltZ$. We omit it since we will provide a detailed proof of the more general Theorem \ref{thm:fkbetaGamma0k}, which includes \eqref{eq:10k1} and \eqref{eq:102k1} as special cases.
\end{rema}

\section{Modular permutations of $C\Psi_{k,\beta}$ under $\Gamma_0(k)$}
\label{r:sec4}

In this section, we demonstrate that if $\gamma\in\Gamma_0(k)$, then the transformation $f_{k,\beta}\vert_{-1/2}\gamma$ results in a single term $f_{k,\beta'}$ up to a constant factor, and possibly with a different $\beta'$. We also explicitly determine the constant and provide a formula for $\beta'$.

To state the formula for $\beta'$, we need some auxiliary notations. For a fixed $k\in\numgeq{Z}{1}$ and $a\in\numZ$, let $\overline{a}$ denote the least nonnegative remainder of $a\bmod 2k$.
\begin{deff}
We define $\lambda^{(k)}\colon\tfrac{k}{2}+\numZ\rightarrow\mathfrak{B}_k$ by the formula
\begin{equation*}
\lambda^{(k)}(\beta)=\begin{dcases}
\overline{2\beta}/2 & \text{ if }0\leq\overline{2\beta}\leq k,\\
k-\overline{2\beta}/2 & \text{ if } k< \overline{2\beta} < 2k.
\end{dcases}
\end{equation*}
When $k$ is fixed or implicitly understood, the superscript can be omitted: $\lambda(\beta)=\lambda^{(k)}(\beta)$.
\end{deff}

The function $\lambda^{(k)}(\beta)$ is designed such that $f_{k,\beta}=f_{k,\lambda(\beta)}$ for $\beta\in\frac{k}{2}+\numZ$. Moreover, we have $\lambda^{(k)}(k\pm\beta)=\lambda^{(k)}(\beta)$. Note that equivalently $\lambda^{(k)}(\beta)$ can be defined as the unique number in $\mathfrak{B}_k\cap(\pm\beta+k\numZ)$.

Now we state the main theorem of this section.
\begin{thm}
\label{thm:fkbetaGamma0k}
Let $k\in\numgeq{Z}{1}$, $\beta\in\mathfrak{B}_k$ and $\tbtmat{a}{b}{c}{d}\in\Gamma_0(k)$. Then there exist $\beta'\in\mathfrak{B}_k$ and a root of unity $p_{\beta}=p_{\beta}\tbtmat{a}{b}{c}{d}$ such that
\begin{equation*}
f_{k,\beta}\vert_{-\frac{1}{2}}\tbtmat{a}{b}{c}{d}=p_{\beta}\cdot f_{k,\beta'}.
\end{equation*}
In this formula, $\beta'$ is given by
\begin{equation}
\label{eq:betapFormula}
\beta'=T_\beta \tbtmat{a}{b}{c}{d}:=\begin{dcases}
\lambda(a\beta) & \text{ if } 2\mid k,\,2k\mid c,\\
\lambda(a\beta-\tfrac{k}{2}) & \text{ if } 2\mid k,\,2k\nmid c,\\
\lambda(a\beta) & \text{ if } 2\nmid k,\,2\nmid a,\\
\lambda(a\beta-\tfrac{k}{2}) & \text{ if } 2\nmid k,\,2\mid a.
\end{dcases}
\end{equation}
Moreover, when $2\mid c$ then $p_{\beta}$ is defined by
\begin{equation*}
p_{\beta}\tbtmat{a}{b}{c}{d}:=\legendre{\sgn{d}kb}{\abs{d}}\cdot\etp{\frac{\abs{d}-1}{8}}\cdot\rmi^{\frac{1-\sgn{d}}{2}\legendre{c}{-1}}\cdot\etp{\frac{k(2ac-cd+2bd-2bdc^2)}{24}-\frac{bd(a\beta)^2}{2k}};
\end{equation*}
when $2\nmid c$ (in which case we must have $2\nmid k$),
\begin{equation*}
p_{\beta}\tbtmat{a}{b}{c}{d}:=\begin{dcases}
\etp{\frac{\sgn{c}k^2}{24}}\cdot p_{\beta}\tbtMat{a}{b}{c-\sgn{c}ka}{d-\sgn{c}kb} &\text{if } 2\nmid a,\\
\etp{-\frac{k}{12}+\frac{\beta^2}{2k}+\frac{\sgn{c}k^2}{24}}\cdot p_{\beta}\tbtMat{a_1}{b_1}{c_1}{d_1} &\text{if } 2\mid a,
\end{dcases}
\end{equation*}
where
\begin{align}
a_1&=a+c, & b_1&=b+d, \notag\\
c_1&=-\sgn{c}ka+(1-\sgn{c}k)c, & d_1&=-\sgn{c}kb+(1-\sgn{c}k)d.\label{eq:a1b1c1d1}
\end{align}
\end{thm}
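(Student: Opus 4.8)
The plan is to reduce everything to the transformation of the scalar components $h^{(k)}_{\beta/k}$ of the theta decomposition, whose behaviour under $\Gamma_0(2)$ is governed by the Weil representation, and then to make every constant explicit. Writing $f_{k,\beta}=(-1)^kh^{(k)}_{\beta/k}/\eta^k$ as in \eqref{eq:fkbeta} and using $\eta^k\vert_{k/2}\tbtmat{a}{b}{c}{d}=\chi_\eta\tbtmat{a}{b}{c}{d}^k\eta^k$ with $\chi_\eta$ from \eqref{eq:etaChar}, it suffices to show that $h^{(k)}_{\beta/k}\vert_{(k-1)/2}\tbtmat{a}{b}{c}{d}$ is a single constant multiple of $h^{(k)}_{\beta'/k}$ with $\beta'$ as in \eqref{eq:betapFormula}, and to identify that constant; the factor $\chi_\eta\tbtmat{a}{b}{c}{d}^{-k}$ then produces the $\etp{k(2ac-cd+2bd-2bdc^2)/24}$ part of $p_\beta$.

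First I would treat the case $2\mid c$ (automatic when $2\mid k$, and when $2\nmid k$ it forces $2k\mid c$ and $a,d$ odd). Then $\tbtmat{a}{b}{c}{d}\in\widetilde{\Gamma_0(2)}$, so the theta decomposition transformation laws \eqref{eq:hbetakvertgamma} (even $k$, lattice $\underline{\numZ}_k$) and \eqref{eq:hbetakvertgammaOdd} (odd $k$, lattice $\underline{2\numZ}_k$), together with \eqref{eq:rhoDual}, give $h^{(k)}_{\beta/k}\vert_{(k-1)/2}\tbtmat{a}{b}{c}{d}=\chi_k\tbtmat{a}{b}{c}{d}\sum_{\beta'}\overline{\rho_{\underline{L}}(\widetilde{\tbtmat{a}{b}{c}{d}})_{\beta/k,\beta'/k}}\,h^{(k)}_{\beta'/k}$. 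Since $k\mid c$ we have $bc\,L^\sharp\subseteq L$ (for odd $k$ one may first right-multiply by a suitable $\tbtmat{1}{j}{0}{1}$ with $j$ odd, harmless by \eqref{eq:fkbetaVertT}, to also secure $4k\mid bc$), so Lemma \ref{lemm:coeffWeil} applies: $\rho_{\underline{L}}(\widetilde{\tbtmat{a}{b}{c}{d}})_{\beta/k,\beta'/k}$ vanishes unless $a\beta/k-\beta'/k\in D_{\underline{L}}[c]^\bullet$, and otherwise is the product of $(-\rmi)^{\frac{1-\sgn{d}}{2}\legendre{c}{-1}}$, the generalized Gauss sum $\mathfrak{g}_{\underline{L}}(b,d;\beta/k)$ evaluated by Lemma \ref{lemm:gbdt}, and $\mathscr{G}_{D_{\underline{L}}}(-cd,\beta/k-d\beta'/k)$ evaluated by Lemma \ref{lemm:StrombergscrG}. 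Because $c$ acts on $D_{\underline{L}}$ with small image, $D_{\underline{L}}[c]^\bullet$ has at most two elements, which I compute from \eqref{eq:DdBullet} (using $Q(j/k)\equiv j^2/(2k)$ and its analogue for $\underline{2\numZ}_k$) to be the class of $-c/(2k)$ and, when $2\nmid k$, possibly its translate by $1$; in $D_{\underline{\numZ}_k}$ this is $0$ when $2k\mid c$ and the class of $(k/2)/k$ otherwise. Hence $\beta'\equiv a\beta$ or $a\beta-k/2\pmod{k}$, and folding back into $\mathfrak{B}_k$ through $\lambda^{(k)}$ gives exactly the four cases of \eqref{eq:betapFormula}; for odd $k$ the possible second surviving index is absorbed by the symmetry $h^{(k)}_t=h^{(k)}_{t+1}$ of \eqref{eq:htsymmetry}, so $h^{(k)}_{\beta/k}\vert_{(k-1)/2}\tbtmat{a}{b}{c}{d}$ is genuinely a single term. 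Assembling the $\rho_{\underline{L}}$-coefficient(s) with $\chi_k$ from \eqref{eq:chikFormula}, the factor $(-1)^k\chi_\eta\tbtmat{a}{b}{c}{d}^{-k}$, and the explicit Gauss-sum values, and simplifying with the reciprocity and periodicity of the Kronecker symbol recalled after \eqref{eq:claGaussSum}, yields the stated $p_\beta\tbtmat{a}{b}{c}{d}$ in the case $2\mid c$.

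Next comes the case $2\nmid c$, which forces $2\nmid k$ and places $\tbtmat{a}{b}{c}{d}$ outside $\Gamma_0(2)$. If $2\nmid a$, set $\gamma'=\tbtMat{1}{0}{-\sgn{c}\,k}{1}\tbtmat{a}{b}{c}{d}=\tbtMat{a}{b}{c-\sgn{c}\,ka}{d-\sgn{c}\,kb}$; this lies in $\Gamma_0(k)$ with even lower-left entry, and since $\tbtMat{1}{0}{\sgn{c}\,k}{1}=\tbtMat{1}{0}{k}{1}^{\pm1}$ acts on $f_{k,\beta}$ by $\etp{\pm k^2/24}$ by Corollary \ref{coro:kOddvert10k1}, we obtain $f_{k,\beta}\vert_{-\frac12}\tbtmat{a}{b}{c}{d}=\etp{\sgn{c}\,k^2/24}\,f_{k,\beta}\vert_{-\frac12}\gamma'$, which is covered by the previous paragraph. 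If $2\mid a$, first replace $\tbtmat{a}{b}{c}{d}$ by $\tbtMat{1}{1}{0}{1}\tbtmat{a}{b}{c}{d}$ (which acts on $f_{k,\beta}$ by $\etp{k/12-\beta^2/(2k)}$ via \eqref{eq:fkbetaVertT}) to make the upper-left entry odd, and then apply the substitution above, reaching the matrix $\tbtMat{a_1}{b_1}{c_1}{d_1}$ of \eqref{eq:a1b1c1d1} and the extra factor $\etp{-k/12+\beta^2/(2k)+\sgn{c}\,k^2/24}$. In both sub-cases $a$ is altered only by a multiple of $c$ (hence of $k$), so $\lambda^{(k)}(a\beta)$ and $\lambda^{(k)}(a\beta-k/2)$ are unchanged and $\beta'$ is as for the reduced matrix.

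I expect the main obstacle to be the root-of-unity bookkeeping in the case $2\mid c$: one has to combine the fourth root $(-\rmi)^{\cdots}$, the Kronecker symbol $\legendre{\sgn{d}\,kb}{\abs{d}}$, the phases $\etp{(1-\abs{d})/8}$ and $\etp{bd(a\beta)^2/(2k)}$ coming from the classical Gauss sum inside $\mathfrak{g}_{\underline{L}}$ and from $\mathscr{G}_{D_{\underline{L}}}$, the character $\chi_k$ (which itself packages $\chi_\eta^{3k}$, a sign, and a factor $\rmi^{-kc/2}$), and the eta multiplier $\chi_\eta\tbtmat{a}{b}{c}{d}^{-k}$ from \eqref{eq:etaChar}, and verify that all of this collapses to the single clean expression claimed; this needs repeated, careful use of quadratic reciprocity for the Kronecker symbol, and --- when $2\mid k$, so that the weight $(k-1)/2$ of $h^{(k)}_{\beta/k}$ is half-integral --- genuine care with the metaplectic cover $\widetilde{\slZ}$. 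After the reductions of the $2\nmid c$ cases one must also check that the three displayed formulas for $p_\beta$ are mutually consistent; the tables for $k=2,3,4$ in Example \ref{examp:k234} provide convenient numerical sanity checks.
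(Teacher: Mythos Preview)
Your proposal is correct and follows essentially the same approach as the paper: the paper likewise splits into the cases $2\mid c$ (handled via \eqref{eq:hbetakvertgamma}/\eqref{eq:hbetakvertgammaOdd}, Lemma~\ref{lemm:coeffWeil}, and Lemma~\ref{lemm:gbdt}, with a left $T^{-k}$-shift to secure $4k\mid bc$ in the odd-$k$ subcase) and $2\nmid c$ (reduced via Corollary~\ref{coro:kOddvert10k1} and, when $2\mid a$, an extra $T^{-1}$ exactly as you describe). Two small points to watch when you carry it out: for odd $k$ the single element of $D_{\underline{2\numZ}_k}[c]^\bullet$ is the class of $-c/(4k)$ rather than $-c/(2k)$, and in the subcase $2k\mid c$, $4k\nmid c$ the two surviving indices contribute genuinely different $\mathscr{G}$-values that must be added (the paper computes them explicitly as $\tfrac{1}{\sqrt2}(1\pm(-\rmi)^{cd/(2k)})$) rather than being literally identified by the symmetry $h_t=h_{t+1}$.
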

\begin{proof}
We divide the proof into five cases:
\begin{gather*}
\text{(i) }2\mid k,\,2k\mid c,\qquad\text{(ii) }2\mid k,\,2k\nmid c,\\
\text{(iii) }2\nmid k,\,2\mid c,\qquad\text{(iv) }2\nmid k,\,2\nmid c,\,2\nmid a,\qquad\text{(v) }2\nmid k,\,2\nmid c,\,2\mid a.
\end{gather*}

\textbf{Case (i).} Since $2\mid k$ and $\tbtmat{a}{b}{c}{d}\in\Gamma_0(k)$, we have $\tbtmat{a}{b}{c}{d}\in\Gamma_0(2)$. Thus, \eqref{eq:hbetakvertgamma} and \eqref{eq:htsymmetry} imply that
\begin{equation}
\label{eq:hbetakvertgamma2}
h_{\beta/k}^{(k)}\vert_{\frac{k-1}{2}}\widetilde{\tbtmat{a}{b}{c}{d}}=\chi_k\widetilde{\tbtmat{a}{b}{c}{d}}\cdot\sum_{\beta'\in\mathscr{B}_k}\mu_{\beta'/k}\cdot\left(\rho_{\underline{\numZ}_k}^*\widetilde{\tbtmat{a}{b}{c}{d}}_{\frac{\beta}{k},\frac{\beta'}{k}}+\rho_{\underline{\numZ}_k}^*\widetilde{\tbtmat{a}{b}{c}{d}}_{\frac{\beta}{k},\frac{k-\beta'}{k}}\right)\cdot h_{\beta'/k}^{(k)}.
\end{equation}
Since $bc\cdot\frac{1}{k}\numZ\subseteq\numZ$ we can apply Lemma \ref{lemm:coeffWeil} to $\underline{\numZ}_k$. Therefore, for $\beta,\beta'\in\numZ$ we have $\rho_{\underline{\numZ}_k}^*\widetilde{\tbtmat{a}{b}{c}{d}}_{\frac{\beta}{k},\frac{\beta'}{k}}=0$ unless $\tfrac{a\beta-\beta'}{k}\in D_{k}[c]^\bullet$, which, by expanding the definition, is equivalent to $\beta'\equiv a\beta\bmod{k}$ since $2k\mid c$. Inserting this into \eqref{eq:hbetakvertgamma2} we obtain
\begin{equation}
\label{eq:hbetakvertgamma3}
h_{\beta/k}^{(k)}\vert_{\frac{k-1}{2}}\widetilde{\tbtmat{a}{b}{c}{d}}=\chi_k\widetilde{\tbtmat{a}{b}{c}{d}}\cdot\rho_{\underline{\numZ}_k}^*\widetilde{\tbtmat{a}{b}{c}{d}}_{\frac{\beta}{k},\frac{a\beta}{k}}\cdot h_{\lambda(a\beta)/k}^{(k)}.
\end{equation}
By the above identity, \eqref{eq:fkbeta} and \eqref{eq:rhoDual} it remains to prove
\begin{multline}
\label{eq:toProveCase1}
\chi_\eta^{-k}\widetilde{\tbtMat{a}{b}{c}{d}}\chi_k\widetilde{\tbtMat{a}{b}{c}{d}}\cdot\overline{\rho_{\underline{\numZ}_k}\widetilde{\tbtMat{a}{b}{c}{d}}_{\frac{\beta}{k},\frac{a\beta}{k}}}\\
=\legendre{\sgn{d}kb}{\abs{d}}\cdot\etp{\frac{\abs{d}-1}{8}}\cdot\rmi^{\frac{1-\sgn{d}}{2}\legendre{c}{-1}}\cdot\etp{\frac{k(2ac-cd+2bd-2bdc^2)}{24}-\frac{bd(a\beta)^2}{2k}}.
\end{multline}
According to \eqref{eq:rhogammayx}, we have
\begin{equation}
\label{eq:rhoZkbab}
\rho_{\underline{\numZ}_k}\widetilde{\tbtMat{a}{b}{c}{d}}_{\frac{\beta}{k},\frac{a\beta}{k}}=(-\rmi)^{\frac{1-\sgn d}{2}\cdot\legendre{c}{-1}}\cdot\mathfrak{g}_{k}\left(b,d;\frac{\beta}{k}\right)\cdot\mathscr{G}_{k}(-cd,0)
\end{equation}
where we have used the fact $\abs{D_{k}[c]}=\abs{D_{k}}$ and $\tfrac{\beta}{k}-\tfrac{ad\beta}{k}+\numZ=0+\numZ$ in $D_{k}$. By the definition and $2k\mid c$ we have $\mathscr{G}_{k}(-cd,0)=1$. Inserting this and Lemma \ref{lemm:gbdt} into \eqref{eq:rhoZkbab} we obtain
\begin{equation}
\label{eq:eq:rhoZkbabSimple}
\rho_{\underline{\numZ}_k}\widetilde{\tbtMat{a}{b}{c}{d}}_{\frac{\beta}{k},\frac{a\beta}{k}}=(-\rmi)^{\frac{1-\sgn d}{2}\cdot\legendre{c}{-1}}\cdot\legendre{\sgn{d}\cdot kb}{\abs{d}}\etp{\frac{1-\abs{d}}{8}}\etp{\frac{bd(a\beta)^2}{2k}}.
\end{equation}
Now \eqref{eq:toProveCase1} follows from the above formula, the formula for $\chi_k$ (see Lemma \ref{lemm:thetaPowerk}) and \eqref{eq:etaChar}, which concludes the proof of case (i).

\textbf{Case (ii).} The proof is similar and will only be indicated briefly. First of all, \eqref{eq:hbetakvertgamma2} still holds. The first difference is that $\tfrac{a\beta-\beta'}{k}\in D_{k}[c]^\bullet$ is equivalent to $\beta'\equiv a\beta-\tfrac{k}{2}\bmod{k}$ since now $2k\nmid c$ but $k\mid c$. Hence, \eqref{eq:hbetakvertgamma3} becomes
\begin{equation*}
h_{\beta/k}^{(k)}\vert_{\frac{k-1}{2}}\widetilde{\tbtmat{a}{b}{c}{d}}=\chi_k\widetilde{\tbtmat{a}{b}{c}{d}}\cdot\rho_{\underline{\numZ}_k}^*\widetilde{\tbtmat{a}{b}{c}{d}}_{\frac{\beta}{k},\frac{a\beta-k/2}{k}}\cdot h_{\lambda(a\beta-k/2)/k}^{(k)}.
\end{equation*}
The rest of the proof runs as case (i) with $\mathscr{G}_{k}(-cd,0)$ replaced by $\mathscr{G}_{k}(-cd,1/2)$, which again equals $1$ for $2k\nmid c$.

\textbf{Case (iii).} This is the most difficult case and will be provided with full details. From \eqref{eq:hbetakvertgammaOdd}, \eqref{eq:htsymmetry} and the fact $h_{\beta/k}^{(k)}=0$ for $2\nmid k$, $\beta\in\numZ$ we deduce that
\begin{multline}
\label{eq:hbetakvertgamma2Odd}
h_{\beta/k}^{(k)}\vert_{\frac{k-1}{2}}\widetilde{\tbtmat{a}{b}{c}{d}}=\chi_k\widetilde{\tbtmat{a}{b}{c}{d}}\cdot\sum_{\beta'\in\mathscr{B}_k}\mu_{\beta'/k}\\
\cdot\left(\rho_{\underline{2\numZ}_k}^*\widetilde{\tbtmat{a}{b}{c}{d}}_{\frac{\beta}{k},\frac{\beta'}{k}}+\rho_{\underline{2\numZ}_k}^*\widetilde{\tbtmat{a}{b}{c}{d}}_{\frac{\beta}{k},\frac{k-\beta'}{k}}+\rho_{\underline{2\numZ}_k}^*\widetilde{\tbtmat{a}{b}{c}{d}}_{\frac{\beta}{k},\frac{k+\beta'}{k}}+\rho_{\underline{2\numZ}_k}^*\widetilde{\tbtmat{a}{b}{c}{d}}_{\frac{\beta}{k},\frac{2k-\beta'}{k}}\right)\cdot h_{\beta'/k}^{(k)}.
\end{multline}
We split the proof into two subcases: $4k\mid bc$ and $4k\nmid bc$.

First assume $4k\mid bc$; then $bc\cdot\frac{1}{2k}\numZ\subseteq2\numZ$. Hence Lemma \ref{lemm:coeffWeil} is applicable to the even lattice $\underline{2\numZ}_k$. Therefore, for $\beta,\beta'\in\tfrac{1}{2}+\numZ$ we have $\rho_{\underline{2\numZ}_k}^*\widetilde{\tbtmat{a}{b}{c}{d}}_{\frac{\beta}{k},\frac{\beta'}{k}}=0$ unless $\tfrac{a\beta-\beta'}{k}\in D_{k}[c]^\bullet$, which, by the definition (see \eqref{eq:DdBullet}), is equivalent to
$$
\etp{c\cdot\frac{k}{2}\cdot\left(\frac{y}{2k}\right)^2+k\cdot\frac{a(2\beta)-(2\beta')}{2k}\cdot\frac{y}{2k}}=1,\quad\forall y\in
\begin{dcases}
\numZ &\text{if } 4k\mid c, \\
2\numZ &\text{if } 4k\nmid c.
\end{dcases}
$$
It is then straightforward that for $\beta,\beta'\in\tfrac{1}{2}+\numZ$,
\begin{equation}
\label{eq:wheninDbullet}
\tfrac{a\beta-\beta'}{k}\in D_{k}[c]^\bullet \Longleftrightarrow
\begin{dcases}
2\beta'\equiv a\cdot2\beta \pmod{4k} &\text{if } 8k\mid c, \\
2\beta'\equiv a\cdot2\beta-2k \pmod{4k} &\text{if } 4k\mid c,\,8k\nmid c,\\
2\beta'\equiv a\cdot2\beta \pmod{2k} &\text{if } 4k\nmid c.
\end{dcases}
\end{equation}
Therefore, for all $\beta\in\mathfrak{B}_k$, there is exactly one (two resp.) $\beta'\in\left\{\frac{1}{2},\frac{3}{2},\dots,\frac{4k-1}{2}\right\}$ such that $\rho_{\underline{2\numZ}_k}^*\widetilde{\tbtmat{a}{b}{c}{d}}_{\frac{\beta}{k},\frac{\beta'}{k}}$ is possibly nonzero if $4k\mid c$ ($4k\nmid c$ resp.) and its value can be calculated via \eqref{eq:rhogammayx} and \eqref{eq:rhoDual} which we now consider.

If $8k\mid c$, then
\begin{equation}
\label{eq:rho2Zkbab}
\rho_{\underline{2\numZ}_k}\widetilde{\tbtMat{a}{b}{c}{d}}_{\frac{\beta}{k},\frac{a\beta}{k}}=(-\rmi)^{\frac{1-\sgn d}{2}\cdot\legendre{c}{-1}}\cdot\mathfrak{g}_{k}\left(b,d;\frac{\beta}{k}\right)\cdot\mathscr{G}_{k}(-cd,0)
\end{equation}
where we have used the fact $\abs{D_{k}[c]}=\abs{D_{k}}$ and $\tfrac{\beta}{k}-\tfrac{ad\beta}{k}+2\numZ=0+2\numZ$ in $D_{k}$ since $4k\mid c$. By the definition we find that $\mathscr{G}_{k}(-cd,0)=1$ for $8k\mid c$. Inserting this and the $2\nmid k$ case of Lemma \ref{lemm:gbdt} into \eqref{eq:rho2Zkbab} we find that \eqref{eq:eq:rhoZkbabSimple} still holds with the left-hand side replaced by $\rho_{\underline{2\numZ}_k}\widetilde{\tbtmat{a}{b}{c}{d}}_{\frac{\beta}{k},\frac{a\beta}{k}}$. Inserting the formula obtained into \eqref{eq:hbetakvertgamma2Odd} we deduce that
\begin{multline*}
h_{\beta/k}^{(k)}\vert_{\frac{k-1}{2}}\widetilde{\tbtmat{a}{b}{c}{d}}=\chi_k\widetilde{\tbtmat{a}{b}{c}{d}}\cdot\rho_{\underline{2\numZ}_k}^*\widetilde{\tbtmat{a}{b}{c}{d}}_{\frac{\beta}{k},\frac{a\beta}{k}}\cdot h_{\lambda(a\beta)/k}^{(k)}\\
=\chi_k\widetilde{\tbtmat{a}{b}{c}{d}}\cdot\rmi^{\frac{1-\sgn d}{2}\cdot\legendre{c}{-1}}\cdot\legendre{\sgn{d}\cdot kb}{\abs{d}}\etp{\frac{\abs{d}-1}{8}}\etp{-\frac{bd(a\beta)^2}{2k}}\cdot h_{\lambda(a\beta)/k}^{(k)}.
\end{multline*}
We multiply this identity by
\begin{equation}
\label{eq:etakTransformation}
\eta^{-k}\vert_{-\frac{k}{2}}\widetilde{\tbtmat{a}{b}{c}{d}}=\chi_\eta^{-k}\widetilde{\tbtmat{a}{b}{c}{d}}\eta^{-k}
\end{equation}
and find that
\begin{multline}
\label{eq:fkbetaVertabcdTemp}
f_{k,\beta}\vert_{-\frac{1}{2}}\widetilde{\tbtmat{a}{b}{c}{d}}=\chi_\eta^{-k}\widetilde{\tbtmat{a}{b}{c}{d}}\chi_k\widetilde{\tbtmat{a}{b}{c}{d}}\\
\cdot\rmi^{\frac{1-\sgn d}{2}\cdot\legendre{c}{-1}}\cdot\legendre{\sgn{d}\cdot kb}{\abs{d}}\etp{\frac{\abs{d}-1}{8}}\etp{-\frac{bd(a\beta)^2}{2k}}\cdot f_{k,\lambda(a\beta)}.
\end{multline}
The proof for $4k\mid c,\,8k\nmid c$ is similar; \eqref{eq:fkbetaVertabcdTemp} still holds. Now we proceed to prove \eqref{eq:fkbetaVertabcdTemp} holds as well if $4k\nmid c$. According to Lemma \ref{lemm:coeffWeil} and \eqref{eq:wheninDbullet}, for all $\beta,\beta'\in\tfrac{1}{2}+\numZ$ we have
\begin{equation}
\label{eq:rho2Zkcases}
\rho_{\underline{2\numZ}_k}\widetilde{\tbtmat{a}{b}{c}{d}}_{\frac{\beta}{k},\frac{\beta'}{k}}=(-\rmi)^{\frac{1-\sgn d}{2}\legendre{c}{-1}}\mathfrak{g}_{k}\left(b,d;\tfrac{\beta}{k}\right)\sqrt{\frac{1}{2}}\cdot
\begin{dcases}
\mathscr{G}_{k}(-cd,0) &\text{if }2\beta'\equiv a\cdot2\beta\bmod{4k},\\
\mathscr{G}_{k}(-cd,1+2\numZ) &\text{if }2\beta'\equiv a\cdot2\beta-2k\bmod{4k},\\
0&\text{otherwise}.
\end{dcases}
\end{equation}
We have
\begin{equation*}
\mathscr{G}_{k}(-cd,0)=\tfrac{1}{\sqrt{2}}(1+(-\rmi)^{\frac{cd}{2k}}),\quad\mathscr{G}_{k}(-cd,1+2\numZ)=\tfrac{1}{\sqrt{2}}(1-(-\rmi)^{\frac{cd}{2k}})
\end{equation*}
for (noting that $4k\mid bc$, $2k\mid c$, $4k\nmid c$ and hence $2\mid b$)
\begin{align*}
\mathscr{G}_{k}(-cd,j+2\numZ)&=\frac{1}{\sqrt{4k\cdot2k}}\sum_{y=0}^{4k-1}\etp{-cd\cdot\frac{k}{2}\left(\frac{y}{2k}\right)^2+k\cdot\frac{jy}{2k}}\\
&=\frac{1}{\sqrt{8}k}\sum_{y_1=0}^{1}\sum_{y_2=0}^{2k-1}\etp{-cd\frac{(2y_2+y_1)^2}{8k}+\frac{j(2y_2+y_1)}{2}}\\
&=\frac{1}{\sqrt{2}}\left(1+\etp{-\frac{cd}{8k}+\frac{j}{2}}\right).\qquad(j=0,1)
\end{align*}
Inserting these two formulas and Lemma \ref{lemm:gbdt} into \eqref{eq:rho2Zkcases}, then inserting the formula obtained into \eqref{eq:hbetakvertgamma2Odd} and multiplying it by \eqref{eq:etakTransformation}, we find that \eqref{eq:fkbetaVertabcdTemp} holds for $4k\nmid c$ as well.

Now we calculate the factor $\chi_\eta^{-k}\widetilde{\tbtmat{a}{b}{c}{d}}\chi_k\widetilde{\tbtmat{a}{b}{c}{d}}$ in \eqref{eq:fkbetaVertabcdTemp}. On account of \eqref{eq:etaChar} and Lemma \ref{lemm:thetaPowerk}, we have
\begin{align*}
\chi_\eta^{-k}\widetilde{\tbtmat{a}{b}{c}{d}}\chi_k\widetilde{\tbtmat{a}{b}{c}{d}}&=\chi_\eta^{2k}\widetilde{\tbtmat{a}{b}{c}{d}}\chi_H^k([\tfrac{c}{2},\tfrac{d-1}{2}],1)\etp{-kc/8}\\
&=\etp{\frac{k(2ac-cd+2bd-2bdc^2)}{24}}.
\end{align*}
Inserting this into \eqref{eq:fkbetaVertabcdTemp} we conclude the proof for the subcase $4k\mid bc$ of case (iii).

Then we consider the subcase $4k\nmid bc$. The key observations are (according to Proposition \ref{prop:cocycleFormula})
\begin{equation*}
\widetilde{\tbtMat{a}{b}{c}{d}}=\widetilde{\tbtMat{1}{-k}{0}{1}}\cdot\widetilde{\tbtMat{a+kc}{b+kd}{c}{d}}
\end{equation*}
in $\sltZ$ and $4k\mid(b+kd)c$. It follows from these facts, the last subcase we just proved and \eqref{eq:fkbetaVertT} that
\begin{align*}
f_{k,\beta}\Big\vert_{-\frac{1}{2}}\widetilde{\tbtMat{a}{b}{c}{d}}&=f_{k,\beta}\Big\vert_{-\frac{1}{2}}\widetilde{\tbtMat{1}{-k}{0}{1}}\Big\vert_{-\frac{1}{2}}\widetilde{\tbtMat{a+kc}{b+kd}{c}{d}}\\
&=\etp{-\frac{k^2}{12}+\frac{\beta^2}{2}}p_\beta\tbtMat{a+kc}{b+kd}{c}{d}\cdot f_{k,\lambda((a+kc)\beta)}.
\end{align*}
We have $\lambda((a+kc)\beta)=\lambda(a\beta)$ since $kc\beta\in k\numZ$. Therefore we only need to show that
\begin{equation}
\label{eq:4knmidbcToshow}
p_\beta\tbtMat{a}{b}{c}{d}=\etp{-\frac{k^2}{12}+\frac{\beta^2}{2}}p_\beta\tbtMat{a+kc}{b+kd}{c}{d}.
\end{equation}
To this end, we evaluate the quotient and find
\begin{multline*}
p_\beta\tbtMat{a}{b}{c}{d}p_\beta\tbtMat{a+kc}{b+kd}{c}{d}^{-1}\\
=\etp{\frac{bd(kc\beta)^2+kd^2(a\beta)^2+kd^2(kc\beta)^2}{2k}}\cdot\etp{-\frac{k^2(c^2+d^2-c^2d^2)}{12}}.
\end{multline*}
Since $2k\mid c$, $4k\nmid bc$ and $ad-bc=1$, we have $a\equiv b\equiv d\equiv1\bmod{2}$. Therefore
\begin{equation*}
\etp{\frac{bd(kc\beta)^2+kd^2(a\beta)^2+kd^2(kc\beta)^2}{2k}}=\etp{\frac{\beta^2}{2}},\quad\etp{-\frac{k^2(c^2+d^2-c^2d^2)}{12}}=\etp{-\frac{k^2}{12}}.
\end{equation*}
Now \eqref{eq:4knmidbcToshow} follows from the above equalities, which concludes the proof of the subcase $4k\nmid bc$, hence of case (iii).

\textbf{Case (iv).} We have (according to Proposition \ref{prop:cocycleFormula})
\begin{equation*}
\widetilde{\tbtMat{a}{b}{c}{d}}=\widetilde{\tbtMat{1}{0}{\sgn{c}k}{1}}\cdot\widetilde{\tbtMat{a}{b}{c-\sgn{c}ka}{d-\sgn{c}kb}}
\end{equation*}
and $2\mid c-\sgn{c}ka$. These two facts, Corollary \ref{coro:kOddvert10k1} and case (iii) imply that
\begin{align*}
f_{k,\beta}\Big\vert_{-\frac{1}{2}}\widetilde{\tbtMat{a}{b}{c}{d}}&=f_{k,\beta}\Big\vert_{-\frac{1}{2}}\widetilde{\tbtMat{1}{0}{\sgn{c}k}{1}}\Big\vert_{-\frac{1}{2}}\widetilde{\tbtMat{a}{b}{c-\sgn{c}ka}{d-\sgn{c}kb}}\\
&=\etp{\frac{\sgn{c}k^2}{24}}p_\beta\tbtMat{a}{b}{c-\sgn{c}ka}{d-\sgn{c}kb}\cdot f_{k,\lambda(a\beta)}.
\end{align*}

\textbf{Case (v).} We start from the identity (according to Proposition \ref{prop:cocycleFormula})
\begin{equation*}
\widetilde{\tbtMat{a}{b}{c}{d}}=\widetilde{\tbtMat{1}{-1}{0}{1}}\cdot\widetilde{\tbtMat{1}{0}{\sgn{c}k}{1}}\cdot\widetilde{\tbtMat{a_1}{b_1}{c_1}{d_1}}
\end{equation*}
where $a_1,b_1,c_1,d_1$ have been given in \eqref{eq:a1b1c1d1}. Note that $2\mid c_1$. Repeating the argument of case (iv) leads to
\begin{equation*}
f_{k,\beta}\Big\vert_{-\frac{1}{2}}\widetilde{\tbtMat{a}{b}{c}{d}}=\etp{-\frac{k}{12}+\frac{\beta^2}{2k}+\frac{\sgn{c}k^2}{24}}p_{\beta}\tbtMat{a_1}{b_1}{c_1}{d_1}\cdot f_{k,\lambda((a+c)\beta)}.
\end{equation*}
Since $c\beta-(-\tfrac{k}{2})\in k\numZ$ we can replace $f_{k,\lambda((a+c)\beta)}$ with $f_{k,\lambda(a\beta-k/2)}$ which gives the desired identity.
\end{proof}

Let $\beta,\beta'\in\mathfrak{B}_k$. We write $\beta\sim\beta'$ if there is a matrix $\tbtmat{a}{b}{c}{d}\in\Gamma_0(k)$ such that $\beta'=T_\beta \tbtmat{a}{b}{c}{d}$ where $T_\beta$ is defined by \eqref{eq:betapFormula}.

\begin{prop}
\label{prop:EquivalenceRelation}
$\beta\sim\beta'$ if and only if $(2\beta,k)=(2\beta',k)$, which indicates that $\sim$ is an equivalence relation.
\end{prop}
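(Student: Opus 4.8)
The plan is to translate the combinatorial relation $\beta\sim\beta'$ into a solvability statement for a linear congruence and then settle that statement by an elementary argument.

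\emph{The easy direction.} Suppose first that $\beta'=T_\beta\tbtmat{a}{b}{c}{d}$ for some $\tbtmat{a}{b}{c}{d}\in\Gamma_0(k)$. From $ad-bc=1$ and $k\mid c$ we get $ad\equiv1\pmod k$, so $(a,k)=1$. Reading off \eqref{eq:betapFormula}, $\beta'$ equals either $\lambda(a\beta)$ or $\lambda(a\beta-\tfrac k2)$, and in each branch the argument of $\lambda$ lies in $\tfrac k2+\numZ$ (this is exactly what the $-\tfrac k2$ shifts are there for). Since $2\lambda^{(k)}(\gamma)$ is, by definition, congruent to $\overline{2\gamma}$ or to $2k-\overline{2\gamma}$ modulo $2k$, hence to $\pm2\gamma\pmod{2k}$, reducing modulo $k$ gives $2\beta'\equiv\pm2a\beta\pmod k$ in both branches. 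Therefore $(2\beta',k)=(2a\beta,k)=(2\beta,k)$, the last equality because $(a,k)=1$.

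\emph{The converse.} I would first prove a number-theoretic lemma: if $(2\beta,k)=(2\beta',k)=:g$, then there is an integer $a$ with $(a,k)=1$ and $2a\beta\equiv2\beta'\pmod k$. Writing $k=gy$, the defining property of $g$ gives $(2\beta/g,y)=(2\beta'/g,y)=1$, so the congruence $2a\beta\equiv2\beta'\pmod k$ is equivalent to $a\equiv a_0\pmod y$ for the fixed unit $a_0\equiv(2\beta'/g)(2\beta/g)^{-1}\pmod y$; since $a_0$ is a unit mod $y$, the Chinese Remainder Theorem lets me additionally require $a\equiv1$ modulo every prime-power factor of $k$ that is coprime to $y$, producing $(a,k)=1$. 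Next, given such an $a$, I would exhibit an explicit $\gamma\in\Gamma_0(k)$ with $T_\beta\gamma=\beta'$ by steering $\gamma$ into the correct branch of \eqref{eq:betapFormula}. If $k$ is even, then $(a,k)=1$ forces $a$ odd and exactly one of $2a\beta\equiv2\beta'\pmod{2k}$ or $2a\beta\equiv2\beta'+k\pmod{2k}$ holds; in the former case I take $\gamma=\tbtmat{a}{(ad-1)/(2k)}{2k}{d}$ with $ad\equiv1\pmod{2k}$, so that $2k\mid c$ and $T_\beta\gamma=\lambda(a\beta)$, and in the latter I take $\gamma=\tbtmat{a}{(ad-1)/k}{k}{d}$ with $ad\equiv1\pmod k$, so that $k\mid c$, $2k\nmid c$ and $T_\beta\gamma=\lambda(a\beta-\tfrac k2)$. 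If $k$ is odd, I replace $a$ by $a+k$ if necessary to make $a$ odd; then $2a\beta$ and $2\beta'$ are both odd with $2a\beta\equiv2\beta'\pmod k$, which forces $2a\beta\equiv2\beta'\pmod{2k}$, and $\gamma=\tbtmat{a}{(ad-1)/k}{k}{d}$ with $ad\equiv1\pmod k$ gives $T_\beta\gamma=\lambda(a\beta)$. In each case one checks $\lambda(\cdot)=\beta'$ from the congruence $2\lambda(\cdot)\equiv\pm2\beta'\pmod{2k}$ together with the observation that two integers in $[0,k]$ that are congruent mod $2k$ up to sign must be equal.

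\emph{Conclusion and main obstacle.} Since ``$(2\beta,k)=(2\beta',k)$'' is visibly an equivalence relation on $\mathfrak{B}_k$, the equivalence of $\sim$ follows immediately; in particular transitivity of $\sim$, not apparent from the definition, becomes automatic. The step I expect to require the most care is matching the branch of \eqref{eq:betapFormula} to the matrix: one must track $c\bmod 2k$ when $k$ is even (and the parity of $a$ when $k$ is odd) and verify that the $-\tfrac k2$ occurring in half the branches is exactly compensated by the gap between the congruence modulo $k$ supplied by the lemma and the congruence modulo $2k$ that $\lambda$ actually evaluates. The lemma itself, and the ``folding'' uniqueness for $\lambda$, are routine.
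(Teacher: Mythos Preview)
Your argument is correct and follows essentially the same strategy as the paper. The easy direction is identical. For the converse, the paper organizes the case split slightly differently: rather than first producing $a$ coprime to $k$ with $2a\beta\equiv2\beta'\pmod k$ and then lifting to $2k$, it observes at the outset (via $2$-adic orders) that $(2\beta,k)=(2\beta',k)$ forces either $(2\beta,2k)=(2\beta',2k)$ or $(2\beta,2k)=(2\beta'+k,2k)$, the second alternative occurring only for even $k$; it then picks $a$ coprime to $2k$ solving the relevant congruence modulo $2k$ directly and uses the same matrices $\tbtmat{a}{b}{2k}{d}$ and $\tbtmat{a}{b}{k}{d}$ you do. Your ``work modulo $k$, then lift'' route and the parity adjustment $a\mapsto a+k$ for odd $k$ accomplish the same thing, so the two proofs differ only in bookkeeping.
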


\begin{proof}
Suppose that $(2\beta,k)=(2\beta',k)$. By analyzing the 2-adic order, one can prove that either $(2\beta,2k)=(2\beta',2k)$ or $(2\beta,2k)=(2\beta'+k,2k)$ holds. If $(2\beta,2k)=(2\beta',2k)$, then there exists an $a$ coprime to $2k$ such that $2\beta'\equiv a\cdot 2\beta\pmod{2k}$. We choose any $b,d\in\numZ$ such that $ad-2bk=1$. Noting that $a$ is odd, we have $T_\beta \tbtmat{a}{b}{2k}{d}=\lambda(a\beta)=\beta'$. If $(2\beta,2k)=(2\beta'+k,2k)$, since this case only occurs for even $k$, then there exists an $a$ coprime to $2k$ such that $2\beta'+k\equiv a\cdot 2\beta\pmod{2k}$. We choose any $b,d\in\numZ$ such that $ad-bk=1$, then $T_\beta \tbtmat{a}{b}{k}{d}=\lambda(a\beta-k/2)=\beta'$.

Conversely, suppose that $\beta'=T_\beta \tbtmat{a}{b}{c}{d}$. Whatever $\beta'=\lambda(a\beta)$ or $\beta'=\lambda(a\beta-k/2)$, we have $2\beta'\equiv \pm 2a\beta \pmod{k}$ and then $(2\beta,k)=(2\beta',k)$ since $(a,k)=1$.
\end{proof}

The equivalence classes of $\mathfrak{B}_k$ given by $\sim$ are collected in the table below for small $k$.
\begin{longtable}{llll}
\caption{Equivalence classes of $\mathfrak{B}_k$\label{table:BkPartitions}} \\
\toprule
$k$ & classes of $2\cdot\mathfrak{B}_k$ & $k$ & classes of $\mathfrak{B}_k$ \\
\midrule
\endfirsthead
$k$ & classes of $2\cdot\mathfrak{B}_k$ & $k$ & classes of $\mathfrak{B}_k$ \\
\midrule
\endhead
$1$ & $\{1\}$ & $2$ & $\{0,1\}$\\
$3$ & $\{1\}$, $\{3\}$ & $4$ & $\{0,2\}$, $\{1\}$\\
$5$ & $\{1,3\}$, $\{5\}$ & $6$ & $\{0,3\}$, $\{1,2\}$\\
$7$ & $\{1,3,5\}$, $\{7\}$ & $8$ & $\{0,4\}$, $\{1,3\}$, $\{2\}$\\
$9$ & $\{1,5,7\}$, $\{3\}$, $\{9\}$ & $10$ & $\{0,5\}$, $\{1,2,3,4\}$\\
$11$ & $\{1,3,5,7,9\}$, $\{11\}$ & $12$ & $\{0,6\}$, $\{1,5\}$, $\{2,4\}$, $\{3\}$\\
$13$ & $\{1,3,5,7,9,11\}$, $\{13\}$ & $14$ & $\{0,7\}$, $\{1,2,3,4,5,6\}$\\
\bottomrule
\end{longtable}

\begin{coro}
For $k$ even, Andrews' generating function $C\Phi_k$ (i.e. $C\Psi_{k,k/2}$) and $C\Psi_{k,0}$ can be transformed to each other by $\tbtmat{1}{0}{k}{1}$, whereas for $k$ odd, $C\Phi_k$ cannot be transformed from any other $C\Psi_{k,\beta}$ by matrices in $\Gamma_0(k)$.
\end{coro}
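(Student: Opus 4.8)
The plan is to read both assertions directly off Theorem~\ref{thm:fkbetaGamma0k} and Proposition~\ref{prop:EquivalenceRelation}, after two preliminary reductions. First, by \eqref{eq:fkbeta} and $f_{k,\beta}=f_{k,\lambda(\beta)}$ it suffices to work with the functions $f_{k,\beta}$, $\beta\in\mathfrak{B}_k$, and Andrews' $C\Phi_k=C\Psi_{k,k/2}$ corresponds to $f_{k,k/2}$, with $k/2\in\mathfrak{B}_k$ in every case. Second, since $\vartheta(\tau,z+\tfrac12)=-q^{1/8}(\zeta^{1/2}+\zeta^{-1/2})+O(q^{9/8})$, the formulas \eqref{eq:fkbeta} and \eqref{eq:htk} give $f_{k,\beta}=\binom{k}{\beta+k/2}\,q^{\,k/12-\beta^2/(2k)}\bigl(1+O(q)\bigr)$; as $\beta\mapsto\beta^2$ is injective on $\mathfrak{B}_k\subseteq[0,k/2]$, the functions $f_{k,\beta}$ with $\beta\in\mathfrak{B}_k$ have pairwise distinct leading $q$-exponents, hence are pairwise non-proportional. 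Combining this with Theorem~\ref{thm:fkbetaGamma0k}, which asserts $f_{k,\beta}\vert_{-\frac12}\gamma=p_\beta\,f_{k,T_\beta\gamma}$ with $p_\beta$ a (nonzero) root of unity, we conclude that a matrix $\gamma\in\Gamma_0(k)$ carries $f_{k,\beta}$ to a scalar multiple of $f_{k,k/2}$ precisely when $T_\beta\gamma=k/2$, i.e.\ precisely when $\beta\sim k/2$ in the sense of Proposition~\ref{prop:EquivalenceRelation}.

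For even $k$ I would take $\gamma=\tbtmat{1}{0}{k}{1}\in\Gamma_0(k)$. Since $2\mid k$ while $2k\nmid c=k$, the second clause of \eqref{eq:betapFormula} gives $T_0\tbtmat{1}{0}{k}{1}=\lambda(-\tfrac{k}{2})$, and because $\overline{-k}=k$ with $0\le k\le k$ this equals $k/2$. Hence Theorem~\ref{thm:fkbetaGamma0k} yields $f_{k,0}\vert_{-\frac12}\tbtmat{1}{0}{k}{1}=p_0\,f_{k,k/2}$. The same computation with $\beta=k/2$ and $\gamma^{-1}=\tbtmat{1}{0}{-k}{1}$ gives $T_{k/2}\tbtmat{1}{0}{-k}{1}=\lambda(0)=0$, so $f_{k,k/2}\vert_{-\frac12}\tbtmat{1}{0}{-k}{1}=p_{k/2}\,f_{k,0}$; together these establish the claimed two-way correspondence between $C\Psi_{k,0}$ and $C\Phi_k$ for even $k$.

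For odd $k$, Proposition~\ref{prop:EquivalenceRelation} says $\beta\sim k/2$ holds exactly when $(2\beta,k)=(2\cdot\tfrac{k}{2},k)=k$, i.e.\ when $k\mid 2\beta$. But for $\beta\in\mathfrak{B}_k$ one has $1\le 2\beta\le k$, so $k\mid 2\beta$ forces $2\beta=k$, i.e.\ $\beta=k/2$. Thus the $\sim$-class of $k/2$ is the singleton $\{k/2\}$, and by the first paragraph no $f_{k,\beta}$ with $\beta\in\mathfrak{B}_k\setminus\{k/2\}$ is mapped to a scalar multiple of $f_{k,k/2}$ by any matrix of $\Gamma_0(k)$. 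Since every $C\Psi_{k,\beta}$ reduces, via $f_{k,\beta}=f_{k,\lambda(\beta)}$, to some $f_{k,\beta'}$ with $\beta'\in\mathfrak{B}_k$, this is exactly the statement that for odd $k$ Andrews' $C\Phi_k$ cannot be transformed from any other $C\Psi_{k,\beta}$ by a matrix in $\Gamma_0(k)$.

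Everything above is routine once Theorem~\ref{thm:fkbetaGamma0k} and Proposition~\ref{prop:EquivalenceRelation} are available. The only subtle point — and hence the one deserving attention — is the legitimacy of reading ``transformed to $C\Phi_k$'' off the index $T_\beta\gamma$ rather than off the full transformed function; this requires the $f_{k,\beta}$, $\beta\in\mathfrak{B}_k$, to be pairwise non-proportional, which is precisely what the distinct-leading-$q$-exponent observation of the first paragraph supplies.
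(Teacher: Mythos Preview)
Your proof is correct and follows the same approach the paper implicitly takes: the corollary is stated without proof right after Proposition~\ref{prop:EquivalenceRelation}, as an immediate consequence of Theorem~\ref{thm:fkbetaGamma0k} and that proposition. Your added observation that the $f_{k,\beta}$, $\beta\in\mathfrak{B}_k$, have pairwise distinct leading $q$-exponents (hence are non-proportional) is a careful justification of a step the paper leaves tacit; the paper only records the stronger linear independence later, in Section~\ref{subsec:representations}.
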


Now we consider the quotient
$$
\frac{f_{k,\beta}(q)}{f_{k,\beta}(q^N)}=q^{(1-N)(\frac{k}{12}-\frac{\beta^2}{2k})}\frac{C\Psi_{k,\beta}(q)}{C\Psi_{k,\beta}(q^N)},
$$
which plays an important role in deriving congruences like \eqref{r:cphi2mod5a}. Its transformation equations are direct consequences of Theorem \ref{thm:fkbetaGamma0k}:

\begin{prop}
\label{prop:fqdivfqN}
Let $k,N\in\numgeq{Z}{1}$, $\beta\in\mathfrak{B}_k$. If $2\mid k$ we require that $2\nmid N$. Let $\tbtmat{a}{b}{c}{d}\in\Gamma_0(N\cdot\lcm(2,k))$. Then we have
\begin{equation*}
\left.\frac{f_{k,\beta}(\tau)}{f_{k,\beta}(N\tau)}\right\vert\tbtMat{a}{b}{c}{d}=\legendre{N}{\abs{d}}\cdot\etp{\frac{(N-1)k}{24}\left(\frac{(2a-d)c}{N}-2bd\left(1+\frac{c^2}{N}\right)\right)+\frac{(N-1)bd(a\beta)^2}{2k}}\frac{f_{k,\beta'}(\tau)}{f_{k,\beta'}(N\tau)}
\end{equation*}
where $\beta'=T_\beta \tbtmat{a}{b}{c}{d}$ (see \eqref{eq:betapFormula}).
\end{prop}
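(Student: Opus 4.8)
The plan is to derive the transformation law from two applications of Theorem~\ref{thm:fkbetaGamma0k}, one for the numerator and one for the denominator of the quotient. Write $\gamma=\tbtmat{a}{b}{c}{d}$. Since $k\mid\lcm(2,k)\mid N\lcm(2,k)$ we have $\gamma\in\Gamma_0(k)$, and $N\mid c$. Unwinding the weight-zero slash,
\[
\left.\frac{f_{k,\beta}(\tau)}{f_{k,\beta}(N\tau)}\right\vert\gamma
=\frac{f_{k,\beta}\!\left(\tfrac{a\tau+b}{c\tau+d}\right)}{f_{k,\beta}\!\left(\tfrac{Na\tau+Nb}{c\tau+d}\right)},
\qquad
\frac{Na\tau+Nb}{c\tau+d}=\frac{a(N\tau)+Nb}{(c/N)(N\tau)+d},
\]
so the denominator is governed by the matrix $\gamma_N:=\tbtmat{a}{Nb}{c/N}{d}$, which has determinant $ad-bc=1$ and lower-left entry $c/N$ divisible by $\lcm(2,k)$; hence $\gamma_N\in\Gamma_0(k)$ as well. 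Passing to the weight $-\tfrac12$ slash produces an automorphy factor $(c\tau+d)^{-1/2}$ from the numerator and $\bigl((c/N)(N\tau)+d\bigr)^{-1/2}=(c\tau+d)^{-1/2}$ from the denominator, and these cancel. Applying Theorem~\ref{thm:fkbetaGamma0k} to $\gamma$ and to $\gamma_N$ (both with the same $\beta\in\mathfrak{B}_k$) yields
\[
\left.\frac{f_{k,\beta}(\tau)}{f_{k,\beta}(N\tau)}\right\vert\gamma
=\frac{p_\beta(\gamma)}{p_\beta(\gamma_N)}\cdot\frac{f_{k,T_\beta\gamma}(\tau)}{f_{k,T_\beta\gamma_N}(N\tau)}.
\]

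Next I would verify $T_\beta\gamma=T_\beta\gamma_N$. For odd $k$ this is immediate, since $T_\beta$ in \eqref{eq:betapFormula} depends only on $a$, which is common to $\gamma$ and $\gamma_N$. For even $k$ it rests on the equivalence $2k\mid c\iff 2k\mid c/N$: writing $c/N=km$ one has $2k\mid c/N\iff2\mid m$, while $2k\mid c=Nkm\iff2\mid Nm\iff2\mid m$ because $N$ is odd. This is precisely the point at which the standing hypothesis $2\nmid N$ (for even $k$) is used. Putting $\beta':=T_\beta\gamma=T_\beta\gamma_N\in\mathfrak{B}_k$, it remains only to evaluate the scalar $p_\beta(\gamma)/p_\beta(\gamma_N)$.

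For this I would insert the explicit formula for $p_\beta$ from Theorem~\ref{thm:fkbetaGamma0k}. Because $N\lcm(2,k)\mid c$, both $c$ and $c/N$ are even, so both $p_\beta$'s are given by the ``$2\mid c$'' case. The factors $\etp{(\abs d-1)/8}$ cancel, and the factors $\rmi^{\frac{1-\sgn d}{2}\legendre{c}{-1}}$ cancel as well, since $\legendre{c}{-1}=\legendre{c/N}{-1}=\sgn c$. By complete multiplicativity of $n\mapsto\legendre{n}{\abs d}$ we have $\legendre{\sgn d\,k(Nb)}{\abs d}=\legendre{\sgn d\,kb}{\abs d}\legendre{N}{\abs d}$, and the symbols are nonzero (being factors of the root of unity $p_\beta$), so the Kronecker part of the ratio is $\legendre{N}{\abs d}$. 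The surviving quotient of exponentials is $\etp E$ with
\[
E=\frac{k}{24}\Bigl[(2ac-cd+2bd-2bdc^2)-\bigl(2a\tfrac{c}{N}-\tfrac{c}{N}d+2Nbd-2Nbd\tfrac{c^2}{N^2}\bigr)\Bigr]+\frac{(N-1)bd(a\beta)^2}{2k},
\]
and substituting $c=N\cdot(c/N)$ throughout collapses the bracket to $(N-1)\bigl(\tfrac{(2a-d)c}{N}-2bd(1+\tfrac{c^2}{N})\bigr)$, which recovers precisely the exponent stated in the proposition.

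The only step demanding genuine thought is the identity $T_\beta\gamma=T_\beta\gamma_N$ in the even-$k$ case, together with the resulting necessity of the hypothesis $2\nmid N$; the rest is the careful but mechanical bookkeeping of the Kronecker symbol and the various roots of unity and the simplification of $E$, where the cubic term $bdc^2$ needs particular attention.
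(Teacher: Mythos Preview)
Your proof is correct and follows exactly the approach of the paper: write the quotient as $f_{k,\beta}\vert_{-1/2}\gamma$ divided by $f_{k,\beta}\vert_{-1/2}\gamma_N$ with $\gamma_N=\tbtmat{a}{Nb}{c/N}{d}$, apply Theorem~\ref{thm:fkbetaGamma0k} to each, verify $T_\beta\gamma=T_\beta\gamma_N$ (using $2\nmid N$ when $2\mid k$), and simplify the ratio $p_\beta(\gamma)/p_\beta(\gamma_N)$. The paper's proof is a two-sentence sketch of precisely this; your version simply supplies the bookkeeping the paper omits.
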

\begin{proof}
We have $\left.\frac{f_{k,\beta}(\tau)}{f_{k,\beta}(N\tau)}\right\vert\tbtmat{a}{b}{c}{d}=f_{k,\beta}\vert_{-\frac{1}{2}}\tbtmat{a}{b}{c}{d}(\tau)/f_{k,\beta}\vert_{-\frac{1}{2}}\tbtmat{a}{Nb}{c/N}{d}(N\tau)$, so we apply Theorem \ref{thm:fkbetaGamma0k} twice and obtain the desired formula. The assumption $2\nmid N$ in the case $2\mid k$ ensures the two occurrences of $\beta'$ in the numerator and denominator of the right-hand side coincide.
\end{proof}

\section{Systems of $U_p$-sequences}
\label{r:sec5}

\subsection{Commutativity of $U_m$ operators and modular transformations}
In this subsection we derive some identities of the type $(U_mf)\vert\gamma=U_m(f\vert\gamma')$ in broad generalities.
\begin{deff}
Let $m\in\numgeq{Z}{1}$ and let $R$ be a complete set of representatives of $\numZ/m\numZ$. Set
\begin{equation*}
U_m^R(f):=\frac{1}{m}\sum_{x\in R}f\left(\frac{\tau+x}{m}\right).
\end{equation*}
\end{deff}
For $k\in\frac{1}{2}\numZ$ we have, by the definition, $U_m^R(f)=m^{\frac{k}{2}-1}\sum_{x\in R}f\vert_k\tbtmat{1}{x}{0}{m}$, which gives us some convenience since it translates some transformation problems into $2\times2$ matrix algebra. If $R=\{0,1,\dots,m-1\}$ then $U_m^R$ is the classical Atkin-Lehner $U_m$ operator. Other choice of $R$, say $R=\{24\cdot0,24\cdot1,\dots,24\cdot(m-1)\}$, gives us flexibility to deal with various multiplier systems, e.g., the one in Proposition \ref{prop:fqdivfqN}.

In the rest of this subsection, $m$ and $R$ are fixed as above.
\begin{lemm}
\label{lemm:sigmaBijective}
Let $\gamma=\tbtmat{a}{b}{c}{d}\in\Gamma_0(m)$. Then for any $x\in R$, there exists a unique $y\in R$ such that $(a+cx)y\equiv b+dx\bmod{m}$. Moreover, the assignment $x\mapsto y$ gives a bijection on $R$.
\end{lemm}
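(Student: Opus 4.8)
The plan is to reduce the stated congruence to one with an invertible leading coefficient and then invoke the finiteness of $R$. First I would note that since $\gamma=\tbtmat{a}{b}{c}{d}\in\Gamma_0(m)$ we have $m\mid c$, so $a+cx\equiv a\bmod m$ for every $x\in R$; hence the relation $(a+cx)y\equiv b+dx\bmod m$ is the same as $ay\equiv b+dx\bmod m$. Combining $ad-bc=1$ with $m\mid c$ gives $ad\equiv1\bmod m$, so $(a,m)=(d,m)=1$ and in particular $a$ is a unit modulo $m$. (The reason the statement is phrased with $a+cx$ rather than $a$ is that this is the shape in which the quantity appears later, when $U_m^R$ is written in terms of $2\times2$ matrices.)

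For existence and uniqueness of $y$: given $x\in R$, the residue class $a^{-1}(b+dx)\bmod m$ is well defined, and because $R$ is a complete set of representatives of $\numZ/m\numZ$ there is exactly one $y\in R$ lying in that class. This defines the map $\sigma\colon R\to R$, $x\mapsto y$.

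For bijectivity: since $R$ is finite it is enough to show $\sigma$ is injective. If $\sigma(x_1)=\sigma(x_2)=y$, then $ay\equiv b+dx_1$ and $ay\equiv b+dx_2\bmod m$, hence $d(x_1-x_2)\equiv0\bmod m$; as $(d,m)=1$ this forces $x_1\equiv x_2\bmod m$, i.e.\ $x_1=x_2$ in $R$. Thus $\sigma$ is a bijection. (Equivalently, one may exhibit the inverse directly: $\sigma^{-1}(y)$ is the representative in $R$ of the class $d^{-1}(ay-b)\bmod m$.)

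This argument is entirely elementary and I do not anticipate any real obstacle. The only point requiring slight care is that $R$ is an arbitrary complete residue system (the paper later takes, e.g., $R=\{0,24,\dots,24(m-1)\}$), so existence and uniqueness must be phrased as ``the unique element of $R$ in a prescribed residue class'' rather than by selecting least nonnegative residues.
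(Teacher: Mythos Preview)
Your argument is correct and essentially the same as the paper's: both reduce to the invertibility of $a$ (equivalently $a+cx$) and $d$ (equivalently $cy-d$) modulo $m$, which follows from $m\mid c$ and $ad-bc=1$, and then use injectivity on the finite set $R$. The only cosmetic difference is that you simplify $a+cx$ to $a$ at the outset, whereas the paper keeps the form $a+cx$ and subtracts the two congruences to obtain $(cy-d)(x_1-x_2)\equiv0$; since $cy\equiv0\bmod m$, this is the same computation.
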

\begin{proof}
The existence and uniqueness of $y\in R$ follow from the fact $(a+cx,m)=1$ since $m\mid c$ and $(a,m)=1$. Hence $x\mapsto y$ is a well-defined map from $R$ to itself. To prove the bijectivity, it suffices to prove the injectivity since $R$ is a finite set. To this end, suppose $x_1,x_2\in R$ with the same image $y$. Then $(a+cx_j)y\equiv b+dx_j\bmod{m}$ for $j=1,2$. Subtracting them gives $(cy-d)(x_1-x_2)\equiv0\bmod{m}$. Since $(cy-d,m)=1$ it follows that $x_1\equiv x_2\bmod{m}$ and hence $x_1=x_2$ for $R$ is a complete set of representatives of $\numZ/m\numZ$.
\end{proof}
The solution $y$ is denoted by $\sigma_{\gamma}^R(x)$, $\sigma_{a,b,c,d}^R(x)$ or just $\sigma(x)$ below. Moreover, set $\sigma'(x)=\sigma(x)-\sigma(0)$ provided $0\in R$.

\begin{deff}
Suppose that $0\in R$. For $\tbtmat{a}{b}{c}{d}\in\Gamma_0(m)$, set
\begin{equation*}
C_{m}^R\tbtMat{a}{b}{c}{d}=\tbtMat{a}{m^{-1}(b-a\sigma(0))}{mc}{d-c\sigma(0)}=\tbtMat{1}{0}{0}{m}\tbtMat{a}{b}{c}{d}\tbtMat{1}{\sigma(0)}{0}{m}^{-1}.
\end{equation*}
\end{deff}

The major technical fact about the ``commutativity'' between $U_m^R$ and modular transformations is the following one.
\begin{lemm}
\label{lemm:commUm}
Let $k\in\frac{1}{2}\numZ$, $\gamma=\tbtmat{a}{b}{c}{d}\in\Gamma_0(m)$. Suppose $0\in R$. Set
\begin{align*}
a_1&=a_1(x)=1+c(a+cx)\sigma'(x), &b_1&=b_1(x)=\frac{x-a(a+cx)\sigma'(x)}{m},\\
c_1&=c_1(x)=mc^2\sigma'(x), &d_1&=d_1(x)=1-ac\sigma'(x).
\end{align*}
If a function $f$ satisfies $f\vert_k\tbtmat{a_1}{b_1}{c_1}{d_1}=f$ for all $x\in R$, then
\begin{equation*}
(U_m^Rf)\vert_k\gamma=U_m^R(f\vert_k C_m^R\gamma).
\end{equation*}
\end{lemm}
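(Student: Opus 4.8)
The plan is to reduce everything to the $2\times2$ matrix slash, using the identity recorded just after the definition of $U_m^R$, namely $U_m^R(f)=m^{k/2-1}\sum_{x\in R}f\vert_k\tbtmat{1}{x}{0}{m}$. From it,
\[
(U_m^Rf)\vert_k\gamma=m^{k/2-1}\sum_{x\in R}\Bigl(f\vert_k\tbtMat{1}{x}{0}{m}\Bigr)\Big\vert_k\gamma,\qquad
U_m^R\bigl(f\vert_k C_m^R\gamma\bigr)=m^{k/2-1}\sum_{y\in R}\Bigl(f\vert_k C_m^R\gamma\Bigr)\Big\vert_k\tbtMat{1}{y}{0}{m}.
\]
Since $x\mapsto\sigma(x)$ is a bijection of $R$ by Lemma~\ref{lemm:sigmaBijective}, it suffices to match these two sums term by term under $y=\sigma(x)$, that is, to prove for each $x\in R$ that $\bigl(f\vert_k\tbtmat{1}{x}{0}{m}\bigr)\vert_k\gamma=\bigl(f\vert_k C_m^R\gamma\bigr)\vert_k\tbtmat{1}{\sigma(x)}{0}{m}$.

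The heart of the argument is a matrix factorization, valid in $\glpQ$:
\[
\tbtMat{1}{x}{0}{m}\gamma=\tbtMat{a_1}{b_1}{c_1}{d_1}\cdot C_m^R\gamma\cdot\tbtMat{1}{\sigma(x)}{0}{m}.
\]
To obtain it, first set $\gamma_x':=\tbtmat{1}{x}{0}{m}\gamma\tbtmat{1}{\sigma(x)}{0}{m}^{-1}$; multiplying out gives $\gamma_x'=\tbtmat{a+cx}{\,m^{-1}(b+dx-(a+cx)\sigma(x))\,}{mc}{d-c\sigma(x)}$, whose upper-right entry is an integer precisely because $(a+cx)\sigma(x)\equiv b+dx\pmod m$ (the defining property of $\sigma$), and whose determinant is $1$, so $\gamma_x'\in\slZ$; in particular $C_m^R\gamma=\gamma_0'$. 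I would then compute $\gamma_x'(\gamma_0')^{-1}$ entrywise and verify, using $\sigma'(x)=\sigma(x)-\sigma(0)$ together with $ad-bc=1$ and $m\mid c$, that it equals $\tbtmat{a_1}{b_1}{c_1}{d_1}$ with exactly the four entries displayed in the statement; the integrality of $b_1$ and the relation $a_1d_1-b_1c_1=1$ fall out of the same computation. Rewriting $\gamma_x'=\tbtmat{a_1}{b_1}{c_1}{d_1}\,\gamma_0'$ then yields the factorization.

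With the factorization in hand, apply $f\vert_k$ and invoke the hypothesis $f\vert_k\tbtmat{a_1}{b_1}{c_1}{d_1}=f$ to conclude $f\vert_k\bigl(\tbtmat{1}{x}{0}{m}\gamma\bigr)=f\vert_k\bigl(C_m^R\gamma\cdot\tbtmat{1}{\sigma(x)}{0}{m}\bigr)$, which is the required termwise identity. The one delicate point — and the place where care is needed — is that for half-integral $k$ the assignment $M\mapsto f\vert_kM$ on $\glpQ$ is only a projective right action, so one must check that no spurious root of unity appears when rewriting $(f\vert_kA)\vert_kB$ as $f\vert_k(AB)$ for the three matrix pairs $(A,B)$ that occur, namely $\bigl(\tbtmat{1}{x}{0}{m},\gamma\bigr)$, $\bigl(\tbtmat{a_1}{b_1}{c_1}{d_1},\;C_m^R\gamma\,\tbtmat{1}{\sigma(x)}{0}{m}\bigr)$ and $\bigl(C_m^R\gamma,\tbtmat{1}{y}{0}{m}\bigr)$. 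A short direct check handles each pair: in every case $c_B\tau+d_B$ is either a positive real multiple of $c_{AB}\tau+d_{AB}$ or differs from it only through a real translation of $\tau$, so $\arg(c_B\tau+d_B)$ and $\arg(c_{AB}\tau+d_{AB})$ do not straddle the branch cut of the principal $(-k)$-th power, the automorphy factors multiply correctly, and the cocycle is trivial; for integral $k$ there is nothing to check. Everything else is routine matrix bookkeeping once the congruence defining $\sigma$ is used.
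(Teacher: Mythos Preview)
Your proof is correct and follows essentially the same route as the paper: the identical matrix factorization $\tbtmat{1}{x}{0}{m}\gamma=\tbtmat{a_1}{b_1}{c_1}{d_1}\cdot C_m^R\gamma\cdot\tbtmat{1}{\sigma(x)}{0}{m}$ combined with the bijectivity of $\sigma$. The only presentational difference is that the paper packages the half-integral cocycle check as a single identity in the metaplectic cover $\sltR$ (stated as a ``straightforward calculation''), whereas you verify the three relevant cocycles directly; these are equivalent verifications.
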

(To see $b_1\in\numZ$, one expands the definition of $\sigma(x)$ and $\sigma(0)$ and uses the fact $ad-bc=1$.)
\begin{proof}
A straightforward calculation, taking into account Proposition \ref{prop:cocycleFormula}, shows that in $\sltR$
\begin{equation*}
\widetilde{\tbtMat{a_1}{b_1}{c_1}{d_1}}\cdot\widetilde{C_{m}^R\tbtMat{a}{b}{c}{d}}=\widetilde{\tbtMat{a+cx}{r(x)}{mc}{d-c\sigma(x)}}
\end{equation*}
where $x\in R$ and $r(x)=\tfrac{(b+dx)-(a+cx)\sigma(x)}{m}$. Since $f\vert_k\tbtmat{a_1}{b_1}{c_1}{d_1}=f$ for all $x\in R$, it follows that
$$
f\Big\vert_k\tbtMat{a+cx}{r(x)}{mc}{d-c\sigma(x)}=f\Big\vert_k C_m^R\tbtMat{a}{b}{c}{d}
$$
for all $x\in R$. Therefore,
\begin{align*}
(U_m^Rf)\vert_k\gamma&=m^{\frac{k}{2}-1}\sum_{x\in R}f\vert_k\widetilde{\tbtmat{1}{x}{0}{m}}\widetilde{\tbtmat{a}{b}{c}{d}}=m^{\frac{k}{2}-1}\sum_{x\in R}f\vert_k\widetilde{\tbtmat{a+cx}{r(x)}{mc}{d-c\sigma(x)}}\widetilde{\tbtmat{1}{\sigma(x)}{0}{m}}\\
&=m^{\frac{k}{2}-1}\sum_{x\in R}(f\vert_k C_m^R\gamma)\vert_k\tbtmat{1}{\sigma(x)}{0}{m}=U_m^R(f\vert_k C_m^R\gamma).
\end{align*}
In the last step we have used the fact $x\mapsto \sigma(x)$ is a bijection; see Lemma \ref{lemm:sigmaBijective}.
\end{proof}
\begin{examp}
Set
\begin{equation*}
G=\{\tbtmat{A}{B}{C}{D}\in\slZ\colon A\equiv D\equiv1\bmod{24\abs{c}},\,C\equiv0\bmod{24mc^2},B\equiv0\bmod{24}\}.
\end{equation*}
Then $G$ is a subgroup of $\slZ$ and is contained in $\Gamma_0(24mc^2)$. Suppose that $0\in R\subseteq24\numZ$ and $m$ is coprime to $6$. If $f$ is a modular function on $G$, that is, $f\left(\frac{A\tau+B}{C\tau+D}\right)=f(\tau)$ for any $\tbtmat{A}{B}{C}{D}\in G$, then Lemma \ref{lemm:commUm} is applicable to $f$ (with $k=0$) since $\tbtmat{a_1}{b_1}{c_1}{d_1}\in G$. Moreover, repeated applications of the lemma give
\begin{equation*}
(U_m^Rf)\vert(C_m^R)^r\gamma=U_m^R(f\vert(C_m^R)^{r+1}\gamma),\quad r\in\numgeq{Z}{0}.
\end{equation*}
\end{examp}

Besides, we will meet weakly holomorphic modular forms of weight $k=-1/2$ to which Lemma \ref{lemm:commUm} is applicable in Sections \ref{r:sec6}. Thus, Lemma \ref{lemm:commUm} is a very general tool.

It may happen that $f\vert_k(C_m^R)^{r}\gamma=f$ for sufficiently large $r$. The following lemma concerns the case $r=2$.
\begin{lemm}
\label{lemm:CmR2}
Suppose $0\in R$. Let $\tbtmat{a}{b}{c}{d}\in\Gamma_0(m)$. Let $y,z$ be the unique elements in $R$ satisfying
\begin{equation*}
ay\equiv b\bmod{m},\qquad az\equiv\frac{b-ay}{m}\bmod{m}.
\end{equation*}
Then in the metaplectic group $\sltZ$ we have $\widetilde{(C_m^R)^2\tbtmat{a}{b}{c}{d}}=\widetilde{\tbtmat{a'}{b'}{c'}{d'}}\cdot\widetilde{\tbtmat{a}{b}{c}{d}}$ where
\begin{align*}
a'&=1+\frac{(m^2-1)bc+ac(y+mz)}{m^2}, &b'&=-\frac{(m^2-1)ab+a^2(y+mz)}{m^2},\\
c'&=(m^2-1)cd+c^2(y+mz), &d'&=1-(m^2-1)bc-ac(y+mz).
\end{align*}
Moreover, if $m$ is coprime to $6$ and $R\subseteq24\numZ$, then $((C_m^R)^2\tbtmat{a}{b}{c}{d})\cdot\tbtmat{a}{b}{c}{d}^{-1}$ is contained in the group
\begin{equation*}
\Gamma_1^*(24\abs{c}):=\{\tbtmat{A}{B}{C}{D}\in\slZ\colon A\equiv D\equiv1\bmod{24\abs{c}},\,C\equiv0\bmod{24\abs{c}},B\equiv0\bmod{24}\}.
\end{equation*}
\end{lemm}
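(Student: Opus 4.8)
The plan is to express $(C_m^R)^2\gamma$ as an explicit product of three matrices, then read off $\tbtmat{a'}{b'}{c'}{d'}$ by a direct $2\times 2$ multiplication, and finally dispatch the metaplectic sign and the congruence conditions as two short, essentially elementary, verifications. First I would unwind the definition of $C_m^R$: by Lemma~\ref{lemm:sigmaBijective} the element $\sigma^R_\gamma(0)$ is the unique $y\in R$ with $ay\equiv b\bmod m$, so $C_m^R\gamma=\tbtmat{1}{0}{0}{m}\tbtmat{a}{b}{c}{d}\tbtmat{1}{y}{0}{m}^{-1}=\tbtmat{a}{(b-ay)/m}{mc}{d-cy}$, which again lies in $\Gamma_0(m)$; applying $C_m^R$ once more brings in $z=\sigma^R_{C_m^R\gamma}(0)$, the unique $z\in R$ with $az\equiv(b-ay)/m\bmod m$, so $y$ and $z$ are exactly the elements named in the statement. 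Multiplying the two defining relations and using $\tbtmat{1}{0}{0}{m}\tbtmat{1}{0}{0}{m}=\tbtmat{1}{0}{0}{m^2}$ and $\tbtmat{1}{y}{0}{m}^{-1}\tbtmat{1}{z}{0}{m}^{-1}=\tbtmat{1}{y+mz}{0}{m^2}^{-1}$ yields
\begin{equation*}
(C_m^R)^2\gamma=\tbtmat{1}{0}{0}{m^2}\,\gamma\,\tbtmat{1}{y+mz}{0}{m^2}^{-1}=\tbtmat{a}{(b-a(y+mz))/m^2}{m^2c}{d-c(y+mz)},
\end{equation*}
where integrality of the upper-right entry is forced by the two congruences defining $y$ and $z$.

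Writing $w=y+mz$, the next step is to verify by a direct multiplication, using only $ad-bc=1$, that $\tbtmat{a'}{b'}{c'}{d'}\,\tbtmat{a}{b}{c}{d}$ equals the matrix displayed above with $a',b',c',d'$ as in the statement; equivalently one computes $\bigl((C_m^R)^2\gamma\bigr)\tbtmat{a}{b}{c}{d}^{-1}$ and simplifies, which also shows $\tbtmat{a'}{b'}{c'}{d'}\in\slZ$, being a product of integral matrices of determinant $1$. It then remains to lift this matrix identity to $\sltZ$, i.e.\ to check that the cocycle $\delta$ occurring in the composition law of $\glptR$, evaluated at the pair $\bigl(\tbtmat{a'}{b'}{c'}{d'},\gamma\bigr)$, equals $1$. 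From the explicit formula for that composition law one sees that $\delta$ depends only on the bottom rows of $\gamma$ and of $(C_m^R)^2\gamma$, namely $(c,d)$ and $(m^2c,\,d-cw)$; moreover $\delta^2=1$ and $\delta$ is continuous and $\{\pm1\}$-valued on the connected domain $\uhp$, hence constant, so it is enough to evaluate it in the limit $\tau\to i\infty$, where the factors $c\tau+d$ and $m^2c\tau+(d-cw)$ have matching arguments and $(m^2c\tau+d-cw)/(c\tau+d)\to m^2>0$, giving $\delta=1$. (When $c=0$, $\gamma$ is $\pm$ a translation and $\tbtmat{a'}{b'}{c'}{d'}$ collapses to a translation, so the claim is immediate because left- and right-multiplication by $\tbtmat{1}{*}{0}{1}$ carries no cocycle.)

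Finally, for the ``moreover'' clause, assume $(m,6)=1$ and $R\subseteq 24\numZ$. Then $m^2\equiv 1\bmod{24}$, and $24\mid w$ because $24\mid y$ and $24\mid z$; in addition, the two congruences from the first step give $aw\equiv b\bmod{m^2}$, hence $(m^2-1)b+aw\equiv 0\bmod{m^2}$. Together with $(m^2-1)b+aw\equiv 0\bmod{24}$ and $(24,m^2)=1$, this forces $(m^2-1)b+aw=24m^2j$ for some $j\in\numZ$; substituting into the formulas for $a',b',c',d'$ then gives $a'-1=24cj$, $d'-1=-24m^2cj$, $b'=-24aj$, and $c'=c\bigl((m^2-1)d+cw\bigr)$ with $(m^2-1)d+cw\equiv 0\bmod{24}$. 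Hence $24\abs{c}$ divides $a'-1$, $d'-1$ and $c'$ while $24\mid b'$, i.e.\ $\tbtmat{a'}{b'}{c'}{d'}\in\Gamma_1^*(24\abs{c})$. The one genuinely delicate point is the cocycle computation --- pinning down the metaplectic sign correctly; everything else is bookkeeping with the definition of $C_m^R$ together with elementary congruences.
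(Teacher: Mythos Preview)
Your proof is correct and is precisely the ``straightforward verification'' the paper alludes to but omits: you unwind $C_m^R$ twice via Lemma~\ref{lemm:sigmaBijective}, multiply out $\bigl((C_m^R)^2\gamma\bigr)\gamma^{-1}$ to recover $a',b',c',d'$, and then check the congruences using $m^2\equiv1\pmod{24}$ and $24\mid w$. Your limit argument for the metaplectic cocycle is a clean way to confirm $\delta=1$ and goes beyond the paper, which gives no details at all.
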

\begin{proof}
A straightforward verification with the aid of Proposition \ref{prop:cocycleFormula}.
\end{proof}

Combining the general Lemmas \ref{lemm:commUm} and \ref{lemm:CmR2} we obtain an easy-to-use criterion on repeatedly commuting $U_m^R$ and modular transformations. It should be noted that we assume $k\in\numZ$ since for half-integral weights the multiplier system cannot be trivial on the whole metaplectic group.
\begin{prop}
\label{prop:commute2}
Suppose $m$ is coprime to $6$ and $0\in R\subseteq24\numZ$. Let $k\in\numZ$ and $M=\tbtmat{a}{b}{c}{d}\in\Gamma_0(m)$. Let $f\colon\uhp\rightarrow\numC\cup\{\infty\}$ satisfy $f\vert_k\gamma=f$ for all $\gamma\in\Gamma_1^*(24\abs{c})$. Then we have
\begin{equation}
\label{eq:commute2}
(U_m^Rf)\vert_kM=U_m^R(f\vert_k C_m^RM),\qquad(U_m^Rf)\vert_kC_m^RM=U_m^R(f\vert_kM).
\end{equation}
\end{prop}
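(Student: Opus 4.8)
The plan is to obtain both identities from Lemma~\ref{lemm:commUm}, applied once for the first identity and twice (combined with Lemma~\ref{lemm:CmR2}) for the second. The only genuine work is to verify that the auxiliary matrices $\tbtmat{a_1}{b_1}{c_1}{d_1}$ produced by Lemma~\ref{lemm:commUm} lie in $\Gamma_1^*(24\abs{c})$, so that the hypothesis ``$f\vert_k\tbtmat{a_1}{b_1}{c_1}{d_1}=f$'' of that lemma is furnished for free by the assumed invariance of $f$.

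For the identity $(U_m^Rf)\vert_kM=U_m^R(f\vert_kC_m^RM)$ I would apply Lemma~\ref{lemm:commUm} with $\gamma=M$. Since $R\subseteq24\numZ$ and $\sigma(x),\sigma(0)\in R$, we have $\sigma'(x)\in24\numZ$ for each $x\in R$; writing $\sigma'(x)=24t$ and inspecting the formulas one sees that the correction terms in $a_1=1+c(a+cx)\sigma'(x)$, $d_1=1-ac\sigma'(x)$ and $c_1=mc^2\sigma'(x)$ are each divisible by $24\abs{c}$, so $a_1\equiv d_1\equiv1$ and $c_1\equiv0\pmod{24\abs{c}}$. The numerator of $b_1=\bigl(x-a(a+cx)\sigma'(x)\bigr)/m$ lies in $24\numZ$; since $\gcd(m,6)=1$ forces $\gcd(m,24)=1$ and $b_1\in\numZ$ by the remark following Lemma~\ref{lemm:commUm}, we get $b_1\in24\numZ$. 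Hence $\tbtmat{a_1}{b_1}{c_1}{d_1}\in\Gamma_1^*(24\abs{c})$ for all $x\in R$, $f$ is fixed by each of them, and Lemma~\ref{lemm:commUm} delivers the first identity.

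For $(U_m^Rf)\vert_kC_m^RM=U_m^R(f\vert_kM)$ I would first note that $C_m^RM\in\Gamma_0(m)$ (its lower-left entry is $mc$), so Lemma~\ref{lemm:commUm} applies with $\gamma=C_m^RM$; the divisibility check for its auxiliary matrices is identical to the one above (the lower-left entry $mc$ only strengthens every congruence), so those matrices again lie in $\Gamma_1^*(24\abs{c})$ and are fixed by $f$. Lemma~\ref{lemm:commUm} then gives $(U_m^Rf)\vert_kC_m^RM=U_m^R\bigl(f\vert_k(C_m^R)^2M\bigr)$. Since $k\in\numZ$ we may forget the metaplectic lift, and the ``moreover'' clause of Lemma~\ref{lemm:CmR2} writes $(C_m^R)^2M=\tbtmat{a'}{b'}{c'}{d'}\,M$ with $\tbtmat{a'}{b'}{c'}{d'}\in\Gamma_1^*(24\abs{c})$; therefore $f\vert_k(C_m^R)^2M=\bigl(f\vert_k\tbtmat{a'}{b'}{c'}{d'}\bigr)\vert_kM=f\vert_kM$, and applying $U_m^R$ concludes the second identity. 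I expect the only delicate point to be this congruence bookkeeping: the argument works precisely because the level $24\abs{c}$ is calibrated to absorb simultaneously the auxiliary matrices of Lemma~\ref{lemm:commUm} for both $M$ and $C_m^RM$ and the collapse matrix of Lemma~\ref{lemm:CmR2}, and because the integrality $b_1\in\numZ$ combined with $\gcd(m,24)=1$ upgrades to $b_1\in24\numZ$; without this one would only control the matrices modulo a proper divisor of $24\abs{c}$ and the hypothesis on $f$ would not apply.
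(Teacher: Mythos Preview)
Your proof is correct and follows essentially the same approach as the paper: apply Lemma~\ref{lemm:commUm} with $\gamma=M$ for the first identity, apply it again with $\gamma=C_m^RM$ and then invoke the ``moreover'' clause of Lemma~\ref{lemm:CmR2} to collapse $(C_m^R)^2M$ back to $M$ for the second. The paper's proof asserts the membership $\tbtmat{a_1}{b_1}{c_1}{d_1}\in\Gamma_1^*(24\abs{c})$ in one sentence; your explicit congruence bookkeeping (in particular the observation that $b_1\in\numZ$ plus $\gcd(m,24)=1$ forces $b_1\in24\numZ$) fills in exactly the details the paper leaves implicit.
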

\begin{proof}
For any fixed $x\in R$ let $a_1,b_1,c_1,d_1$ (depending on $x$) be as in Lemma \ref{lemm:commUm}. Then we have $\tbtmat{a_1}{b_1}{c_1}{d_1}\in\Gamma_1^*(24\abs{c})$ since $R\subseteq24\numZ$ and $(m,6)=1$. Thus, $f\vert_k\tbtmat{a_1}{b_1}{c_1}{d_1}=f$ for any $x\in R$ by the condition we require on $f$. The former formula of \eqref{eq:commute2} is then the conclusion of Lemma \ref{lemm:commUm}. In a similar manner, we find that
\begin{equation}
\label{eq:UmRfTemp}
(U_m^Rf)\vert_kC_m^RM=U_m^R(f\vert_k(C_m^R)^2M).
\end{equation}
By the last conclusion of Lemma \ref{lemm:CmR2} we have $((C_m^R)^2M)\cdot M^{-1}\in\Gamma_1^*(24\abs{c})$, and hence $f\vert_k(C_m^R)^2M=f\vert_kM$. Inserting this into \eqref{eq:UmRfTemp} we obtain the latter formula of \eqref{eq:commute2}.
\end{proof}
\begin{rema}
We call the two formulas in Proposition \ref{prop:commute2} a \emph{system of period $2$}. By similar arguments one can find criteria of systems of any period $r\in\numgeq{Z}{1}$. In principle, the larger $r$ is, the higher the level of $f$ should be. The case $r=1$ is used by Garvan, Sellers and Smoot \cite{GSS24} to deal with $C\Psi_{2,\beta}$. It turns out that a system of period $1$ is not enough for $C\Psi_{3,\beta}$; see the next proposition. Thus (the half-integral variant of) Proposition \ref{prop:commute2} is a necessary tool.
\end{rema}

For the reader's convenience, we provide a period $1$ version of Proposition \ref{prop:commute2} in its most general form.
\begin{prop}
\label{prop:commute1}
Let $m$, $N$ be coprime positive integers and let $0\in R\subseteq N\numZ$. Let $k\in\numZ$ and $M=\tbtmat{a}{b}{c}{d}\in\Gamma_0(m)$. Set
$$N_0=(N,m-1),\quad N_1=(N,(m-1)b).$$
Let $f\colon\uhp\rightarrow\numC\cup\{\infty\}$ satisfy $f\vert_k\gamma=f$ for all $\gamma\in\Gamma_1^*(N_0\abs{c},N_1)$ (see \eqref{eq:Gamma1starNM}).
Then we have
\begin{equation*}
(U_m^Rf)\vert_kM=U_m^R(f\vert_k M).
\end{equation*}
\end{prop}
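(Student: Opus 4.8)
The plan is to derive the identity from Lemma~\ref{lemm:commUm} in a single application, and then to absorb the conjugated matrix $C_m^R M$ back into $M$ using only the $\Gamma_1^*(N_0\abs{c},N_1)$-invariance of $f$.

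\emph{Step 1: checking the hypothesis of Lemma~\ref{lemm:commUm}.} Since $0\in R\subseteq N\numZ$, Lemma~\ref{lemm:sigmaBijective} shows $\sigma_\gamma^R$ permutes $R$, so $\sigma(x)$ and $\sigma(0)$ lie in $N\numZ$ for every $x\in R$, whence $\sigma'(x)=\sigma(x)-\sigma(0)\in N\numZ$. Writing $\sigma'(x)=N_0 t_x$ and feeding this into the formulas for $a_1(x),b_1(x),c_1(x),d_1(x)$, one sees using $\abs{c}\mid c^2$ that $c_1=mc^2\sigma'(x)=N_0\cdot(mc^2t_x)$ is divisible by $N_0\abs{c}$, and using $\abs{c}\mid c$ that $a_1-1=c(a+cx)\sigma'(x)$ and $d_1-1=-ac\sigma'(x)$ are divisible by $N_0\abs{c}$; for $b_1$ one uses instead that $N\mid mb_1=x-a(a+cx)\sigma'(x)$ (both $x$ and $\sigma'(x)$ lie in $N\numZ$) together with $(N,m)=1$ to get $N\mid b_1$, hence $N_1\mid b_1$ since $N_1\mid N$. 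So $\tbtmat{a_1(x)}{b_1(x)}{c_1(x)}{d_1(x)}\in\Gamma_1^*(N_0\abs{c},N_1)$ for all $x\in R$, and therefore $f\vert_k\tbtmat{a_1}{b_1}{c_1}{d_1}=f$ for all such $x$; Lemma~\ref{lemm:commUm} then gives $(U_m^R f)\vert_k M=U_m^R(f\vert_k C_m^R M)$.

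\emph{Step 2: replacing $C_m^R M$ by $M$.} It remains to show $f\vert_k C_m^R M=f\vert_k M$, i.e.\ that $\delta:=(C_m^R M)M^{-1}\in\Gamma_1^*(N_0\abs{c},N_1)$, since then $f\vert_k\delta=f$ forces $f\vert_k C_m^R M=(f\vert_k\delta)\vert_k M=f\vert_k M$. Write $\sigma_0=\sigma_\gamma^R(0)\in N\numZ$ and $e=m^{-1}(b-a\sigma_0)\in\numZ$, so $b=a\sigma_0+me$; from $C_m^R M=\tbtmat{a}{e}{mc}{d-c\sigma_0}$ and $M^{-1}=\tbtmat{d}{-b}{-c}{a}$, and using $ad=1+bc$, one computes
\[
\delta_{11}=1+c\bigl(a\sigma_0+(m-1)e\bigr),\qquad \delta_{12}=-a\bigl((m-1)e+a\sigma_0\bigr),\qquad \delta_{21}=(m-1)cd+c^2\sigma_0,
\]
with $\delta_{22}\equiv1\pmod{N_0\abs{c}}$ then automatic from $\det\delta=1$ once $\delta_{11}\equiv1$ and $\delta_{21}\equiv0$ modulo $N_0\abs{c}$ are known. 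Divisibility of $\delta_{21}$ and $\delta_{11}-1$ by $N_0\abs{c}$ follows from $N_0\mid m-1$, $N_0\mid N\mid\sigma_0$ and $\abs{c}\mid c$ exactly as in Step~1. The remaining congruence $N_1\mid\delta_{12}$ is where the shape of $N_1$ is used: the term $a^2\sigma_0$ is killed by $N_1\mid N\mid\sigma_0$, while $N_1\mid(m-1)b=(m-1)a\sigma_0+m(m-1)e$ together with $N_1\mid\sigma_0$ and $(N_1,m)=1$ (valid since $N_1\mid N$ and $(N,m)=1$) yields $N_1\mid(m-1)e$. Hence $\delta\in\Gamma_1^*(N_0\abs{c},N_1)$, and combining with Step~1 we obtain $(U_m^R f)\vert_k M=U_m^R(f\vert_k M)$.

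I expect the only real obstacle to be the divisibility bookkeeping itself, and inside it the single congruence $N_1\mid\delta_{12}$: it is precisely the cancellation of the factor $m$ there, using $(N,m)=1$, that makes $N_1=(N,(m-1)b)$ the sharp modulus rather than something coarser such as $(N,m-1)$. Everything else is a mechanical unwinding of the definitions of $C_m^R$ and $\sigma_\gamma^R$ and an appeal to Lemma~\ref{lemm:commUm}.
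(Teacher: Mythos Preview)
Your proof is correct and follows exactly the paper's approach: apply Lemma~\ref{lemm:commUm} after checking $\tbtmat{a_1}{b_1}{c_1}{d_1}\in\Gamma_1^*(N_0\abs{c},N_1)$, then show $(C_m^R M)M^{-1}\in\Gamma_1^*(N_0\abs{c},N_1)$ to replace $C_m^R M$ by $M$. The paper states both steps without details; you have supplied the divisibility bookkeeping that the paper omits, and it is all correct.
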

\begin{proof}
Lemma \ref{lemm:commUm} is still applicable since $\tbtmat{a_1}{b_1}{c_1}{d_1}\in\Gamma_1^*(N_0\abs{c},N_1)$. Hence $(U_m^Rf)\vert_kM=U_m^R(f\vert_k C_m^RM)$. A direct calculation shows that $(C_m^RM)\cdot M^{-1}\in\Gamma_1^*(N_0\abs{c},N_1)$, so $f\vert_k C_m^RM=f\vert_kM$. The conclusion now follows from these two formulas.
\end{proof}

To conclude this subsection, we generalize Proposition \ref{prop:commute2} a bit, allowing $M$ taking values in the group algebra (see Definitions \ref{def:groupAlgebra1} and \ref{def:groupAlgebra2}).

For $0\neq M=\sum_{j}x_j\cdot\left(\tbtmat{a_j}{b_j}{c_j}{d_j},\varepsilon_j\right)\in\mathscr{A}_{\numZ}$ with $\left(\tbtmat{a_j}{b_j}{c_j}{d_j},\varepsilon_j\right)$ different and $x_j\in\numC\setminus\{0\}$, we define the \emph{content} of $M$ to be the (nonnegative) greatest common divisor of all $c_j$. Moreover, set $C_m^RM=\sum_{j}x_j\cdot\left(C_m^R\tbtmat{a_j}{b_j}{c_j}{d_j},\varepsilon_j\right)$.
\begin{prop}
\label{prop:commute2alg}
Suppose $m$ is coprime to $6$ and $0\in R\subseteq24\numZ$. Let $k\in\numZ$ and let $M\in\mathscr{A}_m\setminus\{0\}$ whose content is $l\in\numgeq{Z}{0}$. Let $f\colon\uhp\rightarrow\numC$ satisfy $f\vert_k\gamma=f$ for all $\gamma\in\Gamma_1^*(24l)$. Then \eqref{eq:commute2} remains true.
\end{prop}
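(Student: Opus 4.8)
The plan is to deduce Proposition~\ref{prop:commute2alg} from the single-matrix case, Proposition~\ref{prop:commute2}, by observing that both identities in \eqref{eq:commute2} are $\numC$-linear in $M$. First I would write $M=\sum_{j}x_j\cdot\gamma_j$ with $\gamma_j=\left(\tbtmat{a_j}{b_j}{c_j}{d_j},\varepsilon_j\right)\in\widetilde{\Gamma_0(m)}$ pairwise distinct and $x_j\in\numC\setminus\{0\}$. Because $k\in\numZ$ the metaplectic signs are inert, $\varepsilon_j^{-2k}=1$, so $g\vert_k\gamma_j=g\vert_k\tbtmat{a_j}{b_j}{c_j}{d_j}$ for any function $g$, and likewise $g\vert_k C_m^R\gamma_j=g\vert_k C_m^R\tbtmat{a_j}{b_j}{c_j}{d_j}$; thus I may work with honest matrices throughout and the passage to the cover $\sltZ$ is harmless.

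Next I would check that the invariance hypothesis descends to each term. By definition the content $l$ divides every $c_j$, hence $24l\mid 24\abs{c_j}$ and therefore $\Gamma_1^*(24\abs{c_j})\subseteq\Gamma_1^*(24l)$ --- an inclusion that also holds in the degenerate case $c_j=0$, where $\Gamma_1^*(0)$ is the cyclic group generated by $\tbtmat{1}{24}{0}{1}$. Consequently the assumption $f\vert_k\gamma=f$ for all $\gamma\in\Gamma_1^*(24l)$ forces $f\vert_k\gamma=f$ for all $\gamma\in\Gamma_1^*(24\abs{c_j})$, so Proposition~\ref{prop:commute2} applies to each $\tbtmat{a_j}{b_j}{c_j}{d_j}\in\Gamma_0(m)$ and gives
\begin{equation*}
(U_m^Rf)\vert_k\tbtMat{a_j}{b_j}{c_j}{d_j}=U_m^R\!\left(f\vert_k C_m^R\tbtMat{a_j}{b_j}{c_j}{d_j}\right),\qquad
(U_m^Rf)\vert_k C_m^R\tbtMat{a_j}{b_j}{c_j}{d_j}=U_m^R\!\left(f\vert_k\tbtMat{a_j}{b_j}{c_j}{d_j}\right).
\end{equation*}

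Finally I would multiply each of these identities by $x_j$ and sum over $j$, using that $g\mapsto U_m^R(g)$ and, for a fixed element of $\mathscr{A}$, the slash action $g\mapsto g\vert_k(\cdot)$ are both $\numC$-linear, together with the termwise definition $C_m^RM=\sum_j x_j\cdot\left(C_m^R\tbtmat{a_j}{b_j}{c_j}{d_j},\varepsilon_j\right)$ and the first paragraph's reduction to matrices; this reproduces \eqref{eq:commute2} for $M$. I do not expect a genuine obstacle: the substance is already contained in Proposition~\ref{prop:commute2}, and the only points requiring a little care are the bookkeeping inclusion $\Gamma_1^*(24\abs{c_j})\subseteq\Gamma_1^*(24l)$ --- which is precisely why a single invariance condition at level $24l$ suffices for every term --- and the observation that integrality of the weight makes the metaplectic decorations irrelevant.
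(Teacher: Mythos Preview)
Your proposal is correct and follows essentially the same route as the paper's own proof: write $M=\sum_j x_j\gamma_j$, use $l\mid c_j$ to get $\Gamma_1^*(24\abs{c_j})\subseteq\Gamma_1^*(24l)$, apply Proposition~\ref{prop:commute2} termwise, then sum. Your extra remarks on why the metaplectic signs drop out for integral weight and on the degenerate case $c_j=0$ are accurate elaborations that the paper leaves implicit.
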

\begin{proof}
Set $M=\sum_{j}x_j\cdot\left(\tbtmat{a_j}{b_j}{c_j}{d_j},\varepsilon_j\right)$ with $\left(\tbtmat{a_j}{b_j}{c_j}{d_j},\varepsilon_j\right)\in\widetilde{\Gamma_0(m)}$ different and $x_j\in\numC\setminus\{0\}$. Then $l\mid c_j$ for all $j$. Consequently, $f\vert_k\gamma=f$ for all $\gamma\in\Gamma_1^*(24\abs{c_j})$ and for all $j$. Application of Proposition \ref{prop:commute2} to $\tbtmat{a}{b}{c}{d}=\tbtmat{a_j}{b_j}{c_j}{d_j}$ gives
\begin{equation*}
(U_m^Rf)\Big\vert_k\left(\tbtmat{a_j}{b_j}{c_j}{d_j},\varepsilon_j\right)=U_m^R\left(f\Big\vert_k \left(C_m^R\tbtmat{a_j}{b_j}{c_j}{d_j},\varepsilon_j\right)\right).
\end{equation*}
Multiplying this by $x_j$, then adding them together for all $j$, noting the definition of the action of $\mathscr{A}$ on modular forms and using the linearity of $U_m^R$ we obtain the first formula of \eqref{eq:commute2}. The second formula of \eqref{eq:commute2} is proved similarly.
\end{proof}
There is no elegant criterion like Proposition \ref{prop:commute2alg} for half-integral weights. Nevertheless, the following seemingly artificial criterion turns out successful in dealing with $C\Psi_{3,\beta}$.
\begin{prop}
\label{prop:commute2algHalf}
Suppose $m$ is coprime to $6$ and $0\in R\subseteq24\numZ$. Let $k_1\in\frac{1}{2}\numZ$ and $k_2\in\numZ$. Let $M=\sum_{1\leq j\leq J}x_j\widetilde{\tbtmat{a_j}{b_j}{c_j}{d_j}}\in\mathscr{A}_m\setminus\{0\}$ whose content is $l\in\numgeq{Z}{0}$. Set $\gamma_j=\tbtmat{a_j}{b_j}{c_j}{d_j}$, $y_j=\sigma_{\gamma_j}^R(0)$ and $z_j=\sigma_{C_m^R\gamma_j}^R(0)$. Let $f\colon\uhp\rightarrow\numC$ be a function satisfying
\begin{gather}
f\left\vert_{k_1}\tbtMat{1+c_j(a_j+c_jx)\sigma_{\gamma_j}'(x)}{m^{-1}(x-a_j(a_j+c_jx)\sigma_{\gamma_j}'(x))}{mc_j^2\sigma_{\gamma_j}'(x)}{1-a_jc_j\sigma_{\gamma_j}'(x)}\right.=f,\label{eq:halfCond1}\\
f\left\vert_{k_1}\tbtMat{1+mc_j(a_j+mc_jx)\sigma_{C_m^R\gamma_j}'(x)}{m^{-1}(x-a_j(a_j+mc_jx)\sigma_{C_m^R\gamma_j}'(x))}{m^3c_j^2\sigma_{C_m^R\gamma_j}'(x)}{1-ma_jc_j\sigma_{C_m^R\gamma_j}'(x)}\right.=f,\label{eq:halfCond1C}\\
f\left\vert_{k_1}\tbtMat{1+m^{-2}((m^2-1)b_jc_j+a_jc_j(y_j+mz_j))}{-m^{-2}((m^2-1)a_jb_j+a_j^2(y_j+mz_j))}{(m^2-1)c_jd_j+c_j^2(y_j+mz_j)}{1-(m^2-1)b_jc_j-a_jc_j(y_j+mz_j)}\right.=f\label{eq:halfCond2}
\end{gather}
for all $1\leq j\leq J$ and all $x\in R$. Let $g\colon\uhp\rightarrow\numC$ be a function transforming like a modular form on $\Gamma_1^*(24l)$ of weight $k_2$ with trivial multiplier system. Then
\begin{equation}
\label{eq:halfCommConclusion1}
(U_m^R(fg))\vert_{k_1+k_2}M=U_m^R((fg)\vert_{k_1+k_2} C_m^RM),\qquad(U_m^R(fg))\vert_{k_1+k_2}C_m^RM=U_m^R((fg)\vert_{k_1+k_2}M).
\end{equation}
Moreover, if $f$ is only required to satisfy \eqref{eq:halfCond1}, then the former conclusion of \eqref{eq:halfCommConclusion1} still holds.
\end{prop}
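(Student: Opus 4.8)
The plan is to run, with $fg$ in place of $f$, essentially the argument behind Propositions \ref{prop:commute2} and \ref{prop:commute2alg}; the point is that the three hypotheses \eqref{eq:halfCond1}, \eqref{eq:halfCond1C}, \eqref{eq:halfCond2} are tailored so that $f$ is invariant under precisely the finitely many half-integral-weight matrices occurring in the proofs of Lemmas \ref{lemm:commUm} and \ref{lemm:CmR2}, while the integer-weight factor $g$ — which may be required to be modular on a whole congruence subgroup because $k_2\in\numZ$ — carries the remaining invariance on all of $\Gamma_1^*(24l)$. First I would reduce to a single group element: with $M=\sum_j x_j\widetilde{\gamma_j}$, $\gamma_j=\tbtmat{a_j}{b_j}{c_j}{d_j}$, the definition of the right $\mathscr{A}$-action on functions, the $\numC$-linearity of $U_m^R$, and $C_m^RM=\sum_j x_j\widetilde{C_m^R\gamma_j}$ reduce both identities of \eqref{eq:halfCommConclusion1} to the per-$j$ statements $(U_m^R(fg))\vert_{k_1+k_2}\widetilde{\gamma_j}=U_m^R((fg)\vert_{k_1+k_2}\widetilde{C_m^R\gamma_j})$ and $(U_m^R(fg))\vert_{k_1+k_2}\widetilde{C_m^R\gamma_j}=U_m^R((fg)\vert_{k_1+k_2}\widetilde{\gamma_j})$. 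Fix $j$, abbreviate $\gamma=\gamma_j$, and note $l\mid c_j$ since $l$ is the content of $M$.

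For the first per-$j$ identity I would apply Lemma \ref{lemm:commUm} with weight $k_1+k_2$ to $fg$ and the matrix $\gamma$. Its auxiliary matrices $\tbtmat{a_1}{b_1}{c_1}{d_1}$ (with $\sigma'=\sigma_\gamma'$) are exactly those in \eqref{eq:halfCond1}, and since $R\subseteq24\numZ$, $(m,6)=1$ and $l\mid c_j$ a short divisibility check (with $b_1\in\numZ$ supplied by the parenthetical remark after Lemma \ref{lemm:commUm}) shows they all lie in $\Gamma_1^*(24l)$, so $g\vert_{k_2}\tbtmat{a_1}{b_1}{c_1}{d_1}=g$. Combining this with \eqref{eq:halfCond1} and the slash-multiplicativity $(fg)\vert_{k_1+k_2}\widetilde{\delta}=(f\vert_{k_1}\widetilde{\delta})(g\vert_{k_2}\widetilde{\delta})$, which holds for an individual $\widetilde{\delta}\in\sltZ$, gives $(fg)\vert_{k_1+k_2}\tbtmat{a_1}{b_1}{c_1}{d_1}=fg$ for all $x\in R$; Lemma \ref{lemm:commUm} then delivers the first per-$j$ identity. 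Since this uses only \eqref{eq:halfCond1}, it already proves the ``moreover'' clause.

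For the second per-$j$ identity I would first apply Lemma \ref{lemm:commUm} to the matrix $C_m^R\gamma\in\Gamma_0(m)$; its auxiliary matrices are now those in \eqref{eq:halfCond1C} (with $\sigma'=\sigma_{C_m^R\gamma}'$), which again lie in $\Gamma_1^*(24l)$, so $fg$ satisfies the hypothesis of the lemma and one obtains $(U_m^R(fg))\vert_{k_1+k_2}\widetilde{C_m^R\gamma}=U_m^R((fg)\vert_{k_1+k_2}\widetilde{(C_m^R)^2\gamma})$. Then I would invoke Lemma \ref{lemm:CmR2}, which in $\sltZ$ factors $\widetilde{(C_m^R)^2\gamma}=\widetilde{\tbtmat{a'}{b'}{c'}{d'}}\cdot\widetilde{\gamma}$ with $\tbtmat{a'}{b'}{c'}{d'}$ exactly the matrix in \eqref{eq:halfCond2} for $y=\sigma_\gamma^R(0)=y_j$ and $z=\sigma_{C_m^R\gamma}^R(0)=z_j$, and which also gives $\tbtmat{a'}{b'}{c'}{d'}=((C_m^R)^2\gamma)\gamma^{-1}\in\Gamma_1^*(24\abs{c_j})\subseteq\Gamma_1^*(24l)$. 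Hence, by \eqref{eq:halfCond2}, the invariance of $g$, and slash-multiplicativity, $(fg)\vert_{k_1+k_2}\widetilde{\tbtmat{a'}{b'}{c'}{d'}}=fg$, so $(fg)\vert_{k_1+k_2}\widetilde{(C_m^R)^2\gamma}=(fg)\vert_{k_1+k_2}\widetilde{\gamma}$; applying $U_m^R$ yields the second per-$j$ identity, and summing over $j$ with weights $x_j$ gives \eqref{eq:halfCommConclusion1}. The one point needing care is that slash-multiplicativity holds only for a single element of $\sltZ$ and not for general elements of $\mathscr{A}$ (as the text warns), so it must be used \emph{before} the group-algebra element $M$ is reassembled — which is precisely why the whole argument is organized around the individual $\gamma_j$. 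I do not foresee any deeper difficulty; the rest is the routine divisibility bookkeeping that places all auxiliary matrices inside $\Gamma_1^*(24l)$.
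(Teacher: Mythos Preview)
Your proposal is correct and follows essentially the same approach as the paper: reduce to individual $\gamma_j$, verify that the auxiliary matrices of Lemma~\ref{lemm:commUm} (for $\gamma_j$ and for $C_m^R\gamma_j$) and of Lemma~\ref{lemm:CmR2} are precisely those in \eqref{eq:halfCond1}, \eqref{eq:halfCond1C}, \eqref{eq:halfCond2} and lie in $\Gamma_1^*(24l)$, combine the hypotheses on $f$ with the modularity of $g$ via slash-multiplicativity on single elements to get $(fg)$-invariance, apply Lemma~\ref{lemm:commUm} twice and Lemma~\ref{lemm:CmR2} once, and then sum over $j$. Your explicit caveat about using slash-multiplicativity only before reassembling $M$ is exactly the subtlety the paper also navigates.
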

\begin{proof}
Let $A_j$, $B_j$ and $C_j$ denote the matrices in \eqref{eq:halfCond1}, \eqref{eq:halfCond1C} and \eqref{eq:halfCond2} respectively. (Note $A_j$ and $B_j$ depend on $x\in R$.) Since $(m,6)=1$ and $0\in R\subseteq24\numZ$ we have $A_j,B_j,C_j\in\Gamma_1^*(24l)$. Moreover, it follows from Lemma \ref{lemm:CmR2} that $\widetilde{(C_m^R)^2\gamma_j}=\widetilde{C_j}\cdot\widetilde{\gamma_j}$.

By \eqref{eq:halfCond1} and the assumption on $g$ we have $(fg)\vert_{k_1+k_2}\widetilde{A_j}=fg$ for all $j$ and $x$. Then Lemma \ref{lemm:commUm} implies that $(U_m^R(fg))\vert_{k_1+k_2}\widetilde{\gamma_j}=U_m^R((fg)\vert_{k_1+k_2} \widetilde{C_m^R\gamma_j)}$ for all $j$. A summation of this formula over $j$ as in the proof of Proposition \ref{prop:commute2alg} gives the former conclusion of \eqref{eq:halfCommConclusion1}. A similar argument where we use \eqref{eq:halfCond1C} instead of \eqref{eq:halfCond1} gives
\begin{equation}
\label{eq:halfCommConclusion2}
(U_m^R(fg))\vert_{k_1+k_2}C_m^RM=U_m^R((fg)\vert_{k_1+k_2}(C_m^R)^2M).
\end{equation}
Now \eqref{eq:halfCond2} and the assumption on $g$ imply that $(fg)\vert_{k_1+k_2}\widetilde{C_j}=fg$ for all $j$. This, together with the fact $\widetilde{(C_m^R)^2\gamma_j}=\widetilde{C_j}\cdot\widetilde{\gamma_j}$, gives $(fg)\vert_{k_1+k_2}(C_m^R)^2M=(fg)\vert_{k_1+k_2}M$. Inserting it into \eqref{eq:halfCommConclusion2} we obtain the latter conclusion of \eqref{eq:halfCommConclusion1}.
\end{proof}

\begin{rema}
Logically, the assertions in Proposition \ref{prop:commute2algHalf} are equivalent to the seemingly weaker ones with $g$ replaced by $1$, since one can just let $f$ be $fg$. We need the form presented in Proposition \ref{prop:commute2algHalf} because in the applications below (e.g. Proposition \ref{r:prom01}), $U_m^R$ will act on a product of two functions $fg$, with $f$ fixed and $g$ varying. See also Example \ref{examp:commute2algHalf}.
\end{rema}

\begin{rema}
We explain here why the neat Proposition \ref{prop:commute2} is useless for half-integral weights. First note that if we replace the condition $k\in\numZ$ in Proposition \ref{prop:commute2} with $k\in\tfrac{1}{2}\numZ$, then the conclusion still holds as the reader can check through the proof (since Lemma \ref{lemm:commUm} allows $k\in\tfrac{1}{2}\numZ$ and Lemma \ref{lemm:CmR2} is an identity in the metaplectic group). However, if $k\in\tfrac{1}{2}\numZ\setminus\numZ$, then there exists \emph{no} nonzero $f$ such that $f\vert_k\gamma=f$ for all $\gamma\in\Gamma_1^*(24\abs{c})$, which makes Proposition \ref{prop:commute2} be vacuously true. To see why there exists no such $f$, we assume that $f$ satisfies the condition mentioned. We can find three matrices $\tbtmat{a_j}{b_j}{c_j}{d_j}$ in $\Gamma_1^*(24l)$ such that $\tbtmat{a_1}{b_1}{c_1}{d_1}\tbtmat{a_2}{b_2}{c_2}{d_2}=\tbtmat{a_3}{b_3}{c_3}{d_3}$ with $c_1,c_2>0$ but $c_3=c_1a_2+d_1c_2<0$. Lifting the equality to $\widetilde{\slZ}$ we have $\widetilde{\tbtmat{a_1}{b_1}{c_1}{d_1}}\widetilde{\tbtmat{a_2}{b_2}{c_2}{d_2}}=\left(\tbtmat{a_3}{b_3}{c_3}{d_3},-1\right)$; see Proposition \ref{prop:cocycleFormula}. Now $f\vert_k\left(\tbtmat{a_3}{b_3}{c_3}{d_3},-1\right)=f\vert_k\widetilde{\tbtmat{a_1}{b_1}{c_1}{d_1}}\widetilde{\tbtmat{a_2}{b_2}{c_2}{d_2}}=f$. On the other hand, we have $f\vert_k\left(\tbtmat{a_3}{b_3}{c_3}{d_3},-1\right)=-f\vert_k\widetilde{\tbtmat{a_3}{b_3}{c_3}{d_3}}=-f$. Therefore $f=-f$, that is, $f=0$. Thus, to deal with half-integral weights, e.g. $C\Psi_{k,\beta}$, we have to apply Proposition \ref{prop:commute2algHalf}. As the next example shows, there indeed exist nontrivial $f$ and $g$ that it is applicable to.
\end{rema}

\begin{examp}
\label{examp:commute2algHalf}
In Proposition \ref{prop:commute2algHalf} set $m=5$, $R=\{0,24,48,72,96\}$, $J=1$ and $M=\widetilde{\gamma_1}$ with $\gamma_1=\tbtmat{1}{0}{10}{1}$. Then $C_5^RM=\widetilde{\tbtmat{1}{0}{50}{1}}$ and $\sigma_{\gamma_1}(x)=\sigma_{C_5^R\gamma_1}(x)=x$ for all $x\in R$. Thus, the matrices that appear in \eqref{eq:halfCond1} are
\begin{equation*}
\tbtMat{1}{0}{0}{1},\tbtMat{57841}{-1152}{12000}{-239},\tbtMat{230881}{-4608}{24000}{-479},\tbtMat{519121}{-10368}{36000}{-719},\tbtMat{922561}{-18432}{48000}{-959},
\end{equation*}
those in \eqref{eq:halfCond1C} are
\begin{equation*}
\tbtMat{1}{0}{0}{1},\tbtMat{1441201}{-5760}{300000}{-1199},\tbtMat{5762401}{-23040}{600000}{-2399},\tbtMat{12963601}{-51840}{900000}{-3599},\tbtMat{23044801}{-92160}{1200000}{-4799},
\end{equation*}
and the one in \eqref{eq:halfCond2} is
\begin{equation*}
\tbtMat{1}{0}{240}{1}.
\end{equation*}
For $f=f_{k,\beta}=q^{\frac{k}{12}-\frac{\beta^2}{2k}}\cdot C\Psi_{k,\beta}$ where $k=1,2,3,4,6,8,12$ and $\beta\in\mathfrak{B}_k$ or $(k,\beta)=(5,5/2),(10,0),(10,5)$ we have, by Theorem \ref{thm:fkbetaGamma0k}, $f\vert_{-1/2}A=f$ where $A$ is arbitrary one of the above matrices. Therefore, Proposition \ref{prop:commute2algHalf} shows that
\begin{align*}
\left(U_5^R\left(q^{\frac{k}{12}-\frac{\beta^2}{2k}}\cdot C\Psi_{k,\beta}\cdot g\right)\right)\Big\vert_{k_2-1/2}\tbtMat{1}{0}{10}{1}&=U_5^R\left(\left(q^{\frac{k}{12}-\frac{\beta^2}{2k}}\cdot C\Psi_{k,\beta}\cdot g\right)\Big\vert_{k_2-1/2}\tbtMat{1}{0}{50}{1}\right),\\
\left(U_5^R\left(q^{\frac{k}{12}-\frac{\beta^2}{2k}}\cdot C\Psi_{k,\beta}\cdot g\right)\right)\Big\vert_{k_2-1/2}\tbtMat{1}{0}{50}{1}&=U_5^R\left(\left(q^{\frac{k}{12}-\frac{\beta^2}{2k}}\cdot C\Psi_{k,\beta}\cdot g\right)\Big\vert_{k_2-1/2}\tbtMat{1}{0}{10}{1}\right)
\end{align*}
where $k,\beta$ are as above and $g$ is a function satisfying $g\vert_{k_2}\gamma=g$ for all $\gamma\in\Gamma_1^*(240)$. The method demonstrated in this example plays a vital role in transforming \eqref{r:c31cong} to \eqref{r:c33cong}. Finally note that if we want a stronger conclusion like
\begin{equation*}
\left(U_5^R\left(q^{\frac{k}{12}-\frac{\beta^2}{2k}}\cdot C\Psi_{k,\beta}\cdot g\right)\right)\Big\vert_{k_2-1/2}\tbtMat{1}{0}{10}{1}=U_5^R\left(\left(q^{\frac{k}{12}-\frac{\beta^2}{2k}}\cdot C\Psi_{k,\beta}\cdot g\right)\Big\vert_{k_2-1/2}\tbtMat{1}{0}{10}{1}\right),
\end{equation*}
then whatever $R$ we choose, $g$ should satisfy $g\vert_{k_2}\gamma=g$ \emph{at least} (not necessarily sufficient) for all $\gamma\in\Gamma_1(40)$, which is too restrictive for our purpose.

\end{examp}

\subsection{Proof of Theorem \ref{r:thecpsimodfun}}
\label{subsec:proof_of_theorem_thecpsimodfun}
The proof rests on the following fact, which is an immediate consequence of Proposition \ref{prop:fqdivfqN}.
\begin{lemm}
\label{lemm:p2modularFunction}
In Proposition \ref{prop:fqdivfqN}, if $N=p^2$ where $p\geq5$ is a prime, then
\begin{equation*}
\frac{f_{k,\beta}(\tau)}{f_{k,\beta}(p^2\tau)}\Bigg\vert\tbtMat{a}{b}{c}{d}=\etp{\frac{(2\beta)^2(p^2-1)bda^2}{8k}}\frac{f_{k,\beta'}(\tau)}{f_{k,\beta'}(p^2\tau)}.
\end{equation*}
(Recall that $\tbtmat{a}{b}{c}{d}$ is assumed in $\Gamma_0(p^2\lcm(2,k))$.)
\end{lemm}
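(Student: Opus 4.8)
The plan is to specialize Proposition~\ref{prop:fqdivfqN} to the case $N=p^2$ and then simplify the multiplier that comes out. First I would verify the hypotheses of Proposition~\ref{prop:fqdivfqN}: since $p\geq 5$ is prime, $p$ is odd, so $N=p^2$ is odd and the requirement ``$2\nmid N$ whenever $2\mid k$'' is satisfied automatically; moreover the matrix $\tbtmat{a}{b}{c}{d}$ is assumed to lie in $\Gamma_0(p^2\lcm(2,k))=\Gamma_0(N\lcm(2,k))$, which is precisely the domain required there. Applying the proposition then gives
\[
\left.\frac{f_{k,\beta}(\tau)}{f_{k,\beta}(p^2\tau)}\right\vert\tbtMat{a}{b}{c}{d}=\legendre{p^2}{\abs{d}}\cdot\etp{A}\cdot\frac{f_{k,\beta'}(\tau)}{f_{k,\beta'}(p^2\tau)},\qquad\beta'=T_\beta\tbtMat{a}{b}{c}{d},
\]
with $A=\frac{(p^2-1)k}{24}\bigl(\frac{(2a-d)c}{p^2}-2bd\bigl(1+\frac{c^2}{p^2}\bigr)\bigr)+\frac{(p^2-1)bd(a\beta)^2}{2k}$. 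It then remains to show that $\legendre{p^2}{\abs{d}}\cdot\etp{A}=\etp{\frac{(2\beta)^2(p^2-1)bda^2}{8k}}$.

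For the Kronecker symbol I would observe that $p^2\mid c$ together with $ad-bc=1$ forces $ad\equiv1\pmod{p^2}$, hence $p\nmid d$; since $\legendre{p^2}{\abs{d}}=\legendre{p}{\abs{d}}^2$ and $\gcd(p,\abs{d})=1$, this equals $1$. For the exponential, the one arithmetic input needed is that $p$ is coprime to $6$, so $p^2\equiv1\pmod{24}$, i.e.\ $24\mid p^2-1$. Writing $c=p^2\lcm(2,k)\,m$ with $m\in\numZ$ (possible as $p^2\lcm(2,k)\mid c$), I would note that $c/p^2=\lcm(2,k)\,m$ and $c^2/p^2=p^2\lcm(2,k)^2m^2$ are integers, so the first summand of $A$ equals $\frac{p^2-1}{24}$ times an integer and the second equals $\frac{p^2-1}{12}$ times an integer; by $24\mid p^2-1$ both lie in $\numZ$ and so contribute trivially to $\etp{A}$. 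Finally, substituting $\beta^2=(2\beta)^2/4$ rewrites the remaining summand $\frac{(p^2-1)bd(a\beta)^2}{2k}$ as $\frac{(2\beta)^2(p^2-1)bda^2}{8k}$, which yields the claimed identity.

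I do not expect a genuine obstacle here; the statement is, as advertised, an immediate consequence of Proposition~\ref{prop:fqdivfqN}. The only point that requires a little care is the integrality of the first two summands of $A$, where one must carry the divisibility $p^2\lcm(2,k)\mid c$ through the division by $p^2$ and pair it with $24\mid p^2-1$ (which holds because $\gcd(p,6)=1$).
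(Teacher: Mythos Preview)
Your proposal is correct and is exactly the argument the paper has in mind: the lemma is stated as ``an immediate consequence of Proposition~\ref{prop:fqdivfqN}'' with no further proof, and your verification that $\legendre{p^2}{\abs d}=1$ (via $p\nmid d$) together with $24\mid p^2-1$ killing the first two pieces of the exponent is precisely how that immediacy unpacks.
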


For the reader's convenience, we recall:
\begin{thm1.3*}
Let $p\geq 5$ be prime and $\beta,\beta'\in \mathfrak{B}_k$ with $(2\beta,k)=(2\beta',k)$. Set
\begin{equation}
\label{eq:rre}
r=\frac{k}{(k,(2\beta)^2(p^2-1)/8)},\quad r_e=\lcm(2,r).
\end{equation}
Then there exists $\gamma\in \Gamma_0^0(p^2k,r)$ (see \eqref{eq:Gamma00NM}) such that
\begin{equation}
\label{eq:thecpsimodfun}
\frac{f_{k,\beta}(\tau)}{f_{k,\beta}(p^2\tau)}\bigg| \gamma=\frac{f_{k,\beta'}(\tau)}{f_{k,\beta'}(p^2\tau)}.
\end{equation}
Moreover, if $(r,p)=1$, then we can further require that
\begin{equation}
\label{eq:thecpsimodfun2}
U_p'(L\vert \gamma)=U_p'(L)\vert \gamma
\end{equation}
for all modular functions $L$ with respect to $\Gamma_1^*(2p^2k,r)$ (see \eqref{eq:Gamma1starNM}) where
\begin{equation}
\label{eq:Upprime}
U_p'(L):=\frac{1}{p}\sum_{x=0}^{p-1}L\left(\frac{\tau+r_ex}{p}\right).
\end{equation}
\end{thm1.3*}

\begin{proof}
Since by assumption $(2\beta,k)=(2\beta',k)$, we have $(2\beta,2k)=(2\beta',2k)$ or $(2\beta,2k)=(2\beta'+k,2k)$ as is explained in the proof of Proposition \ref{prop:EquivalenceRelation}. If $(2\beta,2k)=(2\beta',2k)$, we can find an $a$ coprime to $2k$ such that $2\beta'\equiv a\cdot 2\beta\pmod{2k}$. We can assume that $p\nmid a$ for if $p\mid a$ then $p\nmid2k$ and hence after replacing $a$ with $a+2k$ we have $p\nmid a$. Let $c$ be any integer that is divisible by $2p^2k$ and coprime to $a$, and let $b,d$ be integers with $r\mid b$ and $ad-bc=1$. (They exist since $(r,a)\mid(k,a)=1$.) Then $\beta'=T_\beta\tbtmat{a}{b}{c}{d}$ by \eqref{eq:betapFormula}. The identity \eqref{eq:thecpsimodfun}, where we choose $\gamma=\tbtmat{a}{b}{c}{d}\in\Gamma_0^0(2p^2k,r)\subseteq\Gamma_0^0(p^2k,r)$, now follows from this and Lemma \ref{lemm:p2modularFunction}.

If $(2\beta,2k)=(2\beta'+k,2k)$, then $2\mid k$ and there exists an $a$ coprime to $2k$ such that $2\beta'+k\equiv a\cdot 2\beta\pmod{2k}$. As in the last case, we can assume that $p\nmid a$. Now we choose an integer $c$ coprime to $a$ with $p^2k\mid c$ but $2k\nmid c$ and then $b,d$ with $r\mid b$ and $ad-bc=1$. Then $\beta'=T_\beta\tbtmat{a}{b}{c}{d}$ by the second line of \eqref{eq:betapFormula}. The rest is the same to the last case.

Finally, let $\gamma=\tbtmat{a}{b}{c}{d}$ be fixed as above and assume that $(r,p)=1$; we proceed to prove \eqref{eq:thecpsimodfun2}. Note that $U_p'=U_p^{R_0}$ where $R_0=\{r_e\cdot0,r_e\cdot1,\dots,r_e\cdot(p-1)\}$. The assumption $(r,p)=1$ ensures that $R_0$ is a complete set of representatives of $\numZ/p\numZ$. It follows directly from Proposition \ref{prop:commute1} where we set $m=p$, $N=r_e$ and $R=R_0$ that $U_p^{R_0}(L\vert \gamma)=U_p^{R_0}(L)\vert \gamma$ since $\Gamma_1^*(N_0\abs{c},N_1)\subseteq\Gamma_1^*(2p^2k,r)$ and $L$ is modular on $\Gamma_1^*(2p^2k,r)$. From this $\eqref{eq:thecpsimodfun2}$ follows.
\end{proof}

We summarize the algorithm to find $\gamma$ in Theorem \ref{r:thecpsimodfun} as follows.
\begin{algo}
\label{algo:findgaama}
Given: $p$, $k$, $\beta$, $\beta'$, $r$, $r_e$ satisfying the conditions of Theorem \ref{r:thecpsimodfun}.
\begin{itemize}
	\item The case $(2\beta,2k)=(2\beta',2k)$:
	\begin{description}
	\item[Step 1a] Let $a\in\numZ$ satisfy $2\beta'\equiv a\cdot 2\beta\pmod{2k}$ and $(a,2k)=1$.
	\item[Step 2a] If $p\mid a$, replace $a$ with $a+2k$.
	\item[Step 3a] Let $c\in\numZ$ satisfy $2p^2k\mid c$ and $(a,c)=1$.
	\item[Step 4a] Let $b,d\in\numZ$ satisfy $r\mid b$ and $ad-bc=1$.
	\end{description}
	\item The case $(2\beta,2k)=(2\beta'+k,2k)$ (in which we must have $2\mid k$):
	\begin{description}
	\item[Step 1b] Let $a\in\numZ$ satisfy $2\beta'+k\equiv a\cdot 2\beta\pmod{2k}$ and $(a,2k)=1$.
	\item[Step 2b] If $p\mid a$, replace $a$ with $a+2k$.
	\item[Step 3b] Let $c\in\numZ$ satisfy $p^2k\mid c$, $2k\nmid c$ and $(a,c)=1$.
	\item[Step 4b] Let $b,d\in\numZ$ satisfy $r\mid b$ and $ad-bc=1$.
	\end{description}
\end{itemize}
In both cases, set $\gamma=\tbtmat{a}{b}{c}{d}$. Then the conclusion \eqref{eq:thecpsimodfun} holds, and if $(r,p)=1$, then \eqref{eq:thecpsimodfun2} holds as well.
\end{algo}

\begin{rema}
Let $\gamma$ be a matrix produced by Algorithm \ref{algo:findgaama} and we do \emph{not} assume $(r,p)=1$. Besides \eqref{eq:thecpsimodfun2} (which requires $(r,p)=1$), there is a more general formula if one uses other $U_p^R$ operators instead of $U_p'$. Let $N$ be a positive integer coprime to $p$ and set $N_0=(N,p-1)$, $N_1'=(N,(p-1)r)$. Suppose $R$ is a complete set of representatives of $\numZ/p\numZ$ such that $0\in R\subseteq N\numZ$. Then we have
\begin{equation}
\label{eq:UpRLcomm}
U_p^R(L\vert \gamma)=U_p^R(L)\vert\gamma
\end{equation}
for any modular function $L$ with respect to $\Gamma_1^*(N_0p^2k,N_1')$. This follows directly from Proposition \ref{prop:commute1}. To apply this formula, in the case $p\nmid k$, to as many $L$s as possible, one can set $N=(p-1)k$, so $N_0=p-1$ and $N_1'=(p-1)r$ since $r\mid k$. Then the conclusion \eqref{eq:UpRLcomm} is valid for all modular functions $L$ with respect to $\Gamma_1^*((p-1)p^2k,(p-1)r)$, which is the smallest group that can be attained in the current problem. Nevertheless, the larger group $\Gamma_1^*(2p^2k,r)$ used in Theorem \ref{r:thecpsimodfun} suffices for our purpose.
\end{rema}

The following special case on $\beta'=k/2$ is concerned with Andrews' $C\Phi_k$, in which we have a perfect choice of $\gamma$ if $k\equiv2\pmod{4}$.
\begin{prop}
\label{prop:whenk24gamma}
Let the notations be as in Theorem \ref{r:thecpsimodfun}. If $2\mid k$, then for $\gamma=\tbtmat{1}{0}{p^2k}{1}$ we have
\begin{equation}
\label{eq:0tok2}
\frac{f_{k,0}(\tau)}{f_{k,0}(p^2\tau)}\bigg| \gamma=\frac{f_{k,k/2}(\tau)}{f_{k,k/2}(p^2\tau)}.
\end{equation}
Moreover, \eqref{eq:thecpsimodfun2} holds if $(r,p)=1$. Finally, if $k\equiv2\pmod{4}$, then we have
\begin{equation}
\label{eq:10p2k1Decomp}
2\cdot\tbtMat{1}{0}{p^2k}{1}=\tbtMat{1}{-1/2}{0}{1}\tbtMat{2+p^2k}{2+p^2k/2}{2p^2k}{p^2k+2}\tbtMat{1}{-1/2}{0}{1}
\end{equation}
where $\tbtmat{2+p^2k}{2+p^2k/2}{2p^2k}{p^2k+2}$ is an Atkin-Lehner involution of level $2p^2k$. As a consequence, $\gamma$ maps each eta-quotient with respect to $\Gamma_0(2p^2k)$ to another one with respect to the same group, up to a constant factor.
\end{prop}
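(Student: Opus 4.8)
The plan is to reduce everything to Proposition \ref{prop:fqdivfqN} and Lemma \ref{lemm:p2modularFunction}. For \eqref{eq:0tok2}, note that $\gamma = \tbtmat{1}{0}{p^2k}{1}$ lies in $\Gamma_0(p^2k) \subseteq \Gamma_0(p^2\lcm(2,k))$ since $2\mid k$, so Lemma \ref{lemm:p2modularFunction} applies with $a=1$, $b=0$, $c=p^2k$, $d=1$. The phase $\etp{(2\beta)^2(p^2-1)bda^2/(8k)}$ vanishes because $b=0$, so the lemma gives $\frac{f_{k,0}(\tau)}{f_{k,0}(p^2\tau)}\big|\gamma = \frac{f_{k,\beta'}(\tau)}{f_{k,\beta'}(p^2\tau)}$ with $\beta' = T_0\tbtmat{1}{0}{p^2k}{1}$. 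It remains to check $T_0\tbtmat{1}{0}{p^2k}{1} = k/2$: since $2\mid k$ we are in one of the first two cases of \eqref{eq:betapFormula}, and because $2k\nmid p^2k$ (as $p$ is odd) we fall into the case $2\mid k$, $2k\nmid c$, which gives $\lambda(1\cdot0 - k/2) = \lambda(-k/2) = k/2$ by the definition of $\lambda^{(k)}$ (using $\lambda^{(k)}(k\pm\beta)=\lambda^{(k)}(\beta)$ and $\lambda^{(k)}(k/2)=k/2$). This establishes \eqref{eq:0tok2}. The assertion that \eqref{eq:thecpsimodfun2} holds when $(r,p)=1$ is then immediate from the last part of Theorem \ref{r:thecpsimodfun}, since $\gamma$ is a valid choice there (it has $r\mid b = 0$ and $2p^2k \mid c$ in the case-two sense; one only needs $(a,c)=1$, which holds as $a=1$).

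For the matrix identity \eqref{eq:10p2k1Decomp}, the approach is a direct computation in $\glpQ$: multiply out the right-hand side
\[
\tbtMat{1}{-1/2}{0}{1}\tbtMat{2+p^2k}{2+p^2k/2}{2p^2k}{p^2k+2}\tbtMat{1}{-1/2}{0}{1}
\]
and verify it equals $\tbtmat{2}{0}{2p^2k}{2}$. One should also record that the middle matrix $W:=\tbtmat{2+p^2k}{2+p^2k/2}{2p^2k}{p^2k+2}$ has determinant $(2+p^2k)(2+p^2k)-2p^2k(2+p^2k/2) = (p^2k+2)^2 - p^2k(p^2k+4) = 4$, and that, writing $2p^2k = 2\cdot p^2k$ with the diagonal entries divisible by $2$ and the upper-right entry divisible by $1$, $W$ has the shape of an Atkin--Lehner involution $W_2$ of level $2p^2k$ when $k\equiv 2\pmod 4$ (so that $p^2k/2$ is an integer and the relevant divisibility/coprimality conditions for $W_2$ are met: $2\,\|\,2p^2k$ and $\gcd(2, p^2k)=2$). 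The only subtlety is to phrase "Atkin--Lehner involution of level $2p^2k$" correctly; I would invoke the standard normalization $W_Q = \tbtmat{Q\alpha}{\beta}{2p^2k\gamma}{Q\delta}$ with $Q\,\|\,2p^2k$, $\det = Q$, here with $Q=2$, after scaling $W$ by $1/\sqrt 2$.

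Finally, for the consequence about eta-quotients: since $\gamma$ acts (up to the scalar $1/2$ absorbed harmlessly, as $f|\tbtmat{a}{b}{c}{d}$ only depends on $\tbtmat{a}{b}{c}{d}$ through its action on $\uhp$) as a composition of $\tbtmat{1}{-1/2}{0}{1}$ (a half-period translation, sending $\tau\mapsto\tau-\tfrac12$), the Atkin--Lehner involution $W_2$ on $\Gamma_0(2p^2k)$, and another half-period translation, and since each of these three operations is known to send an eta-quotient on $\Gamma_0(2p^2k)$ to another eta-quotient on $\Gamma_0(2p^2k)$ up to a constant (translations act by roots of unity on each $\eta(N\tau)$ with $N\mid 2p^2k$ even — one may need $2\mid N$ for the half-period shift to stay within eta-quotients, which holds since every divisor relevant here is even when $4\mid 2p^2k$, i.e. $k\equiv2\pmod4$; and Atkin--Lehner involutions permute the $\eta(N\tau)$ up to constants by the classical Ligozat-type transformation formulae), the composition does too. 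I expect the main obstacle to be the bookkeeping in this last step: verifying that the half-period translation $\tau\mapsto\tau-\tfrac12$ genuinely preserves the space of eta-quotients on $\Gamma_0(2p^2k)$ requires that $\eta(N(\tau-\tfrac12)) = \etp{-N/48}\,\eta(N\tau)$-type identities land back among the same $\eta(N\tau)$, which is automatic, but one must be careful that the product $W$ was split using $\tbtmat{1}{-1/2}{0}{1}$ rather than an integral translation — the half-integer entry is exactly what makes the middle matrix a clean $W_2$, and tracking the multiplier contributions of this non-integral shift (it is still a genuine element of $\glpQ$ acting on $\uhp$, so $f|\gamma$ is well defined) is the delicate point. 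Everything else is routine matrix arithmetic and citation of standard eta-quotient transformation laws.
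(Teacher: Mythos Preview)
Your approach matches the paper's: reduce \eqref{eq:0tok2} and \eqref{eq:thecpsimodfun2} to the earlier machinery (the paper phrases it as ``$\gamma$ obeys Algorithm \ref{algo:findgaama}''), verify \eqref{eq:10p2k1Decomp} by direct multiplication, identify the middle matrix as an Atkin--Lehner involution, and deduce the eta-quotient statement from the factorization. Your computation of $T_0\tbtmat{1}{0}{p^2k}{1}=k/2$ via the second line of \eqref{eq:betapFormula} is correct.

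There is one genuine slip in the Atkin--Lehner identification. When $k\equiv 2\pmod 4$, the $2$-adic valuation of $2p^2k$ is $2$, so $4\,\|\,2p^2k$, not $2\,\|\,2p^2k$ as you assert. Correspondingly, $\det W=4$ forces $Q=4$, not $Q=2$: in the paper's normalization (following \cite[Definition 6.6.2]{CS17}) the middle matrix is $W_4\big((2+p^2k)/4,\,2+p^2k/2,\,1,\,(2+p^2k)/4\big)$, which makes sense because $p^2k\equiv 2\pmod 4$ gives $4\mid 2+p^2k$. Your ``scale by $1/\sqrt 2$'' remark is therefore off by a factor. This does not break the argument---all that matters for the final assertion is that $W$ \emph{is} some $W_Q$ with $Q\,\|\,2p^2k$---but the parameters and divisibility checks you wrote are incorrect. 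For the half-period shift, the paper's phrasing is cleaner than your divisor-by-divisor discussion: $\tbtmat{1}{-1/2}{0}{1}$ sends $f(q)$ to $f(-q)$, and $q\mapsto -q$ preserves eta-quotients on $\Gamma_0(2p^2k)$ up to a root of unity (for odd $d\mid 2p^2k$ one uses $\eta(\tau+\tfrac12)=\zeta\cdot\eta_2^3/(\eta_1\eta_4)$, and $4d\mid 2p^2k$ holds precisely because $4\,\|\,2p^2k$).
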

\begin{proof}
The $\gamma$ given above obeys the procedure in Algorithm \ref{algo:findgaama} where $\beta=0$, $\beta'=k/2$, so \eqref{eq:0tok2} and \eqref{eq:thecpsimodfun2} hold. The identity \eqref{eq:10p2k1Decomp} obviously holds. Set $Q=4$, $N=2p^2k$. Since $k\equiv2\pmod{4}$, we have $Q\Vert N$ (cf. \cite[Definition 6.6.1]{CS17}). Therefore, $\tbtmat{2+p^2k}{2+p^2k/2}{2p^2k}{p^2k+2}=W_4((2+p^2k)/4,2+p^2k/2,1,(2+p^2k)/4)$ where the right-side hand is defined in \cite[Definition 6.6.2]{CS17}. This is equivalent to saying that $\tbtmat{2+p^2k}{2+p^2k/2}{2p^2k}{p^2k+2}$ is an Atkin-Lehner involution of level $2p^2k$. The final assertion follows from the fact that $\tbtmat{1}{-1/2}{0}{1}$ maps $f(q)$ to $f(-q)$ and standard properties of Atkin-Lehner involutions.
\end{proof}
\begin{rema}
For the reader's convenience, we mention here how Atkin-Lehner involutions map eta-quotients to eta-quotients. We follow the notation of \cite[Section 6.6]{CS17}. Let $Q, N$ be positive integers with $Q\Vert N$. Let $\mathcal{D}_N$ be the set of positive divisors of $N$. We define $\sigma_Q\colon\mathcal{D}_N\to\mathcal{D}_N$ by $\sigma_Q(n)=(Q,N/n)\cdot(n,N/Q)$, which is an involution. Then for integers $j_n$ we have
\begin{equation*}
\prod_{n\mid N}\eta_n^{j_n}\Big\vert_{\sum j_n/2}W_Q=c\cdot\prod_{n\mid N}\eta_n^{j_{\sigma_Q(n)}}
\end{equation*}
where $c$ is a constant. In Proposition \ref{prop:whenk24gamma}, we are using $W_4$ where $N=2p^2k$.
\end{rema}

\subsection{A generalization of Garvan, Sellers and Smoot's result}
\label{subsec:genGSS}

We now explain how Theorem \ref{r:thecpsimodfun}, together with some auxiliary tools, can be applied to extend the result of Garvan, Sellers and Smoot \cite{GSS24} to more general $c\psi_{k,\beta}$. Let $p, k,\beta$ and $\beta'$ satisfy the conditions of Theorem \ref{r:thecpsimodfun} and let $r$, $r_e$ be defined by \eqref{eq:rre}. Define $U_p'$ by \eqref{eq:Upprime}, which depends on $p,k,\beta$ all. Since $(2\beta,k)=(2\beta',k)$ we have
\begin{equation}
\label{eq:rthesame}
r=\frac{k}{(k,(2\beta)^2(p^2-1)/8)}=\frac{k}{(k,(2\beta')^2(p^2-1)/8)},
\end{equation}
so the $U_p'$ given by $(p,k,\beta)$ is the same as the one given by $(p,k,\beta')$.

Define two $U_p'$-sequences by $K_0=L_0=1$, and for $\alpha\geq 0$
\beq
\label{r:s5Lalpha}
L_{2\alpha+1}:=U_p'(A_0L_{2\alpha}), \qquad L_{2\alpha+2}:=U_p'(L_{2\alpha+1}),
\eeq
and
\beq
\label{r:s5Kalpha}
K_{2\alpha+1}:=U_p'(A_1K_{2\alpha}), \qquad K_{2\alpha+2}:=U_p'(K_{2\alpha+1}),
\eeq
where
$$
A_0=q^{(p^2-1)(\beta^2/2k-k/12)}C\Psi_{k,\beta}(q)/C\Psi_{k,\beta}(q^{p^2})
$$
and
$$
A_1=q^{(p^2-1)(\beta'^2/2k-k/12)}C\Psi_{k,\beta'}(q)/C\Psi_{k,\beta'}(q^{p^2}).
$$
\begin{lemm}
\label{lemm:A0A1modular}
$A_0$ and $A_1$ are modular functions with respect to the group $\Gamma_1^*(2p^2k,r)$.
\end{lemm}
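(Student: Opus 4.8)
The plan is to recognize $A_0$ and $A_1$ as quotients of components of the vector-valued form of Theorem~\ref{thm:fkbetaSLZ}, and then to read off their invariance from the transformation law in Lemma~\ref{lemm:p2modularFunction}.

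\emph{First,} I would rewrite the two functions. From $f_{k,\beta}=q^{k/12-\beta^2/(2k)}\,C\Psi_{k,\beta}(q)$ one gets $f_{k,\beta}(p^2\tau)=q^{p^2(k/12-\beta^2/(2k))}C\Psi_{k,\beta}(q^{p^2})$, so that
\[
A_0=\frac{f_{k,\beta}(\tau)}{f_{k,\beta}(p^2\tau)},\qquad A_1=\frac{f_{k,\beta'}(\tau)}{f_{k,\beta'}(p^2\tau)}.
\]
Since $f_{k,\beta}$ and $f_{k,\beta'}$ are nonzero weakly holomorphic modular forms of weight $-1/2$ (Theorem~\ref{thm:fkbetaSLZ}), both quotients are meromorphic on $\uhp$ and meromorphic at every cusp; the only thing left to prove is invariance under $\Gamma_1^*(2p^2k,r)$.

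\emph{Next,} I would fix $\gamma=\tbtmat{a}{b}{c}{d}\in\Gamma_1^*(2p^2k,r)$. As $\lcm(2,k)\mid 2k$ and $2p^2k\mid c$ we have $\gamma\in\Gamma_0(p^2\lcm(2,k))$, and $2\nmid p^2$, so Lemma~\ref{lemm:p2modularFunction} applies and yields
\[
A_0\vert\gamma=\etp{\frac{(2\beta)^2(p^2-1)bda^2}{8k}}\,\frac{f_{k,T_\beta\gamma}(\tau)}{f_{k,T_\beta\gamma}(p^2\tau)}.
\]
I then have to verify $T_\beta\gamma=\beta$: since $a\equiv1\pmod{2p^2k}$ is odd and $2k\mid c$, the value $T_\beta\gamma$ given by \eqref{eq:betapFormula} equals $\lambda(a\beta)$; and $a\equiv1\pmod{2k}$ gives $2a\beta\equiv 2\beta\pmod{2k}$, hence $\lambda^{(k)}(a\beta)=\lambda^{(k)}(\beta)=\beta$ because $\beta\in\mathfrak{B}_k$. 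The identical computation shows $T_{\beta'}\gamma=\beta'$.

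\emph{Finally,} I would check that the root-of-unity factor is trivial. Since $p\ge5$ is odd, $8\mid p^2-1$, so $(2\beta)^2(p^2-1)/8\in\numZ$; putting $g=(k,(2\beta)^2(p^2-1)/8)$, so that $r=k/g$, I get
\[
\frac{(2\beta)^2(p^2-1)bda^2}{8k}=\frac{(2\beta)^2(p^2-1)/8}{g}\cdot\frac{b}{r}\cdot da^2\in\numZ,
\]
because $g$ divides the numerator of the first factor and $r\mid b$. Hence $\etp{\cdots}=1$ and $A_0\vert\gamma=A_0$. For $A_1$ the same computation works verbatim, now using \eqref{eq:rthesame}, i.e.\ that $r=k/(k,(2\beta')^2(p^2-1)/8)$ as well, so $r\mid b$ again forces the relevant exponent to be an integer. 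Together with the first paragraph, this proves that $A_0$ and $A_1$ are modular functions on $\Gamma_1^*(2p^2k,r)$.

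I do not expect a real obstacle: the argument is bookkeeping on top of Lemma~\ref{lemm:p2modularFunction}. The only point that requires attention is that the congruences $a\equiv d\equiv1\pmod{2p^2k}$, $c\equiv0\pmod{2p^2k}$, $b\equiv0\pmod r$ defining $\Gamma_1^*(2p^2k,r)$ are exactly tuned so that (i) the reflection/translation ambiguity inside $\lambda^{(k)}$ is killed, forcing $T_\beta\gamma=\beta$, and (ii) the factor $\etp{(2\beta)^2(p^2-1)bda^2/(8k)}$ collapses to $1$ --- step (ii) being precisely where the definition $r=k/(k,(2\beta)^2(p^2-1)/8)$ enters, and the reason the statement is phrased with this particular $r$.
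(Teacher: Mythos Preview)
Your proposal is correct and follows essentially the same approach as the paper: apply Lemma~\ref{lemm:p2modularFunction}, verify $T_\beta\gamma=\beta$ from the congruences defining $\Gamma_1^*(2p^2k,r)$, and check the exponential factor is trivial because $r\mid b$. Your write-up is in fact more explicit than the paper's own proof, which states $T_\beta\gamma=\beta$ and concludes without spelling out why the factor $\etp{(2\beta)^2(p^2-1)bda^2/(8k)}$ equals~$1$.
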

\begin{proof}
Since $\Gamma_1^*(2p^2k,r)\subseteq\Gamma_0(2p^2k)$, Lemma \ref{lemm:p2modularFunction} implies that for any $\tbtmat{a}{b}{c}{d}\in\Gamma_1^*(2p^2k,r)$ we have
\begin{equation*}
\frac{f_{k,\beta}(\tau)}{f_{k,\beta}(p^2\tau)}\Bigg\vert\tbtMat{a}{b}{c}{d}=\frac{f_{k,\beta_0}(\tau)}{f_{k,\beta_0}(p^2\tau)}
\end{equation*}
where $\beta_0=T_\beta\tbtmat{a}{b}{c}{d}$. Since $a\equiv1\pmod{2p^2k}$ and $c\equiv0\pmod{2p^2k}$, we have $T_\beta\tbtmat{a}{b}{c}{d}=\beta$ by \eqref{eq:betapFormula}. This proves that $A_0$ is a modular function on $\Gamma_1^*(2p^2k,r)$. To prove the assertion on $A_1$, note that we have \eqref{eq:rthesame}. Hence, we repeat the argument above with $A_0$ replaced by $A_1$ and $\beta$ replaced by $\beta'$ which gives the conclusion on $A_1$.
\end{proof}

\begin{thm}
\label{thm:thklkl}
Let $r$, $L_\alpha$, and $K_\alpha$ be defined as in \eqref{eq:rthesame}, \eqref{r:s5Lalpha} and \eqref{r:s5Kalpha}, respectively. Suppose that $(r,p)=1$. Then we have
\beq
\label{r:thklklStrong}
K_\alpha=L_\alpha \vert \gamma,\text{ and } K_\alpha,L_\alpha\text{ are modular functions with respect to }\Gamma_1^*(2p^2k,r),
\eeq
where $\gamma$ is any matrix chosen according to Algorithm \ref{algo:findgaama}.
\end{thm}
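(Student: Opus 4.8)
The plan is to establish \eqref{r:thklklStrong} by two short inductions on $\alpha$, both resting on Theorem \ref{r:thecpsimodfun} together with the $U_p'$-commutation machinery of Section \ref{r:sec5}. Throughout write $A_0=f_{k,\beta}(\tau)/f_{k,\beta}(p^2\tau)$ and $A_1=f_{k,\beta'}(\tau)/f_{k,\beta'}(p^2\tau)$, so that $A_0,A_1$ are exactly the multipliers appearing in the recursions \eqref{r:s5Lalpha} and \eqref{r:s5Kalpha} and, by \eqref{eq:thecpsimodfun} of Theorem \ref{r:thecpsimodfun}, $A_1=A_0\vert\gamma$ for the matrix $\gamma$ produced by Algorithm \ref{algo:findgaama}.

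The first induction shows that every $L_\alpha$ and every $K_\alpha$ is a modular function with respect to $\Gamma_1^*(2p^2k,r)$. For $\alpha=0$ this is trivial. For the step, $A_0L_\alpha$ is modular with respect to $\Gamma_1^*(2p^2k,r)$ by Lemma \ref{lemm:A0A1modular} and the inductive hypothesis (likewise $A_1K_\alpha$), and then $L_{\alpha+1}=U_p'(A_0L_\alpha)$ (or $U_p'(L_\alpha)$ in the odd-to-even case) is modular with respect to $\Gamma_1^*(2p^2k,r)$ because $U_p'=U_p^{R_0}$, $R_0=\{0,r_e,\dots,(p-1)r_e\}$, preserves this space. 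The latter claim follows from Proposition \ref{prop:commute1} applied with $m=p$, $N=r_e$, $R=R_0$, weight $0$, and $M=\tbtmat{a}{b}{c}{d}$ any element of $\Gamma_1^*(2p^2k,r)$: since $(r,p)=1$ forces $(p,r_e)=1$, and since $2p^2k\mid c$ together with $r\mid r_e$ and $r\mid b$ gives $\Gamma_1^*((r_e,p-1)\abs{c},(r_e,(p-1)b))\subseteq\Gamma_1^*(2p^2k,r)$, the proposition yields $(U_p'L)\vert M=U_p'(L\vert M)=U_p'L$ for every $L$ modular with respect to $\Gamma_1^*(2p^2k,r)$.

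The second induction proves $K_\alpha=L_\alpha\vert\gamma$. For $\alpha=0$ both sides equal $1$. For the step in the even-to-odd case, using the inductive hypothesis $K_\alpha=L_\alpha\vert\gamma$, the identity $A_1=A_0\vert\gamma$, and the fact that the slash by the single matrix $\gamma$ is multiplicative on functions,
\begin{equation*}
A_1K_\alpha=(A_0\vert\gamma)(L_\alpha\vert\gamma)=(A_0L_\alpha)\vert\gamma .
\end{equation*}
Since $A_0L_\alpha$ is modular with respect to $\Gamma_1^*(2p^2k,r)$ by the first induction and $(r,p)=1$, the relation \eqref{eq:thecpsimodfun2} of Theorem \ref{r:thecpsimodfun} applies and gives
\begin{equation*}
K_{\alpha+1}=U_p'(A_1K_\alpha)=U_p'\bigl((A_0L_\alpha)\vert\gamma\bigr)=U_p'(A_0L_\alpha)\vert\gamma=L_{\alpha+1}\vert\gamma ;
\end{equation*}
the odd-to-even case is identical with $A_0,A_1$ removed (and with \eqref{eq:thecpsimodfun2} applied to $L=L_\alpha$, which is modular by the first induction). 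This completes the induction, and \eqref{r:thklklStrong} follows.

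I do not expect a deep obstacle here: once Theorem \ref{r:thecpsimodfun} is in hand the argument is an organized bookkeeping exercise. The point needing the most care is ensuring \eqref{eq:thecpsimodfun2} is legitimately invoked at every stage, which is exactly why the modularity of all the $L_\alpha$ and $K_\alpha$ on the \emph{same} group $\Gamma_1^*(2p^2k,r)$ has to be carried along; inside that, the small but essential step is the group inclusion $\Gamma_1^*((r_e,p-1)\abs{c},(r_e,(p-1)b))\subseteq\Gamma_1^*(2p^2k,r)$ which makes $U_p'$ preserve $\Gamma_1^*(2p^2k,r)$-modularity through Proposition \ref{prop:commute1}, and which is where the hypothesis $(r,p)=1$ and the divisibilities $r\mid k$, $r\mid r_e$ are used.
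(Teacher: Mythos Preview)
Your proof is correct and follows essentially the same approach as the paper's: both run an induction on $\alpha$ that uses Theorem \ref{r:thecpsimodfun} (for $A_1=A_0\vert\gamma$ and for the commutation \eqref{eq:thecpsimodfun2}) together with Lemma \ref{lemm:A0A1modular}. The only differences are organizational: the paper carries the identity $K_\alpha=L_\alpha\vert\gamma$ and the modularity statement along in a single induction and dismisses the $U_p'$-stability of $\Gamma_1^*(2p^2k,r)$-modularity as ``a standard property of $U_p'$'', whereas you split into two inductions and supply the justification of that stability via Proposition \ref{prop:commute1}.
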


\begin{proof}
We prove \eqref{r:thklklStrong} by induction on $\alpha$. Obviously \eqref{r:thklklStrong} holds for $\alpha=0$. Assume that \eqref{r:thklklStrong} holds for $2\alpha$. Then
\begin{align*}
K_{2\alpha+1}=U_p'(A_1K_{2\alpha})=U_p'(A_0L_{2\alpha}\vert \gamma)=U_p'(A_0L_{2\alpha})\vert \gamma=L_{2\alpha+1} \vert \gamma
\end{align*}
where we have used Theorem \ref{r:thecpsimodfun} in the second and third equalities. Moreover, it is a standard property of $U_p'$ that $K_{2\alpha+1}$ and $L_{2\alpha+1}$ are modular functions on $\Gamma_1^*(2p^2k,r)$ since by the induction hypothesis and Lemma \ref{lemm:A0A1modular} $A_1K_{2\alpha}$ and $A_0L_{2\alpha}$ are. We have shown \eqref{r:thklklStrong} holds for $2\alpha+1$. Similarly, the assumption \eqref{r:thklklStrong} holds for $2\alpha+1$ can imply that it holds for $2\alpha+2$. This concludes the induction.
\end{proof}
\begin{rema}
If $r=1$, that is, $k\mid(2\beta)^2(p^2-1)/8$, then $U_p'(L)=U_p(L)$ for $L$ in Theorem \ref{r:thecpsimodfun} where $U_p$ is the usual Atkin-Lehner operator. In this case, $K_\alpha$ and $L_\alpha$ can be alternatively defined by
$$
L_{2\alpha+1}=U_p(A_0L_{2\alpha}), \qquad L_{2\alpha+2}=U_p(L_{2\alpha+1}),
$$
$$
K_{2\alpha+1}=U_p(A_1K_{2\alpha}), \qquad K_{2\alpha+2}=U_p(K_{2\alpha+1}).
$$
For instance, if $p^2\equiv1\pmod{2k}$, then the above discussion happens.
\end{rema}

The $U_p'$-sequence in Theorem \ref{thm:thklkl} can be employed to prove Ramanujan-type congruences. If there exists a family of congruences
\beq
\label{r:cpcongg}
c\psi_{k,\beta}(p^\alpha n+\delta_\alpha)\equiv 0 \pmod{p^{s(\alpha)}}
\eeq
(where $24k\delta_\alpha \equiv 12\beta^2-2k^2 \pmod{p^\alpha}$) that are guaranteed by the existence of identities
$$
L_\alpha=p^{s(\alpha)}f_\alpha(p_1,p_2,\cdots,p_i)
$$
with a sequence of Laurent polynomial $f_\alpha$ and modular functions $p_1,p_2,\cdots,p_i$ with $p$-adic integral coefficients, then by Theorem \ref{thm:thklkl} we have
\beq
\label{r:s5kp}
K_\alpha=p^{s(\alpha)}f_\alpha(p_1\vert \gamma,p_2\vert \gamma,\cdots,p_i\vert \gamma).
\eeq
We note that \eqref{r:s5kp} are not sufficient to indicate that
\beq
\label{r:s5kp1}
c\psi_{k,\beta'}(p^\alpha n+\delta'_\alpha)\equiv 0 \pmod{p^{s(\alpha)}},
\eeq
where $24k\delta'_\alpha \equiv 12\beta'^2-2k^2 \pmod{p^\alpha}$. However, if one can show that all $p_j\vert \gamma$ have $p$-adic integral coefficients, then \eqref{r:s5kp1} holds. We will show this is the case for $k=3$, $p=5$ in Section \ref{subsec:fromf312tof332}. Thus, Theorem \ref{thm:thklkl} would be a useful tool to find new, prove known, and establish relations between Ramanujan-type congruences.

\section{Congruences of $C\Psi_{3,\beta}$}
\label{r:sec6}

In this section, we provide a sketch of the proof of \eqref{r:c31cong} and then prove \eqref{r:c33cong} by transforming from $C\Psi_{3,1/2}$ to $C\Psi_{3,3/2}$. Here we use the abbreviation $\eta_m$ to denote the function $\eta(m\tau)$.

By calculating the $\zeta_{1/2}$ and $\zeta_{3/2}$ coefficients of \eqref{r:fkdef} directly we have
$$
C\Psi_{3,1/2}(q)=\frac{q^{-5/24}}{\eta_1^3}\sum_{n,m=-\infty}^\infty q^{(n+1/3)^2+(n+1/3)(m+1/3)+(m+1/3)^2},
$$
and
$$
C\Psi_{3,3/2}(q)=\frac{q^{1/8}}{\eta_1^3}\sum_{n,m=-\infty}^\infty q^{n^2+nm+m^2}.
$$
The expansions \eqref{eq:cpsi31genfun} and \eqref{r:cpsi33genfun} then follow immediately from the above formulas, \cite[Eq. (2.1)]{BBG94} and \cite[Prop. 2.2]{BBG94}. Alternatively, one can prove \eqref{eq:cpsi31genfun} and \eqref{r:cpsi33genfun} by the valence formula directly.

\subsection{Sketch of the proof of \eqref{r:c31cong}}
\label{subsec:sketch_r_c31cong}

The proof depends on the relations in Appendix \ref{appendix:modular_relations}. All these relations can be proved algorithmically using the valence formula; cf. \cite[Section 2]{CCG20}. Following Paule and Radu \cite{PR12} we let
\begin{equation}
\label{eq:A}
A:=\frac{C\Psi_{3,1/2}(q)}{q^5 C\Psi_{3,1/2}(q^{25})}=\frac{\eta_{25}^4\eta_{3}^3}{\eta_{75}^3\eta_{1}^4}.
\end{equation}
Define $U^{(0)}(f):=U_5(Af)$, $U^{(1)}(f):=U_5(f)$, and the U-sequence $L_\alpha$ by $L_0=1$ and for $\alpha\geq 0$
$$
L_{2\alpha+1}:=U^{(0)}(L_{2\alpha}), \qquad L_{2\alpha+2}:=U^{(1)}(L_{2\alpha+1}).
$$
Then we have
$$
L_{2\alpha}=\frac{1}{3}\prod_{n=1}^\infty \frac{(1-q^{n})^4}{(1-q^{3n})^3} \sum_{n=0}^\infty c\psi_{3,1/2}(5^{2\alpha}n+\delta_{2\alpha}) q^n,
$$
and
$$
L_{2\alpha+1}=\frac{1}{3q}\prod_{n=1}^\infty \frac{(1-q^{5n})^4}{(1-q^{15n})^3} \sum_{n=0}^\infty c\psi_{3,1/2}(5^{2\alpha+1}n+\delta_{2\alpha+1}) q^n.
$$
We also need
\begin{equation}
\label{eq:tp0p1}
t:=\frac{\eta_5^6}{\eta_1^6}, \quad p_0:=6xy+27x+(y-3)t, \quad \text{and}\qquad p_1:=12xy+81x+y+(12y-9)t,
\end{equation}
with
\beq
\label{r:xxyy}
x:=\frac{\eta_{15}^5 \eta_5}{\eta_3\eta_1^5} \qquad \text{and}  \qquad y:=\frac{\eta_5^2\eta_1^2}{\eta_{15}^2\eta_3^2}.
\eeq

The following lemma is analogous to \cite[Lemma 4.3]{PR12}.
Note that here we use the same $t$ but different $p_0$ and $p_1$. The first part of the proof of this lemma is to check all the relations in Appendix \ref{appendix:modular_relations}, which can be done by any computer algebra system. The rest of the proof follows \cite[Lemma 4.3]{PR12} and is omitted.

\begin{lemma}
\label{r:lm43}
There exist discrete functions (a map from $\mathbb{Z}^m$ to $\mathbb{Z}$ with finite support for some positive integer $m$) $a_i$, $b$, $c$, and $d_i$, $i\in \{0,1\}$,
such that the following relations hold for all $k\in \numgeq{Z}{0}$:
\begin{align*}
&U^{(0)}(t^k)=\sum_{n\geq \lceil (k+3)/5 \rceil} a_0(k,n)5^{\lfloor \frac{5n-k-2}{2} \rfloor} t^n
+p_1\sum_{n\geq \lceil k/5 \rceil} a_1(k,n)5^{\lfloor \frac{5n-k}{2} \rfloor} t^n,\\
&U^{(0)}(p_0t^k)=p_1\sum_{n\geq \lceil k/5 \rceil} b(k,n)5^{\lfloor \frac{5n-k-1}{2} \rfloor} t^n,\\
&U^{(1)}(t^k)=\sum_{n\geq \lceil k/5 \rceil} c(k,n)5^{\lfloor \frac{5n-k-1}{2} \rfloor} t^n,\\
&U^{(1)}(p_1t^k)=\sum_{n\geq \lceil (k+1)/5 \rceil} d_0(k,n)5^{\lfloor \frac{5n-k-1}{2} \rfloor} t^n
+p_0\sum_{n\geq \lceil (k-1)/5 \rceil} d_1(k,n)5^{\lfloor \frac{5n-k+2}{2} \rfloor} t^n.
\end{align*}
\end{lemma}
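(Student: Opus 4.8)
The plan is to follow the strategy of Paule and Radu \cite{PR12} closely, the main point being that the four $U$-operator identities in the lemma are a closed system: each of $U^{(0)}$ and $U^{(1)}$ applied to a power of $t$ (possibly multiplied by $p_0$ or $p_1$) returns a polynomial in $t$ of the same shape, with controlled $5$-adic valuations of the coefficients and with $p_0,p_1$ entering only linearly. So the first step is purely computational: one must establish a finite list of ``relations'' (the ones collected in the Appendix) expressing products such as $Ax^iy^j$, $x^iy^j$, $t^i$, $p_0t^i$, $p_1t^i$ etc.\ as $\numC$-linear (indeed $\numZ[1/5]$-linear, with explicit powers of $5$) combinations of a fixed finite basis of modular functions on the relevant genus-one curve $X_0(15)$ (with the four cusps). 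Each such relation is a single identity between $q$-expansions of modular functions of level dividing $75$, and is verified by the valence formula: bound the order of the pole divisor, check enough Fourier coefficients, done. This step is routine in the sense of \cite[Section 2]{CCG20} but is the bulk of the work, and it is where essentially all the case-by-case effort lives.

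Next I would set up the action of $U^{(0)}$ and $U^{(1)}$ on the filtered module. Write $M$ for the $\numC$-span of $\{t^n\}_{n\ge0}\cup\{p_0t^n\}_{n\ge0}\cup\{p_1t^n\}_{n\ge0}$ and record that $U^{(i)}$ maps $M$ to $M$; this is exactly the content of the four displayed formulas once one knows the finitely many relations. The key quantitative claim is the $5$-adic one: the coefficient of $t^n$ in, say, $U^{(0)}(t^k)$ is divisible by $5^{\lfloor(5n-k-2)/2\rfloor}$, and similarly for the other three. This is proved by induction, reducing everything to a finite base of small $k$ (the non-negative shift-values $\lceil(k+3)/5\rceil$ etc.\ force $n$ not too small relative to $k$) together with the multiplicativity relation $t\cdot t^n=t^{n+1}$, which lets one bootstrap from $U^{(i)}(t^k)$ to $U^{(i)}(t^{k+5})$ by multiplying by $t^5$ inside the $U$-operator and using a ``$U(t^5 g)=\text{(shift)}\,U(g)$''-type identity; tracking how the floor-exponents change under $k\mapsto k+5$ gives the uniform divisibility. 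The combinatorial heart is checking that the exponents $\lfloor(5n-k-2)/2\rfloor$, $\lfloor(5n-k)/2\rfloor$, $\lfloor(5n-k-1)/2\rfloor$, $\lfloor(5n-k+2)/2\rfloor$ are internally consistent: applying $U^{(0)}$ then $U^{(1)}$ (or vice versa) to $t^k$ must land in a space with the valuations dictated by composing the two estimates, and the arithmetic of the floors has to close up. This is precisely the analogue of the bookkeeping in \cite[Lemma 4.3]{PR12}, and I expect it to be the main obstacle: one has to choose the floor thresholds and the support bounds on $a_i,b,c,d_i$ so that the induction is self-sustaining, and an off-by-one there breaks the whole scheme.

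Finally, having the four identities, the proof is completed exactly as in \cite{PR12}: combine them to show $L_{2\alpha}$ and $L_{2\alpha+1}$ lie in $5^{\lfloor\alpha/2\rfloor}$ times the module $M$ with integral coefficients — more precisely $L_{2\alpha}\in\sum_{n\ge1}5^{?}\numZ\,t^n$ plus possibly a $p_0$-term, and likewise for the odd index — and then read off the congruence \eqref{r:c31cong} for $c\psi_{3,1/2}$ from the identification of $L_\alpha$ with the restricted generating function (the eta-product prefactor has integral power-series coefficients, so no $5$'s are lost). Since the excerpt explicitly says ``the rest of the proof follows \cite[Lemma 4.3]{PR12} and is omitted,'' for this lemma I would only: (i) point to the Appendix for the list of relations and note each is a valence-formula check; (ii) carry out the induction on $k$ for the divisibility, emphasizing the role of multiplication by $t$ and the base cases; and (iii) cite \cite{PR12,CCG20} for the structural part. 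The one genuinely new ingredient relative to \cite{PR12} is that the polynomials $p_0,p_1$ here differ from theirs (while $t$ is the same), so I would double-check that $p_0,p_1$ as defined in \eqref{eq:tp0p1} together with $t$ still generate the right space of modular functions on $X_0(15)$ and that the cross-terms $U^{(0)}(p_0t^k)$, $U^{(1)}(p_1t^k)$ genuinely produce $p_1$-, respectively $p_0$-, components and nothing worse — this is the only place the change of $p_0,p_1$ could cause trouble.
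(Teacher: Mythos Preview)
Your proposal is correct and matches the paper's approach essentially verbatim: the paper states that the proof consists of (i) checking all the relations in the Appendix by computer algebra (equivalently, the valence formula), and (ii) following \cite[Lemma~4.3]{PR12} for the rest, which is exactly the induction on $k$ you outline. One minor sharpening: the step you describe as ``$U(t^5 g)=\text{(shift)}\,U(g)$'' is in practice the degree-$5$ modular equation for $t$ over $\numC(t(5\tau))$, which yields a five-term recurrence in $k$ for each of $U^{(i)}(t^k)$, $U^{(0)}(p_0t^k)$, $U^{(1)}(p_1t^k)$; the Appendix supplies exactly five consecutive base cases ($k=0,-1,-2,-3,-4$) for each, and the floor-exponent bookkeeping then goes through as in \cite{PR12}.
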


Let
\begin{align*}
X^{(0)}:=&\left\{\sum_{n=0}^\infty r_0(n)5^{\lfloor \frac{5n}{2} \rfloor}p_0t^n+\sum_{n=1}^\infty r_1(n)5^{\lfloor \frac{5n-3}{2} \rfloor}t^n:\text{$r_i$ are discrete functions} \right\},\\
X^{(1)}:=&\left\{\delta p_1+\sum_{n=1}^\infty r_0(n)5^{\lfloor \frac{5n-1}{2} \rfloor}p_1t^n+\sum_{n=1}^\infty r_1(n)5^{\lfloor \frac{5n-2}{2} \rfloor}t^n:\text{$r_i$ are discrete functions}, \delta \in \mathbb{Z} \right\}.
\end{align*}
By Lemma \ref{r:lm43} and the fundamental identity
$$
U^{(0)}(p_0)=(5^9 t^4+5^8 t^3+8 \cdot 5^5 t^2+4 \cdot 5^3+1)p_1
$$
given in Appendix \ref{appendix:modular_relations} we can prove the following lemma which is analogous to \cite[Lemma 4.4]{PR12}, and the proof is omitted again.

\begin{lemma}
\label{r:lm44}
We have
\begin{align*}
&f\in X^{(0)} \text{ implies } U^{(0)}(f)\in X^{(1)},\\
&f\in X^{(1)} \text{ implies } \frac{1}{5}U^{(1)}(f)\in X^{(0)}.\\
\end{align*}
\end{lemma}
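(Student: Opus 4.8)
The plan is to follow the proof of \cite[Lemma 4.4]{PR12} closely, reducing both implications to the explicit action formulas of Lemma \ref{r:lm43} together with the fundamental identity $U^{(0)}(p_0)=(5^9 t^4+5^8 t^3+8\cdot5^5 t^2+4\cdot5^3+1)p_1$ recorded in the Appendix, and then carrying out a careful bookkeeping of powers of $5$. First I would use that $U^{(0)}$ and $U^{(1)}$ are $\numC$-linear, so that each implication only has to be tested on the generators of $X^{(0)}$ and $X^{(1)}$. For the first implication, a general element $f\in X^{(0)}$ is a finite $\numZ$-combination of the monomials $5^{\lfloor 5n/2\rfloor}p_0t^n$ and $5^{\lfloor(5n-3)/2\rfloor}t^n$; applying $U^{(0)}$ and substituting the formulas of Lemma \ref{r:lm43} for $U^{(0)}(p_0t^k)$ and $U^{(0)}(t^k)$, and the sharper fundamental identity for the bare term $U^{(0)}(p_0)$, rewrites $U^{(0)}(f)$ as a $t$-series combination of $p_1t^m$ and $t^m$.

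The heart of the argument is then to verify that the $5$-adic valuation of the coefficient of each $p_1t^m$ is at least $\lfloor(5m-1)/2\rfloor$ and of each $t^m$ at least $\lfloor(5m-2)/2\rfloor$, with the $m=0$ contributions absorbed into the integer parameter $\delta$ of $X^{(1)}$; this is exactly the membership condition $U^{(0)}(f)\in X^{(1)}$. Concretely, one adds the prefactor exponent (such as $\lfloor 5n/2\rfloor$) to the exponent produced by Lemma \ref{r:lm43} (such as $\lfloor(5m-n-1)/2\rfloor$ in $U^{(0)}(p_0t^n)$), and checks the resulting inequality using the summation ranges $m\geq\lceil n/5\rceil$, $m\geq\lceil(n+3)/5\rceil$, and so on, together with the elementary bound $\lfloor a\rfloor+\lfloor b\rfloor\geq\lfloor a+b\rfloor-1$. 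The second implication is dual: a general $g\in X^{(1)}$ is $\delta p_1$ plus a finite $\numZ$-combination of $5^{\lfloor(5n-1)/2\rfloor}p_1t^n$ and $5^{\lfloor(5n-2)/2\rfloor}t^n$, and applying the relevant $U$-operator together with the remaining formulas of Lemma \ref{r:lm43} writes the result as a combination of $p_0t^m$ and $t^m$; one then verifies that after dividing by $5$ the coefficient of $p_0t^m$ still has valuation at least $\lfloor 5m/2\rfloor$ and that of $t^m$ at least $\lfloor(5m-3)/2\rfloor$.

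I expect the main obstacle to be the tightness of these floor estimates at the low-degree boundary, especially the contributions coming from the bare terms $U^{(0)}(p_0)$ and $U^{(1)}(p_1)$: there the generic inequality $\lfloor a\rfloor+\lfloor b\rfloor\geq\lfloor a+b\rfloor-1$ loses precisely the unit needed, which is exactly why the fundamental identity for $U^{(0)}(p_0)$ must be invoked in place of the $k=0$ instance of Lemma \ref{r:lm43}, and why $5$-integrality has to be re-checked after the division by $5$ in the second implication. Since the combinatorial structure is identical to \cite[Lemma 4.4]{PR12}, the only genuinely new input is re-running these valuation inequalities with the present $p_0$, $p_1$ from \eqref{eq:tp0p1} and $x$, $y$ from \eqref{r:xxyy}, and with the fundamental identity from the Appendix; every relation required has already been reduced by the valence-formula method to the finitely many identities verified there, so no further analytic input is needed.
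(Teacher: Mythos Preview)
Your proposal is correct and follows exactly the approach the paper indicates: the paper omits the proof entirely, saying only that it follows \cite[Lemma~4.4]{PR12} using Lemma~\ref{r:lm43} and the fundamental identity $U^{(0)}(p_0)=(5^9t^4+5^8t^3+8\cdot5^5t^2+4\cdot5^3t+1)p_1$ from the Appendix, and your sketch is precisely that argument carried out in detail. Your identification of the boundary case $U^{(0)}(p_0)$ as the place where the generic floor estimate from Lemma~\ref{r:lm43} loses one unit and the explicit identity must be invoked is exactly right; the $\delta p_1$ term in the second implication, by contrast, does not strictly require the explicit Appendix formula for $U^{(1)}(p_1)$, since the $k=0$ instance of the $U^{(1)}(p_1t^k)$ formula in Lemma~\ref{r:lm43} already gives just enough $5$-divisibility.

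One remark on the statement itself: the second implication should read $\frac{1}{5}U^{(1)}(f)\in X^{(0)}$ rather than $\frac{1}{5}U^{(0)}(f)\in X^{(0)}$, since the recursion is $L_{2\alpha+2}=U^{(1)}(L_{2\alpha+1})$ and the induction \eqref{r:Lmod5} needs $U^{(1)}$ here; your phrase ``the relevant $U$-operator'' together with your reference to the $U^{(1)}$-formulas of Lemma~\ref{r:lm43} shows you already parsed this correctly.
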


By Lemma \ref{r:lm44} and the fundamental identity
$$
L_1=U^{(0)}(1)=5^7 t^3+9\cdot 5^4 t^2+9 \cdot 5 t+(5^5 t^2+8 \cdot 5^2 t+1)p_1 \in X^{(1)}
$$
given in Appendix \ref{appendix:modular_relations} we have for $\alpha \geq 1$,
\begin{align}
\label{r:Lmod5}
L_{2\alpha-1}\in 5^{\alpha-1} X^{(1)}\qquad \text{and} \qquad L_{2\alpha}\in 5^{\alpha} X^{(0)},
\end{align}
which proves \eqref{r:c31cong}.

\subsection{Finer transformation equations of $C\Psi_{3,1/2}$}
As shown by Table \ref{table:BkPartitions}, there is no modular transformation in $\Gamma_0(3)$ that permutes $C\Psi_{3,1/2}$ and $C\Psi_{3,3/2}$. This remains true if $\Gamma_0(3)$ is replaced by the larger group $\slZ$ according to the middle two formulas in Example \ref{examp:k234}. However, there indeed exists $M$ in the group algebra that sends $C\Psi_{3,1/2}$ to $C\Psi_{3,3/2}$ and has fine interaction with $U_m^R$. This is the theme of this subsection.

Set $T=\tbtmat{1}{1}{0}{1}$ and $\zeta_m=\etp{1/m}$.
\begin{prop}
\label{prop:f3betaGamma026}
Let $\gamma=\tbtmat{a}{b}{c}{d}\in\Gamma_0^0(2,6)$ and $s\in\numZ$. We have
\begin{equation*}
f_{3,1/2}\vert_{-\frac{1}{2}}\widetilde{\gamma T^s}=\alpha_\gamma\cdot\left(\zeta_{24}^{5s}\cdot\frac{2+\zeta_3^{cd/2}}{3}\cdot f_{3,1/2}+\zeta_8^{-s}\cdot\frac{1-\zeta_3^{cd/2}}{3}\cdot f_{3,3/2}\right),
\end{equation*}
where
\begin{equation*}
\alpha_\gamma=\legendre{\sgn{d}3b}{\abs{d}}\cdot\etp{\frac{\abs{d}-1}{8}}\cdot\rmi^{\frac{1-\sgn{d}}{2}\legendre{c}{-1}}\cdot\etp{\frac{(2b+c)d}{8}-\frac{ab}{24}}.
\end{equation*}
\end{prop}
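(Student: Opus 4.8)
The plan is to reduce the statement about the vector $(f_{3,1/2},f_{3,3/2})$ under $\gamma T^s$ to the already-established machinery of Theorem~\ref{thm:fkbetaGamma0k} (for the $\Gamma_0(3)$ part) and Theorem~\ref{thm:fkbetaSLZ} (for the $\tbtmat{0}{-1}{1}{0}$ and $T$ parts), together with the explicit entries of the Weil representation coming from Lemma~\ref{lemm:coeffWeil} and Lemma~\ref{lemm:gbdt}. Here $k=3$ so $\mathfrak{B}_3=\{1/2,3/2\}$ and the relevant lattice is the even lattice $\underline{2\numZ}_3$ with discriminant module $\tfrac{1}{6}\numZ/2\numZ$. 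The key point is that $\gamma\in\Gamma_0^0(2,6)$ has $c\equiv0\pmod 2$ and $b\equiv0\pmod 6$, which puts us squarely in case (iii) of the proof of Theorem~\ref{thm:fkbetaGamma0k} but \emph{without} the hypothesis $3\mid c$; so $\gamma$ need not lie in $\Gamma_0(3)$ and the transform $f_{3,\beta}\vert_{-1/2}\gamma$ can genuinely be a nontrivial linear combination of $f_{3,1/2}$ and $f_{3,3/2}$ rather than a single term.

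First I would handle the $T^s$ factor: since $f_{3,1/2}\vert_{-1/2}T=\etp{\tfrac{3}{12}-\tfrac{1/4}{6}}f_{3,1/2}=\zeta_{24}^{5}f_{3,1/2}$ and $f_{3,3/2}\vert_{-1/2}T=\etp{\tfrac{3}{12}-\tfrac{9/4}{6}}f_{3,3/2}=\zeta_8^{-1}f_{3,3/2}$ by \eqref{eq:fkbetaVertT}, the scalars $\zeta_{24}^{5s}$ and $\zeta_8^{-s}$ attached to the two coordinates are forced, and it suffices to prove the $s=0$ case and then apply $\vert_{-1/2}\widetilde{T^s}$ on the right. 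For $s=0$: go back to \eqref{eq:hbetakvertgamma2Odd}, which expresses $h_{\beta/3}^{(3)}\vert_{(k-1)/2}\widetilde{\gamma}$ in terms of the four coefficients $\rho_{\underline{2\numZ}_3}^*\widetilde{\gamma}_{\beta/3,\ast}$. Exactly as in case (iii) one reduces (possibly after the trick $\widetilde{\gamma}=\widetilde{\tbtmat{1}{-3}{0}{1}}\widetilde{\tbtmat{a+3c}{b+3d}{c}{d}}$ to arrange $12\mid bc$, using $b\equiv0\pmod 6$ and $c$ even) to evaluating $\rho_{\underline{2\numZ}_3}\widetilde{\gamma}_{\beta/3,\beta'/3}$ via \eqref{eq:rhogammayx}: this is a product of $(-\rmi)^{\frac{1-\sgn d}{2}\legendre{c}{-1}}$, the Gauss sum $\mathfrak{g}_{\underline{2\numZ}_3}(b,d;\beta/3)$ from Lemma~\ref{lemm:gbdt}, a normalizing factor $\sqrt{|D[c]|/|D|}$, and $\mathscr{G}_{D_{\underline{2\numZ}_3}}(-cd,\ast)$. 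Because $3\nmid c$ here, the ``$4k\nmid c$'' branch of \eqref{eq:wheninDbullet}--\eqref{eq:rho2Zkcases} applies: both $\beta'\in\mathfrak{B}_3$ survive, $|D[c]|/|D|$ produces the factor $1/3$ (the $3$-part of the discriminant group being killed), and the two Gauss sums $\mathscr{G}_{D_{\underline{2\numZ}_3}}(-cd,0)$ and $\mathscr{G}_{D_{\underline{2\numZ}_3}}(-cd,\text{nonzero coset})$ evaluate to $\tfrac{1}{?}(1\pm\zeta_3^{cd/2})$-type expressions — this is where the $2+\zeta_3^{cd/2}$ and $1-\zeta_3^{cd/2}$ in the statement come from. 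Finally multiply through by $\eta^{-3}\vert_{-3/2}\widetilde{\gamma}=\chi_\eta^{-3}\widetilde{\gamma}\,\eta^{-3}$ and collect the eta-multiplier, the $\chi_k$-factor (Lemma~\ref{lemm:thetaPowerk}), and the Gauss-sum phases into $\alpha_\gamma$; comparing with the closed form $\alpha_\gamma=\legendre{\sgn d\,3b}{|d|}\etp{\frac{|d|-1}{8}}\rmi^{\frac{1-\sgn d}{2}\legendre{c}{-1}}\etp{\frac{(2b+c)d}{8}-\frac{ab}{24}}$ is then a finite check using \eqref{eq:etaChar} and $\chi_\eta^{-3}\widetilde{\gamma}\chi_k\widetilde{\gamma}=\etp{\frac{3(2ac-cd+2bd-2bdc^2)}{24}}$ (the identity derived in case (iii)), simplified under $2\mid c$, $6\mid b$, $ad-bc=1$.

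The main obstacle I anticipate is \emph{bookkeeping the $3$-adic part of the discriminant module} $\tfrac{1}{6}\numZ/2\numZ\cong\numZ/12\numZ$ when $3\nmid c$: one must carefully identify which two residues $\beta'$ in $\{1/2,3/2\}$ the nonzero coefficients $\rho^*_{\underline{2\numZ}_3}\widetilde{\gamma}_{\beta/3,\beta'/3}$ hit (via $D[c]^\bullet$), and then correctly evaluate the two generalized Gauss sums $\mathscr{G}_{D_{\underline{2\numZ}_3}}(-cd,x)$ on the relevant cosets — the subtle factor of $1/3$ versus $1$ and the dependence on $cd\bmod 3$ (encoded as $\zeta_3^{cd/2}$, which is legitimate since $cd$ is even) all live here. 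A secondary annoyance is that $\gamma\in\Gamma_0^0(2,6)$ does not lie in $\Gamma_0(2)$-and-$\Gamma_0(3)$ simultaneously in the way case~(i)/(iii) assumed $3\mid c$, so one cannot quote Theorem~\ref{thm:fkbetaGamma0k} verbatim; instead one re-runs its case~(iii) argument keeping $c$ coprime to $3$, which is the technical heart of the computation. Once these Gauss-sum evaluations are pinned down, everything else — the eta multiplier, the $\chi_3$ character, the reduction $12\mid bc$, and the reinsertion of $T^s$ — is routine algebra with roots of unity.
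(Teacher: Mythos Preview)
Your approach is essentially the paper's: reduce to $s=0$ via \eqref{eq:fkbetaVertT}, then re-run the case~(iii) machinery of Theorem~\ref{thm:fkbetaGamma0k} without the hypothesis $3\mid c$, using Lemma~\ref{lemm:coeffWeil} (applicable since $6\mid b$ and $2\mid c$ already force $12\mid bc$, so your $T^{-3}$ trick is unnecessary), and finally collect the $\eta$-multiplier and $\chi_3$ into $\alpha_\gamma$.

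Two small slips to flag. First, equations \eqref{eq:wheninDbullet}--\eqref{eq:rho2Zkcases} were derived under the standing assumption $k\mid c$ of Theorem~\ref{thm:fkbetaGamma0k}, so you cannot literally invoke the ``$4k\nmid c$ branch'' when $3\nmid c$; you must re-derive the $D_{\underline{2\numZ}_3}[c]^\bullet$ condition from scratch, and the paper indeed does this, obtaining three subcases depending on $c\bmod 8$ (its \eqref{eq:when2Z3nonzero}). Second, $|D[c]|/|D|$ equals $1/6$ when $c\equiv 2\pmod 4$ and $1/3$ when $4\mid c$, so it is not by itself the source of the $1/3$; the clean factors $(2+\zeta_3^{cd/2})/3$ and $(1-\zeta_3^{cd/2})/3$ emerge only after summing over all contributing $\beta'\in\{1,3,5,7,9,11\}/6$ (grouped into sets $B_1,B_3$ depending on $c\bmod 8$ and $a\bmod 4$) and over $y\in D$, a case check the paper calls ``tedious but elementary.'' With these corrections your outline goes through.
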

\begin{proof}
If $3\mid c$, then the desired conclusion follows from Theorem \ref{thm:fkbetaGamma0k} and \eqref{eq:fkbetaVertT}. Now we assume that $3\nmid c$.
Let $\underline{2\numZ}$ denote the lattice $2\numZ$ equipped with the bilinear form $(x,y)\mapsto 3xy$. Then the dual lattice $(2\numZ)^\sharp=\tfrac{1}{6}\numZ$. Since $\gamma=\tbtmat{a}{b}{c}{d}\in\Gamma_0^0(2,6)\subseteq\Gamma_0(2)$ we can apply \eqref{eq:hbetakvertgammaOdd} with $k=3$ and obtain
\begin{equation*}
h_{1/6}^{(3)}\vert_{1}\widetilde{\gamma}=\chi_3(\widetilde{\gamma})\cdot\sum_{0\leq2\beta'<12,\,2\beta'\equiv1\bmod2}\rho_{\underline{2\numZ}}^*(\widetilde{\gamma})_{\frac{1}{6},\frac{2\beta'}{6}}\cdot h_{\beta'/3}^{(3)}.
\end{equation*}
Multiplying this formula by $(-\eta)^{-3}\vert_{-3/2}\widetilde{\gamma}=\chi_\eta^{-3}(\widetilde{\gamma})(-\eta)^{-3}$ we obtain
\begin{equation*}
f_{3,1/2}\vert_{-1/2}\widetilde{\gamma}=\chi_3(\widetilde{\gamma})\chi_\eta^{-3}(\widetilde{\gamma})\cdot\sum_{0\leq2\beta'<12,\,2\beta'\equiv1\bmod2}\rho_{\underline{2\numZ}}^*(\widetilde{\gamma})_{\frac{1}{6},\frac{2\beta'}{6}}\cdot f_{3,\beta'}.
\end{equation*}
Since $f_{3,\beta'}=f_{3,\beta'+k}=f_{3,k-\beta'}$ (see \eqref{eq:htsymmetry}) the above formula becomes
\begin{equation}
\label{eq:f312vertTemp}
f_{3,1/2}\vert_{-1/2}\widetilde{\gamma}=\chi_3(\widetilde{\gamma})\chi_\eta^{-3}(\widetilde{\gamma})\cdot\left(c_1(\gamma)\cdot f_{3,1/2}+c_3(\gamma)\cdot f_{3,3/2}\right)
\end{equation}
where
\begin{equation}
\label{eq:f3Gamma26Coeff}
c_1(\gamma)=\rho_{\underline{2\numZ}}^*(\widetilde{\gamma})_{\frac{1}{6},\frac{1}{6}}+\rho_{\underline{2\numZ}}^*(\widetilde{\gamma})_{\frac{1}{6},\frac{5}{6}}+\rho_{\underline{2\numZ}}^*(\widetilde{\gamma})_{\frac{1}{6},\frac{7}{6}}+\rho_{\underline{2\numZ}}^*(\widetilde{\gamma})_{\frac{1}{6},\frac{11}{6}},\quad c_3(\gamma)=\rho_{\underline{2\numZ}}^*(\widetilde{\gamma})_{\frac{1}{6},\frac{3}{6}}+\rho_{\underline{2\numZ}}^*(\widetilde{\gamma})_{\frac{1}{6},\frac{9}{6}}.
\end{equation}
Since $\gamma=\tbtmat{a}{b}{c}{d}\in\Gamma_0^0(2,6)$, we have $bc\cdot (2\numZ)^\sharp\subseteq2\numZ$ and hence Lemma \ref{lemm:coeffWeil} is applicable to $\underline{2\numZ}=\underline{2\numZ}_3$, which, for $l,m\in\{1,3,5,7,9,11\}$, gives
\begin{equation}
\label{eq:rho3coefficients}
\rho_{\underline{2\numZ}}(\widetilde{\gamma})_{\frac{l}{6},\frac{m}{6}}=(-\rmi)^{\frac{(1-\sgn d)}{2}\cdot\legendre{c}{-1}}\cdot\mathfrak{g}_{3}\left(b,d;\frac{l}{6}\right)\cdot\sqrt{\frac{\abs{D_{3}[c]}}{12}}\cdot\mathscr{G}_{3}\left(-cd,\frac{l-dm}{6}\right)
\end{equation}
if $\tfrac{al-m}{6}\in D_{3}[c]^\bullet$, and $\rho_{\underline{2\numZ}}(\widetilde{\gamma})_{\frac{l}{6},\frac{m}{6}}=0$ otherwise. We have (note that we have assumed $3\nmid c$)
\begin{align}
\frac{al-m}{6}\in D_{3}[c]^\bullet &\Longleftrightarrow c\cdot\frac{3}{2}y^2+3\cdot\frac{al-m}{6}\cdot y\in\numZ\quad(\forall y\in D_{3}[c])\notag\\
&\Longleftrightarrow\frac{6c}{(12,c)^2}n^2+\frac{al-m}{(12,c)}n\in\numZ\quad(\forall n\in \numZ)\notag\\
&\Longleftrightarrow\begin{dcases}
al-m\equiv0\pmod{2} & \text{if } c\equiv2\pmod{4},\\
al-m\equiv2\pmod{4} & \text{if } c\equiv4\pmod{8},\\
al-m\equiv0\pmod{4} & \text{if } c\equiv0\pmod{8}.\\
\end{dcases}\label{eq:when2Z3nonzero}
\end{align}
For the Gauss sum $\mathfrak{g}_{3}\left(b,d;\frac{l}{6}\right)$ that appears in \eqref{eq:rho3coefficients} we apply Lemma \ref{lemm:gbdt} and obtain
\begin{equation}
\label{eq:g2Z3}
\mathfrak{g}_{3}\left(b,d;\frac{l}{6}\right)=\legendre{\sgn{d}\cdot 3b}{\abs{d}}\etp{\frac{1-\abs{d}}{8}}\etp{\frac{bda^2l^2}{24}}=\legendre{\sgn{d}\cdot 3b}{\abs{d}}\etp{\frac{1-\abs{d}}{8}}\etp{\frac{abl^2}{24}}
\end{equation}
where we have used the assumption $12\mid bc$.
For the other Gauss sum $\mathscr{G}_{3}$ in \eqref{eq:rho3coefficients} we have, when $\tfrac{al-m}{6}\in D_{3}[c]^\bullet$,
\begin{equation*}
\mathscr{G}_{3}\left(-cd,\frac{l-dm}{6}\right)=\frac{1}{\sqrt{12\cdot\abs{D_{3}[cd]}}}\cdot\sum_{y\in\frac{1}{6}\numZ/2\numZ}\etp{-cd\cdot\frac{3}{2}y^2+3\cdot\frac{l-dm}{6}y}.
\end{equation*}
Inserting this and \eqref{eq:g2Z3} into \eqref{eq:rho3coefficients} we obtain
\begin{multline*}
\rho_{\underline{2\numZ}}(\widetilde{\gamma})_{\frac{l}{6},\frac{m}{6}}=(-\rmi)^{\frac{(1-\sgn d)}{2}\cdot\legendre{c}{-1}}\cdot\legendre{\sgn{d}\cdot 3b}{\abs{d}}\etp{\frac{1-\abs{d}}{8}}\etp{\frac{abl^2}{24}}\\
\cdot\frac{1}{12}\sum_{y\in\frac{1}{6}\numZ/2\numZ}\etp{-cd\cdot\frac{3}{2}y^2+3\cdot\frac{l-dm}{6}y}
\end{multline*}
if $\tfrac{al-m}{6}\in D_{3}[c]^\bullet$. Substituting this and \eqref{eq:when2Z3nonzero} into \eqref{eq:f3Gamma26Coeff} and taking into account \eqref{eq:rhoDual}, we find that
\begin{multline}
\label{eq:c1c3Formula}
c_j(\gamma)=\rmi^{\frac{(1-\sgn d)}{2}\cdot\legendre{c}{-1}}\cdot\legendre{\sgn{d}\cdot 3b}{\abs{d}}\etp{\frac{\abs{d}-1}{8}}\etp{-\frac{ab}{24}}\\
\cdot\frac{1}{12}\sum_{m\in B_j}\sum_{y\in\frac{1}{6}\numZ/2\numZ}\etp{cd\cdot\frac{3}{2}y^2-3\cdot\frac{1-dm}{6}y},
\end{multline}
where $j=1,3$,
$$
B_1=\begin{dcases}
\{1,5,7,11\} & \text{if }c\equiv2\pmod{4}\\
\{7,11\} & \text{if }c\equiv4\pmod{8}\text{ and }a\equiv1\pmod{4}\\
\{1,5\} & \text{if }c\equiv4\pmod{8}\text{ and }a\equiv3\pmod{4}\\
\{1,5\} & \text{if }c\equiv0\pmod{8}\text{ and }a\equiv1\pmod{4}\\
\{7,11\} & \text{if }c\equiv0\pmod{8}\text{ and }a\equiv3\pmod{4}
\end{dcases}
$$
and
$$
B_3=\begin{dcases}
\{3,9\} & \text{if }c\equiv2\pmod{4}\\
\{3\} & \text{if }c\equiv4\pmod{8}\text{ and }a\equiv1\pmod{4}\\
\{9\} & \text{if }c\equiv4\pmod{8}\text{ and }a\equiv3\pmod{4}\\
\{9\} & \text{if }c\equiv0\pmod{8}\text{ and }a\equiv1\pmod{4}\\
\{3\} & \text{if }c\equiv0\pmod{8}\text{ and }a\equiv3\pmod{4}.
\end{dcases}
$$
If follows by a tedious but elementary verification that (recall we have assumed $3\nmid c$)
\begin{equation}
\label{eq:doubleSumB1}
\frac{1}{12}\sum_{m\in B_1}\sum_{y\in\frac{1}{6}\numZ/2\numZ}\etp{cd\cdot\frac{3}{2}y^2-3\cdot\frac{1-dm}{6}y}=\frac{2+\zeta_3^{cd/2}}{3}.
\end{equation}
For instance, if $c\equiv2\pmod{24}$, $d\equiv1\pmod{24}$, then $B_1=\{1,5,7,11\}$ and the two sides both equal $(2+\zeta_3)/3$. Similarly, we have
\begin{equation}
\label{eq:doubleSumB3}
\frac{1}{12}\sum_{m\in B_3}\sum_{y\in\frac{1}{6}\numZ/2\numZ}\etp{cd\cdot\frac{3}{2}y^2-3\cdot\frac{1-dm}{6}y}=\frac{1-\zeta_3^{cd/2}}{3}.
\end{equation}
Inserting \eqref{eq:doubleSumB1} and \eqref{eq:doubleSumB3} into \eqref{eq:c1c3Formula}, then inserting the obtained formula into \eqref{eq:f312vertTemp}, we obtain
\begin{multline}
\label{eq:f312vertTemp2}
f_{3,1/2}\vert_{-1/2}\widetilde{\gamma}=\chi_3(\widetilde{\gamma})\chi_\eta^{-3}(\widetilde{\gamma})\cdot\rmi^{\frac{(1-\sgn d)}{2}\cdot\legendre{c}{-1}}\cdot\legendre{\sgn{d}\cdot 3b}{\abs{d}}\etp{\frac{\abs{d}-1}{8}}\etp{-\frac{ab}{24}}\\
\cdot\left(\frac{2+\zeta_3^{cd/2}}{3}\cdot f_{3,1/2}+\frac{1-\zeta_3^{cd/2}}{3}\cdot f_{3,3/2}\right).
\end{multline}
By \eqref{eq:etaChar} and \eqref{eq:chikFormula}, we have
\begin{align*}
\chi_3(\widetilde{\gamma})\chi_\eta^{-3}(\widetilde{\gamma})&=\chi_\eta^6\widetilde{\tbtmat{a}{b}{c}{d}}\cdot\rmi^{-\frac{3c}{2}}\cdot\chi_H\left([\tfrac{c}{2},\tfrac{d-1}{2}],1\right)\\
&=\etp{\frac{ac+bd+3d-3}{4}}\cdot\etp{-\frac{3c}{8}}\cdot\etp{\frac{1}{2}\left(\frac{c(d-1)}{4}+\frac{c}{2}+\frac{d-1}{2}\right)}\\
&=\etp{\frac{(2b+c)d}{8}}.
\end{align*}
Inserting this into \eqref{eq:f312vertTemp2}, we obtain the desired identity in the case $s=0$. The conclusion for arbitrary $s$ follows from the case $s=0$ and \eqref{eq:fkbetaVertT}. This concludes the proof for $3\nmid c$, hence the whole proof.
\end{proof}
\begin{rema}
Let $M\in\slZ$ be arbitrary. Then there always exists a factorization $$M=\tbtmat{1}{0}{3}{1}^{j_1}\cdot\gamma\cdot \tbtmat{1}{1}{0}{1}^s\cdot\tbtmat{0}{-1}{1}{0}^{j_2}$$ where $\gamma\in\Gamma_0^0(2,6)$, $s\in\{0,1,2,3,4,5\}$ and $j_1,j_2\in\{0,1\}$. Lifting it to $\sltZ$ we obtain $$(M,\varepsilon)=\widetilde{\tbtmat{1}{0}{3}{1}}^{j_1}\cdot\widetilde{\gamma T^s}\cdot\widetilde{\tbtmat{0}{-1}{1}{0}}^{j_2}$$ where $\varepsilon\in\{\pm1\}$. Therefore, the $f_{3,1/2}$ and $f_{3,3/2}$ coefficients of $$f_{3,1/2}\big\vert_{-1/2}\widetilde{M}=\varepsilon\cdot f_{3,1/2}\big\vert_{-1/2}\widetilde{\tbtmat{1}{0}{3}{1}}^{j_1}\big\vert_{-1/2}\widetilde{\gamma T^s}\big\vert_{-1/2}\widetilde{\tbtmat{0}{-1}{1}{0}}^{j_2}$$
can be obtained by combining \eqref{eq:10k1}, Proposition \ref{prop:f3betaGamma026} and \eqref{eq:fkbetaVertS}.
\end{rema}

\begin{examp}
\label{r:exm01}
Among the modular transformations stated in Proposition \ref{prop:f3betaGamma026} we need the following two:
\begin{align*}
f_{3,1/2}\left\vert_{-\frac{1}{2}}\tbtMat{1}{0}{50}{1}\right.=\rmi\cdot\left(\frac{2+\zeta_3}{3}\cdot f_{3,1/2}+\frac{1-\zeta_3}{3}\cdot f_{3,3/2}\right),\\
f_{3,1/2}\left\vert_{-\frac{1}{2}}\tbtMat{1}{0}{100}{1}\right.=-\left(\frac{2+\zeta_3^2}{3}\cdot f_{3,1/2}+\frac{1-\zeta_3^2}{3}\cdot f_{3,3/2}\right).
\end{align*}
Thus, if we set
\begin{equation}
\label{eq:M0}
M_0:=\zeta_{12}^{-1}\cdot\widetilde{\tbtMat{1}{0}{50}{1}}+\zeta_{3}\cdot\widetilde{\tbtMat{1}{0}{100}{1}}\in\mathscr{A}_\numZ,
\end{equation}
then $f_{3,1/2}\vert_{-\frac{1}{2}}M_0=f_{3,3/2}$. As a consequence, we have $f_{3,1/2}(q^5)\vert_{-\frac{1}{2}}M_1=f_{3,3/2}(q^5)$ where
\begin{equation}
\label{eq:M1}
M_1:=\zeta_{12}^{-1}\cdot\widetilde{\tbtMat{1}{0}{10}{1}}+\zeta_{3}\cdot\widetilde{\tbtMat{1}{0}{20}{1}}\in\mathscr{A}_\numZ.
\end{equation}
\end{examp}

Now the general Proposition \ref{prop:commute2algHalf}, applied to $f=f_{3,1/2},\,f_{3,1/2}(q^5)$ and $M=M_1,\,M_0$, gives us:
\begin{prop}
\label{r:prom01}
Let $M_0$, $M_1$ be as in \eqref{eq:M0} and \eqref{eq:M1} respectively. Let $h$ be a modular function with respect to $\Gamma_1^*(240)$. Let $R=\{0,24,48,72,96\}$. Then we have
\begin{align}
\left[U_5^R(h\cdot f_{3,1/2})\right]\vert_{-\frac{1}{2}}M_1 &=U_5^R\left[(h\cdot f_{3,1/2})\vert_{-\frac{1}{2}}M_0\right],\label{eq:U5RM1}\\
\left[U_5^R(h\cdot f_{3,1/2}(q^5))\right]\vert_{-\frac{1}{2}}M_0 &=U_5^R\left[(h\cdot f_{3,1/2}(q^5))\vert_{-\frac{1}{2}}M_1\right].\label{eq:U5RM0}
\end{align}
\end{prop}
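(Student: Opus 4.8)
The plan is to derive both identities from the general criterion in Proposition~\ref{prop:commute2algHalf}, applied with $m=5$, $R=\{0,24,48,72,96\}$, $k_1=-\tfrac12$, $k_2=0$ and $g=h$. In both applications I take $M=M_1$; the operator $M_0$ then enters only through the identity $C_5^RM_1=M_0$.

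First I would record the combinatorial data attached to $M_1$. The matrices $\tbtmat{1}{0}{10}{1}$ and $\tbtmat{1}{0}{20}{1}$ lie in $\widetilde{\Gamma_0(5)}$, so $M_1\in\mathscr{A}_5$, and its content is $l=\gcd(10,20)=10$, whence $24l=240$; this is exactly the level occurring in the hypothesis on $h$ (a modular function on $\Gamma_1^*(240)$ plays the role of a weight-$0$ trivial-multiplier modular form $g$, and only its transformation law is used below). Since no nonzero element of $R$ is divisible by $5$, one gets $\sigma_{\gamma}^R(0)=0$ for $\gamma=\tbtmat{1}{0}{10}{1}$ and for $\gamma=\tbtmat{1}{0}{20}{1}$, so $C_5^R$ acts on these two matrices as conjugation by $\tbtmat{1}{0}{0}{5}$, giving $C_5^R\tbtmat{1}{0}{10}{1}=\tbtmat{1}{0}{50}{1}$ and $C_5^R\tbtmat{1}{0}{20}{1}=\tbtmat{1}{0}{100}{1}$, hence $C_5^RM_1=M_0$.

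The substantial step is to verify the hypotheses \eqref{eq:halfCond1}, \eqref{eq:halfCond1C}, \eqref{eq:halfCond2} for $f=f_{3,1/2}$ and for $f=f_{3,1/2}(q^5)$. As in the proof of Proposition~\ref{prop:commute2algHalf}, every matrix occurring in these three families (for the two summands of $M_1$ and all $x\in R$) lies in $\Gamma_1^*(24l)=\Gamma_1^*(240)$, so it is enough to show $f\vert_{-\frac12}\gamma=f$ for the finitely many such $\gamma$. For $f_{3,1/2}=3\eta_3^3/\eta_1^4$ this follows from Theorem~\ref{thm:fkbetaGamma0k} with $k=3$, $\beta=\tfrac12$: because $(2\beta,3)=1$, the $\sim$-class of $\tfrac12$ in $\mathfrak{B}_3$ is $\{\tfrac12\}$ by Proposition~\ref{prop:EquivalenceRelation}, so $T_\beta\gamma=\tfrac12$ and $f_{3,1/2}\vert_{-\frac12}\gamma=p_{1/2}(\gamma)f_{3,1/2}$, and one checks from the explicit formula for $p_{1/2}$ that $p_{1/2}(\gamma)=1$ for each $\gamma\in\Gamma_1^*(240)$ in question; for the summand $\tbtmat{1}{0}{10}{1}$ these are precisely the matrices and the verification displayed in the Example preceding Proposition~\ref{prop:commute2algHalf}, and $\tbtmat{1}{0}{20}{1}$ is handled identically. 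For $f_{3,1/2}(q^5)=3\eta_{15}^3/\eta_5^4$ I would reduce to the previous case by the rescaling identity
\begin{equation*}
f_{3,1/2}(5\tau)\big\vert_{-\frac12}\tbtmat{a}{b}{c}{d}=\Big(f_{3,1/2}\big\vert_{-\frac12}\tbtmat{a}{5b}{c/5}{d}\Big)(5\tau),
\end{equation*}
valid whenever $5\mid c$ (here $240\mid c$): the matrix $\tbtmat{a}{5b}{c/5}{d}$ has determinant $1$ and again lies in $\Gamma_0(3)$, so Theorem~\ref{thm:fkbetaGamma0k} gives $p_{1/2}=1$ on it.

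With the hypotheses in hand, Proposition~\ref{prop:commute2algHalf} applied to $f=f_{3,1/2}$, $g=h$, $M=M_1$ yields $(U_5^R(hf_{3,1/2}))\vert_{-\frac12}M_1=U_5^R((hf_{3,1/2})\vert_{-\frac12}C_5^RM_1)$, which is \eqref{eq:U5RM1} after substituting $C_5^RM_1=M_0$; and the same proposition applied to $f=f_{3,1/2}(q^5)$, $g=h$, $M=M_1$ yields $(U_5^R(hf_{3,1/2}(q^5)))\vert_{-\frac12}C_5^RM_1=U_5^R((hf_{3,1/2}(q^5))\vert_{-\frac12}M_1)$, which is \eqref{eq:U5RM0}. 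The main obstacle is the third paragraph: confirming that the root of unity $p_{1/2}(\gamma)$ really equals $1$ for every matrix appearing in \eqref{eq:halfCond1}--\eqref{eq:halfCond2} (including those coming from the $\tbtmat{1}{0}{20}{1}$-summand, which are not written out in the Example), and correctly tracking the half-integral-weight normalization through the $q\mapsto q^5$ rescaling for $f_{3,1/2}(q^5)$ — a finite but delicate bookkeeping task, which is exactly what the Example before Proposition~\ref{prop:commute2algHalf} is designed to absorb.
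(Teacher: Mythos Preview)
Your approach is correct and essentially the same as the paper's. For \eqref{eq:U5RM1} the two proofs coincide: $M=M_1$, $f=f_{3,1/2}$, first conclusion of Proposition~\ref{prop:commute2algHalf}, together with $C_5^RM_1=M_0$. For \eqref{eq:U5RM0} there is only an organizational difference: you keep $M=M_1$ and invoke the \emph{second} conclusion of Proposition~\ref{prop:commute2algHalf} (so that $C_5^RM_1=M_0$ lands directly on the left-hand side), whereas the paper switches to $M=M_0$, uses the first conclusion, and then adds a closing step reducing $(h\cdot f_{3,1/2}(q^5))\vert_{-1/2}C_5^RM_0$ to $(h\cdot f_{3,1/2}(q^5))\vert_{-1/2}M_1$. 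The underlying checks are identical: the matrices in \eqref{eq:halfCond1C} for $M_1$ are precisely those in \eqref{eq:halfCond1} for $M_0$, and the matrices $\tbtmat{1}{0}{240}{1}$, $\tbtmat{1}{0}{480}{1}$ from \eqref{eq:halfCond2} for $M_1$ are exactly what drives the paper's closing step. One small economy: for the second conclusion you only need \eqref{eq:halfCond1C} and \eqref{eq:halfCond2}, so your planned verification of \eqref{eq:halfCond1} for $f_{3,1/2}(q^5)$ with $M_1$ (the rescaled matrices with $c/5=2400$, etc.) is not required, and the paper does not carry it out.
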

\begin{proof}
Set in Proposition \ref{prop:commute2algHalf}
\begin{equation*}
m=5,\quad R=\{0,24,48,72,96\},\quad k_1=-1/2,\quad k_2=0,\quad f=f_{3,1/2},\quad g=h,\quad M=M_1.
\end{equation*}
Then $l=10$ and the matrices in \eqref{eq:halfCond1} are
\begin{gather*}
\tbtMat{1}{0}{0}{1},\tbtMat{57841}{-1152}{12000}{-239},\tbtMat{230881}{-4608}{24000}{-479},\tbtMat{519121}{-10368}{36000}{-719},\tbtMat{922561}{-18432}{48000}{-959},\\
\tbtMat{1}{0}{0}{1},\tbtMat{230881}{-2304}{48000}{-479},\tbtMat{922561}{-9216}{96000}{-959},\tbtMat{2075041}{-20736}{144000}{-1439},\tbtMat{3688321}{-36864}{192000}{-1919}.
\end{gather*}
By Theorem \ref{thm:fkbetaGamma0k} we know that $f_{3,1/2}\vert_{-1/2}A=f_{3,1/2}$ for $A$ being any of the above matrices. The first conclusion \eqref{eq:U5RM1} follows from this, the last conclusion of Proposition \ref{prop:commute2algHalf} and the fact $C_5^RM_1=M_0$.

To prove \eqref{eq:U5RM0}, we set in Proposition \ref{prop:commute2algHalf}
\begin{equation*}
m=5,\quad R=\{0,24,48,72,96\},\quad k_1=-1/2,\quad k_2=0,\quad f=f_{3,1/2}(q^5),\quad g=h,\quad M=M_0.
\end{equation*}
In this setting, $l=50$ and the matrices in \eqref{eq:halfCond1} are
\begin{gather*}
\tbtmat{1}{0}{0}{1},\tbtmat{1441201}{-5760}{300000}{-1199},\tbtmat{5762401}{-23040}{600000}{-2399},\tbtmat{12963601}{-51840}{900000}{-3599},\tbtmat{23044801}{-92160}{1200000}{-4799},\\
\tbtmat{1}{0}{0}{1},\tbtmat{5762401}{-11520}{1200000}{-2399},\tbtmat{23044801}{-46080}{2400000}{-4799},\tbtmat{51847201}{-103680}{3600000}{-7199},\tbtmat{92169601}{-184320}{4800000}{-9599}.
\end{gather*}
Again by Theorem \ref{thm:fkbetaGamma0k} we know that $f_{3,1/2}(q^5)\vert_{-1/2}A=f_{3,1/2}(q^5)$, that is, $f_{3,1/2}\vert_{-1/2}\tbtmat{5}{0}{0}{1}A\tbtmat{1/5}{0}{0}{1}=f_{3,1/2}$, for $A$ being any of the above matrices. It follows that
\begin{equation}
\label{eq:vertM0Temp}
\left[U_5^R(h\cdot f_{3,1/2}(q^5))\right]\vert_{-\frac{1}{2}}M_0=U_5^R\left[(h\cdot f_{3,1/2}(q^5))\vert_{-\frac{1}{2}}C_5^RM_0\right].
\end{equation}
We have
\begin{equation*}
C_5^RM_0=\zeta_{12}^{-1}\cdot\widetilde{\tbtMat{1}{0}{250}{1}}+\zeta_{3}\cdot\widetilde{\tbtMat{1}{0}{500}{1}}.
\end{equation*}
Therefore
\begin{equation}
\label{eq:vertM0Temp2}
(h\cdot f_{3,1/2}(q^5))\vert_{-\frac{1}{2}}C_5^RM_0=(h\cdot f_{3,1/2}(q^5))\vert_{-\frac{1}{2}}M_1
\end{equation}
by the assumption on $h$ and Theorem \ref{thm:fkbetaGamma0k}. Now \eqref{eq:U5RM0} follows from inserting \eqref{eq:vertM0Temp2} into \eqref{eq:vertM0Temp}.
\end{proof}
\begin{rema}
The function $h$ can be of any integral weight $k$ instead of weight $0$. The proof remains the same. Note that in this general case the action $\vert_{-\frac{1}{2}}$ should be replaced by $\vert_{k-\frac{1}{2}}$.
\end{rema}
\begin{rema}
Since $\Gamma_1^*(240)\subseteq\Gamma_0(240)$, if we know that $h$ is a modular function with respect to $\Gamma_0(240)$, then it is as well modular with respect to $\Gamma_1^*(240)$.
\end{rema}

\subsection{From \eqref{r:c31cong} to \eqref{r:c33cong}}
\label{subsec:fromf312tof332}

Unfortunately, it seems hard to follow the method proving \eqref{r:c31cong} to prove \eqref{r:c33cong} since the modular function representations associated with $C\Psi_{3,3/2}$ are more complex than those associated with $C\Psi_{3,1/2}$, which are presented in Section \ref{subsec:sketch_r_c31cong}. As we have mentioned above, there exists no single modular transformation that maps $C\Psi_{3,1/2}(q)$ to $C\Psi_{3,3/2}(q)$. What we have done is to use the combination transformations $M_0$ and $M_1$, which are defined by \eqref{eq:M0} and \eqref{eq:M1} respectively. Then (see Example \ref{r:exm01})
\begin{align}
\label{r:mtran1}
q^{5/24}C\Psi_{3,1/2}(q)\vert M_0&=q^{-1/8}C\Psi_{3,3/2}(q),\\
\label{r:mtran2}
q^{25/24}C\Psi_{3,1/2}(q^5)\vert M_1&=q^{-5/8}C\Psi_{3,3/2}(q^5),
\end{align}
where (and in this subsection) the operator $\vert$ denotes $\vert_{-1/2}$, the weight $-1/2$ slash operator. Let $U_5^*=U_5^R$ with $R=\{0,24,48,72,96\}$. Then for formal Laurent series $f(q)$ and $g(q)$ in $\numC[[q,q^{-1}]]$, we have
$$
U_5^*(f(q^{5/24})g(q^{1/24}))=f(q^{1/24})U_5^*(g(q^{1/24})),\quad U_5^*(f)=U_5(f).
$$
By Proposition \ref{r:prom01} We have
\begin{align}
\label{r:mtran3}
U_5^*(q^{5/24}C\Psi_{3,1/2}(q)h)\vert M_1&=U_5^*(q^{5/24}C\Psi_{3,1/2}(q)h\vert M_0),\\
\label{r:mtran4}
U_5^*(q^{25/24}C\Psi_{3,1/2}(q^5)h)\vert M_0&=U_5^*(q^{25/24}C\Psi_{3,1/2}(q^5)h\vert M_1),
\end{align}
where $h$ is a function modular on $\Gamma_0(240)$ with trivial multiplier and of integral weight. In particular, $h$ can be a modular function on $\Gamma_0(15)$, such as $L_\alpha$ discussed in Section \ref{subsec:sketch_r_c31cong}, or any rational function of $t$, $p_0$ and $p_1$ (see \eqref{eq:tp0p1}) since the multiplier systems of these three functions are trivial on $\Gamma_0(15)$. We let $K_0=1$ and for $\alpha\geq 0$
$$
K_{2\alpha+1}=U_5(A_3K_{2\alpha}),
$$
$$
K_{2\alpha+2}=U_5(K_{2\alpha+1}),
$$
where $A_3:=q^3\frac{C\Psi_{3,3/2}(q)}{C\Psi_{3,3/2}(q^{25})}$. On one hand, we can calculate $K_\alpha$ for $\alpha>0$ directly:
\begin{gather}
K_{2\alpha-1}=\frac{q}{C\Psi_{3,3/2}(q^5)} \sum_{n=0}^\infty c\psi_{3,3/2}(5^{2\alpha-1}n+\lambda_{2\alpha-1}) q^n,\label{eq:Kodddirect}\\
K_{2\alpha}=\frac{q}{C\Psi_{3,3/2}(q)} \sum_{n=0}^\infty c\psi_{3,3/2}(5^{2\alpha}n+\lambda_{2\alpha}) q^n.\label{eq:Kevendirect}
\end{gather}
It follows that $U_5^*(K_\alpha)=U_5(K_\alpha)$. On the other hand, we calculate $K_\alpha$ by the properties \eqref{r:mtran1}--\eqref{r:mtran4}.

\begin{lemma}
\label{lema:LtoK}
For $\alpha\geq 0$ we have
\beq
\label{r:K2a}
K_{2\alpha}=\frac{q^{1/8}}{C\Psi_{3,3/2}(q)}\left(q^{5/24}C\Psi_{3,1/2}(q)L_{2\alpha}\mid M_0\right),
\eeq
and
\beq
\label{r:K2a+1}
K_{2\alpha+1}=\frac{q^{5/8}}{C\Psi_{3,3/2}(q^5)}\left(q^{25/24}C\Psi_{3,1/2}(q^5)L_{2\alpha+1}\mid M_1\right).
\eeq
\end{lemma}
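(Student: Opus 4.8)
\emph{Proof plan.} The plan is to first recast the two identities in a form free of explicit fractional powers of $q$. With $f_{3,1/2}=q^{5/24}C\Psi_{3,1/2}(q)$ and $f_{3,3/2}=q^{-1/8}C\Psi_{3,3/2}(q)$ as in \eqref{eq:fkbeta}, and writing $f_{3,j}(5\tau)$ for the function $\tau\mapsto f_{3,j}(5\tau)$, the identities \eqref{r:K2a} and \eqref{r:K2a+1} are equivalent to
\[
K_{2\alpha}\cdot f_{3,3/2}=\bigl(f_{3,1/2}\cdot L_{2\alpha}\bigr)\mid M_0,\qquad K_{2\alpha+1}\cdot f_{3,3/2}(5\tau)=\bigl(f_{3,1/2}(5\tau)\cdot L_{2\alpha+1}\bigr)\mid M_1 .
\]
I would prove these two identities simultaneously, by induction on $\alpha$, alternating between them. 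The base case $\alpha=0$ is $K_0=L_0=1$, and then the first identity says $(f_{3,1/2}\cdot 1)\mid M_0=f_{3,3/2}$, which is exactly \eqref{r:mtran1}.

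Two elementary identities, each obtained by matching powers of $q$, drive the inductive steps: from $A=C\Psi_{3,1/2}(q)/\bigl(q^5C\Psi_{3,1/2}(q^{25})\bigr)$ one gets $f_{3,1/2}(25\tau)\cdot A=f_{3,1/2}$, and from $A_3=q^3C\Psi_{3,3/2}(q)/C\Psi_{3,3/2}(q^{25})$ one gets $f_{3,3/2}=A_3\cdot f_{3,3/2}(25\tau)$. I will also use $U_5^*=U_5^R$ with $R=\{0,24,48,72,96\}$, the fact that $U_5^*$ agrees with $U_5$ on ordinary $q$-power series, and the shift identity $U_5^*\bigl(H(5\tau)\,G\bigr)=H(\tau)\,U_5^*(G)$ recorded at the start of Section~\ref{subsec:fromf312tof332}. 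For the step from the even identity to the odd one, using $L_{2\alpha+1}=U^{(0)}(L_{2\alpha})=U_5(AL_{2\alpha})$ one obtains
\[
f_{3,1/2}(5\tau)\,L_{2\alpha+1}=f_{3,1/2}(5\tau)\,U_5^*(AL_{2\alpha})=U_5^*\bigl(f_{3,1/2}(25\tau)A\,L_{2\alpha}\bigr)=U_5^*\bigl(f_{3,1/2}L_{2\alpha}\bigr);
\]
slashing by $M_1$ and invoking \eqref{r:mtran3} rewrites the last term as $U_5^*\bigl((f_{3,1/2}L_{2\alpha})\mid M_0\bigr)$, which by the even induction hypothesis equals $U_5^*(K_{2\alpha}f_{3,3/2})$; and finally
\[
U_5^*(K_{2\alpha}f_{3,3/2})=U_5^*\bigl(f_{3,3/2}(25\tau)A_3K_{2\alpha}\bigr)=f_{3,3/2}(5\tau)\,U_5^*(A_3K_{2\alpha})=f_{3,3/2}(5\tau)\,K_{2\alpha+1},
\]
which is the odd identity. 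The passage from the odd identity to the even one (that is, from $K_{2\alpha+1}$ to $K_{2\alpha+2}$) is the mirror image, using $L_{2\alpha+2}=U_5(L_{2\alpha+1})$, $K_{2\alpha+2}=U_5(K_{2\alpha+1})$ and \eqref{r:mtran4} in place of \eqref{r:mtran3}.

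The main difficulty is not conceptual but one of careful bookkeeping. First, \eqref{r:mtran3} and \eqref{r:mtran4} require the auxiliary function $h$ appearing in them to be a modular function on $\Gamma_0(240)$ of integral weight with trivial multiplier; I would take $h$ to be $L_{2\alpha}$, respectively $L_{2\alpha+1}$, which are polynomials in $t,p_0,p_1$ (see \eqref{eq:tp0p1}), hence modular functions on $\Gamma_0(15)\supseteq\Gamma_0(240)$ with trivial multiplier, so those transformation laws do apply. Second, since $M_0,M_1\in\mathscr{A}_\numZ$ are genuine group-algebra elements, $(fh)\mid M$ does not factor as $(f\mid M)(h\mid M)$, so the products $f_{3,1/2}L_{2\alpha}$ and $f_{3,1/2}(5\tau)L_{2\alpha+1}$ must be carried intact throughout. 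Finally one must keep track of which series in play are honest $q$-power series (where $U_5^*=U_5$) and which carry a fractional $q$-shift (where the shift identity, rather than naive extraction of every fifth coefficient, is the right tool); this is precisely why the even and odd halves of the $K$-sequence live at $\tau$ and at $5\tau$. Once these points are handled, the induction closes as outlined.
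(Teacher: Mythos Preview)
Your proof is correct and follows essentially the same approach as the paper: an induction alternating between the even and odd identities, using the $U_5^*$ shift identity, the commutation relations \eqref{r:mtran3}--\eqref{r:mtran4} (with $h=L_\alpha$, modular on $\Gamma_0(15)$), and the definitions of $A$, $A_3$, $L_\alpha$, $K_\alpha$. Your recasting in terms of $f_{3,1/2}$ and $f_{3,3/2}$ is a cosmetic cleanup of the fractional $q$-powers, and you traverse the same chain of equalities in a slightly different order, but the argument is the same.
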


\begin{proof}
By \eqref{r:mtran1} we have
$$
\frac{q^{1/8}}{C\Psi_{3,3/2}(q)}\left(q^{5/24}C\Psi_{3,1/2}(q)L_{0}\mid M_0\right)=1=K_0.
$$
Assume that \eqref{r:K2a} holds for $\alpha$; then
\begin{align*}
K_{2\alpha+1}=&U_5^*(A_3 K_{2\alpha})\\
=&U_5^*\left(\frac{q^{25/8}}{C\Psi_{3,3/2}(q^{25})}(q^{5/24}C\Psi_{3,1/2}(q)L_{2\alpha}\mid M_0)\right)\\
=&\frac{q^{5/8}}{C\Psi_{3,3/2}(q^5)}U_5^*(q^{5/24}C\Psi_{3,1/2}(q)L_{2\alpha}\mid M_0)\\
=&\frac{q^{5/8}}{C\Psi_{3,3/2}(q^5)}\left(U_5^*(q^{5/24}C\Psi_{3,1/2}(q)L_{2\alpha})\mid M_1\right)\\
=&\frac{q^{5/8}}{C\Psi_{3,3/2}(q^5)}\left(q^{25/24}C\Psi_{3,1/2}(q^5)U_5(A L_{2\alpha})\mid M_1\right)\\
=&\frac{q^{5/8}}{C\Psi_{3,3/2}(q^5)}\left(q^{25/24}C\Psi_{3,1/2}(q^5)L_{2\alpha+1}\mid M_1\right).
\end{align*}
Hence \eqref{r:K2a+1} holds for $\alpha$. Similarly \eqref{r:K2a} for $\alpha+1$ follows from \eqref{r:K2a+1} for $\alpha$. Thus, the lemma holds by induction.
\end{proof}

To clarify the right-hand side of \eqref{r:K2a}, we need the following lemma. Recall that $x,y$ are defined by \eqref{r:xxyy}.

\begin{lemma}
\label{r:lmm0}
We have
\begin{align}
q^{5/24}C\Psi_{3,1/2}(q)x\vert M_0=&\frac{1}{9}\eta_1^{-9}\eta_5\cdot q^{3/8}\left(q^{1/3}\sum_{n\geq 0}a_{3n+1}q^n-q^{2/3}\sum_{n\geq0}a_{3n+2}q^n\right),\label{eq:C31xM0}\\
q^{5/24}C\Psi_{3,1/2}(q)y\vert M_0=&9\eta_1^{-2}\eta_5^2\cdot q^{-1/8}\left(-q^{1/3}\sum_{n\geq 0}b_{3n+1}q^n+q^{2/3}\sum_{n\geq0}b_{3n+2}q^n\right),\label{eq:C31yM0}\\
q^{5/24}C\Psi_{3,1/2}(q)xy\vert M_0=&\eta_1^{-7}\eta_5^3\cdot q^{5/24}\left(\sum_{n\geq0}c_{3n}q^n-q^{1/3}\sum_{n\geq 0}c_{3n+1}q^n\right),\label{eq:C31xyM0}
\end{align}
where the sequences $(a_n)_{n\geq0}$, $(b_n)_{n\geq0}$ and $(c_n)_{n\geq0}$ are defined by
\begin{align*}
\eta_1^2\eta_5^5=q^{9/8}\sum_{n\geq0}a_nq^n,\quad \eta_1\eta_5^{-2}=q^{-3/8}\sum_{n\geq0}b_nq^n,\quad \eta_5^3=q^{5/8}\sum_{n\geq0}c_nq^n.
\end{align*}
\end{lemma}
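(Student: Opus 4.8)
The plan is to compute the three products $q^{5/24}C\Psi_{3,1/2}(q)\cdot v\,\vert\, M_0$ for $v\in\{x,y,xy\}$ directly from the definition $M_0=\zeta_{12}^{-1}\widetilde{\tbtmat{1}{0}{50}{1}}+\zeta_3\widetilde{\tbtmat{1}{0}{100}{1}}$ together with the already-established identity $f_{3,1/2}\vert_{-\frac12}M_0=f_{3,3/2}$, i.e. $q^{5/24}C\Psi_{3,1/2}(q)\vert M_0=q^{-1/8}C\Psi_{3,3/2}(q)$. The key point is that $v=x,y,xy$ are eta-quotients with respect to $\Gamma_0(15)$, and both $\tbtmat{1}{0}{50}{1}$ and $\tbtmat{1}{0}{100}{1}$ lie in $\Gamma_0(15)$; so under $\vert_0\tbtmat{1}{0}{50m}{1}$ (for $m=1,2$) each of $x,y,xy$ is multiplied only by a root of unity coming from the eta-multiplier, which can be read off from \eqref{eq:etaChar}. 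Hence the product $(C\Psi_{3,1/2}\cdot v)\vert M_0$ is a root-of-unity-weighted combination of $C\Psi_{3,3/2}(q)$ against that same eta-quotient $v$ evaluated at $\tau$ — but the roots of unity differ between the two terms of $M_0$, and this is exactly what produces the ``trisection'' pattern on the right-hand sides.

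First I would record, using \eqref{eq:etaChar}, the precise scalars $\varepsilon_{v,m}$ such that $v\vert_0\tbtmat{1}{0}{50m}{1}=\varepsilon_{v,m}\cdot v$ for $v\in\{x,y,xy\}$, $m\in\{1,2\}$; since $x,y,xy$ are products of $\eta_1,\eta_3,\eta_5,\eta_{15}$ with integer exponents, and $c=50m$ is even, the even-$c$ branch of \eqref{eq:etaChar} applies to each factor $\eta(j\tau)=\eta\vert\tbtmat{j}{0}{0}{1}$, giving $\varepsilon_{v,m}$ as an explicit $48$-th root of unity depending on $j$, the exponent, and $m$. Next I would substitute into $(q^{5/24}C\Psi_{3,1/2}(q)\,v)\vert M_0 = \zeta_{12}^{-1}\varepsilon_{v,1}\,(q^{5/24}C\Psi_{3,1/2}(q))\vert\tbtmat{1}{0}{50}{1}\cdot v + \zeta_3\,\varepsilon_{v,2}\,(q^{5/24}C\Psi_{3,1/2}(q))\vert\tbtmat{1}{0}{100}{1}\cdot v$, and use $q^{5/24}C\Psi_{3,1/2}(q)\vert M_0=q^{-1/8}C\Psi_{3,3/2}(q)$ — or rather its refinement $f_{3,1/2}\vert_{-1/2}\tbtmat{1}{0}{50}{1}=\rmi(\frac{2+\zeta_3}{3}f_{3,1/2}+\frac{1-\zeta_3}{3}f_{3,3/2})$ and the analogue for $\tbtmat{1}{0}{100}{1}$ from Example \ref{r:exm01} — to express everything in terms of $f_{3,1/2}$ and $f_{3,3/2}$ times $v$. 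Then, using the $q$-expansions \eqref{eq:cpsi31genfun} for $C\Psi_{3,1/2}$ and \eqref{r:cpsi33genfun} for $C\Psi_{3,3/2}$, together with the eta-quotient formulas for $x,y,xy$, I would identify the combinations $f_{3,\beta}\cdot v$ as the stated eta-quotients $\eta_1^{-9}\eta_5$, $\eta_1^{-2}\eta_5^2$, $\eta_1^{-7}\eta_5^3$ times $q$-series whose non-integer $q$-powers match $\eta_1^2\eta_5^5$, $\eta_1\eta_5^{-2}$, $\eta_5^3$ respectively; the $\rmi$-linear-combination structure of the $S$-transforms then combines with the $\zeta_3$-powers to collapse the two-term sums into the single trisected series in $(a_n)$, $(b_n)$, $(c_n)$.

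The main obstacle I anticipate is bookkeeping of the many roots of unity — reconciling the $48$-th roots $\varepsilon_{v,m}$ from the eta-multiplier, the $\zeta_{12}^{-1}$ and $\zeta_3$ built into $M_0$, the factors $\rmi$ and $\frac{2\pm\zeta_3}{3}$, $\frac{1\mp\zeta_3}{3}$ from Proposition \ref{prop:f3betaGamma026}, and the fractional-power prefactors $q^{5/24}$, $q^{3/8}$, $q^{-1/8}$, $q^{5/24}$ — so that the final cancellation yields exactly the $\frac19$, $9$, $1$ and the precise signs in \eqref{eq:C31xM0}–\eqref{eq:C31xyM0}. A secondary subtlety is verifying that the two terms of $M_0$, after acting, pick up $\zeta_3$-powers differing by exactly the shift needed to realize a clean trisection (rather than a generic two-term combination); this amounts to checking that $50\cdot(a\beta)^2$-type exponents in the transformation laws, evaluated at $\beta=1/2,3/2$, land in the right residue classes mod $3$. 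Once the root-of-unity ledger balances, matching the eta-quotient $q$-expansions coefficient-by-coefficient against $\eta_1^2\eta_5^5$, $\eta_1\eta_5^{-2}$, $\eta_5^3$ is routine, since each such product is again an eta-quotient whose leading exponent one computes from the $\frac1{24}$-shifts in \eqref{eq:eta}.
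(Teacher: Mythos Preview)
There is a genuine gap in your plan: the assertion that $\tbtmat{1}{0}{50}{1}$ and $\tbtmat{1}{0}{100}{1}$ lie in $\Gamma_0(15)$ is false, since neither $50$ nor $100$ is divisible by $3$ (let alone $15$). Consequently the eta-quotients $x,y,xy$, which involve $\eta_3$ and $\eta_{15}$, do \emph{not} transform by a mere root of unity under these matrices. Concretely, $\eta(3\tau)\vert_{1/2}\tbtmat{1}{0}{50}{1}$ requires computing $\eta$ at $\frac{3\tau}{50\cdot 3\tau+1}$, and since $3\nmid 50$ this cannot be brought back to $\eta(3\tau)$ by an $\slZ$-transformation. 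Your whole scheme of writing $(q^{5/24}C\Psi_{3,1/2}\cdot v)\vert M_0$ as root-of-unity multiples of $f_{3,\beta}\cdot v$ therefore collapses at the first step, and the ``trisection from $\zeta_3$-powers'' mechanism you describe does not apply.

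What the paper actually does is absorb $v$ into the $C\Psi_{3,1/2}$-factor \emph{before} acting by $M_0$: using \eqref{eq:cpsi31genfun} one has $q^{5/24}C\Psi_{3,1/2}(q)\cdot x=3\eta_1^{-9}\eta_3^2\eta_5\eta_{15}^5$, and similarly for $y$ and $xy$. Then one computes $\eta_3\vert_{1/2}\tbtmat{1}{0}{50m}{1}$ and $\eta_{15}\vert_{1/2}\tbtmat{1}{0}{50m}{1}$ directly via a matrix factorisation through $\tbtmat{0}{-1}{1}{0}$, obtaining expressions like $\tfrac{1}{\sqrt 3}\cdot(\text{root of unity})\cdot\eta\!\left(\tfrac{\tau-j}{3}\right)$ and $\tfrac{1}{\sqrt 3}\cdot(\text{root of unity})\cdot\eta\!\left(\tfrac{5(\tau-j)}{3}\right)$ with $j\in\{1,2\}$. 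These are $q^{1/3}$-series, and it is the appearance of the argument $\tau/3$ (not any $\zeta_3$ in the coefficients of $M_0$) that produces the trisection. After combining the two terms of $M_0$, the resulting expression is an $\eta_1,\eta_5$-quotient times a two-term sum in $\eta\!\left(\tfrac{\tau-1}{3}\right),\eta\!\left(\tfrac{5(\tau-1)}{3}\right)$ and $\eta\!\left(\tfrac{\tau-2}{3}\right),\eta\!\left(\tfrac{5(\tau-2)}{3}\right)$; expanding those products as $q^{1/3}$-series and sorting by residue class mod $3$ gives exactly the $a_{3n+1},a_{3n+2}$ (resp.\ $b_{3n+1},b_{3n+2}$; $c_{3n},c_{3n+1}$) pattern. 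The $\tfrac{1}{9}$, $9$, $1$ prefactors come from the powers of $\tfrac{1}{\sqrt 3}$ accumulated from the $\eta_3$- and $\eta_{15}$-factors.
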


\begin{proof}
By \eqref{r:xxyy} and \eqref{eq:cpsi31genfun}, the left-hand side of \eqref{eq:C31xM0} equals $3\eta_1^{-9}\eta_3^2\eta_5\eta_{15}^5\vert M_0$. The key observation is (according to Proposition \ref{prop:cocycleFormula})
\begin{equation*}
\widetilde{\tbtMat{1}{0}{2}{1}}=\widetilde{\tbtMat{-1}{1}{-3}{2}}\widetilde{\tbtMat{0}{-1}{1}{0}}\widetilde{\tbtMat{1}{-1}{0}{1}}.
\end{equation*}
It follows that
\begin{align}
\eta_3\Big\vert_{\frac{1}{2}}\widetilde{\tbtMat{1}{0}{2}{1}}&=\eta_3\Big\vert_{\frac{1}{2}}\widetilde{\tbtMat{-1}{1}{-3}{2}}\Big\vert_{\frac{1}{2}}\widetilde{\tbtMat{0}{-1}{1}{0}}\Big\vert_{\frac{1}{2}}\widetilde{\tbtMat{1}{-1}{0}{1}}\notag\\
&=\chi_\eta\widetilde{\tbtMat{-1}{3}{-1}{2}}\cdot\left(\tau^{-\frac{1}{2}}\eta\left(-\frac{3}{\tau}\right)\right)\Big\vert_{\frac{1}{2}}\widetilde{\tbtMat{1}{-1}{0}{1}}\notag\\
&=\frac{1}{\sqrt{3}}\etp{-\frac{1}{24}}\eta\left(\frac{\tau-1}{3}\right)\label{eq:eta3vert1011}
\end{align}
where we have used \eqref{eq:etaChar} twice for $\tbtmat{-1}{3}{-1}{2}$ and $\tbtmat{0}{-1}{1}{0}$.
Since
\begin{equation*}
\widetilde{\tbtMat{1}{0}{1}{1}}=\widetilde{\tbtMat{-1}{1}{-3}{2}}\widetilde{\tbtMat{1}{0}{2}{1}}\widetilde{\tbtMat{1}{-1}{0}{1}},
\end{equation*}
we have similarly
\begin{equation}
\label{eq:eta3vert1021}
\eta_3\Big\vert_{\frac{1}{2}}\widetilde{\tbtMat{1}{0}{1}{1}}=\frac{1}{\sqrt{3}}\etp{\frac{1}{24}}\eta\left(\frac{\tau-2}{3}\right).
\end{equation}
From \eqref{eq:eta3vert1011} and \eqref{eq:eta3vert1021} we obtain
\begin{align*}
\eta_3\Big\vert_{\frac{1}{2}}\widetilde{\tbtMat{1}{0}{50}{1}}&=\eta_3\Big\vert_{\frac{1}{2}}\widetilde{\tbtMat{1}{0}{48}{1}}\Big\vert_{\frac{1}{2}}\widetilde{\tbtMat{1}{0}{2}{1}}=\frac{1}{\sqrt{3}}\etp{\frac{7}{24}}\eta\left(\frac{\tau-1}{3}\right),\\
\eta_3\Big\vert_{\frac{1}{2}}\widetilde{\tbtMat{1}{0}{100}{1}}&=\eta_3\Big\vert_{\frac{1}{2}}\widetilde{\tbtMat{1}{0}{99}{1}}\Big\vert_{\frac{1}{2}}\widetilde{\tbtMat{1}{0}{1}{1}}=\frac{1}{\sqrt{3}}\etp{\frac{2}{3}}\eta\left(\frac{\tau-2}{3}\right)
\end{align*}
respectively. A similar argument shows that
\begin{align*}
\eta_{15}\Big\vert_{\frac{1}{2}}\widetilde{\tbtMat{1}{0}{50}{1}}&=\frac{1}{\sqrt{3}}\etp{-\frac{1}{24}}\eta\left(\frac{5(\tau-1)}{3}\right),\\
\eta_{15}\Big\vert_{\frac{1}{2}}\widetilde{\tbtMat{1}{0}{100}{1}}&=\frac{1}{\sqrt{3}}\etp{-\frac{1}{6}}\eta\left(\frac{5(\tau-2)}{3}\right).
\end{align*}
Taking into account the last four formulas, we have
\begin{align*}
&\mathrel{\phantom{=}}3\eta_1^{-9}\eta_3^2\eta_5\eta_{15}^5\Big\vert\widetilde{\tbtMat{1}{0}{50}{1}}\\
&=3\chi_\eta\widetilde{\tbtMat{1}{0}{50}{1}}^{-9}\eta_1^{-9}\cdot\left(\frac{1}{\sqrt{3}}\etp{\frac{7}{24}}\eta\left(\frac{\tau-1}{3}\right)\right)^2\cdot\chi_\eta\widetilde{\tbtMat{1}{0}{10}{1}}\eta_5\cdot\left(\frac{1}{\sqrt{3}}\etp{-\frac{1}{24}}\eta\left(\frac{5(\tau-1)}{3}\right)\right)^{5}\\
&=3^{-\frac{5}{2}}\etp{\frac{17}{24}}\cdot\eta_1^{-9}\eta_5\cdot\eta\left(\frac{\tau-1}{3}\right)^2\eta\left(\frac{5(\tau-1)}{3}\right)^5
\end{align*}
and
\begin{equation*}
3\eta_1^{-9}\eta_3^2\eta_5\eta_{15}^5\Big\vert\widetilde{\tbtMat{1}{0}{100}{1}}=-3^{-\frac{5}{2}}\etp{\frac{2}{3}}\cdot\eta_1^{-9}\eta_5\cdot\eta\left(\frac{\tau-2}{3}\right)^2\eta\left(\frac{5(\tau-2)}{3}\right)^5.
\end{equation*}
Therefore,
\begin{equation*}
3\eta_1^{-9}\eta_3^2\eta_5\eta_{15}^5\vert M_0=3^{-\frac{5}{2}}\eta_1^{-9}\eta_5\cdot\left(\etp{\frac{5}{8}}\eta\left(\frac{\tau-1}{3}\right)^2\eta\left(\frac{5(\tau-1)}{3}\right)^5-\eta\left(\frac{\tau-2}{3}\right)^2\eta\left(\frac{5(\tau-2)}{3}\right)^5\right).
\end{equation*}
By expanding the $q$-series of the right-hand {}side we obtain \eqref{eq:C31xM0}. Similar arguments give
\begin{gather*}
3\eta_1^{-2}\eta_3\eta_5^2\eta_{15}^{-2}\vert M_0=3^{\frac{3}{2}}\eta_1^{-2}\eta_5^2\cdot\left(\etp{\frac{5}{8}}\eta\left(\frac{\tau-1}{3}\right)\eta\left(\frac{5(\tau-1)}{3}\right)^{-2}+\eta\left(\frac{\tau-2}{3}\right)\eta\left(\frac{5(\tau-2)}{3}\right)^{-2}\right),\\
3\eta_1^{-7}\eta_5^3\eta_{15}^{3}\vert M_0=3^{-\frac{1}{2}}\eta_1^{-7}\eta_5^3\cdot\left(\etp{\frac{1}{8}}\eta\left(\frac{5(\tau-1)}{3}\right)^{3}-\eta\left(\frac{5(\tau-2)}{3}\right)^{3}\right),
\end{gather*}
which lead to \eqref{eq:C31yM0} and \eqref{eq:C31xyM0} respectively.
\end{proof}

The parallel lemma for \eqref{r:K2a+1} is the following one.
\begin{lemma}
\label{r:lmm1}
We have
\begin{align*}
q^{25/24}C\Psi_{3,1/2}(q^5)x\vert M_1=&\frac{1}{9}\eta_1^{-5}\eta_5^{-3}\cdot q^{13/24}\left(\sum_{n\geq0}a'_{3n}q^n-q^{2/3}\sum_{n\geq 0}a'_{3n+2}q^n\right),\\
q^{25/24}C\Psi_{3,1/2}(q^5)y\vert M_1=&9\eta_1^{2}\eta_5^{-2}\cdot q^{1/24}\left(-\sum_{n\geq 0}b'_{3n}q^n+q^{2/3}\sum_{n\geq0}b'_{3n+2}q^n\right),\\
q^{25/24}C\Psi_{3,1/2}(q^5)xy\vert M_1=&\eta_1^{-3}\eta_5^{-1}\cdot q^{3/8}\left(-q^{1/3}\sum_{n\geq 0}c'_{3n+1}q^n+q^{2/3}\sum_{n\geq0}c'_{3n+2}q^n\right),
\end{align*}
where the sequences $(a_n')_{n\geq0}$, $(b_n')_{n\geq0}$ and $(c_n')_{n\geq0}$ are defined by

\begin{align*}
\eta_1^{-1}\eta_5^8=q^{13/8}\sum_{n\geq0}a_n'q^n,\quad \eta_1^{-2}\eta_5^{1}=q^{1/8}\sum_{n\geq0}b_n'q^n,\quad \eta_1^{-3}\eta_5^6=q^{9/8}\sum_{n\geq0}c_n'q^n.
\end{align*}
\end{lemma}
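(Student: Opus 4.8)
The statement is the exact counterpart of Lemma~\ref{r:lmm0}, with $M_0$ replaced by $M_1$ and with the base eta-quotient $q^{5/24}C\Psi_{3,1/2}(q)=3\eta_3^3/\eta_1^4$ replaced by $q^{25/24}C\Psi_{3,1/2}(q^5)$; the plan is to run exactly the same argument. First I would record that applying \eqref{eq:cpsi31genfun} with $\tau$ replaced by $5\tau$ gives
$$q^{25/24}C\Psi_{3,1/2}(q^5)=3\,\frac{\eta_{15}^3}{\eta_5^4},$$
so that, by \eqref{r:xxyy}, the three functions on the left of the claimed identities are the weight $-\tfrac12$ eta-quotients
$$3\,\frac{\eta_{15}^8}{\eta_1^5\eta_3\eta_5^3},\qquad 3\,\frac{\eta_1^2\eta_{15}}{\eta_3^2\eta_5^2},\qquad 3\,\frac{\eta_{15}^6}{\eta_1^3\eta_3^3\eta_5},$$
to which the weight $-\tfrac12$ slash by $M_1=\zeta_{12}^{-1}\,\widetilde{\tbtmat{1}{0}{10}{1}}+\zeta_3\,\widetilde{\tbtmat{1}{0}{20}{1}}$ is applied.

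Next I would compute the action of the single elements $\widetilde{\tbtmat{1}{0}{10}{1}}$ and $\widetilde{\tbtmat{1}{0}{20}{1}}$ on $\eta_1,\eta_3,\eta_5,\eta_{15}$ one factor at a time (legitimate here because the slash distributes over products for single group elements, unlike for general elements of $\mathscr{A}$). Since $1\mid 10,20$ and $5\mid 10,20$, \eqref{eq:etaChar} gives $\eta_1\vert_{1/2}\widetilde{\tbtmat{1}{0}{c}{1}}$ and $\eta_5\vert_{1/2}\widetilde{\tbtmat{1}{0}{c}{1}}$ ($c=10,20$) as explicit roots of unity times $\eta_1$ and $\eta_5$. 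Since $3\nmid 10,20$, I would treat $\eta_3$ and $\eta_{15}$ by the device of \eqref{eq:eta3vert1011}--\eqref{eq:eta3vert1021}: either factor $\widetilde{\tbtmat{1}{0}{10}{1}}=\widetilde{\tbtmat{1}{0}{9}{1}}\,\widetilde{\tbtmat{1}{0}{1}{1}}$ and $\widetilde{\tbtmat{1}{0}{20}{1}}=\widetilde{\tbtmat{1}{0}{18}{1}}\,\widetilde{\tbtmat{1}{0}{2}{1}}$ (the metaplectic cocycle factors being trivial) and reuse \eqref{eq:eta3vert1011}, \eqref{eq:eta3vert1021}, or else exhibit the matrices $\tbtmat{3}{2}{10}{7}$, $\tbtmat{3}{1}{20}{7}\in\slZ$ taking $\tfrac{3\tau}{10\tau+1}$, $\tfrac{3\tau}{20\tau+1}$ to $\tfrac{\tau-2}{3}$, $\tfrac{\tau-1}{3}$ and the matrices $\tbtmat{3}{10}{2}{7}$, $\tbtmat{3}{5}{4}{7}\in\slZ$ taking $\tfrac{15\tau}{10\tau+1}$, $\tfrac{15\tau}{20\tau+1}$ to $\tfrac{5(\tau-2)}{3}$, $\tfrac{5(\tau-1)}{3}$, and then apply \eqref{eq:etaChar} together with the fact that the two automorphy factors collapse to $\tfrac1{\sqrt3}$. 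Either way this produces $\eta_3\vert_{1/2}\widetilde{\tbtmat{1}{0}{10}{1}}$, $\eta_3\vert_{1/2}\widetilde{\tbtmat{1}{0}{20}{1}}$, $\eta_{15}\vert_{1/2}\widetilde{\tbtmat{1}{0}{10}{1}}$, $\eta_{15}\vert_{1/2}\widetilde{\tbtmat{1}{0}{20}{1}}$ as explicit constants times $\eta(\tfrac{\tau-2}{3})$, $\eta(\tfrac{\tau-1}{3})$, $\eta(\tfrac{5(\tau-2)}{3})$, $\eta(\tfrac{5(\tau-1)}{3})$.

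Plugging these into $M_1$ and collecting, each of the three quantities becomes $\eta_1^{a}\eta_5^{b}$ times a $\numC$-linear combination of $\eta(\tfrac{\tau-2}{3})^{A}\eta(\tfrac{5(\tau-2)}{3})^{B}$ and $\eta(\tfrac{\tau-1}{3})^{A}\eta(\tfrac{5(\tau-1)}{3})^{B}$, where $(a,b,A,B)$ is $(-5,-3,-1,8)$, $(2,-2,-2,1)$, $(-3,-1,-3,6)$ in the three cases. Writing $\eta_1^{A}\eta_5^{B}=q^{(A+5B)/24}\sum_n s_n q^n$ and substituting $q\mapsto\zeta_3^{-j}q^{1/3}$ gives
$$\eta\!\left(\tfrac{\tau-j}{3}\right)^{A}\eta\!\left(\tfrac{5(\tau-j)}{3}\right)^{B}=\etp{-\tfrac{j(A+5B)}{24}}\,q^{(A+5B)/24}\sum_n s_n\,\zeta_3^{-jn}\,q^{n/3},$$
and in the three cases $s_n$ is precisely the sequence $a_n'$, $b_n'$, $c_n'$ defined in the statement (here $A+5B$ equals $39$, $3$, $27$, matching the normalizations $q^{13/8}$, $q^{1/8}$, $q^{9/8}$). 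Adding the $j=1$ and $j=2$ contributions, the accumulated roots of unity conspire so that the coefficient of $q^{n/3}$ vanishes for exactly one residue class of $n$ modulo $3$ and, on the other two classes, equals a fixed root of unity times $s_n$ with opposite signs; matching powers of $q$ and simplifying the prefactor to $\tfrac19$, $9$, $1$ yields the three asserted identities.

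The hard part will be this last bookkeeping: checking that the eighth- and twenty-fourth-roots of unity produced by the several evaluations of $\chi_\eta$ and by the principal-branch square roots in the metaplectic cover combine so that (i) the predicted residue class mod $3$ is annihilated and (ii) the surviving constant is exactly $\tfrac19$, $9$, $1$. Nothing here is conceptually new---every ingredient is an instance of \eqref{eq:etaChar} or of the automorphy-factor collapse already used in Lemma~\ref{r:lmm0}---but it is the step most prone to a sign slip, so I would carry it out by direct computation (confirming it, say, against the first few $q$-coefficients) rather than by appeal to symmetry.
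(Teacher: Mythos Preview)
Your proposal is correct and follows exactly the approach the paper intends: the paper's own proof reads in its entirety ``The proof is similar to Lemma \ref{r:lmm0} and is omitted,'' and your outline is precisely that---rewrite each left-hand side as an explicit eta-quotient (your three quotients $3\eta_{15}^8/(\eta_1^5\eta_3\eta_5^3)$, $3\eta_1^2\eta_{15}/(\eta_3^2\eta_5^2)$, $3\eta_{15}^6/(\eta_1^3\eta_3^3\eta_5)$ are correct), evaluate the slash by $\widetilde{\tbtmat{1}{0}{10}{1}}$ and $\widetilde{\tbtmat{1}{0}{20}{1}}$ factorwise using \eqref{eq:etaChar} and the $\eta_3,\eta_{15}$ trick from \eqref{eq:eta3vert1011}--\eqref{eq:eta3vert1021}, combine via the coefficients $\zeta_{12}^{-1}$, $\zeta_3$ of $M_1$, and expand the resulting $\eta(\tfrac{\tau-j}{3})^A\eta(\tfrac{5(\tau-j)}{3})^B$ as $q$-series to extract residue classes mod $3$.
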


The proof is similar to Lemma \ref{r:lmm0} and is omitted.

Recall that $t$ is defined in \eqref{eq:tp0p1}. It is a modular function on $\Gamma_0(5)$. A straightforward verification shows the following:
\begin{lemma}
\label{lemm:hftvertMi}
Let $f(t)$ be a Laurent polynomial of the modular function $t$, $i=0,1$ and $h$ be arbitrary function defined on $\uhp$. Then we have
$$
hf(t)\vert M_i=h\vert M_i\cdot f(t).
$$
\end{lemma}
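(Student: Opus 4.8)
The plan is to reduce the claim to the multiplicativity of the slash operator for a \emph{single} group element together with the linearity of the $\mathscr{A}$-action, taking care \emph{not} to invoke multiplicativity for the group-algebra element $M_i$ itself (which, as warned in Section~\ref{r:sec5}, would be illegitimate). Concretely, I would first unfold each $M_i$ into its two metaplectic summands: write $M_0=\zeta_{12}^{-1}\widetilde{\gamma_1}+\zeta_{3}\widetilde{\gamma_2}$ with $\gamma_1=\tbtmat{1}{0}{50}{1}$, $\gamma_2=\tbtmat{1}{0}{100}{1}$, and $M_1=\zeta_{12}^{-1}\widetilde{\gamma_3}+\zeta_{3}\widetilde{\gamma_4}$ with $\gamma_3=\tbtmat{1}{0}{10}{1}$, $\gamma_4=\tbtmat{1}{0}{20}{1}$; all four matrices lie in $\Gamma_0(5)$.

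The only substantive input is that $t=\eta_5^6/\eta_1^6$ is a weight-$0$ modular function on $\Gamma_0(5)$ with trivial multiplier system; this is the ``straightforward verification'' referenced, and follows from the standard eta-quotient criteria (both $\sum\delta r_\delta$ and $\sum(5/\delta)r_\delta$ are divisible by $24$, and $\prod\delta^{r_\delta}=5^6$ is a perfect square). Consequently $t\vert_{0}\widetilde{\gamma_j}=t$ for $j=1,\dots,4$ --- the metaplectic $\varepsilon$-factor being immaterial at weight $0$ --- and therefore $f(t)\vert_{0}\widetilde{\gamma_j}=f\bigl(t\vert_{0}\widetilde{\gamma_j}\bigr)=f(t)$ for any Laurent polynomial $f$, since at weight $0$ the slash acts simply by precomposition with $\gamma_j$.

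Next, for each $j$ the product rule $(f_1\vert_{k_1}\widetilde{\gamma_j})(f_2\vert_{k_2}\widetilde{\gamma_j})=(f_1 f_2)\vert_{k_1+k_2}\widetilde{\gamma_j}$, which is valid for single group elements of the metaplectic group, gives
\[
(hf(t))\vert_{-\frac{1}{2}}\widetilde{\gamma_j}=\bigl(h\vert_{-\frac{1}{2}}\widetilde{\gamma_j}\bigr)\cdot\bigl(f(t)\vert_{0}\widetilde{\gamma_j}\bigr)=\bigl(h\vert_{-\frac{1}{2}}\widetilde{\gamma_j}\bigr)\cdot f(t).
\]
Multiplying by the corresponding coefficient ($\zeta_{12}^{-1}$ or $\zeta_{3}$), summing the two terms of $M_i$, and invoking the linearity of $\vert_{-\frac{1}{2}}$ as a right $\mathscr{A}$-action then yields $(hf(t))\vert_{-\frac{1}{2}}M_i=(h\vert_{-\frac{1}{2}}M_i)\cdot f(t)$, which is the assertion.

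The main --- and essentially only --- obstacle is the conceptual one already flagged in Section~\ref{r:sec5}: one must resist applying the product formula to $M_i\in\mathscr{A}_\numZ$ directly, since that identity fails for general group-algebra elements; the computation has to be carried out termwise on the underlying group elements $\widetilde{\gamma_j}$ and then reassembled by linearity. Everything else --- checking that the four translation matrices lie in $\Gamma_0(5)$ and that $t$ is $\Gamma_0(5)$-invariant with trivial multiplier --- is routine bookkeeping.
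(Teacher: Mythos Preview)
Your proposal is correct and is precisely the ``straightforward verification'' the paper omits: the paper gives no proof beyond that phrase, and your argument --- termwise application of the single-element product rule $(hf(t))\vert_{-1/2}\widetilde{\gamma_j}=(h\vert_{-1/2}\widetilde{\gamma_j})(f(t)\vert_0\widetilde{\gamma_j})$ together with the $\Gamma_0(5)$-invariance of $t$, then reassembly by linearity --- is the intended route. Your explicit caution about not applying the product formula directly to $M_i\in\mathscr{A}_\numZ$ is well placed and matches the warning in Section~\ref{r:sec5}.
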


Now we define the $C\Psi_{3,3/2}$-analog of \eqref{eq:tp0p1}. The modular function $t$ remains unchanged and let
$$
\overline{p_0}:=\frac{q^{1/8}}{C\Psi_{3,3/2}(q)}\left(q^{5/24}C\Psi_{3,1/2}(q)p_0\vert M_0\right),
$$
$$
\overline{p_1}:=\frac{q^{5/8}}{C\Psi_{3,3/2}(q^5)}\left(q^{25/24}C\Psi_{3,1/2}(q^5)p_1\vert M_1\right).
$$

\begin{lemm}
\label{lemm:overlinep0p1U}
For each of the identities stated in Appendix \ref{appendix:modular_relations}, if we replace $A$, $p_0$ and $p_1$ with $A_3$, $\overline{p_0}$ and $\overline{p_1}$, respectively, then the obtained formula remains true. In particular, we have
\begin{align}
\overline{p_1}=-\frac{25t}{2}-\frac{1}{2}U_5(A_3t^{-1}),\label{eq:overlinep1U}\\
\overline{p_0}=\frac{18}{5}+\frac{1}{5}U_5(\overline{p_1}t^{-1}).\label{eq:overlinep0U}
\end{align}
\end{lemm}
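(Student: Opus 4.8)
The plan is to transport the entire Paule--Radu-type machinery of Section~\ref{subsec:sketch_r_c31cong} from $C\Psi_{3,1/2}$ to $C\Psi_{3,3/2}$ through two explicit linear ``transfer maps'' and to check that these maps intertwine the relevant $U$-operators, so that every relation in Appendix is carried over term by term. Concretely, write $g_0:=q^{5/24}C\Psi_{3,1/2}(q)=f_{3,1/2}$, $g_1:=q^{25/24}C\Psi_{3,1/2}(q^5)=f_{3,1/2}(q^5)$, and $G_1:=q^{125/24}C\Psi_{3,1/2}(q^{25})$ (so $G_1$ is obtained from $g_1$ by $q\mapsto q^5$), and define, for a function $h$ on $\uhp$,
\begin{equation*}
\Phi_0(h):=\frac{q^{1/8}}{C\Psi_{3,3/2}(q)}\left(g_0\,h\mid M_0\right),\qquad \Phi_1(h):=\frac{q^{5/8}}{C\Psi_{3,3/2}(q^5)}\left(g_1\,h\mid M_1\right),
\end{equation*}
so that $\overline{p_0}=\Phi_0(p_0)$ and $\overline{p_1}=\Phi_1(p_1)$ by definition, and, by Lemma~\ref{lema:LtoK}, $K_{2\alpha}=\Phi_0(L_{2\alpha})$ and $K_{2\alpha+1}=\Phi_1(L_{2\alpha+1})$. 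First I would record the elementary properties of $\Phi_0,\Phi_1$: each is $\numC$-linear; from \eqref{r:mtran1}--\eqref{r:mtran2} (Example~\ref{r:exm01}) one has $g_0\mid M_0=q^{-1/8}C\Psi_{3,3/2}(q)$ and $g_1\mid M_1=q^{-5/8}C\Psi_{3,3/2}(q^5)$, hence $\Phi_0(1)=\Phi_1(1)=1$; and since $t$ is a modular function on $\Gamma_0(5)$, Lemma~\ref{lemm:hftvertMi} gives $\Phi_i(t^{k}h)=t^{k}\Phi_i(h)$ for all $k\in\numZ$, so in particular $\Phi_0(t^k)=\Phi_1(t^k)=t^k$, $\Phi_0(p_0t^k)=\overline{p_0}\,t^k$ and $\Phi_1(p_1t^k)=\overline{p_1}\,t^k$.

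The core of the argument is two intertwining identities. With $U^{(0)}=U_5(A\,\cdot\,)$ and $U^{(1)}=U_5(\cdot)$ as in Section~\ref{subsec:sketch_r_c31cong}, the claim is that for every rational function $h$ of $t,p_0,p_1$ (hence modular on $\Gamma_0(15)$, and so on $\Gamma_1^*(240)$),
\begin{equation*}
\Phi_1\left(U^{(0)}(h)\right)=U_5\left(A_3\,\Phi_0(h)\right),\qquad \Phi_0\left(U^{(1)}(h)\right)=U_5\left(\Phi_1(h)\right).
\end{equation*}
To prove the first one I would compute $A_3\,\Phi_0(h)=\frac{q^{25/8}}{C\Psi_{3,3/2}(q^{25})}(g_0 h\mid M_0)$, apply $U_5^*$, pull out the factor living in the variable $q^5$ via the identity $U_5^*(F(q^5)G(q))=F(q)\,U_5^*(G(q))$, then use \eqref{r:mtran3} (equivalently \eqref{eq:U5RM1}) to move $M_0$ through $U_5^*$ and turn it into $M_1$, and finally use the relation $g_0=A\cdot G_1$ together with $U_5^*(Ah)=U_5(Ah)=U^{(0)}(h)$ to recognise the outcome as $\Phi_1(U^{(0)}(h))$; the second identity comes out the same way, with \eqref{r:mtran4} (equivalently \eqref{eq:U5RM0}) replacing \eqref{r:mtran3}.

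With these in hand the lemma is formal. Every relation in Appendix is a $\numC$-linear relation all of whose terms are of one ``parity'': either they all lie in the span of $\{t^n,\ p_0t^n,\ U^{(1)}(t^n),\ U^{(1)}(p_1t^n)\}$, or they all lie in the span of $\{t^n,\ p_1t^n,\ U^{(0)}(t^n),\ U^{(0)}(p_0t^n)\}$. I would apply $\Phi_0$ to a relation of the first kind and $\Phi_1$ to one of the second kind, and then expand using linearity, the rules $\Phi_i(t^k)=t^k$, $\Phi_i(p_it^k)=\overline{p_i}\,t^k$, and the two intertwining identities (which replace $U^{(0)}$ by $U_5(A_3\,\cdot\,)$ and $U^{(1)}$ by $U_5(\cdot)$); this turns the relation into precisely the asserted one with $A,p_0,p_1$ replaced by $A_3,\overline{p_0},\overline{p_1}$. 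The two displayed special cases \eqref{eq:overlinep1U} and \eqref{eq:overlinep0U} then come out of applying $\Phi_1$ to the Appendix identity $p_1=-\tfrac{25t}{2}-\tfrac12U^{(0)}(t^{-1})$ and $\Phi_0$ to $p_0=\tfrac{18}{5}+\tfrac15U^{(1)}(p_1t^{-1})$.

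I expect the main obstacle to be not conceptual but a matter of careful bookkeeping inside the two intertwining identities: one must follow the fractional $q$-power prefactors through the substitution $q\mapsto q^5$, verify at each occurrence that $U_5^*$ may be replaced by the ordinary $U_5$ (which rests on the relevant arguments being genuine Laurent series in integral powers of $q$---for instance that each $\Phi_i$-image of a rational function of $t,p_0,p_1$ is such a series), and match the right transfer map to the right parity class of Appendix relation. Once the intertwining identities are secured, the remainder of the proof is the same formal manipulation already used to establish Lemma~\ref{lema:LtoK}.
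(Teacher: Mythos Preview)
Your proposal is correct and is essentially the same argument as the paper's, just more cleanly packaged: the paper proves the two displayed cases by multiplying an Appendix identity by $q^{25/24}C\Psi_{3,1/2}(q^5)$ or $q^{5/24}C\Psi_{3,1/2}(q)$, applying $\vert M_1$ or $\vert M_0$ respectively (invoking \eqref{r:mtran1}--\eqref{r:mtran4} and Lemma~\ref{lemm:hftvertMi}), and then dividing out, which is exactly your $\Phi_1$ and $\Phi_0$; it then says ``other implications can be proved in the same strategy.'' Your intertwining identities $\Phi_1\circ U^{(0)}=U_5(A_3\,\Phi_0(\cdot))$ and $\Phi_0\circ U^{(1)}=U_5\circ\Phi_1$ are precisely the abstract form of these two computations, so the approaches coincide.
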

\begin{proof}
First we show $U_5(At^{-1})=-5^2 t-2p_1$ implies \eqref{eq:overlinep1U}. Multiplying both sides of $U_5(At^{-1})=-5^2 t-2p_1$ by $q^{25/24}C\Psi_{3,1/2}(q^5)$ we obtain
\begin{equation*}
U_5^*(q^{5/24}C\Psi_{3,1/2}(q)t^{-1})=-25tq^{25/24}C\Psi_{3,1/2}(q^5)-2q^{25/24}C\Psi_{3,1/2}(q^5)p_1.
\end{equation*}
Applying to both sides the operator $\vert M_1$ and using \eqref{r:mtran1}-\eqref{r:mtran3} and Lemma \ref{lemm:hftvertMi} we find that
\begin{equation*}
U_5^*(q^{-1/8}C\Psi_{3,3/2}(q)t^{-1})=-25tq^{-5/8}C\Psi_{3,3/2}(q^5)-2(q^{25/24}C\Psi_{3,1/2}(q^5)p_1)\vert M_1.
\end{equation*}
Therefore,
\begin{equation*}
\overline{p_1}=-\frac{25t}{2}-\frac{1}{2}\frac{U_5^*(q^{-1/8}C\Psi_{3,3/2}(q)t^{-1})}{q^{-5/8}C\Psi_{3,3/2}(q^5)}=-\frac{25t}{2}-\frac{1}{2}U_5(A_3t^{-1}).
\end{equation*}
To see \eqref{eq:overlinep0U}, we multiply both sides of $U_5(p_1t^{-1})=-18+5p_0$ by $q^{5/24}C\Psi_{3,1/2}(q)$ and obtain
\begin{equation*}
U_5^*(q^{25/24}C\Psi_{3,1/2}(q^5)p_1t^{-1})=-18q^{5/24}C\Psi_{3,1/2}(q)+5q^{5/24}C\Psi_{3,1/2}(q)p_0.
\end{equation*}
Applying to both sides the operator $\vert M_0$ and using \eqref{r:mtran1}, \eqref{r:mtran2}, \eqref{r:mtran4} and Lemma \ref{lemm:hftvertMi} we deduce that
\begin{equation*}
q^{-1/8}C\Psi_{3,3/2}(q)U_5(\overline{p_1}t^{-1})=-18q^{-1/8}C\Psi_{3,3/2}(q)+5q^{-1/8}C\Psi_{3,3/2}(q)\overline{p_0},
\end{equation*}
which reduces to \eqref{eq:overlinep0U}. Other implications can be proved in the same strategy.
\end{proof}

The explicit expressions of the coefficients of $\overline{p_0}$ and $\overline{p_1}$ are embodied in the following formulas.
\begin{lemma}
\label{eq:overlinep0p1coeff}
We have
$$
\overline{p_0}=\frac{1}{C\Psi_{3,3/2}(q)}
\left(9q^{2/3}\frac{\eta_5^8}{\eta_1^8}\sum_{n=0}^\infty b_{3n+2}q^n-3q^{7/6}\frac{\eta_5}{\eta_1^9} \sum_{n=0}^\infty a_{3n+2}q^n \right)-3t,
$$
\begin{align*}
\overline{p_1}=&\frac{1}{C\Psi_{3,3/2}(q^5)}
\bigg(9q^{4/3}\frac{\eta_1^2}{\eta_5^2}(1+12t)\sum_{n=0}^\infty b'_{3n+2}q^n
-12q^{4/3}\frac{1}{\eta_1^3 \eta_5} \sum_{n=0}^\infty c'_{3n+1}q^n\\
&-9q^{11/6}\frac{1}{\eta_1^5 \eta_5^3} \sum_{n=0}^\infty a'_{3n+2}q^n \bigg)-9t,
\end{align*}
\end{lemma}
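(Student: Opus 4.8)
The plan is to prove both formulas directly from the definitions of $\overline{p_0}$ and $\overline{p_1}$, feeding in the expansions of $p_0,p_1$ in the generators $x,y,t$ from \eqref{eq:tp0p1} together with the transformation formulas of Lemmas \ref{r:lmm0} and \ref{r:lmm1}. First I would rewrite $p_0=6xy+27x+yt-3t$ and $p_1=12xy+81x+(1+12t)y-9t$. Since $t=\eta_5^6/\eta_1^6$ is a modular function on $\Gamma_0(5)$ with trivial multiplier and $3t$, $1+12t$, $9t$ are Laurent polynomials in $t$, Lemma \ref{lemm:hftvertMi} lets me pull these factors outside $\vert M_0$ and $\vert M_1$; applying it term by term and using \eqref{r:mtran1} and \eqref{r:mtran2} on the lone $-3t$ and $-9t$ pieces, this yields
\begin{align*}
\overline{p_0}&=\frac{q^{1/8}}{C\Psi_{3,3/2}(q)}\Bigl(6\,q^{5/24}C\Psi_{3,1/2}(q)xy\vert M_0+27\,q^{5/24}C\Psi_{3,1/2}(q)x\vert M_0+t\,(q^{5/24}C\Psi_{3,1/2}(q)y\vert M_0)\Bigr)-3t,\\
\overline{p_1}&=\frac{q^{5/8}}{C\Psi_{3,3/2}(q^5)}\Bigl(12\,q^{25/24}C\Psi_{3,1/2}(q^5)xy\vert M_1+81\,q^{25/24}C\Psi_{3,1/2}(q^5)x\vert M_1+(1+12t)(q^{25/24}C\Psi_{3,1/2}(q^5)y\vert M_1)\Bigr)-9t.
\end{align*}

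Next I would substitute the three identities of Lemma \ref{r:lmm0} into the first line and those of Lemma \ref{r:lmm1} into the second and collect the resulting $q$-series by the residue of the exponents modulo $3$. Two kinds of simplification take place. First, some sifted series vanish identically: e.g. $\eta_5^3=\sum_n c_nq^{n+5/8}$ is supported on exponents $n\equiv0,2\pmod 3$ by Jacobi's identity $\prod_{n\ge1}(1-q^n)^3=\sum_{k\ge0}(-1)^k(2k+1)q^{k(k+1)/2}$, so $\sum_n c_{3n+1}q^n=0$ and the residue-$1$ part of the $xy$-contribution to $\overline{p_0}$ disappears outright. Second, the remaining unwanted pieces --- those whose exponent residue is not the one retained in the statement (for $\overline{p_0}$: the residue-$0$ parts coming from $xy$ and from $x$, and the residue-$1$ parts coming from $x$ and from $yt$; analogously for $\overline{p_1}$) --- must cancel among themselves. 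After clearing the eta denominators this is a finite identity among explicit $q$-series that I would verify by comparing coefficients up to a Sturm-type bound; equivalently it is an identity of weakly holomorphic modular forms on a suitable congruence subgroup and follows from the valence formula, exactly as the ``fundamental identities'' of the Appendix are established. What survives is precisely the pair of terms in the statement; keeping track of the fractional $q$-powers (the prefactors $q^{1/8}$, $q^{5/8}$ against the $q^{-1/8}$, $q^{3/8}$, $q^{1/24}$, $q^{13/24}$, $\dots$ on the right-hand sides in Lemmas \ref{r:lmm0} and \ref{r:lmm1}, which produce $q^{2/3}$, $q^{7/6}$ for $\overline{p_0}$ and $q^{4/3}$, $q^{11/6}$ for $\overline{p_1}$) and of the eta prefactors supplied by $t=\eta_5^6/\eta_1^6$ then gives the displayed coefficients.

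The step I expect to be the main obstacle is this last cancellation --- showing that, once Lemmas \ref{r:lmm0} and \ref{r:lmm1} are inserted, every residue class other than the intended one drops out. Unlike the vanishing of $\sum_n c_{3n+1}q^n$, it is not a single classical theta identity; it genuinely uses that $p_0$ and $p_1$ are the particular combinations chosen in \eqref{eq:tp0p1}, and the most robust way to settle it is a finite coefficient comparison on the relevant modular-forms space. A more self-contained alternative, avoiding both the valence formula and Lemmas \ref{r:lmm0} and \ref{r:lmm1}, would be to apply $M_0$ (resp. $M_1$) directly to the single eta quotient $q^{5/24}C\Psi_{3,1/2}(q)p_0$ (resp. $q^{25/24}C\Psi_{3,1/2}(q^5)p_1$), written out as a sum of products of $\eta_1,\eta_3,\eta_5,\eta_{15}$, using the transformation formulas for $\eta_3$ and $\eta_{15}$ under $\tbtmat{1}{0}{50}{1}$ and $\tbtmat{1}{0}{100}{1}$ already established in the proof of Lemma \ref{r:lmm0}; then the cancellations appear as the vanishing of the sums of $\zeta_3$-phases attached to each summand over the two shifts $j=1,2$, and the entire argument stays elementary.
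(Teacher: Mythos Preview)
Your setup is exactly the paper's: expand $p_0,p_1$ via \eqref{eq:tp0p1}, pull $t$-factors through with Lemma~\ref{lemm:hftvertMi}, substitute Lemmas~\ref{r:lmm0} and~\ref{r:lmm1}, and split the result by the fractional part of the $q$-exponent. The paper writes this as $C\Psi_{3,3/2}(q)\overline{p_0}=S_1+S_2-3tC\Psi_{3,3/2}(q)$ with $S_1\in\numC[[q]]$ and $S_2\in q^{2/3}\numC[[q]]$, and similarly $S_3,S_4$ for $\overline{p_1}$.

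Where you diverge is in how the unwanted pieces $S_2,S_4$ are shown to vanish. You propose a Sturm-bound / valence-formula verification, and note correctly that this is where the specific coefficients in \eqref{eq:tp0p1} matter. That would work. The paper instead observes that Lemma~\ref{lemm:overlinep0p1U} (the formulas $\overline{p_1}=-\tfrac{25t}{2}-\tfrac12 U_5(A_3t^{-1})$ and $\overline{p_0}=\tfrac{18}{5}+\tfrac15 U_5(\overline{p_1}t^{-1})$) forces $\overline{p_0}$ and $\overline{p_1}$ to be Laurent series in integer powers of $q$. Since $C\Psi_{3,3/2}$, $t$ and $S_1,S_3$ also have integer $q$-exponents, the pieces $S_2\in q^{2/3}\numC[[q]]$ and $S_4\in q^{1/3}\numC[[q]]$ must vanish identically. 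No coefficient comparison is needed; the cancellation is a consequence of a structural fact already established. Your use of Jacobi's identity to kill $\sum c_{3n+1}q^n$ is the same as the paper's final step in simplifying $S_1$.

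So your proposal is correct but more laborious. The paper's argument is shorter because it recycles the $U_5$-description of $\overline{p_0},\overline{p_1}$ to read off the integrality of the $q$-exponents, which is exactly the missing idea you flagged as ``the main obstacle.''
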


\begin{proof}
It follows from \eqref{eq:tp0p1}, \eqref{r:mtran1}, Lemmas \ref{r:lmm0} and \ref{lemm:hftvertMi} that
\begin{align}
&\mathrel{\phantom{=}}C\Psi_{3,3/2}(q)\overline{p_0}\label{eq:C3p0decomp}\\
&=q^{1/8}\left(6q^{5/24}C\Psi_{3,1/2}xy\vert M_0 + 27q^{5/24}C\Psi_{3,1/2}x\vert M_0+tq^{5/24}C\Psi_{3,1/2}y\vert M_0-3tq^{5/24}C\Psi_{3,1/2}\vert M_0\right)\notag\\
&=S_1+S_2-3tC\Psi_{3,3/2}(q)\notag
\end{align}
where
\begin{equation*}
S_1=9q^{2/3}\frac{\eta_5^8}{\eta_1^8}\sum_{n=0}^\infty b_{3n+2}q^n-3q^{7/6}\frac{\eta_5}{\eta_1^9} \sum_{n=0}^\infty a_{3n+2}q^n-6q^{2/3}\frac{\eta_5^3}{\eta_1^7}\sum_{n=0}^\infty c_{3n+1}q^n
\end{equation*}
and
\begin{equation*}
S_2=-9q^{1/3}\frac{\eta_5^8}{\eta_1^8}\sum_{n=0}^\infty b_{3n+1}q^n+3q^{5/6}\frac{\eta_5}{\eta_1^9} \sum_{n=0}^\infty a_{3n+1}q^n+6q^{1/3}\frac{\eta_5^3}{\eta_1^7}\sum_{n=0}^\infty c_{3n}q^n.
\end{equation*}
Likewise, by \eqref{eq:tp0p1}, \eqref{r:mtran2}, Lemmas \ref{lemm:hftvertMi} and \ref{r:lmm1} we have
\begin{equation}
C\Psi_{3,3/2}(q^5)\overline{p_1}=S_3+S_4-9tC\Psi_{3,3/2}(q^5)\label{eq:C3p1decomp}
\end{equation}
where
\begin{equation*}
S_3=9q^{4/3}\frac{\eta_1^2}{\eta_5^2}(1+12t)\sum_{n=0}^\infty b'_{3n+2}q^n-12q^{4/3}\frac{1}{\eta_1^3 \eta_5} \sum_{n=0}^\infty c'_{3n+1}q^n-9q^{11/6}\frac{1}{\eta_1^5 \eta_5^3} \sum_{n=0}^\infty a'_{3n+2}q^n
\end{equation*}
and
\begin{equation*}
S_4=-9q^{2/3}\frac{\eta_1^2}{\eta_5^2}(1+12t)\sum_{n=0}^\infty b'_{3n}q^n + 12q^{5/3}\frac{1}{\eta_1^3 \eta_5} \sum_{n=0}^\infty c'_{3n+2}q^n+9q^{7/6}\frac{1}{\eta_1^5 \eta_5^3} \sum_{n=0}^\infty a'_{3n}q^n.
\end{equation*}
Expanding the Fourier series we find that
\begin{gather*}
S_1=\sum_{n\in\numZ}s_1(n)q^n,\quad S_2=q^{2/3}\sum_{n\in\numZ}s_2(n)q^n,\\
S_3=\sum_{n\in\numZ}s_3(n)q^n,\quad S_4=q^{1/3}\sum_{n\in\numZ}s_4(n)q^n.\\
\end{gather*}
We have $S_2=S_4=0$ for each term of the $q$-series expansions of $\overline{p_0}$ and $\overline{p_1}$ has integral exponent by \eqref{eq:overlinep1U} and \eqref{eq:overlinep0U}. Thus, the assertion on $\overline{p_1}$ follows from this and \eqref{eq:C3p1decomp}. To conclude the proof for $\overline{p_0}$ from \eqref{eq:C3p0decomp}, it remains to show $c_{3n+1}=0$, which can be seen from the well-known identity
\begin{equation*}
\eta(\tau)^3=\frac{1}{2}\sum_{n\in\numZ}\legendre{-4}{n}nq^{n^2/8}.
\end{equation*}
\end{proof}

We are now ready to prove \eqref{r:c33cong}, that is,
\begin{align*}
c\psi_{3,3/2}(5^{\alpha}n+\lambda_{\alpha})\equiv 0 \pmod{5^{\lfloor\frac{\alpha}{2}\rfloor}}.
\end{align*}
\begin{proof}
Obviously, $\overline{p_0}$ and $\overline{p_1}$ are formal $q$-series with integral coefficients by Lemma \ref{eq:overlinep0p1coeff}. We have shown in Section \ref{subsec:sketch_r_c31cong} that $L_{\alpha}$ can be decomposed as
$$
L_{\alpha}=f_{\alpha}(t)+p_i g_{\alpha}(t),
$$
where $i=0,1$ is determined by $\alpha\equiv i \pmod{2}$ and $f_\alpha$, $g_\alpha$ are Laurent polynomials. Applying Lemma \ref{lema:LtoK} to this identity and using \eqref{r:mtran1}, \eqref{r:mtran2} and Lemma \ref{lemm:hftvertMi}, we obtain
$$
K_{\alpha}=f_{\alpha}(t)+\overline{p_i} g_{\alpha}(t).
$$
It then follows from \eqref{r:Lmod5} that
\begin{equation}
\label{eq:KinX}
K_{2\alpha-1}\in 5^{\alpha-1} \overline{X}^{(1)}\qquad \text{and} \qquad K_{2\alpha}\in 5^{\alpha} \overline{X}^{(0)},
\end{equation}
where
\begin{align*}
\overline{X}^{(0)}:=&\left\{\sum_{n=0}^\infty r_0(n)5^{\lfloor \frac{5n}{2} \rfloor}\overline{p_0}t^n+\sum_{n=1}^\infty r_1(n)5^{\lfloor \frac{5n-3}{2} \rfloor}t^n:\text{$r_i$ are discrete functions} \right\},\\
\overline{X}^{(1)}:=&\left\{\delta \overline{p_1}+\sum_{n=1}^\infty r_0(n)5^{\lfloor \frac{5n-1}{2} \rfloor}\overline{p_1}t^n+\sum_{n=1}^\infty r_1(n)5^{\lfloor \frac{5n-2}{2} \rfloor}t^n:\text{$r_i$ are discrete functions}, \delta \in \mathbb{Z} \right\}.
\end{align*}
Combining \eqref{eq:Kodddirect}, \eqref{eq:Kevendirect} and \eqref{eq:KinX}, we conclude the proof of \eqref{r:c33cong}.
\end{proof}
\begin{rema}
An alternative way to shift from \eqref{r:c31cong} to \eqref{r:c33cong} is firstly using a single Atkin-Lehner involution to shift from $C\Psi_{3,1/2}$ to a linear combination of $C\Psi_{3,1/2}(q^3)$ and $C\Psi_{3,3/2}(q^3)$, up to different factors of powers of $q$, then finding out congruences of this combination from \eqref{r:c31cong}, and finally disregarding the $C\Psi_{3,1/2}(q^3)$-part. The action of the Atkin-Lehner involution mentioned above follows from Theorem \ref{thm:fkbetaSLZ} and is given by
\begin{equation*}
q^{5/24}C\Psi_{3,1/2}\Big\vert_{-1/2}\tbtMat{0}{-1}{3}{0}=\left(\frac{\sqrt{6}}{6}+\frac{\sqrt{6}}{6}\rmi\right)\cdot\left(-q^{5/24}C\Psi_{3,1/2}(q^3)+q^{-1/8}C\Psi_{3,1/2}(q^3)\right).
\end{equation*}
This proof may be tidier, but the proof that we have presented seems to apply to more general cases due to Theorem \ref{thm:smallk}.
\end{rema}

\section{Miscellaneous observations and future works}
\label{sec:misc}

\subsection{On representations of $\sltZ$}
\label{subsec:representations}

Let $k\in\numgeq{Z}{1}$ and $\beta\in\mathfrak{B}_k$. The initial term of $f_{k,\beta}=q^{\frac{k}{12}-\frac{\beta^2}{2k}}\cdot C\Psi_{k,\beta}$ is $c\psi_{k,\beta}(0)q^{\frac{k}{12}-\frac{\beta^2}{2k}}$ where $c\psi_{k,\beta}(0)$ equals the $\zeta^{\beta-k/2}$-coefficient of $(1+\zeta^{-1})^k$. It follows that $\{f_{k,\beta}\colon\beta\in\mathfrak{B}_k\}$ is $\numC$-linearly independent. Therefore, if we define
$$
\rho_k\colon\sltZ\to\glnC{\lfloor k/2\rfloor+1}
$$
partially by (noting that $\delta_{\beta,\beta'}$ is the Kronecker's delta)
\begin{align*}
\rho_k\widetilde{\tbtMat{1}{1}{0}{1}}&=\left(\delta_{\beta,\beta'}\cdot\etp{\frac{k}{12}-\frac{\beta^2}{2k}}\right)_{\beta,\beta'\in\mathfrak{B}_k},\\
\rho_k\widetilde{\tbtMat{0}{-1}{1}{0}}&=\left(s_{\beta,\beta'}^{(k)}\right)_{\beta,\beta'\in\mathfrak{B}_k},
\end{align*}
then it actually extends to a representation of the metaplectic group $\sltZ$ in the space $\numC^{\lfloor k/2\rfloor+1}$ by Theorem \ref{thm:fkbetaSLZ}. Set $\mathbf{f}_k=(f_{k,\beta})_{\beta\in\mathfrak{B}_k}$, viewed as a column vector. Then Theorem \ref{thm:fkbetaSLZ} amounts to saying that
\begin{equation*}
\mathbf{f}_k\vert_{-1/2}\gamma=\rho_k(\gamma)\mathbf{f}_k,\quad\gamma\in\sltZ.
\end{equation*}
\begin{prop}
\label{prop:kerrhok}
The kernel of $\rho_k$ has finite index in $\sltZ$. Moreover, each matrix $\rho_k(\gamma)$ has finite order and so are its eigenvalues.
\end{prop}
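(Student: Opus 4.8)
The plan is to prove that $\ker\rho_k$ has finite index in $\sltZ$; everything else then follows formally, since $\rho_k(\sltZ)\cong\sltZ/\ker\rho_k$ is a finite group, each of its elements has finite order, and the eigenvalues of a matrix of finite order are roots of unity and hence of finite order. As a preliminary sanity check, note that $\rho_k(\widetilde{\tbtmat{1}{1}{0}{1}})$ is the diagonal matrix with entries $\etp{\frac{k}{12}-\frac{\beta^2}{2k}}$, which are roots of unity because $\frac{k}{12}-\frac{\beta^2}{2k}\in\numQ$; and $\rho_k(\widetilde{\tbtmat{0}{-1}{1}{0}})$ has finite order simply because $\widetilde{\tbtmat{0}{-1}{1}{0}}$ has order $8$ in $\sltZ$. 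These two facts alone do not give finiteness of the image, so the real work is to produce a finite-index subgroup of $\sltZ$ lying in $\ker\rho_k$.

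The main step is to restrict to the finite-index subgroup $\widetilde{\Gamma_0(2)}\leq\sltZ$ and use the theta-decomposition description of $f_{k,\beta}$. Combining \eqref{eq:fkbeta} with \eqref{eq:hbetakvertgamma} (for $k$ even) or \eqref{eq:hbetakvertgammaOdd} (for $k$ odd), and then using the identities $f_{k,\beta'}=f_{k,\lambda(\beta')}$ coming from \eqref{eq:htsymmetry} together with the fact that $\lambda^{(k)}$ restricts to the identity on $\mathfrak{B}_k$, one sees that for every $\widetilde\gamma\in\widetilde{\Gamma_0(2)}$ the entries of $\rho_k(\widetilde\gamma)$ are $\numZ$-linear combinations (with the $\mu$-weights of Theorem \ref{thm:fkbetaSLZ} for $k$ even) of the numbers $\chi_\eta^{-k}(\widetilde\gamma)\,\chi_k(\widetilde\gamma)\,\rho^*_{\underline L}(\widetilde\gamma)_{\bullet,\bullet}$, where $\underline L=\underline{\numZ}_k$ if $2\mid k$ and $\underline L=\underline{2\numZ}_k$ if $2\nmid k$. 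The crucial consequence is: if $\widetilde\gamma\in\widetilde{\Gamma_0(2)}$ satisfies $\rho^*_{\underline L}(\widetilde\gamma)=\mathrm{id}$ and $\chi_\eta^{-k}(\widetilde\gamma)\chi_k(\widetilde\gamma)=1$, then in the above linear combinations all the Weil-representation entries become Kronecker deltas and collapse to a single term, forcing $\rho_k(\widetilde\gamma)=\mathrm{id}$. Hence
$$
\ker\rho_k\ \supseteq\ \widetilde{\Gamma_0(2)}\ \cap\ \ker\!\big(\chi_\eta^{-k}\chi_k\big)\ \cap\ \ker\rho^*_{\underline L}.
$$

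It remains to observe that the right-hand side has finite index in $\sltZ$. The subgroup $\widetilde{\Gamma_0(2)}$ has index $[\slZ:\Gamma_0(2)]<\infty$. The character $\chi_\eta^{-k}\chi_k$ of $\widetilde{\Gamma_0(2)}$ has finite order: indeed $\chi_\eta^{24}=1$ on $\sltZ$ (as $\eta^{24}=\Delta$ has trivial multiplier), and the remaining factors of $\chi_k$ in \eqref{eq:chikFormula}, namely $\rmi^{-kc/2}$ and $\chi_H^k([\tfrac{c}{2},\tfrac{d-1}{2}],1)$, depend on $\widetilde\gamma$ only through $c,d$ modulo small fixed moduli; so its kernel has finite index. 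Finally, the Weil representation $\rho_{\underline L}$, hence its dual $\rho^*_{\underline L}$, factors through the finite group $\mathrm{Mp}_2(\numZ/N\numZ)$ with $N$ the level of $\underline L$ (see \cite[\S4]{Bor00} or \cite[\S6]{Str13}), so $\ker\rho^*_{\underline L}$ has finite index in $\sltZ$. Intersecting three finite-index subgroups yields a finite-index subgroup of $\sltZ$ contained in $\ker\rho_k$, which completes the proof. The one genuinely delicate point is the bookkeeping in the main step — checking that the foldings $f_{k,\beta'}=f_{k,\lambda(\beta')}$, the $\mu$-weights, and the edge cases $\beta\in\{0,\tfrac k2\}$ combine so that $\rho^*_{\underline L}(\widetilde\gamma)=\mathrm{id}$ really does force $\rho_k(\widetilde\gamma)=\mathrm{id}$; everything else is a citation of standard facts about eta multipliers and Weil representations.
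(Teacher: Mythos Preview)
Your proof is correct, but takes a different route from the paper's. The paper's argument is a two-line application of Theorem~\ref{thm:fkbetaGamma0k}: for $\gamma\in\Gamma(24k)$ that theorem's explicit formulas give $\beta'=\beta$ and $p_\beta=\varepsilon_\gamma\in\{\pm1\}$ independent of $\beta$, so $(\gamma,\varepsilon_\gamma)\in\ker\rho_k$; since these elements generate an index-$2$ subgroup of $\widetilde{\Gamma(24k)}$, the finite-index claim follows immediately.

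Your approach instead bypasses Theorem~\ref{thm:fkbetaGamma0k} entirely and goes back to the theta-decomposition formulas \eqref{eq:hbetakvertgamma}--\eqref{eq:hbetakvertgammaOdd}: on $\widetilde{\Gamma_0(2)}$ the representation $\rho_k$ is assembled from the character $\chi_\eta^{-k}\chi_k$ and the dual Weil representation $\rho^*_{\underline L}$, and both have finite-index kernels by standard facts. This is more conceptual and explains \emph{why} the kernel is of finite index without the case analysis of Theorem~\ref{thm:fkbetaGamma0k}; the trade-off is that the paper's route produces an explicit congruence subgroup in the kernel (essentially $\Gamma(24k)$ up to a sign), which it needs later in Proposition~\ref{prop:formulamk}. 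Your ``delicate point'' about the folding is in fact harmless here: once $\rho^*_{\underline L}(\widetilde\gamma)=\mathrm{id}$, equation \eqref{eq:hbetakvertgamma} (resp.\ \eqref{eq:hbetakvertgammaOdd}) already gives $h_{\beta/k}^{(k)}\vert\widetilde\gamma=\chi_k(\widetilde\gamma)h_{\beta/k}^{(k)}$ before any folding, so no edge cases arise.
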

\begin{proof}
For each matrix $\gamma\in\Gamma(24k)$, we have $f_{k,\beta}\vert_{-1/2}\widetilde{\gamma}=\varepsilon_\gamma\cdot f_{k,\beta}$ with $\varepsilon_\gamma\in\{\pm1\}$ independent of $\beta$ by Theorem \ref{thm:fkbetaGamma0k}. Therefore, $\rho_k(\gamma,\varepsilon_\gamma)$ is the identity matrix in $\glnC{\lfloor k/2\rfloor+1}$. Since the group generated by all $(\gamma,\varepsilon_\gamma)$ has index $2$ in $\widetilde{\Gamma(24k)}$, we have $[\sltZ\colon \ker \rho_k]\leq2[\sltZ\colon\widetilde{\Gamma(24k)}]<+\infty$. For the second conclusion, let $\gamma\in\sltZ$ be arbitrary. By what we have just proved, $\gamma^n\in\ker \rho_k$ for some $n\in\numgeq{Z}{1}$, so $\rho_k(\gamma)^n$ equals the identity matrix in $\glnC{\lfloor k/2\rfloor+1}$. The final assertion then follows from this.
\end{proof}

The following proposition\footnote{This is suggested by one of the anonymous referees. We are grateful for his or her insightful comments.} links $\rho_k$ to the Weil representation $\rho_{\underline{L}_k}$.
\begin{prop}
\label{prop:rhokrhoLk}
Let $k$ be a positive integer. Let $\gamma=\tbtmat{a}{b}{c}{d}\in\Gamma_0(2)$. Let $\beta,\beta'\in\mathfrak{B}_k$. If $k$ is even, then
\begin{equation}
\label{eq:rhokrhoLkeven}
\rho_k(\widetilde{\gamma})_{\beta,\beta'}=\rmi^{-\frac{kc}{2}}\chi_\eta^{2k}(\widetilde{\gamma})\mu_{\beta'/k}\cdot\left(\rho_{\underline{\numZ}_k}^*(\widetilde{\gamma})_{\beta/k,\beta'/k}+\rho_{\underline{\numZ}_k}^*(\widetilde{\gamma})_{\beta/k,-\beta'/k}\right),
\end{equation}
where $\mu_{\beta'/k}$ is defined in Theorem \ref{thm:fkbetaSLZ}, and $\chi_\eta$ is the character of the Dedekind eta function (see \eqref{eq:etaChar}). On the other hand, if $k$ is odd, then
\begin{multline}
\label{eq:rhokrhoLkodd}
\rho_k(\widetilde{\gamma})_{\beta,\beta'}=(-1)^{\frac{c(d-1)}{4}+\frac{c+d-1}{2}}\rmi^{-\frac{kc}{2}}\chi_\eta^{2k}(\widetilde{\gamma})\mu_{\beta'/k}\\
\cdot\left(\rho_{\underline{2\numZ}_k}^*(\widetilde{\gamma})_{\beta/k,\beta'/k}+\rho_{\underline{2\numZ}_k}^*(\widetilde{\gamma})_{\beta/k,-\beta'/k}+\rho_{\underline{2\numZ}_k}^*(\widetilde{\gamma})_{\beta/k,(k+\beta')/k}+\rho_{\underline{2\numZ}_k}^*(\widetilde{\gamma})_{\beta/k,-(k+\beta')/k}\right).
\end{multline}
\end{prop}
\begin{proof}
For $k$ even, the assertion follows by combining \eqref{eq:hbetakvertgamma2}, \eqref{eq:fkbeta}, and \eqref{eq:chikFormula}. For $k$ odd, we use \eqref{eq:hbetakvertgamma2Odd} instead of \eqref{eq:hbetakvertgamma2}.
\end{proof}
\begin{rema}
Numerical experiments show that \eqref{eq:rhokrhoLkeven} and \eqref{eq:rhokrhoLkodd} do not necessarily hold for $\gamma\in\slZ$. We do not know explicit formulas that link $\rho_k$ with $\rho_{\underline{L}_k}$ on the whole $\slZ$. Maybe one could acquire such formulas by modifying the coefficients a bit in \eqref{eq:rhokrhoLkeven} and \eqref{eq:rhokrhoLkodd}.
\end{rema}

We have defined an equivalence relation on $\mathfrak{B}_k$ according to whether there exists $\gamma\in\Gamma_0(k)$ such that we can transform one $f_{k,\beta}$ to another by $\gamma$; see Proposition \ref{prop:EquivalenceRelation}. It already leads to a new approach for establishing correspondence among congruence families; see Theorem \ref{thm:thklkl}. However, our correspondence between $f_{3,1/2}$ and $f_{3,3/2}$ does not rely on $\gamma\in\Gamma_0(3)$ since $1/2$ and $3/2$ are inequivalent, but relies on a linear combination of elements in $\sltZ$; see Example \ref{r:exm01}. Furthermore, this correspondence is powerful: it indeed let us find out a new congruence family on Andrews' $c\phi_{3}(n)$, see \eqref{r:c33cong}. Therefore, the following definition seems to be important as well:
\begin{deff}
Let $\beta,\beta'\in\mathfrak{B}_k$. We write $\beta\succeq\beta'$ (or more rigorous $\beta\succeq_k\beta'$) if there exists an $M\in\mathscr{A}_\numZ$ such that $f_{k,\beta}\vert_{-1/2}M=f_{k,\beta'}$. (For $\mathscr{A}_\numZ$ and its action, see Definitions \ref{def:groupAlgebra2} and \ref{def:groupAlgebra3}.)
\end{deff}
The proposition below follows directly from the expressions of $\rho_k\widetilde{\tbtmat{1}{1}{0}{1}}$, $\rho_k\widetilde{\tbtmat{0}{-1}{1}{0}}$ and basic linear algebra.
\begin{prop}
\label{prop:MinK}
If $f_{k,\beta}\vert_{-1/2}M=f_{k,\beta'}$, then we can require that $M\in K[\sltZ]$ where $K$, depending only on $k$, is an algebraic number field.
\end{prop}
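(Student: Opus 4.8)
The plan is to use that $\rho_k$ factors through a finite group, together with the explicit generating matrices in Theorem~\ref{thm:fkbetaSLZ}, which have entries in a fixed number field. First I would reduce to a finite set of group elements. Recall from the paragraph preceding Proposition~\ref{prop:kerrhok} that $\mathbf{f}_k\vert_{-1/2}\gamma=\rho_k(\gamma)\mathbf{f}_k$ for every $\gamma\in\sltZ$, and from Proposition~\ref{prop:kerrhok} that $\ker\rho_k$ has finite index in $\sltZ$. Fix representatives $\gamma_1=\widetilde{\tbtmat{1}{0}{0}{1}},\gamma_2,\dots,\gamma_N$ of the left cosets of $\ker\rho_k$; these depend only on $k$. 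If $\kappa\in\ker\rho_k$ then $f_{k,\beta''}\vert_{-1/2}\kappa=f_{k,\beta''}$ for all $\beta''$, so for $\gamma=\gamma_i\kappa$ one has $f_{k,\beta}\vert_{-1/2}\gamma=(f_{k,\beta}\vert_{-1/2}\gamma_i)\vert_{-1/2}\kappa=\sum_{\beta''\in\mathfrak{B}_k}\rho_k(\gamma_i)_{\beta,\beta''}\,f_{k,\beta''}=f_{k,\beta}\vert_{-1/2}\gamma_i$. Hence, for any $M=\sum_\gamma c_\gamma\cdot\gamma\in\mathscr{A}_\numZ$, grouping the (finitely many) terms by their coset gives $f_{k,\beta}\vert_{-1/2}M=f_{k,\beta}\vert_{-1/2}M_0$ with $M_0=\sum_{i=1}^N\big(\sum_{\gamma\in\gamma_i\ker\rho_k}c_\gamma\big)\gamma_i$, so it suffices to produce an $M$ supported on $\{\gamma_1,\dots,\gamma_N\}$ with coefficients in a number field.

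Next I would pin down the field. Since $\sltZ$ is generated by $\widetilde{\tbtmat{1}{1}{0}{1}}$ and $\widetilde{\tbtmat{0}{-1}{1}{0}}$, each $\rho_k(\gamma_i)$ is a word in $\rho_k\widetilde{\tbtmat{1}{1}{0}{1}}^{\pm1}$ and $\rho_k\widetilde{\tbtmat{0}{-1}{1}{0}}^{\pm1}$. By Theorem~\ref{thm:fkbetaSLZ}, the entries of $\rho_k\widetilde{\tbtmat{1}{1}{0}{1}}$ are the roots of unity $\etp{\tfrac{k}{12}-\tfrac{\beta^2}{2k}}$ (with denominator dividing $24k$), while those of $\rho_k\widetilde{\tbtmat{0}{-1}{1}{0}}$ are rational combinations of $\rmi^{2\beta}$, the $8$th root of unity $\etp{\tfrac{2k+1}{8}}$, $\tfrac{1}{\sqrt k}$, and $\cos\!\big(\tfrac{2\pi\beta'(\beta+k/2)}{k}\big)$, all of which lie in $K:=\numQ(\zeta_{24k})$ (note $\sqrt k$ lies in a cyclotomic field, and $\cos$ of a rational multiple of $\pi$ is an algebraic integer in such a field). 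As $K$ is a field, it is closed under matrix multiplication and, by Cramer's rule, under inverting the invertible matrices $\rho_k(\gamma)$; therefore every entry $\rho_k(\gamma_i)_{\beta,\beta''}$ lies in the number field $K$, which depends only on $k$.

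Finally I would descend the solvability. Write $M_0=\sum_{i=1}^N c_i\gamma_i$. Because $\{f_{k,\beta''}:\beta''\in\mathfrak{B}_k\}$ is $\numC$-linearly independent, the equation $f_{k,\beta}\vert_{-1/2}M_0=f_{k,\beta'}$ is equivalent to the linear system $\sum_{i=1}^N c_i\,\rho_k(\gamma_i)_{\beta,\beta''}=\delta_{\beta',\beta''}$ indexed by $\beta''\in\mathfrak{B}_k$ in the unknowns $c_1,\dots,c_N$. The hypothesis $f_{k,\beta}\vert_{-1/2}M=f_{k,\beta'}$ together with the reduction of the first paragraph guarantees that this system is solvable over $\numC$; since its coefficient matrix and right-hand side have entries in $K$ and the solvability of a linear system over a field is detected by rank equalities insensitive to field extension, the system already has a solution $(c_1',\dots,c_N')\in K^N$. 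Then $M':=\sum_{i=1}^N c_i'\gamma_i\in K[\sltZ]$ satisfies $f_{k,\beta}\vert_{-1/2}M'=f_{k,\beta'}$. The only slightly delicate point is the bookkeeping in the second paragraph---verifying that every trigonometric or square-root quantity occurring in Theorem~\ref{thm:fkbetaSLZ} is genuinely algebraic and that these jointly generate a number field depending only on $k$---but this is routine; the reduction to coset representatives and the field-theoretic descent of solvability are purely formal.
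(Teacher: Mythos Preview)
Your proof is correct and is precisely the detailed unpacking of the paper's one-line justification (``follows directly from the expressions of $\rho_k\widetilde{\tbtmat{1}{1}{0}{1}}$, $\rho_k\widetilde{\tbtmat{0}{-1}{1}{0}}$ and basic linear algebra''). The reduction to finitely many coset representatives of $\ker\rho_k$, the observation that all entries of $\rho_k(\gamma_i)$ lie in the number field generated by the entries of $\rho_k\widetilde{T}$ and $\rho_k\widetilde{S}$, and the descent of solvability of the linear system from $\numC$ to $K$ are exactly what the paper leaves implicit; the field $K$ you identify ($\numQ(\zeta_{24k})$) contains the one the paper names immediately after the proposition.
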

Indeed, $K$ can always be chosen as the number field generated by $\etp{\frac{k}{12}-\frac{\beta^2}{2k}}$ and $s_{\beta,\beta'}^{(k)}$, $\beta,\beta'\in\mathfrak{B}_k$. Furthermore, $\rho_k$ can be seen as a representation over the base field $K$ just mentioned.

From the last proposition, we see that if $\beta\succeq\beta'$, then it is possible to transform a number-theoretical or combinatorial property, such as a congruence, from $C\Psi_{k,\beta}$ to $C\Psi_{k,\beta'}$ via $M\in\mathscr{A}_\numZ$ as we have done for $k=3$ in Section \ref{subsec:fromf312tof332}. The following problem is then fundamental:
\begin{oq}
\label{op:btobp}
Find out an explicit criterion on when $\beta\succeq\beta'$.
\end{oq}
The answer in the following case is simple.
\begin{prop}
\label{prop:rhokirredu}
If $\rho_k$ is irreducible, then $\beta\succeq\beta'$ for all $\beta,\beta'\in\mathfrak{B}_k$.
\end{prop}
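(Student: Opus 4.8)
The plan is to translate the relation $\beta\succeq\beta'$ into a statement about the $\numC$-subalgebra of matrices $\mathcal{A}_k\subseteq\glnC{\lfloor k/2\rfloor+1}$ spanned by $\{\rho_k(\gamma):\gamma\in\sltZ\}$ (equivalently, $\mathcal{A}_k=\rho_k(\mathscr{A}_\numZ)$, since $\mathscr{A}_\numZ=\numC[\sltZ]$ and $\rho_k$ is a homomorphism), and then to invoke Burnside's theorem. Set $n=\lfloor k/2\rfloor+1$ and index the standard basis of $\numC^n$ by $\mathfrak{B}_k$.

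First I would record the elementary identity, valid for any $M=\sum_{\gamma}c_\gamma\gamma\in\mathscr{A}_\numZ$:
\[
f_{k,\beta}\big\vert_{-\frac12}M=\sum_{\beta''\in\mathfrak{B}_k}\rho_k(M)_{\beta,\beta''}\,f_{k,\beta''},\qquad \rho_k(M):=\sum_{\gamma}c_\gamma\,\rho_k(\gamma),
\]
which follows immediately from $\mathbf{f}_k\vert_{-1/2}\gamma=\rho_k(\gamma)\mathbf{f}_k$ (Theorem \ref{thm:fkbetaSLZ}, as reformulated in Section \ref{subsec:representations}) together with linearity of the $\vert$-action. Since the functions $f_{k,\beta''}$, $\beta''\in\mathfrak{B}_k$, are $\numC$-linearly independent (first paragraph of Section \ref{subsec:representations}), the equation $f_{k,\beta}\vert_{-1/2}M=f_{k,\beta'}$ is equivalent to the $\beta$-th row of the matrix $\rho_k(M)$ being the standard coordinate row vector $e_{\beta'}^{\mathsf T}$. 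In particular, whenever the matrix unit $E_{\beta,\beta'}$ lies in $\mathcal{A}_k$, say $E_{\beta,\beta'}=\rho_k(M)$ with $M\in\mathscr{A}_\numZ$, that very $M$ witnesses $\beta\succeq\beta'$.

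Next I would invoke Burnside's theorem (equivalently, the Jacobson density theorem for a simple module over a $\numC$-algebra): a subalgebra of $M_n(\numC)$ that acts irreducibly on $\numC^n$ must be all of $M_n(\numC)$. By hypothesis $\rho_k$ is irreducible, and a subspace of $\numC^n$ is $\mathcal{A}_k$-invariant precisely when it is $\rho_k(\sltZ)$-invariant; hence $\mathcal{A}_k$ acts irreducibly and $\mathcal{A}_k=M_n(\numC)$. Therefore every matrix unit $E_{\beta,\beta'}$ with $\beta,\beta'\in\mathfrak{B}_k$ belongs to $\mathcal{A}_k$, and by the previous paragraph $\beta\succeq\beta'$ for all $\beta,\beta'\in\mathfrak{B}_k$, as claimed.

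There is no genuinely hard step; the only points requiring care are the index bookkeeping in the displayed identity (reading the \emph{row} rather than the column of $\rho_k(M)$, consistently with the convention $\mathbf{f}_k\vert_{-1/2}\gamma=\rho_k(\gamma)\mathbf{f}_k$), and the observation that $\mathscr{A}_\numZ=\numC[\sltZ]$ carries \emph{complex} coefficients, so Burnside over the algebraically closed field $\numC$ applies verbatim — with integer coefficients the conclusion would fail, which is why the definition of $\succeq$ and Proposition \ref{prop:MinK} only ask for $M\in\mathscr{A}_\numZ$ (resp. $M\in K[\sltZ]$ for a number field $K$).
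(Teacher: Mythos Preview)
Your proof is correct and takes a genuinely different route from the paper's. The paper realises $\mathfrak{F}_k=\bigoplus_{\beta}\numC f_{k,\beta}$ as a left $\sltZ$-module via $(\gamma,f)\mapsto f\vert_{-1/2}\gamma^{-1}$, observes that in the basis $\{f_{k,\beta}\}$ the resulting representation is $\gamma\mapsto\rho_k(\gamma)^{-T}$, and then invokes character theory of the finite quotient $\sltZ/\ker\rho_k$ (Proposition~\ref{prop:kerrhok}) to conclude this contragredient is irreducible whenever $\rho_k$ is; simplicity of $\mathfrak{F}_k$ then gives $\beta\succeq\beta'$. You bypass the contragredient entirely: Burnside's theorem applied directly to $\rho_k$ forces $\rho_k(\mathscr{A}_\numZ)=M_n(\numC)$, so every matrix unit $E_{\beta,\beta'}$ is hit and the row condition you isolated is met. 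Your argument is shorter and does not use the finite-image fact at all (Burnside needs only irreducibility over an algebraically closed field), while the paper's approach dovetails naturally with the character-norm computation $\mathfrak{m}_k$ used immediately afterwards.
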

\begin{proof}
Set $\mathfrak{F}_k=\bigoplus_{\beta\in\mathfrak{B}_k}\numC f_{k,\beta}$. The (left) group action $(\gamma,f)\mapsto f\vert_{-1/2}\gamma^{-1}$ makes $\mathfrak{F}_k$ to be a left $\sltZ$-module of $\numC$-dimension $\lfloor k/2\rfloor+1$. This $\sltZ$-module structure gives rise to a homomorphism $\rho_k'\colon\sltZ\to\mathrm{GL}(\mathfrak{F}_k)$, $\gamma\mapsto(f\mapsto f\vert_{-1/2}\gamma^{-1})$. Let $\mathcal{T}\colon\mathrm{GL}(\mathfrak{F}_k)\to\mathrm{GL}(\numC^{\lfloor k/2\rfloor+1})$ be the isomorphism that maps $g$ to its matrix representation under the basis $\{f_{k,\beta}\colon\beta\in\mathfrak{B}_k\}$. Then $\mathcal{T}\circ\rho_k'$ is a group representation and one can verify immediately that $\mathcal{T}\circ\rho_k'(\gamma)=\rho_k(\gamma)^{-T}$ where ``T'' means matrix transpose. As the two representations both factor through representations of the finite group $\sltZ/\ker \rho_k$ and the traces of $\rho_k(\gamma)^{-T}$ and $\rho_k(\gamma)$ are conjugate to each other, we deduce from the assumption $\rho_k$ is irreducible that $\rho_k'$ is irreducible as well (cf. \cite[p.17]{Ser77}), which is equivalent to saying $\mathfrak{F}_k$ is a simple $\sltZ$-module. Therefore, the submodule generated by $f_{k,\beta}$ is the whole module so $\beta\succeq\beta'$.
\end{proof}

To determine whether $\rho_k$ is irreducible (for any fixed $k$), we may use character theory of finite groups. Let $G$ be arbitrary finite-index normal subgroup of $\sltZ$ contained in $\ker\rho_k$, and let $\widetilde{\rho}_k\colon\sltZ/G\to\glnC{\lfloor k/2\rfloor+1}$ be the representation given by $gG\mapsto\rho_k(g)$. We define
\begin{equation}
\label{eq:defmk}
\mathfrak{m}_k:=\frac{1}{\abs{\sltZ/G}}\cdot\sum_{g\cdot G\in\sltZ/G}\abs{\mathop{\mathrm{Tr}}\rho_k(g)}^2.
\end{equation}
Alternatively, $\mathfrak{m}_k$ is the norm of the character of $\widetilde{\rho}_k$. Note that $\mathfrak{m}_k$ is independent of the choice of $G$. By the character theory, $\mathfrak{m}_k$ is a positive integer; moreover,
\begin{equation}
\label{eq:mk1}
\rho_k \text{ is irreducible if and only if } \mathfrak{m}_k=1.
\end{equation}
See \cite[p. 17]{Ser77}.

We now give a concrete formula to calculate $\mathfrak{m}_k$, which is not an efficient one\footnote{A more efficient method should involve Str\"{o}mberg's formula \cite[Remark 6.8]{Str13}, which we have not considered.} but is enough for small $k$ with the aid of any computer algebra system. First some notations. Set $T=\tbtmat{1}{1}{0}{1}$, $S=\tbtmat{0}{-1}{1}{0}$, $N=24k$. Let $A_d$ be a complete set of representatives of $\left(\numZ/(d,N/d)\numZ\right)^\times$ with the additional conditions $0\leq x<N/d$ and $(x,N/d)=1$ for all $x\in A_d$. Set $\mathscr{N}_d=\{dx\colon x\in A_d\}$ and $\mathscr{N}=\bigcup_{d\mid N,\,d\neq1,N}\mathscr{N}_d$, which is a disjoint union.
\begin{prop}
\label{prop:formulamk}
We have
\begin{multline*}
\mathfrak{m}_k\cdot N^3\prod_{p\mid N}\left(1-\frac{1}{p^2}\right)\\
=\sum_{j=0}^{N-1}\sum_{\twoscript{a=0}{(a,N)=1}}^{N-1}\left\lvert{\mathop{\mathrm{Tr}}\rho_k(\widetilde{T})^j\rho_k\widetilde{\tbtmat{a}{b_a}{N}{d_a}}}\right\rvert^2
+\sum_{j=0}^{N-1}\sum_{\twoscript{a=0}{(a,N)=1}}^{N-1}\sum_{i=0}^{N-1}\left\lvert{\mathop{\mathrm{Tr}}\rho_k(\widetilde{T})^j\rho_k\widetilde{\tbtmat{a}{b_a}{N}{d_a}}}\rho_k(\widetilde{S})\rho_k(\widetilde{T})^i\right\rvert^2\\
+\sum_{j=0}^{N-1}\sum_{\twoscript{a=0}{(a,N)=1}}^{N-1}\sum_{n\in\mathscr{N}}\sum_{i=0}^{N/(N,n^2)-1}\left\lvert{\mathop{\mathrm{Tr}}\rho_k(\widetilde{T})^j\rho_k\widetilde{\tbtmat{a}{b_a}{N}{d_a}}}\rho_k(\widetilde{S})\rho_k(\widetilde{T})^n\rho_k(\widetilde{S})\rho_k(\widetilde{T})^i\right\rvert^2
\end{multline*}
where $p$ denotes primes and $b_a$, $d_a$ are arbitrary integers such that $ad_a-Nb_a=1$.
\end{prop}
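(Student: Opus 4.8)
I would start from a character-theoretic reduction of $\mathfrak{m}_k$ to a sum over $\mathrm{SL}_2(\numZ/N)$ (recall $N=24k$). By Proposition~\ref{prop:kerrhok} the representation $\rho_k$ factors through the finite group $\sltZ/\ker\rho_k$, so $\mathfrak{m}_k$ is the squared norm of its character; hence $\phi(g):=\lvert\mathrm{Tr}\,\rho_k(g)\rvert^2$ is constant on cosets of $\ker\rho_k$. On the other hand, exactly as in the proof of Proposition~\ref{prop:kerrhok}, Theorem~\ref{thm:fkbetaGamma0k} applied to $\gamma\in\Gamma(24k)$ (for which $T_\beta\gamma=\beta$ and the constant $p_\beta\gamma$ lies in $\{\pm1\}$ independently of $\beta$) gives $\rho_k(\widetilde{\Gamma(N)})\subseteq\{\pm I\}$, so $\phi$ is \emph{also} constant on cosets of $\widetilde{\Gamma(N)}$. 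The average of such a function over $\sltZ/H$ is independent of the finite-index subgroup $H$ on whose cosets $\phi$ is constant (pass to the common refinement $\ker\rho_k\cap\widetilde{\Gamma(N)}$), so $\mathfrak{m}_k=[\sltZ:\widetilde{\Gamma(N)}]^{-1}\sum_{g\widetilde{\Gamma(N)}\in\sltZ/\widetilde{\Gamma(N)}}\phi(g)$. Since $(\gamma,\varepsilon)\mapsto\gamma\bmod N$ identifies $\sltZ/\widetilde{\Gamma(N)}$ with $\mathrm{SL}_2(\numZ/N)$, of order $N^3\prod_{p\mid N}(1-p^{-2})$, and $\phi(\widetilde\gamma)$ depends only on $\gamma\bmod N$ (two lifts, and two representatives of a $\Gamma(N)$-coset, change $\rho_k(\widetilde\gamma)$ only by a scalar $\pm1$, which $\lvert\cdot\rvert^2$ ignores), I obtain
\[
\mathfrak{m}_k\cdot N^3\prod_{p\mid N}\Bigl(1-\tfrac1{p^2}\Bigr)=\sum_{\gamma\in\mathrm{SL}_2(\numZ/N)}\bigl\lvert\mathrm{Tr}\,\rho_k(\widetilde\gamma)\bigr\rvert^2.
\]
From here on, all metaplectic cocycle signs that appear when the lift of a product of $T$'s and $S$'s is compared with the product of the individual lifts are absorbed by $\lvert\cdot\rvert^2$, so the double cover needs no further tracking.

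Next I would enumerate $\mathrm{SL}_2(\numZ/N)$ by bottom rows. Left multiplication by $\tbtmat{1}{j}{0}{1}$ fixes the bottom row and acts freely, so $\mathrm{SL}_2(\numZ/N)=\bigsqcup_{(c,d)}\{T^jg_{c,d}:0\le j<N\}$, the union over primitive vectors $(c,d)\in(\numZ/N)^2$ and $g_{c,d}$ any matrix with that bottom row; therefore the right-hand side above equals $\sum_{(c,d)}\sum_{j=0}^{N-1}\lvert\mathrm{Tr}\,\rho_k(\widetilde T)^j\rho_k(\widetilde{g_{c,d}})\rvert^2$. The three families in the statement then amount to a choice of the $g_{c,d}$ sorted by $e:=(c,N)$: (i) if $e=N$ (so $c\equiv0$ and $d$ a unit) take $g=\tbtmat{a}{b_a}{N}{d_a}$ with $ad_a-Nb_a=1$, noting that $d_a\equiv a^{-1}$ runs bijectively over the units; (ii) if $e=1$ ($c$ a unit, $d$ arbitrary) take $g=\tbtmat{a}{b_a}{N}{d_a}ST^i$, whose bottom row is $\equiv(d_a,d_ai)$ and for which $(a,i)\mapsto(d_a,d_ai)$ is a bijection onto $(\numZ/N)^\times\times\numZ/N$; (iii) if $1<e<N$ take $g=\tbtmat{a}{b_a}{N}{d_a}ST^nST^i=\tbtmat{a}{b_a}{N}{d_a}\tbtmat{-1}{0}{n}{-1}T^i$, whose bottom row is $\equiv(d_an,\,d_a(ni-1))$. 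Cases (i) and (ii) are immediate bijection checks, and these three cases exhaust the primitive vectors disjointly because $(c,N)$ is either $N$, or $1$, or a proper divisor.

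The main obstacle is case (iii): one must show the map $(a,n,i)\mapsto(d_an,\,d_a(ni-1))\bmod N$, with $a$ over the units of $\numZ/N$, $n$ over $\mathscr{N}=\bigsqcup_{1<e\mid N,\,e\ne N}\mathscr{N}_e$, and $i$ over $\{0,\dots,N/(N,n^2)-1\}$, is a bijection onto the primitive $(c,d)$ with $1<(c,N)<N$. I would fix $e=(c,N)$ and argue prime by prime via the Chinese Remainder Theorem (reducing to $N=p^r$). First: every $n\in\mathscr{N}_e$ is $e$ times a unit mod $N/e$, so $(n,N)=e$ and $(N,n^2)$ depends only on $e$, giving the $i$-range a constant length on $\mathscr{N}_e$. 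Then, for a target $(c,d)$ with $(c,N)=e$, one studies the two congruences $d_an\equiv c$ and $d_a(ni-1)\equiv d\pmod N$: the first fixes $d_a$ modulo the stabiliser of $n$ in $(\numZ/N)^\times$ (the kernel of reduction to $(\numZ/(N/e))^\times$, of order $\varphi(N)/\varphi(N/e)$), while solvability of the second together with primitivity $(e,d)=1$ forces $d_a\equiv-d\pmod e$; these two constraints are complementary, and the remaining freedom in $i$ is precisely the range $N/(N,n^2)$. Finally a cardinality count — $\lvert\mathscr{N}_e\rvert=\varphi((e,N/e))$, $\lvert(\numZ/N)^\times\rvert=\varphi(N)$, and the number of admissible targets being $\varphi(N/e)\cdot\varphi(e)\cdot N/e$ for each $e$ — confirms that the index set and the target set have the same size, so well-definedness and injectivity (both direct verifications) yield the bijection. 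This bookkeeping is elementary but carries essentially all of the weight of the proof; everything else is formal.
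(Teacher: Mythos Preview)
Your reduction to a sum over $\mathrm{SL}_2(\numZ/N)$ is correct and matches the paper's: both observe that $\lvert\mathrm{Tr}\,\rho_k(\cdot)\rvert^2$ descends to $\slZ/\Gamma(N)$ (the paper phrases this via the explicit index-$2$ subgroup $G_k\subset\widetilde{\Gamma(N)}$ on which $\rho_k$ is trivial, you via an averaging argument; these are equivalent).

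Where you diverge is in justifying the enumeration. The paper does not argue by bottom rows at all: it uses the standard tower $\Gamma(N)\subset\Gamma_1(N)\subset\Gamma_0(N)\subset\slZ$, with $\Gamma_1(N)=\bigsqcup_j\Gamma(N)T^j$ and $\Gamma_0(N)=\bigsqcup_a\Gamma_1(N)\tbtmat{a}{b_a}{N}{d_a}$ for the first two layers, and then \emph{cites} Lascurain's coset decomposition $\slZ=\bigsqcup_{\gamma\in R_0}\Gamma_0(N)\gamma$ with $R_0=\{I\}\cup\{ST^i\}\cup\{ST^nST^i\}$ for the last layer. That the three product families are distinct in $\mathrm{SL}_2(\numZ/N)$ is then automatic, since they are distinct $\Gamma(N)$-cosets by construction; no bottom-row bijection needs to be checked. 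Your approach is more self-contained --- you are effectively re-deriving Lascurain's result --- and your cardinality identity $\varphi(N)\varphi((e,N/e))\cdot N/(N,e^2)=\varphi(N/e)\varphi(e)\cdot N/e$ (which you implicitly use, and which follows from $\varphi(e)\varphi(N/e)(e,N/e)=\varphi(N)\varphi((e,N/e))$ together with $(N,n^2)=(N,e^2)=e(e,N/e)$) is correct. The one soft spot is your case~(iii) injectivity sketch: the ``complementary constraints'' narrative is a bit loose, since the constraint from the first congruence is on $d_a\bmod N/e$ while choosing the right $x\in A_e$ is what makes the $\bmod\,e$ constraint from the second congruence compatible. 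It is cleaner to argue \emph{surjectivity} (choose the unique $x\in A_e$ with $x\equiv -c'd^{-1}\pmod{(e,N/e)}$, then solve for $d_a$, then for $i$) and let the cardinality count finish the job. With that adjustment your argument goes through; the paper simply sidesteps all of this by citing the known coset system.
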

\begin{proof}[Sketch of proof]
Let $G_k$ be the index $2$ subgroup of $\widetilde{\Gamma(24k)}$ introduced in the proof of Proposition \ref{prop:kerrhok}. One can prove that $G_k$ is a normal subgroup of $\sltZ$. Setting $G=G_k$ in \eqref{eq:defmk} we deduce that
\begin{equation}
\label{eq:mkTemp}
\mathfrak{m}_k=\frac{1}{\abs{\slZ/\Gamma(24k)}}\cdot\sum_{g\in R}\abs{\mathop{\mathrm{Tr}}\rho_k(\widetilde{g})}^2
\end{equation}
where $R$ is a complete set of representatives of the orbit space $\Gamma(24k)\backslash\slZ$. Since $[\slZ\colon\Gamma(24k)]=N^3\prod_{p\mid N}\left(1-\frac{1}{p^2}\right)$ it remains to find a suitable $R$. We have $\Gamma_1(N)=\bigcup_{0\leq j<N}\Gamma(N)\cdot T^j$, $\Gamma_0(N)=\bigcup_{0\leq a<N,\,(a,N)=1}\Gamma_1(N)\cdot\tbtmat{a}{b_a}{N}{d_a}$, both of which are disjoint unions. Moreover, a slight modification of \cite[Proposition 2]{Las02} shows that $\slZ=\bigcup_{\gamma\in R_0}\Gamma_0(N)\cdot\gamma$, a disjoint union, where $R_0$ consists of the following matrices:
\begin{equation*}
I,\quad ST^i\,(0\leq i<N),\quad ST^nST^i\,(n\in\mathscr{N},\,0\leq i<N/(N,n^2)).
\end{equation*}
Combining these three decompositions we obtain a choice of $R$. Inserting it into \eqref{eq:mkTemp} and noting that $\rho_k(\gamma,1)=-\rho_k(\gamma,-1)$ we obtain the desired formula.
\end{proof}

The values of $\rho_k(\widetilde{T})$ and $\rho_k(\widetilde{S})$ in Proposition \ref{prop:formulamk} are known by the definition, whereas the matrices $\rho_k\widetilde{\tbtmat{a}{b_a}{N}{d_a}}$ can be obtained via Theorem \ref{thm:fkbetaGamma0k} since $\tbtmat{a}{b_a}{N}{d_a}\in\Gamma_0(k)$. We have worked out $\mathfrak{m}_k$ for small $k$ using Proposition \ref{prop:formulamk}; see Table \ref{table:mk}.
\begin{longtable}{llllllllll}
\caption{$\mathfrak{m}_k$ for small $k$\label{table:mk}} \\
\toprule
$k$ & $1$ & $2$ & $3$ & $4$ & $5$ & $6$ & $7$ & $8$ & $9$\\
\endfirsthead
$\mathfrak{m}_k$ & $1$ & $1$ & $1$ & $1$ & $1$ & $1$ & $1$ & $2$ & $2$\\
\midrule
$k$ & $10$ & $11$ & $12$ & $13$ & $14$ & $15$ & $16$ & $17$ & $18$\\
$\mathfrak{m}_k$ & $1$ & $1$ & $2$ & $1$ & $1$ & $2$ & $2$ & $1$ & $2$\\
\bottomrule
\end{longtable}

Skoruppa \cite[\S 6]{Sko08} provided the decomposition of each $\rho_{\underline{L}_k}$ into irreducible parts. It is possible to apply this theory to $\rho_k$ and $\mathfrak{m}_k$, provided that one has rigorously established the relationship between $\rho_k$ with $\rho_{\underline{L}_k}$. Up to now, we only know Proposition \ref{prop:rhokrhoLk}, which concerns only $\Gamma_0(2)$.

Combining Propositions \ref{prop:MinK}, \ref{prop:rhokirredu}, Table \ref{table:mk} and \eqref{eq:mk1} we obtain a partial answer to Question \ref{op:btobp}:
\begin{thm}
\label{thm:smallk}
Let $k\in\{1,2,3,4,5,6,7,10,11,13,14,17\}$ and $\beta,\beta'\in\mathfrak{B}_k$. Then there exists $M\in K[\sltZ]$ such that $f_{k,\beta}\vert_{-1/2}M=f_{k,\beta'}$, where $K$ is the number field generated by $\etp{\frac{k}{12}-\frac{b^2}{2k}}$ and $s_{b,b'}^{(k)}$, $b,b'\in\mathfrak{B}_k$.
\end{thm}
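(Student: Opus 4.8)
The plan is to obtain the statement by chaining together three facts already assembled in this section: the irreducibility criterion \eqref{eq:mk1}, the values of $\mathfrak{m}_k$ recorded in Table \ref{table:mk}, and Propositions \ref{prop:rhokirredu} and \ref{prop:MinK}. Concretely, for each $k$ in the list $\{1,2,3,4,5,6,7,10,11,13,14,17\}$, Table \ref{table:mk} gives $\mathfrak{m}_k=1$; by \eqref{eq:mk1} this says precisely that the representation $\rho_k\colon\sltZ\to\glnC{\lfloor k/2\rfloor+1}$ is irreducible. Proposition \ref{prop:rhokirredu} then yields $\beta\succeq\beta'$ for every pair $\beta,\beta'\in\mathfrak{B}_k$, i.e.\ an element $M\in\mathscr{A}_\numZ$ with $f_{k,\beta}\vert_{-1/2}M=f_{k,\beta'}$; and Proposition \ref{prop:MinK}, together with the remark after it, lets us take $M\in K[\sltZ]$ for the stated field $K$ generated by $\etp{\tfrac{k}{12}-\tfrac{b^2}{2k}}$ and the $s^{(k)}_{b,b'}$. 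So modulo Table \ref{table:mk} the theorem is immediate, and the whole task reduces to verifying the listed entries $\mathfrak{m}_k=1$.

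For that verification I would apply Proposition \ref{prop:formulamk} with $N=24k$: it expresses $\mathfrak{m}_k\cdot N^3\prod_{p\mid N}(1-p^{-2})$ as an explicit finite triple sum of squared traces $\lvert\mathop{\mathrm{Tr}}\rho_k(\widetilde\gamma)\rvert^2$ ranging over the $\Gamma(N)$-coset representatives built from $T^j$, the $\tbtmat{a}{b_a}{N}{d_a}$, and the $ST^nST^i$. The matrices $\rho_k(\widetilde T)$ and $\rho_k(\widetilde S)$ are handed to us directly by Theorem \ref{thm:fkbetaSLZ}, and since $N=24k$ every matrix $\tbtmat{a}{b_a}{N}{d_a}$ lies in $\Gamma_0(k)$, so $\rho_k\widetilde{\tbtmat{a}{b_a}{N}{d_a}}$ is computed from Theorem \ref{thm:fkbetaGamma0k}: it is a generalized permutation matrix whose nonzero entry in each column is the explicit root of unity $p_\beta$ and whose column index is $T_\beta$. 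Plugging these finitely many matrices into the formula of Proposition \ref{prop:formulamk} and evaluating the sum exactly in the cyclotomic field containing all $s^{(k)}_{\beta,\beta'}$ — a step one delegates to a computer algebra system — produces the integer $\mathfrak{m}_k$, and one checks it equals $1$ for each $k$ in the list (and, incidentally, $2$ for $k=8,9,12,15,16,18$, explaining why those are excluded).

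The main obstacle is purely computational: $[\slZ:\Gamma(24k)]=N^3\prod_{p\mid N}(1-p^{-2})$ grows rapidly, so already near $k=15$ the triple sum has a large number of terms, and the enumeration of the $ST^nST^i$ representatives — that is, the set $\mathscr{N}$ of allowed exponents $n$ together with the ranges $0\le i<N/(N,n^2)$ — must be carried out correctly, with all arithmetic done exactly in the relevant cyclotomic field so that no rounding masks the true value of $\mathfrak{m}_k$. One subsidiary point also deserves explicit mention: $\mathfrak{m}_k$ as defined by \eqref{eq:defmk} is independent of the auxiliary finite-index normal subgroup $G\subseteq\ker\rho_k$, which is just the standard invariance of the norm of a character under inflation; this is what legitimizes the choice $G=G_k$ used in the reduction of Proposition \ref{prop:formulamk} and hence the whole computation. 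Once Table \ref{table:mk} is secured in this way, the theorem follows with no further work.
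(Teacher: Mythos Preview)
Your proposal is correct and follows essentially the same approach as the paper: the theorem is stated there as an immediate consequence of combining \eqref{eq:mk1}, Table~\ref{table:mk}, Proposition~\ref{prop:rhokirredu}, and Proposition~\ref{prop:MinK}. Your additional discussion of how the entries of Table~\ref{table:mk} are computed via Proposition~\ref{prop:formulamk} simply elaborates what the paper leaves to a computer-algebra verification.
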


We state another two questions related to this theme.
\begin{oq}
In the case $\beta\succeq_k\beta'$, can we always find $M$ in the ring of integers of $K$ such that $f_{k,\beta}\vert_{-1/2}M=f_{k,\beta'}$? The $M_0$ in Example \ref{r:exm01} is of this kind.
\end{oq}
\begin{oq}
In the case $\beta\succeq_k\beta'$, find an explicit expression of or an algorithm (like Algorithm \ref{algo:findgaama}) to produce $M$.
\end{oq}

\subsection{More congruences for $c\psi_{k,\beta}(n)$}

There may be more examples analogous to Theorem \ref{r:themaincong}.

\begin{conj}
For integers $\alpha>0$, $\beta=0,1,2$, and $n$ such that $3\beta^2-8\equiv 24 n\pmod{7^{2\alpha-1}}$, we have
\beq
\label{r:ex4}
c\psi_{4,\beta}(n) \equiv 0 \pmod{7^\alpha}.
\eeq
\end{conj}

\begin{conj}
For integers $\alpha>0$, $\beta=0,1,2,3$, and $n$ such that $\beta^2-6 \equiv 12 n\pmod{7^{2\alpha-1}}$, we have
\beq
\label{r:ex6}
c\psi_{6,\beta}(49n+24-4\beta^2) \equiv x_\alpha \cdot c\psi_{6,\beta}(n) \pmod{7^{\alpha}},
\eeq
where $x_\alpha$ is a constant depending on $\alpha$.
\end{conj}

The main theme of this paper, on which all our results are centered, is to connect congruences between $c\psi_{k,\beta}(n)$ and $c\psi_{k,\beta'}(n)$. Nonetheless, our results do not directly prove any congruences, but provide machinery to convert from congruences for one $\beta$, which should be the simplest one to prove, to those for another $\beta'$, which may be difficult to prove by standard methods. The congruences \eqref{r:ex4} may be easier than \eqref{r:ex6} since the generating function of $c\psi_{4,\beta}$ appears simple for $\beta=1$. Thus, one could prove \eqref{r:ex4} for $\beta=1$ by standard procedures, for example using the method in Section \ref{subsec:sketch_r_c31cong}, and then prove \eqref{r:ex4} for $\beta=0,2$ using the method we present in Section \ref{subsec:fromf312tof332}. On the other hand, the congruences \eqref{r:ex6} appear more challenging since we do not have ideas for proving any of them. However, if one can prove \eqref{r:ex6} for one value of $\beta$ via modular techniques, it may be possible to transform this proof to other values of $\beta$ as in Section \ref{subsec:fromf312tof332}.

\begin{appendix}
\section{Modular relations associated with $C\Psi_{3,1/2}(q)$}
\label{appendix:modular_relations}

In this appendix, we list all modular relations that are used in the proof of \eqref{r:c31cong}. See Section \ref{subsec:sketch_r_c31cong}.

\begin{align*}
&\text{Group I:}\\
&\qquad U^{(0)}(1)=5^7 t^3+9\cdot 5^4 t^2+9 \cdot 5 t+(5^5 t^2+8 \cdot 5^2 t+1)p_1,\\
&\qquad U^{(0)}(t^{-1})=-5^2 t-2p_1,\\
&\qquad U^{(0)}(t^{-2})=5^5 t^2+6\cdot 5^2 t+(2\cdot 5^3 t+4 \cdot 5)p_1,\\
&\qquad U^{(0)}(t^{-3})=-9\cdot 5^5 t^2-9\cdot 5^3 t+1+(5^6 t^2-2\cdot 5^4 t-37 \cdot 5)p_1,\\
&\qquad U^{(0)}(t^{-4})=5^{11} t^4+3\cdot 5^9 t^3+9\cdot 5^6 t^2+79 \cdot 5^3 t-3\cdot 5-(5^8 t^2+12\cdot 5^4 t-67 \cdot 5^2)p_1.\\
&\text{Group II:}\\
&\qquad U^{(0)}(p_0)=(5^9 t^4+5^8 t^3+8 \cdot 5^5 t^2+4 \cdot 5^3+1)p_1,\\
&\qquad U^{(0)}(p_0t^{-1})=-p_1,\\
&\qquad U^{(0)}(p_0t^{-2})=(5^3 t+14)p_1,\\
&\qquad U^{(0)}(p_0t^{-3})=(5^6 t^2-26 \cdot 5)p_1,\\
&\qquad U^{(0)}(p_0t^{-4})=-(5^9 t^3+38\cdot 5^6 t^2+38 \cdot 5^4 t-228 \cdot 5)p_1.\\
&\text{Group III:}\\
&\qquad U^{(1)}(1)=1,\\
&\qquad U^{(1)}(t^{-1})=-5^2t-6,\\
&\qquad U^{(1)}(t^{-2})=-5^5t^2+54,\\
&\qquad U^{(1)}(t^{-3})=-5^8t^3-102\cdot 5,\\
&\qquad U^{(1)}(t^{-4})=-5^{11}t^4+966\cdot 5.\\
&\text{Group IV:}\\
&\qquad U^{(1)}(p_1)=-18\cdot 5^7 t^3-234\cdot 5^4 t^2-126 \cdot 5^2 t+(5^9t^3+14 \cdot5^6 t^2+44\cdot 5^3 t+2\cdot 5)p_0,\\
&\qquad U^{(1)}(p_1t^{-1})=-18+5p_0,\\
&\qquad U^{(1)}(p_1t^{-2})=126+5^4 t p_0,\\
&\qquad U^{(1)}(p_1t^{-3})=-234\cdot 5+5^7 t^2p_0,\\
&\qquad U^{(1)}(p_1t^{-4})=18\cdot 5^9 t^3+234\cdot 5^6 t^2+126\cdot 5^4 t+2268\cdot 5\\
&\qquad\qquad\qquad\qquad-(4\cdot 5^{10}t^3+14\cdot 5^8 t^2+44\cdot 5^5 t+2\cdot 5^3-t^{-1})p_0.
\end{align*}

\section{Metaplectic covers}
\label{appendix:metaplectic}
Since the metaplectic covers (i.e., double covers) and elements like $\widetilde{\tbtmat{a}{b}{c}{d}}$ are used throughout the paper, it is worthwhile to recall the definitions and basic properties. To our best knowledge, the idea of using metaplectic covers to study modular forms of half-integral weights was first proposed by Shimura \cite[p. 443]{Shi73}. For more details on this topic, see e.g. \cite[p. 15]{Bru02}, \cite[\S 4]{Str13}, or \cite[\S 1.1.6]{CS17}.

Let $\glpR$ be the group of $2\times2$ real matrices of positive determinant. For a complex number $z\neq0$, we define $\sqrt{z}:=\exp(\frac{1}{2}\log z)$ with $-\uppi\rmi<\Im\log z\leq\uppi\rmi$. As a consequence, we have $-\frac{\uppi}{2}<\arg\sqrt{c\tau+d}\leq\frac{\uppi}{2}$, provided that $c\tau+d\neq0$.

\begin{deff}
The \emph{metaplectic cover} of $\glpR$, denoted by $\glptR$, is the set of elements $\left(\tbtmat{a}{b}{c}{d},\varepsilon\right)$, where $\tbtmat{a}{b}{c}{d}\in\glpR$ and $\varepsilon\in\{\pm1\}$. Moreover, for $\tbtmat{a}{b}{c}{d}\in\glpR$, we set $\widetilde{\tbtmat{a}{b}{c}{d}}=\left(\tbtmat{a}{b}{c}{d},1\right)$.
\end{deff}

There is a trivial composition on the set $\glptR$, which makes it into a group isomorphic to the direct product $\glpR\times\{\pm1\}$. However, the useful composition is the following one, which makes $\glptR$ into a nontrivial central extension of $\glpR$.
\begin{prop}
The composition
\begin{align*}
\glptR\times\glptR&\rightarrow\glptR\\
\left(\tbtmat{a_1}{b_1}{c_1}{d_1},\varepsilon_1\right),\left(\tbtmat{a_2}{b_2}{c_2}{d_2},\varepsilon_2\right)&\mapsto\left(\tbtmat{a_3}{b_3}{c_3}{d_3},\varepsilon_3\right),
\end{align*}
where $\tbtmat{a_3}{b_3}{c_3}{d_3}=\tbtmat{a_1}{b_1}{c_1}{d_1}\tbtmat{a_2}{b_2}{c_2}{d_2}$, and where
\begin{equation}
\label{eq:epsilon3formula}
\varepsilon_3=\varepsilon_1\varepsilon_2\cdot\frac{\sqrt{c_1(a_2\tau+b_2)/(c_2\tau+d_2)+d_1}\sqrt{c_2\tau+d_2}}{\sqrt{c_1(a_2\tau+b_2)+d_1(c_2\tau+d_2)}},
\end{equation}
makes $\glptR$ into a group. Note that in the right-hand side of \eqref{eq:epsilon3formula}, $\tau\in\uhp$ is chosen arbitrarily, and the value of $\varepsilon_3\in\{\pm1\}$ is independent of the choice of $\tau$.
\end{prop}
\begin{proof}
A simplification shows that the fraction in \eqref{eq:epsilon3formula} takes values in the discrete space $\{\pm1\}$. Since $\uhp$ is connected, and since this fraction is a holomorphic function of $\tau\in\uhp$, it is a constant function, namely, it is independent of the choice of $\tau$. The fact the composition given above makes $\glptR$ into a group follows from a straightforward but tedious verification of the three group axioms.
\end{proof}

It is inconvenient to calculate $\varepsilon_3$ using the right-hand side of \eqref{eq:epsilon3formula}. Instead, we use the following formula.
\begin{prop}
\label{prop:cocycleFormula}
Let $\sigma\left(\tbtmat{a_1}{b_1}{c_1}{d_1},\tbtmat{a_2}{b_2}{c_2}{d_2}\right)$ be the fraction in \eqref{eq:epsilon3formula} and let $\tbtmat{a_3}{b_3}{c_3}{d_3}=\tbtmat{a_1}{b_1}{c_1}{d_1}\tbtmat{a_2}{b_2}{c_2}{d_2}$. Then we have
\begin{enumerate}
  \item $\sigma\left(\tbtmat{a_1}{b_1}{c_1}{d_1},\tbtmat{a_2}{b_2}{c_2}{d_2}\right)=-1$ if one of the following three conditions holds:
  \begin{itemize}
    \item $c_1=c_2=0$ and $d_1<0,\,d_2<0$,
    \item $c_1 \geq 0,\, c_2 \geq 0$ but $c_3 < 0$,
    \item $c_1 < 0$, $c_2 < 0$ but $c_3 \geq 0$.
   \end{itemize}
  \item $\sigma\left(\tbtmat{a_1}{b_1}{c_1}{d_1},\tbtmat{a_2}{b_2}{c_2}{d_2}\right)=1$ otherwise.
\end{enumerate}
\end{prop}
\begin{proof}
Set $z_1=c_1(a_2\tau+b_2)/(c_2\tau+d_2)+d_1$ and $z_2=c_2\tau+d_2$, then $z_1z_2=c_3\tau+d_3$, and
$$\sigma\left(\tbtmat{a_1}{b_1}{c_1}{d_1},\tbtmat{a_2}{b_2}{c_2}{d_2}\right)=\frac{\sqrt{z_1}\sqrt{z_2}}{\sqrt{z_1z_2}}=\exp\left(\frac{\log z_1+\log z_2 - \log(z_1z_2)}{2}\right).$$
If $c_1 < 0$, $c_2 < 0$ but $c_3 \geq 0$, then $\Im z_1<0$, $\Im z_2<0$, and $\Im(z_1z_2)\geq0$. It follows that $\log z_1+\log z_2 - \log(z_1z_2)=\rmi(\arg(z_1)+\arg(z_2)-\arg(z_1z_2))=-2\uppi\rmi$. Hence $\sigma\left(\tbtmat{a_1}{b_1}{c_1}{d_1},\tbtmat{a_2}{b_2}{c_2}{d_2}\right)=-1$ as desired. The proofs for other cases are similar.
\end{proof}
\begin{rema}
There is a formula expressing $\sigma\left(\tbtmat{a_1}{b_1}{c_1}{d_1},\tbtmat{a_2}{b_2}{c_2}{d_2}\right)$ using the Hilbert symbol. See e.g. \cite[Theorem 4.1]{Str13}.
\end{rema}

\begin{examp}
We have $\left(\tbtmat{-1}{0}{1}{-1},1\right)\cdot\left(\tbtmat{1}{0}{2}{1},1\right)=\left(\tbtmat{-1}{0}{-1}{-1},-1\right)$.
\end{examp}

\begin{deff}
\label{def:slashaction}
Let $f\colon\uhp\rightarrow\numC$ be a function defined on $\uhp$. Let $\left(\tbtmat{a}{b}{c}{d},\varepsilon\right)\in\glptR$. Let $k\in\frac{1}{2}\numZ$. We set
\begin{equation*}
f\vert_k\left(\tbtmat{a}{b}{c}{d},\varepsilon\right)(\tau):=\left(\varepsilon\sqrt{c'\tau+d'}\right)^{-2k}f\left(\frac{a\tau+b}{c\tau+d}\right),
\end{equation*}
where $\tbtmat{a'}{b'}{c'}{d'}\in\slR$ is the unique modular matrix proportional to $\tbtmat{a}{b}{c}{d}$, that is, $\tbtmat{a'}{b'}{c'}{d'}=(ad-bc)^{-1/2}\tbtmat{a}{b}{c}{d}$. 
\end{deff}

The following facts illustrate the motivation behind \eqref{eq:epsilon3formula}, and the main advantage of using the metaplectic cover.
\begin{prop}
Let $k\in\frac{1}{2}\numZ$. We have $f(\vert_k\gamma_1)\vert_k\gamma_2=f\vert_k(\gamma_1\gamma_2)$ for $\gamma_1,\gamma_2\in\glptR$. Moreover, we have $f\vert_k(I,1)=f$ where $I$ is the identity matrix. ($(I,1)$ is the group unit of $\glptR$).
\end{prop}
\begin{proof}
A straightforward verification by expanding the definitions.
\end{proof}

\begin{rema}
We can also define, for $\tbtmat{a}{b}{c}{d}\in\glpR$, $f\vert_k\tbtmat{a}{b}{c}{d}(\tau):=(c'\tau+d')^{-k}f\left(\frac{a\tau+b}{c\tau+d}\right)$. Then for $k$ not an integer, we only have $(f\vert_{k}\tbtmat{a_1}{b_1}{c_1}{d_1})\vert_{k}\tbtmat{a_2}{b_2}{c_2}{d_2}=\pm f\vert_{k}\left(\tbtmat{a_1}{b_1}{c_1}{d_1}\tbtmat{a_2}{b_2}{c_2}{d_2}\right)$, with an annoying factor $\pm1$. For example,
\begin{equation*}
\left(f\vert_{1/2}\tbtmat{-1}{0}{1}{-1}\right)\vert_{1/2}\tbtmat{1}{0}{2}{1}=-f\vert_{1/2}\tbtmat{-1}{0}{-1}{-1}.
\end{equation*}
Of course, one can work with $\glpR$ instead of $\glptR$, always paying attention to the factors $\pm1$ which occur. In this way, if we have a modular form $f$ with $f\vert_{1/2}\gamma=\chi(\gamma)f$ for a group of $\gamma$s, then $\chi$ is a so-called projective character. If we work with $\glptR$, then $\chi$ will become a true group character on the double cover, which is one of the reasons why we use metaplectic covers. However, the main reason, which will be stated at the end of this appendix, is that we need an action of linear combinations of modular transformations on modular forms of half-integral weights, in which situation only considering $\glpR$ is problematic.
\end{rema}

\begin{prop}
\label{prop:widetildeG}
Let $G$ be a subgroup of $\glpR$, and set $\widetilde{G}=\left\{\left(\gamma,\varepsilon\right)\colon\gamma\in G,\varepsilon\in\{\pm1\}\right\}$. Then $\widetilde{G}$ is a subgroup of $\glptR$.
\end{prop}
\begin{proof}
This is immediate.
\end{proof}

\begin{deff}
\label{def:groupAlgebra1}
The group algebra $\mathscr{A}:=\numC[\glptR]$ is the set of finite formal sums
$$\sum_{\gamma\in\glptR}c_\gamma\cdot\gamma$$
with $c_\gamma\in\numC$. Equivalently, the above element can be understood as a function from $\glptR$ to $\numC$ with a finite support. Moreover, the element $1\cdot\gamma\in\mathscr{A}$ is identified with $\gamma\in\glptR$.
\end{deff}
\begin{examp}
We have $3\cdot\left(\tbtmat{-1}{0}{1}{-1},1\right)-2\cdot\left(\tbtmat{1}{0}{2}{1},1\right)\in\mathscr{A}$. It is important to note that this is just a \emph{formal} sum.
\end{examp}

\begin{prop}
Under the usual addition, and under the multiplication define by $\left(\sum c_\gamma\cdot\gamma\right)\cdot\left(\sum d_\gamma\cdot\gamma\right)=\sum_{\gamma}\sum_{\gamma_1\gamma_2=\gamma}(c_{\gamma_1}d_{\gamma_2})\cdot\gamma$, $\mathscr{A}$ is a unital associative $\numC$-algebra.
\end{prop}
\begin{proof}
See \cite[p. 127, Ex. 8]{Jac85}.
\end{proof}

\begin{deff}
\label{def:groupAlgebra2}
We define $\mathscr{A}_\numZ:=\numC[\sltZ]$ and $\mathscr{A}_m:=\numC[\widetilde{\Gamma_0(m)}]$, where $m\in\numgeq{Z}{1}$. They are subalgebras of $\mathscr{A}$.
\end{deff}

\begin{deff}
\label{def:groupAlgebra3}
Let $f\colon\uhp\rightarrow\numC$ be a function defined on $\uhp$. Let $M=\sum_{j=1}^{r}c_j\cdot\left(\tbtmat{a_j}{b_j}{c_j}{d_j},\varepsilon_j\right)\in\mathscr{A}$. Let $k\in\frac{1}{2}\numZ$. We define
\begin{equation*}
f\vert_kM(\tau):=\sum_{j=1}^rc_j\cdot\left(\varepsilon_j\sqrt{c_j'\tau+d_j'}\right)^{-2k}\cdot f\left(\frac{a_j\tau+b_j}{c_j\tau+d_j}\right),
\end{equation*}
where $\tbtmat{a_j'}{b_j'}{c_j'}{d_j'}=(a_jd_j-b_jc_j)^{-1/2}\tbtmat{a_j}{b_j}{c_j}{d_j}$.
\end{deff}

This is compatible with Definition \ref{def:slashaction}, and makes any particular class of functions, say the space of holomorphic functions, a right $\mathscr{A}$-module:
\begin{prop}
\label{prop:mathscrARightAction}
Let $M_1,M_2\in\mathscr{A}$ and let $k\in\frac{1}{2}\numZ$. Then $(f\vert_kM_1)\vert_kM_2=f\vert_k(M_1M_2)$ and $f\vert_k\widetilde{I}=f$.
\end{prop}
\begin{proof}
A straightforward verification by expanding the definitions.
\end{proof}

\begin{rema}
The formula $(f_1\vert_{k_1}M)\cdot(f_2\vert_{k_2}M)=(f_1f_2)\vert_{k_1+k_2}M$ holds for $M\in\glptR$. However, it does \emph{not} necessarily hold for $M\in\mathscr{A}$.
\end{rema}

To conclude this appendix, we explain the main reason to use metaplectic covers. If one uses the matrix group algebra $\numC[\slZ]$ instead, then at each step when two elements act on a half-integral weight modular form successively and one wants to first multiply the two elements, many cocycle factors $\pm1$ must be tracked, which seems to be a drawback. See the next example. The use of $\numC[\sltZ]$ absorbs these cocycle factors into the algebra itself.

\begin{examp}
We consider the formal sum $\tbtmat{-1}{0}{1}{-1}\boxplus\tbtmat{1}{0}{1}{1}\in\numC[\slZ]$ (i.e., the function that sends $\tbtmat{-1}{0}{1}{-1}$ and $\tbtmat{1}{0}{1}{1}$ to $1$ and takes the value $0$ at other matrices). We use $\boxplus$ to denote the addition on $\numC[\slZ]$ to distinguish it from the usual matrix addition. In the group algebra $\numC[\slZ]$ we have
$$\left(\tbtmat{-1}{0}{1}{-1}\boxplus\tbtmat{1}{0}{1}{1}\right)\cdot\tbtmat{1}{0}{2}{1}=\tbtmat{-1}{0}{-1}{-1}\boxplus\tbtmat{1}{0}{3}{1}.$$
Now let $f$ be a function define on $\uhp$, and set $M_1=\left(\tbtmat{-1}{0}{1}{-1}\boxplus\tbtmat{1}{0}{1}{1}\right)$ and $M_2=\tbtmat{1}{0}{2}{1}$. We have
\begin{align*}
(f\vert_{1/2}M_1)\vert_{1/2}M_2&=-f\vert_{1/2}\tbtmat{-1}{0}{-1}{-1}+f\vert_{1/2}\tbtmat{1}{0}{3}{1},\\
f\vert_{1/2}(M_1M_2)&=f\vert_{1/2}\tbtmat{-1}{0}{-1}{-1}+f\vert_{1/2}\tbtmat{1}{0}{3}{1}.
\end{align*}
They are not proportional. This means the multiplication on $\numC[\slZ]$ is not compatible with the action of $\numC[\slZ]$ on modular forms of half-integral weights. However, there is no problem with $\numC[\sltZ]$ as Proposition \ref{prop:mathscrARightAction} shows.
\end{examp}

\end{appendix}

\section*{Acknowledgement}
We thank the anonymous referees for their constructive comments, which greatly improve this manuscript.

\bibliographystyle{plain}
\bibliography{ref}

\end{document}